\newtheorem{theorem}{Theorem}[section]
\newtheorem{definition}[theorem]{Definition}
\newtheorem{lemma}[theorem]{Lemma}
\newtheorem{proposition}[theorem]{Proposition}
\newtheorem{corollary}[theorem]{Corollary}
\newtheorem{remark}[theorem]{Remark}
\newtheorem{notation}[theorem]{Notation}
\author{Dieter Degrijse}
\address{Department of Mathematics, Catholic University of Leuven, Kortrijk, Belgium}%
\email{Dieter.Degrijse@kuleuven-kortrijk.be}%
\author{Nansen Petrosyan}
\address{Department of Mathematics, Catholic University of Leuven, Kortrijk, Belgium}%
\email{Nansen.Petrosyan@kuleuven-kortrijk.be}%
\title{$\mathfrak{F}$-structures and Bredon-Galois cohomology}
\thanks{Both authors were supported by the Research Fund K.U.Leuven.}
\thanks{The second author was also supported by the FWO-Flanders Research Fellowship.}
\date{\today}
\newcommand{\mF}{\mathfrak {F}}
\newcommand{\Z}{\mathbb Z}
\newcommand{\orb}{\mathcal{O}_{\mF}G}
\newcommand{\orbmod}{\mbox{Mod-}\mathcal{O}_{\mF}G}
\newcommand{\nathom}{\mathrm{Hom}_{\mF}}
\newcommand{\orbg}{\mathcal{O}_{\mF}(\Gamma)}
\begin{document}
\maketitle
\begin{abstract}
Let $\mF$ be an arbitrary family of subgroups of a group $G$ and let $\orb$ be the associated orbit category. We investigate interpretations of  low dimensional  $\mF$-Bredon cohomology of $G$ in terms of abelian extensions of $\orb$. Specializing to fixed point functors as coefficients, we derive several group theoretic applications and introduce Bredon-Galois cohomology. We prove an analog of Hilbert's Theorem 90 and show that the second Bredon-Galois cohomology is a certain intersection of relative Brauer groups. As applications, we realize the relative Brauer group $\mathrm{Br}(L/K)$ of a finite separable non-normal extension of fields $L/K$ as a second Bredon cohomology group and show that this approach is quite suitable for finding nonzero elements in $\mathrm{Br}(L/K)$.
\end{abstract}

\tableofcontents
\section{Introduction}
Bredon cohomology was introduced by G. Bredon in \cite{Bredon} as a means to develop an obstruction theory for equivariant extension of maps. It was further developed by Slomi\'{n}ska \cite{slom0}, tom Dieck \cite{tom},  L\"uck \cite{Luck},  Nucinkis \cite{Nucinkis} and many other authors with applications to proper group actions and classifying spaces for families of subgroups (see \cite{Luck2}).

Let $\orbmod$ be the category of contravariant functors from the orbit category $\mathcal{O}_{\mF}G$ to the category of abelian groups $\mathfrak{Ab}$. The Bredon cohomology functors $\mathrm{H}_{\mF}^{\ast}(G,-)$ of a group $G$ with respect to the family of subgroups $\mF$ are the right derived functors of a certain $\mathrm{Hom}$ functor from  $\orbmod$ to $\mathfrak{Ab}$.

Assuming the family $\mF$ is closed under conjugation and finite intersections, one can consider the homotopy category of $G$-$\mathrm{CW}$-complexes with stabilizers in $\mF$. A terminal
object in this category is called a model for the classifying space $E_{\mF}G$, and one can show that these models always exist under the given assumptions on the family (e.g. see \cite{Luck2}). The  augmented cellular chain complex of $E_{\mF}G$ yields a projective resolution which can be used to compute $\mathrm{H}_{\mF}^{\ast}(G,-)$. Partially motivated by the Baum-Connes and Farell-Jones Isomorphism conjectures (see \cite{BCH}, \cite{MV}, \cite{DL}), a lot of attention has gone to questions concerning finiteness properties of $E_{\mF}G$, in particular when $\mF$ is the family of finite subgroups or the family of virtually cyclic subgroups (e.g. see \cite{Kochloukova},\cite{Luck},\cite{LuckWeiermann},\cite{Nucinkis}).

In this paper, we specialize to another aspect of the theory; namely, the interpretation of lower dimensional Bredon cohomology groups in terms of so-called abelian extensions of the orbit category.  This correspondence was first developed by G. Hoff in a more general framework of cohomology of categories (see, \cite{Hoff},\cite{Hoff2}, \cite{Webb}). The cohomology of a category $C$ is defined in a similar way as Bredon cohomology where one just replaces the orbit category by the category $C$. In order to give an interpretation of the second  cohomology of a small category $C$, Hoff considers abelian extensions of $C$. He then derived similar characterizations for the  second cohomology of $C$ as in the case of ordinary group cohomology. We expand the theory developed by Hoff in the case of fixed point functor coefficients.

Let $G$ be a group and let $\mF$ be a collection of subgroups of $G$ that contains the trivial subgroup.
Let $M$ be a left $G$-module. Then the fixed point functor associated to $M$ is the contravariant functor:
\[ \underline{M}: \orb \rightarrow \mathfrak{Ab}: G/H \mapsto M^{\scriptscriptstyle H}, \]
where $M^{\scriptscriptstyle H}$ is the $H$-invariant subgroup of $M$ and  $\underline{M}$ maps a morphism $G/H \xrightarrow{x} G/K$ to $M^{\scriptscriptstyle K} \rightarrow M^{\scriptscriptstyle H} : m \mapsto x\cdot m$.

\begin{definition}\label{def: fstructure }\rm Let $0 \rightarrow M \rightarrow \Gamma \xrightarrow{\pi} G \rightarrow 1$,
be an abelian group extension of $G$ by $M$.
 An {\it $\mF$-structure} on this extension is a collection of subgroups $\{\Gamma_{\scriptscriptstyle H}\}_{H \in \mF}$ of $\Gamma$, such that
\begin{enumerate}[(i)]
 \item for each $H \in \mF$, $\pi: \Gamma_{\scriptscriptstyle H} \xrightarrow{\cong} H$;
 \medskip
 \item for each $H,K \in \mF$ and  $x \in G$, if $x^{\scriptscriptstyle -1}H x\subseteq K$ then there exists  $y \in \pi^{\scriptscriptstyle -1}(x)$ such that $y^{\scriptscriptstyle -1}\Gamma_{\scriptscriptstyle H} y\subseteq \Gamma_{\scriptscriptstyle K}$.
\end{enumerate}
This $\mF$-structure $\{\Gamma_{\scriptscriptstyle H}\}_{H \in \mF}$  is called split if there exists a group homomorphism $s: G \rightarrow \Gamma$ such that $\pi \circ s = id$ and  for each $H \in \mF$, the subgroups $s(H)$ and $\Gamma_{\scriptscriptstyle H}$ are conjugate in $\Gamma$ by $m_{\scriptscriptstyle H}$ for  some $m_{\scriptscriptstyle H} \in M$. Two splittings $s$ and $t$ of an $\mF$-structure on  $(G,M)$
are said to be $M$-conjugate if there exists an element $m \in M$ such that
$s(x)$ and $t(x)$ are conjugate by $m$ for all $x \in G$.
An $\mF$-structure on $(G,M)$ is just an $\mF$-structure on some extension of $G$ by $M$. We define a notion of equivalence of $\mF$-structures and denote the set of equivalence classes of $\mF$-structures on $(G,M)$ by $\mathrm{Str}_{\mF}(G,M)$ (see \ref{def: fstructure}).
\end{definition}
We prove the following.
\begin{theorem} \label{th: main intro2} There is a one-to-one correspondence between $\mathrm{H}^2_{\mF}(G,\underline{M})$ and $\mathrm{Str}_{\mF}(G,M)$ such that the zero element in $\mathrm{H}^2_{\mF}(G,\underline{M})$ corresponds to the class of split $\mF$-structures on $(G,M)$. In particular, $\mathrm{H}^2_{\mF}(G,\underline{M})=0$ if and only if every $\mF$-structure on $(G,M)$ splits. Moreover, there is a one-to-one correspondence between the $M$-conjugacy classes of splittings of the standard split $\mF$-structure on $(G,M)$ and the elements of $\mathrm{H}^1_{\mF}(G,\underline{M})$.  In particular, $\mathrm{H}^1_{\mF}(G,\underline{M})=0$ if and only if all splittings of the standard split $\mF$-structure on $(G,M)$ are $M$-conjugate.
\end{theorem}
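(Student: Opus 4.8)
The plan is to carry everything out inside the cohomology of the orbit category, using that $\mathrm{H}^n_\mF(G,\underline M)=\mathrm{H}^n(\orb,\underline M)$, and to treat the two halves of the statement by different means: the degree-two half through abelian extensions of $\orb$, and the degree-one half by an explicit cochain computation.

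For the $\mathrm{H}^2$ half I would begin from the correspondence of Hoff (see \cite{Hoff}, \cite{Hoff2}, \cite{Webb}): $\mathrm{H}^2(\orb,\underline M)$ is in natural bijection with the set of equivalence classes of abelian extensions of the category $\orb$ by the functor $\underline M$ --- functors $p\colon\mathcal E\to\orb$ which are bijective on objects and surjective on morphisms and for which each fibre $p^{-1}(\varphi)$ over a morphism $\varphi\colon G/H\to G/K$ is a simply transitive $\underline M(G/H)$-set, these actions being compatible with composition through the maps $\underline M(\varphi)$ --- with the zero class corresponding to the split abelian extension. The substance of the argument is then a bijection between such extensions and $\mathrm{Str}_\mF(G,M)$. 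To an extension $0\to M\to\Gamma\xrightarrow{\pi}G\to1$ equipped with an $\mF$-structure $\{\Gamma_H\}_{H\in\mF}$ I would attach the functor $\pi_*\colon\orbg\to\orb$, where $\orbg$ is the orbit category of $\Gamma$ with respect to the family $\{\Gamma_H\}_{H\in\mF}$, sending $\Gamma/\Gamma_H\mapsto G/H$ (via the isomorphism $\pi|_{\Gamma_H}\colon\Gamma_H\xrightarrow{\cong}H$) and $y\Gamma_K\mapsto\pi(y)K$. Condition (ii) in the definition of an $\mF$-structure is exactly what makes $\pi_*$ surjective on morphisms, and a short computation using $M\cap\Gamma_K=1$ shows that the fibre over $\varphi\colon G/H\to G/K$ (represented by $x\in G$) is a simply transitive $M^H=\underline M(G/H)$-set under left translation by $M$, with composition in $\orbg$ transforming these actions through $\underline M(\varphi)$. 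Thus $\pi_*$ is an abelian extension of $\orb$ by $\underline M$; checking that equivalent $\mF$-structures yield equivalent extensions gives a well-defined map $\mathrm{Str}_\mF(G,M)\to\mathrm{H}^2(\orb,\underline M)$.

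Conversely, from an abelian extension $p\colon\mathcal E\to\orb$ I would recover an $\mF$-structure by setting $\Gamma:=\mathrm{mor}_{\mathcal E}(e_0,e_0)$, where $e_0$ is the object lying over $G/1$; with the multiplication opposite to composition this is a group, and $p$ exhibits it as an extension $0\to M\to\Gamma\xrightarrow{\pi}G\to1$. For each $H\in\mF$ I would choose a lift $\mathfrak p_H\in\mathrm{mor}_{\mathcal E}(e_0,e_H)$ of the projection $G/1\to G/H$, where $e_H$ lies over $G/H$, and put $\Gamma_H:=\{\gamma\in\Gamma\mid\mathfrak p_H\circ\gamma=\mathfrak p_H\}$; the torsor property forces $\pi$ to restrict to an isomorphism $\Gamma_H\xrightarrow{\cong}H$. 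For condition (ii), given $x^{-1}Hx\subseteq K$ one lifts the corresponding morphism $G/H\to G/K$ to some $\mathfrak f\in\mathrm{mor}_{\mathcal E}(e_H,e_K)$ and uses the torsor property to produce $y\in\pi^{-1}(x)$ with $\mathfrak p_K\circ y=\mathfrak f\circ\mathfrak p_H$; then $y^{-1}\Gamma_H y\subseteq\Gamma_K$ drops out of a direct manipulation of these identities together with $\mathfrak p_H\circ\gamma=\mathfrak p_H$ for $\gamma\in\Gamma_H$. This verification of (ii), together with checking that the two constructions are mutually inverse up to equivalence and that a group-theoretic splitting $s\colon G\to\Gamma$ of an $\mF$-structure corresponds precisely to a categorical splitting of $\orbg\to\orb$ --- so that the split $\mF$-structures form the preimage of the zero class --- is where I expect the bookkeeping, particularly around the opposite-composition conventions, to be most delicate. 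The two ``in particular'' assertions of the first part then follow formally.

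For the $\mathrm{H}^1$ half I would compute $\mathrm{H}^1(\orb,\underline M)$ directly from the normalised bar resolution of the constant functor $\underline{\Z}$, in which a $1$-cochain assigns to each morphism of $\orb$ an element of $\underline M$ evaluated at its domain. Since every morphism out of $G/1$ factors through the endomorphisms of $G/1$ and the projections $p_H\colon G/1\to G/H$, a $1$-cocycle $f$ is determined by $f|_G\in Z^1(G,M)$ together with the elements $f(p_H)\in M$; the cocycle identity forces $f|_G$ to restrict on each $H\in\mF$ to the principal derivation $-\partial\bigl(f(p_H)\bigr)|_H$, and conversely any such data assemble to a cocycle, with $B^1(\orb,\underline M)$ corresponding to the evident boundaries. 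A short diagram chase then identifies $\mathrm{H}^1(\orb,\underline M)$ with $\bigl\{d\in Z^1(G,M)\mid d|_H\text{ is principal for every }H\in\mF\bigr\}\big/B^1(G,M)$. On the other hand, realising the standard split $\mF$-structure as $\Gamma=M\rtimes G$ with $\Gamma_H=1\rtimes H$, every splitting has the form $s(g)=(d(g),g)$ for some $d\in Z^1(G,M)$; the requirement that $s(H)$ be $M$-conjugate to $\Gamma_H$ translates exactly into $d|_H$ being principal, and two such splittings are $M$-conjugate precisely when the corresponding cocycles differ by an element of $B^1(G,M)$. Comparing the two descriptions yields the claimed bijection, and the closing sentence of the theorem is immediate because the standard splitting always exists.
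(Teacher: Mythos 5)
Your proposal follows essentially the same route as the paper: the $\mathrm{H}^2$ half proceeds through Hoff's identification of $\mathrm{H}^2(\orb,\underline M)$ with abelian right extensions of $\orb$, with the passage $\mF$-structure $\leftrightarrow$ extension realised exactly as in the paper (the forward direction via $\orbg\to\orb$, the inverse by extracting $\Gamma=\mathrm{Mor}(E(\{e\}),E(\{e\}))$ and defining $\Gamma_H$ as the stabiliser of a chosen lift $\varphi_H$ of the projection $G/\{e\}\to G/H$), and you correctly flag the mutual-inverse and split-structure-to-zero-class checks as the delicate bookkeeping that Theorem \ref{th: special ext} carries out. For the $\mathrm{H}^1$ half your direct bar-cochain computation, factoring morphisms of $\orb$ through $\mathrm{End}(G/\{e\})$ and the projections $p_H$ to land on $\{d\in Z^1(G,M)\mid d|_H\ \text{principal for all }H\in\mF\}/B^1(G,M)$, is the same calculation the paper packages into $\mF$-derivations of $\orb$-modules and the isomorphism $\mathrm{Der}_{\mF}(G,M)/\mathrm{PDer}_{\mF}(G,M)\cong\mathrm{Der}_{\mF}(G,\underline M)/\mathrm{PDer}_{\mF}(G,\underline M)$; your version is a touch more streamlined but not a different argument.
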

Among other applications, this theorem can be used to derive splitting results, as in the following corollary.
\begin{corollary} Let $G$ be either a countable group and $\mathfrak{F}$ the family of finitely generated subgroups of $G$, or let $G$ be a virtually free group with $\mathfrak{F}$ the family of finite subgroups of $G$. Suppose $M \in G\mbox{-mod}$ and consider an abelian extension, $0 \rightarrow M \rightarrow \Gamma \rightarrow G \rightarrow 1$. Then
\begin{enumerate}[(i)]
\item every $\mathfrak{F}$-structure on the extension is split.
\smallskip
\item If $H^1(H,M)=0$ for all $H \in \mathfrak{F}$,  the extension splits if and only if it splits when restricted to every $H \in \mathfrak{F}$.
    \end{enumerate}
\end{corollary}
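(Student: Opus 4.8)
The plan is to deduce both statements from Theorem~\ref{th: main intro2}, once we know that $\mathrm{H}^2_{\mF}(G,\underline{M})=0$ for every $M\in G\mbox{-mod}$. The latter I would obtain by exhibiting a projective resolution of the constant functor $\underline{\mathbb{Z}}$ in $\orbmod$ of length at most one.

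In the countable case, write $G=\bigcup_{n\geq 1}G_n$ as an ascending union of finitely generated subgroups $G_1\subseteq G_2\subseteq\cdots$. Each $G_n$ lies in $\mF$, so each representable functor $\mathbb{Z}[-,G/G_n]$ is free in $\orbmod$, and the canonical $G$-maps $G/G_n\to G/G_{n+1}$ induce morphisms between them. Since $G$ exhausts every $K\in\mF$, the colimit of the fixed-point sets $(G/G_n)^K$ is a single point for each such $K$, whence $\underline{\mathbb{Z}}=\varinjlim_n\mathbb{Z}[-,G/G_n]$ in $\orbmod$. The telescope sequence
\[
0\longrightarrow\bigoplus_{n}\mathbb{Z}[-,G/G_n]\xrightarrow{\ \mathrm{id}-\mathrm{shift}\ }\bigoplus_{n}\mathbb{Z}[-,G/G_n]\longrightarrow\underline{\mathbb{Z}}\longrightarrow 0
\]
is then a free resolution of $\underline{\mathbb{Z}}$ of length one. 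In the virtually free case, $G$ acts on a tree $T$ with finite stabilizers; each finite subgroup fixes a non-empty (hence contractible) subtree while infinite subgroups fix nothing, so $T$ is a model for $E_{\mF}G$ and its augmented cellular chain complex is again a free resolution of $\underline{\mathbb{Z}}$ of length one. In either case $\mathrm{H}^2_{\mF}(G,\underline{M})=0$, so by Theorem~\ref{th: main intro2} every $\mF$-structure on $(G,M)$---in particular every $\mF$-structure on the given extension---splits; this is~(i).

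For~(ii), one direction is trivial: a section $s\colon G\to\Gamma$ restricts to a section of $\pi^{-1}(H)\to H$ for every $H\in\mF$. Conversely, assume $0\to M\to\pi^{-1}(H)\to H\to 1$ splits for each $H\in\mF$, choose a section $s_H$ of it, and set $\Gamma_H=s_H(H)$. Property~(i) of an $\mF$-structure is clear; for property~(ii), take $H,K\in\mF$ and $x\in G$ with $L:=x^{-1}Hx\subseteq K$. For any $y_0\in\pi^{-1}(x)$, both $y_0^{-1}\Gamma_H y_0$ and $s_K(L)$ are complements of $M$ in $\pi^{-1}(L)$; since $L\in\mF$ and $H^1(L,M)=0$, all complements of $M$ in $\pi^{-1}(L)$ are $M$-conjugate, so $m^{-1}(y_0^{-1}\Gamma_H y_0)m=s_K(L)\subseteq\Gamma_K$ for some $m\in M$, and $y:=y_0 m\in\pi^{-1}(x)$ satisfies $y^{-1}\Gamma_H y\subseteq\Gamma_K$. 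Hence $\{\Gamma_H\}_{H\in\mF}$ is an $\mF$-structure on the extension, which by~(i) is split; by the definition of a split $\mF$-structure this yields a section $G\to\Gamma$, so the extension splits.

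The \emph{main obstacle} is the vanishing $\mathrm{H}^2_{\mF}(G,\underline{M})=0$. In the countable case the crux is checking that $\underline{\mathbb{Z}}$ really is $\varinjlim_n\mathbb{Z}[-,G/G_n]$---equivalently, that each $\varinjlim_n(G/G_n)^K$ degenerates to a point because $G=\bigcup_n G_n$ swallows $K$---after which the telescope lemma and exactness of evaluation at objects finish it; in the virtually free case it rests on the structure theory identifying such groups with groups acting on trees with finite stabilizers. Granting this, parts~(i) and~(ii) are formal consequences of Theorem~\ref{th: main intro2} together with the classical fact that the $M$-conjugacy classes of complements of the kernel in a split abelian extension form a torsor under the corresponding $H^1$.
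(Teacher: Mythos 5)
Your proof is correct, and it reaches the same conclusion by essentially the same mechanism — exhibiting a free $\orb$-resolution of $\underline{\mathbb{Z}}$ of length one, deducing $\mathrm{H}^2_{\mathfrak{F}}(G,\underline{M})=0$, and then applying the correspondence of Theorem~\ref{th: main intro2} (i.e.\ Theorem~\ref{cor: H^2 int}). The differences are in packaging. For the countable case, the paper invokes Bass--Serre theory to produce a tree $X$ which is a one-dimensional model for $E_{\mathfrak{F}}G$ and then cites the fact that a one-dimensional $E_{\mathfrak{F}}G$ forces $\mathrm{H}^{\geq 2}_{\mathfrak{F}}(G,-)=0$; you instead build the resolution algebraically as a mapping telescope $0\to\bigoplus_n\mathbb{Z}[-,G/G_n]\to\bigoplus_n\mathbb{Z}[-,G/G_n]\to\underline{\mathbb{Z}}\to 0$, using that each $K\in\mathfrak{F}$ is finitely generated and so eventually swallowed by the chain $\{G_n\}$, and that free abelian group and evaluation both commute with filtered colimits. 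These are really the same resolution — the telescope complex is exactly the augmented Bredon chain complex of the Bass--Serre tree — but your version avoids the language of classifying spaces and is more self-contained. For part~(ii), the paper simply cites Corollary~\ref{cor: intersection} (identifying $\mathrm{H}^2_{\mathfrak{F}}(G,\underline{M})$ with the intersection of restriction kernels), whereas you reprove in-line the key input, namely Proposition~\ref{prop: ker ext}: given $\mathrm{H}^1(L,M)=0$ for all $L$ in the conjugation-closure of $\mathfrak{F}$, any collection of complements satisfying property~(i) of an $\mathfrak{F}$-structure automatically satisfies property~(ii), so that local sections assemble into an $\mathfrak{F}$-structure, which by part~(i) splits. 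Both routes are valid; yours trades the citation for a short direct verification. One could streamline your writeup of~(ii) by simply invoking Corollary~\ref{cor: intersection} once $\mathrm{H}^2_{\mathfrak{F}}$ vanishes, noting that in both cases the family $\mathfrak{F}$ is closed under conjugation so $\overline{\mathfrak{F}}=\mathfrak{F}$.
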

Our main application of Bredon cohomology with fixed point functor coefficients and its interpretations is related to Galois cohomology. One of the aims of the theory is to investigate a finite Galois extension of fields $E/K$ by studying the cohomology of the Galois group $\mathrm{Gal}(E/K)$ with coefficients in multiplicative group $E^{\times}$ (see \cite{Serre2}). We propose a generalization of Galois cohomology, called \emph{Bredon-Galois cohomology}, as a tool to study  a collection of intermediate fields of a finite Galois extensions $E/K$.
 \begin{definition}\label{def: Bredon-Galois}\rm Let $E/K$ be a finite Galois extension of fields and suppose $\mathcal{F}$ is a collection of intermediate fields. Denote  $G=\mathrm{Gal}(E/K)$ and define the family of subgroups  $\mathfrak{F}=\{H\subseteq G \;|\; E^H=F \mbox{ for some } F\in \mathcal{F}\}$. The {\it $n$-th Bredon-Galois cohomology of $E/K$ for the family $\mathcal{F}$} is by definition $\mathrm{H}^n_{\mF}(G, \underline{E}^{\times})$.
 \end{definition}
 \smallskip
It is well-known that the relative Brauer group $\mathrm{Br}(E/K)$ of a finite Galois extension $E/K$ of fields is isomorphic to $\mathrm{H}^2(\mathrm{Gal}(E/K),E^{\times})$. Utilizing the  correspondence of Theorem \ref{th: main intro2}, we derive a generalization of this result which extends to non-normal extensions.
\begin{theorem} \label{prop: intro bredon galois} Let $E/K$ be a finite Galois extension with Galois group $G$. Let $\mF$ be a family of subgroups of $G$ containing the trivial subgroup, then we have:
\[ \mathrm{H}^2_{\mF}(G,\underline{E}^{\times}) \cong \cap_{H \in \mF} \mathrm{Br}(E^{\scriptscriptstyle H}/K)\subseteq \mathrm{Br}(E/K).\]
In particular, suppose $L/K$ is a finite separable extension and let  $E/L$ be a finite field extension such that $E$ is Galois over $K$. Denote $G=\mathrm{Gal}(E/K)$ and let $H$ be the subgroup of $G$ such that $E^{\scriptscriptstyle H}=L$. Then,
\[
 \mathrm{H}^2_{\mF}(G,\underline{E}^{\times}) \cong \mathrm{Br}(L/K),
\]
where $\mF$ is the smallest family of subgroups of $G$ that is closed under conjugation, closed under taking subgroups and contains $H$.
\end{theorem}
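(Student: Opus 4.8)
The plan is to apply Theorem~\ref{th: main intro2} with $M=E^{\times}$ and to compare $\mathrm{H}^{2}_{\mF}(G,\underline{E}^{\times})$ with ordinary group cohomology by restricting to the trivial family. Since $\mF$ contains the trivial subgroup, the inclusion of families $\{1\}\subseteq\mF$ (here $\{1\}$ denotes the family whose only member is the trivial subgroup) induces a restriction homomorphism
\[
r\colon \mathrm{H}^{2}_{\mF}(G,\underline{E}^{\times})\longrightarrow \mathrm{H}^{2}_{\{1\}}(G,\underline{E}^{\times})=\mathrm{H}^{2}(G,E^{\times})\cong\mathrm{Br}(E/K),
\]
the last isomorphism being the classical crossed–product isomorphism, under which the group–cohomology restriction $\mathrm{H}^{2}(G,E^{\times})\to\mathrm{H}^{2}(H,E^{\times})$ corresponds to the scalar–extension map $\mathrm{Br}(E/K)\to\mathrm{Br}(E/E^{H})$ for every $H\le G$. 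Unravelling the correspondence of Theorem~\ref{th: main intro2}, the map $r$ sends the class of an $\mF$-structure on $(G,E^{\times})$ to the class in $\mathrm{Br}(E/K)$ of its underlying abelian extension $0\to E^{\times}\to\Gamma\xrightarrow{\pi}G\to 1$. It therefore suffices to prove that $r$ is injective with image $\cap_{H\in\mF}\mathrm{Br}(E^{H}/K)$; here one uses throughout that $\mathrm{Br}(E^{H}/K)\subseteq\mathrm{Br}(E/K)$ (a central simple $K$-algebra split by $E^{H}$ is split by the larger field $E$), so that $\mathrm{Br}(E^{H}/K)=\ker\bigl(\mathrm{Br}(E/K)\to\mathrm{Br}(E/E^{H})\bigr)$.

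For injectivity, suppose $r(\alpha)=0$. Then $\alpha$ is the class of an $\mF$-structure $\{\Gamma_{H}\}_{H\in\mF}$ whose underlying abelian extension splits, say via a section $s\colon G\to\Gamma$. For each $H\in\mF$, both $s(H)$ and $\Gamma_{H}$ are complements to $E^{\times}$ in $\pi^{-1}(H)$; since $H=\mathrm{Gal}(E/E^{H})$, Hilbert's Theorem~90 gives $\mathrm{H}^{1}(H,E^{\times})=0$, so any two complements to $E^{\times}$ in $\pi^{-1}(H)$ are conjugate by an element of $E^{\times}$. Hence $s$ exhibits $\{\Gamma_{H}\}_{H\in\mF}$ as a split $\mF$-structure, so $\alpha=0$ by Theorem~\ref{th: main intro2}. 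For the inclusion $\mathrm{im}(r)\subseteq\cap_{H\in\mF}\mathrm{Br}(E^{H}/K)$, note that condition~(i) of Definition~\ref{def: fstructure } forces the underlying abelian extension of any $\mF$-structure to split after restriction to each $H\in\mF$, i.e.\ its image in $\mathrm{H}^{2}(H,E^{\times})\cong\mathrm{Br}(E/E^{H})$ vanishes; by the description of $\mathrm{Br}(E^{H}/K)$ above, the class therefore lies in $\mathrm{Br}(E^{H}/K)$ for every $H\in\mF$.

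For the reverse inclusion, let $\alpha\in\cap_{H\in\mF}\mathrm{Br}(E^{H}/K)$ and realize it by an abelian extension $0\to E^{\times}\to\Gamma\xrightarrow{\pi}G\to 1$. For each $H\in\mF$ the class $\alpha$ restricts to $0$ in $\mathrm{Br}(E/E^{H})\cong\mathrm{H}^{2}(H,E^{\times})$, so $\pi^{-1}(H)$ splits over $E^{\times}$; choose a complement $\Gamma_{H}$, giving condition~(i). For condition~(ii), let $H,K\in\mF$ and $x\in G$ with $x^{-1}Hx\subseteq K$, and pick any $y_{0}\in\pi^{-1}(x)$. Then $y_{0}^{-1}\Gamma_{H}y_{0}$ and $\Gamma_{K}\cap\pi^{-1}(x^{-1}Hx)$ are both complements to $E^{\times}$ in $\pi^{-1}(x^{-1}Hx)$, so Hilbert's Theorem~90 applied to $x^{-1}Hx=\mathrm{Gal}(E/E^{x^{-1}Hx})$ shows they are conjugate by some $m\in E^{\times}$; then $y:=y_{0}m\in\pi^{-1}(x)$ satisfies $y^{-1}\Gamma_{H}y\subseteq\Gamma_{K}$. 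Thus $\{\Gamma_{H}\}_{H\in\mF}$ is an $\mF$-structure realizing $\alpha$, so $\alpha\in\mathrm{im}(r)$, and the first isomorphism of the theorem follows. This verification of condition~(ii) is the one step that is not purely formal: a priori one only knows that each restricted extension splits, and the complements $\Gamma_{H}$ must be made simultaneously compatible under conjugation; what rescues it for the module $E^{\times}$ is exactly that complements over a fixed subgroup are unique up to $E^{\times}$-conjugacy by Hilbert's Theorem~90, so any choice is automatically coherent. (The same input is what gives injectivity of $r$.)

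For the ``in particular'' statement, the hypotheses give $H=\mathrm{Gal}(E/L)$ with $E^{H}=L$, and the smallest family of subgroups of $G$ that is closed under conjugation, closed under taking subgroups, and contains $H$ is $\mF=\{H'\le G\mid H'\subseteq gHg^{-1}\text{ for some }g\in G\}$, which indeed contains the trivial subgroup. By the first part, $\mathrm{H}^{2}_{\mF}(G,\underline{E}^{\times})\cong\cap_{H'\in\mF}\mathrm{Br}(E^{H'}/K)$. Since $H\in\mF$, this intersection is contained in $\mathrm{Br}(E^{H}/K)=\mathrm{Br}(L/K)$. Conversely, if $H'\in\mF$ with $H'\subseteq gHg^{-1}$, then $E^{H'}\supseteq E^{gHg^{-1}}=g(E^{H})=g(L)$, and $g\colon L\to g(L)$ is an isomorphism of $K$-algebras, so a central simple $K$-algebra split by $L$ is split by $g(L)$ and hence by the larger field $E^{H'}$; that is, $\mathrm{Br}(L/K)=\mathrm{Br}(g(L)/K)\subseteq\mathrm{Br}(E^{H'}/K)$. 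Hence $\mathrm{Br}(L/K)\subseteq\cap_{H'\in\mF}\mathrm{Br}(E^{H'}/K)$, so the two subgroups of $\mathrm{Br}(K)$ coincide and $\mathrm{H}^{2}_{\mF}(G,\underline{E}^{\times})\cong\mathrm{Br}(L/K)$.
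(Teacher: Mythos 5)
Your proof is correct and follows essentially the same route as the paper's: use the $\mathrm{H}^2_{\mF}\cong\mathrm{Str}_{\mF}$ correspondence, then use Hilbert's Theorem 90 (uniqueness of complements up to $E^{\times}$-conjugacy) to show the forgetful map to $\mathrm{Ext}(G,E^{\times})\cong\mathrm{Br}(E/K)$ is injective with image $\cap_{H\in\mF}\mathrm{Ker}\bigl(\mathrm{H}^2(G,E^{\times})\to\mathrm{H}^2(H,E^{\times})\bigr)=\cap_{H\in\mF}\mathrm{Br}(E^{H}/K)$. The paper packages this H90 input into a general lemma (Proposition~\ref{prop: ker ext}, stated for any $M$ with $\mathrm{H}^1(H,M)=0$ over the conjugation-closure of $\mF$) and cites it via Corollary~\ref{cor: intersection}, whereas you re-derive its content directly for $M=E^{\times}$; one small tightening worth making is to argue injectivity of the forgetful map on all fibres (two $\mF$-structures on equivalent extensions have $M$-conjugate complements over each $H$, again by H90) rather than only showing the preimage of $0$ is $0$, unless you first justify that your map $r$ is the additive Bredon restriction.
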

Next, we consider a  finite Galois extension $E/K$ with Galois group $G$.
\begin{theorem} Let $\mF$ be any family of subgroups of $G$ that is closed under conjugation and taking subgroups. For a prime $p$, if the $p$-Sylow subgroups of $G$ are cyclic, then we have a short exact sequence:
\[ 0 \rightarrow  \cap_{H \in \mF} \mathrm{Br}(E^{\scriptscriptstyle H}/K)_{(p)} \rightarrow \mathrm{Br}(E/K)_{(p)} \rightarrow \lim_{H \in \mF}\mathrm{Br}(E/E^{\scriptscriptstyle H})_{(p)} \rightarrow 0. \]
\end{theorem}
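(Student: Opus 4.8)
The plan is to translate the statement into ordinary Galois cohomology, reduce it to the surjectivity of a restriction map, and then kill the resulting obstruction with a spectral sequence, invoking the Sylow hypothesis only at the very end. We may assume $\mF\neq\emptyset$, hence $\{1\}\in\mF$ (if $\mF=\emptyset$ all four terms vanish). Applying the classical isomorphism $\mathrm{Br}(E/F)\cong\mathrm{H}^2(\mathrm{Gal}(E/F),E^{\times})$ to $F=K$ and to each $F=E^{\scriptscriptstyle H}$ (so that $\mathrm{Gal}(E/E^{\scriptscriptstyle H})=H$), the diagram $G/H\mapsto\mathrm{Br}(E/E^{\scriptscriptstyle H})$ over $\orb$ becomes the functor $\mathcal{H}^2\colon G/H\mapsto\mathrm{H}^2(H,E^{\times})$, the group $\mathrm{Br}(E/K)$ becomes $\mathrm{H}^2(G,E^{\times})$, and the canonical maps $\mathrm{Br}(E/K)\to\mathrm{Br}(E/E^{\scriptscriptstyle H})$ become the restriction homomorphisms $\mathrm{res}^{G}_{H}$. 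Since $\mathrm{Br}(E^{\scriptscriptstyle H}/K)=\ker\!\big(\mathrm{res}^{G}_{H}\colon\mathrm{Br}(E/K)\to\mathrm{Br}(E/E^{\scriptscriptstyle H})\big)$, the kernel of $\mathrm{Br}(E/K)\to\lim_{H\in\mF}\mathrm{Br}(E/E^{\scriptscriptstyle H})$ equals $\cap_{H\in\mF}\mathrm{Br}(E^{\scriptscriptstyle H}/K)$, which by Theorem \ref{prop: intro bredon galois} is $\mathrm{H}^2_{\mF}(G,\underline{E}^{\times})$. As $G$ is finite, so is $\mF$, and localization at $p$ is exact and commutes with this finite limit; hence
\[ 0\longrightarrow\mathrm{H}^2_{\mF}(G,\underline{E}^{\times})_{(p)}\longrightarrow\mathrm{Br}(E/K)_{(p)}\xrightarrow{\ \mathrm{res}\ }\lim_{H\in\mF}\mathrm{Br}(E/E^{\scriptscriptstyle H})_{(p)} \]
is exact with the indicated left-hand term, and the whole statement reduces to showing that $\mathrm{res}$ is surjective after $p$-localization.

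For that I would use the spectral sequence
\[ E_2^{s,t}=\lim\nolimits^{s}_{\orb}\mathcal{H}^{t}\ \Longrightarrow\ \mathrm{H}^{s+t}(G,M),\qquad \mathcal{H}^{t}\colon G/H\mapsto\mathrm{H}^{t}(H,M), \]
valid for any $\mathbb{Z}G$-module $M$: it arises from the free contractible $G$-CW-complex $EG\times E_{\mF}G$ filtered by the $E_{\mF}G$-skeleta, its bottom row is $E_2^{s,0}=\mathrm{H}^{s}_{\mF}(G,\underline{M})$, and its degree-$n$ edge homomorphism $\mathrm{H}^{n}(G,M)\to E_2^{0,n}$ is the assembled restriction map. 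Take $M=E^{\times}$. By Hilbert's Theorem $90$, $\mathrm{H}^1(H,E^{\times})=0$ for every $H\leq G$, so the functor $\mathcal{H}^1$ vanishes and the entire row $t=1$ of the $E_2$-page is zero. Hence in total degree $2$ the only possibly nonzero differential out of $E_\bullet^{0,2}$ is $d_3\colon E_3^{0,2}\to E_3^{3,0}$, whose target is a subquotient of $E_2^{3,0}=\mathrm{H}^3_{\mF}(G,\underline{E}^{\times})$; thus $\mathrm{coker}(\mathrm{res})$ is a subquotient of $\mathrm{H}^3_{\mF}(G,\underline{E}^{\times})$. All of this persists after $p$-localization, so it suffices to prove $\mathrm{H}^3_{\mF}(G,\underline{E}^{\times})_{(p)}=0$; this is the single place where the Sylow hypothesis is used.

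To prove that vanishing, let $P$ be a Sylow $p$-subgroup of $G$, cyclic by hypothesis, and regard a model for $E_{\mF}G$ as a $P$-CW-complex: its isotropy groups are the $p$-subgroups $G_{\sigma}\cap P$, which lie in $\mF$ because $\mF$ is subgroup-closed, and its $H$-fixed points are nonempty precisely for $H\in\mF$, so it is a model for $E_{\mF_P}P$ with $\mF_P=\{H\leq P:H\in\mF\}$. As $P$ is cyclic its subgroups are linearly ordered, so the subgroup-closed family $\mF_P$ is exactly the set of all subgroups of its largest member $Q_0$, which is normal in $P$. Now $E(P/Q_0)$, with $P$ acting through $P\twoheadrightarrow P/Q_0$, is a model for $E_{\mF_P}P$, and since the Bredon cochain complex of such a model with fixed-point-functor coefficients is the ordinary $P$-equivariant complex $\mathrm{Hom}_{\mathbb{Z}P}\!\big(C_{*}(E(P/Q_0)),E^{\times}\big)$ while $C_{*}(E(P/Q_0))$ is a free $\mathbb{Z}[P/Q_0]$-resolution of $\mathbb{Z}$ on which $P$ acts through $P/Q_0$, the $(-)^{Q_0}$-adjunction gives $\mathrm{H}^{*}_{\mF_P}(P,\underline{E}^{\times})\cong\mathrm{H}^{*}\!\big(P/Q_0,(E^{\scriptscriptstyle Q_0})^{\times}\big)$. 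But $P/Q_0=\mathrm{Gal}(E^{\scriptscriptstyle Q_0}/E^{\scriptscriptstyle P})$ is cyclic, so by periodicity of Tate cohomology of a cyclic group and Hilbert's Theorem $90$, $\mathrm{H}^{3}\!\big(P/Q_0,(E^{\scriptscriptstyle Q_0})^{\times}\big)\cong\widehat{\mathrm{H}}^{1}\!\big(P/Q_0,(E^{\scriptscriptstyle Q_0})^{\times}\big)=0$. Finally $[G:P]$ is prime to $p$, hence a unit in $\mathbb{Z}_{(p)}$, so the restriction–transfer composite makes restriction $\mathrm{H}^{3}_{\mF}(G,\underline{E}^{\times})_{(p)}\to\mathrm{H}^{3}_{\mF_P}(P,\underline{E}^{\times})_{(p)}$ injective; therefore $\mathrm{H}^3_{\mF}(G,\underline{E}^{\times})_{(p)}=0$, which completes the argument.

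The main obstacle is infrastructural rather than conceptual: one needs the spectral sequence $E_2^{s,t}=\lim^{s}_{\orb}\mathcal{H}^{t}\Rightarrow\mathrm{H}^{s+t}(G,M)$ with its degree-$2$ edge map correctly identified with restriction, together with a working restriction–transfer calculus for $\mathfrak{F}$-Bredon cohomology (including the identification, over $P$, of $E_{\mF}G$ with $E_{\mF_P}P$ and of $\mathrm{H}^{*}_{\mF_P}(P,\underline{E}^{\times})$ with $\mathrm{H}^{*}(P/Q_0,(E^{\scriptscriptstyle Q_0})^{\times})$). The decisive input is elementary: cyclicity of the Sylow subgroup forces $\mF_P$ to be the family of all subgroups of one normal subgroup $Q_0\leq P$, which collapses $\mathrm{H}^{\geq 1}_{\mF_P}(P,-)$ onto the cohomology of the cyclic group $P/Q_0$, where Hilbert's Theorem $90$ annihilates the relevant odd degree.
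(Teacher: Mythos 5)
Your argument is correct, and at the top level it runs parallel to the paper's proof: both use an isotropy/Martinez-type spectral sequence with $E_2^{s,t}=\mathrm{H}^s_{\mathfrak F}(G,\mathcal H^t)\Rightarrow \mathrm H^{s+t}(G,E^\times)$, both kill the $t=1$ row with Hilbert's Theorem $90$, both read off the exact sequence $0\to\mathrm H^2_{\mathfrak F}(G,\underline{E}^\times)\to\mathrm H^2(G,E^\times)\to\lim_{H}\mathrm H^2(H,E^\times)\to\mathrm H^3_{\mathfrak F}(G,\underline{E}^\times)$ and then $p$-localize, so that everything rests on $\mathrm H^3_{\mathfrak F}(G,\underline{E}^\times)_{(p)}=0$. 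Both also reduce this vanishing to a cyclic Sylow $p$-subgroup $P$ via the restriction--transfer relation. Where you genuinely diverge is in how you finish off the $p$-group case. The paper's Proposition \ref{lemma: odd cohomology is zero} proceeds by induction on $|P|$, using a result of Mart\'inez-P\'erez (her Corollary $4.5$) to delete the trivial subgroup from the family, then identifying $\mathcal{O}_{\mathfrak H}P$ with an orbit category of the quotient $P/H$ by the unique order-$p$ subgroup, iterating until the family is trivial. You instead observe at once that cyclicity makes $\mathfrak F\cap P$ the full subgroup lattice of a single normal subgroup $Q_0\leq P$, so $E(P/Q_0)$ (inflated along $P\twoheadrightarrow P/Q_0$) is a model for $E_{\mathfrak F\cap P}P$, and the fixed-point-functor adjunction collapses the Bredon complex to the ordinary cochain complex of the cyclic group $P/Q_0$ with coefficients $(E^{Q_0})^\times$, where periodicity and Hilbert $90$ give the vanishing in one stroke. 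Your route avoids the induction and the appeal to Mart\'inez-P\'erez's comparison of families, at the cost of invoking the equivalence between Bredon cohomology defined via projective resolutions and via models of $E_{\mathfrak F}G$, together with the adjunction $\mathrm{Hom}_{\mathcal{O}_{\mathfrak F}\Gamma}(\underline{C}_*(X),\underline{M})\cong\mathrm{Hom}_{\mathbb Z\Gamma}(C_*(X),M)$ for fixed-point coefficients; both are standard, and the resulting computation $\mathrm{H}^*_{\mathfrak F\cap P}(P,\underline{E}^\times)\cong\mathrm{H}^*(P/Q_0,(E^{Q_0})^\times)$ is the transparent statement that the paper's induction is secretly proving. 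The one point to keep in mind when polishing is that the identification of the degree-$2$ edge map with assembled restriction is being asserted rather than derived; the paper outsources this to the cited spectral sequence of Mart\'inez-P\'erez, and you should either do the same or include the one-paragraph verification from the skeletal filtration.
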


In \cite{FeinSaltmanSchacher} and \cite{FeinSchacher}, Fein, Saltman, and Schacher used  Galois cohomology to
obtain interesting structural results on relative Brauer groups of finite separable extensions of fields.  Our results indicate that  Bredon-Galois cohomology is perhaps  a better suited tool for studying relative Brauer groups of such extensions.

\begin{definition} \rm Let $k \in \mathbb{N}\cup \{\infty\}$. We say that an extension of fields $E/K$ is $k$-\emph{admissible} if it is a finite Galois extension with Galois group $G$ such that for every nontrivial $\sigma \in G$, there exist at least $k$ discrete valuations $\pi$ of $E$ such that $G_{\pi}=\langle\sigma\rangle$ and  $\pi|_{K}$ is unramified in $E$.
\end{definition}

\begin{theorem}
Let $E/K$ be a $k$-admissible extension with Galois group $G$ and let $\mF$ be any family of subgroups of $G$  closed under conjugation and taking subgroups. If there exists an element $\sigma \in G$ such that the group:
\[  \{f \in \mathrm{Hom}(\langle\sigma\rangle,\mathbb{Q}/\mathbb{Z}) \; | \; f(H\cap \langle\sigma\rangle)=0, \ \forall \ H \in \mF \}
\]
 is nontrivial, then $\cap_{H \in \mF} \mathrm{Br}(E^{\scriptscriptstyle H}/K)$ contains at least $k$ non-zero elements.
\end{theorem}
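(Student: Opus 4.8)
My plan is to translate the statement into Bredon cohomology, detect classes by localizing at the given valuations, and produce enough of them by corestricting cyclic algebras from a cyclic subgroup.

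\textbf{Reduction.} By Theorem~\ref{prop: intro bredon galois} applied to $E/K$ one has $\bigcap_{H\in\mF}\mathrm{Br}(E^{\scriptscriptstyle H}/K)\cong\mathrm{H}^2_{\mF}(G,\underline{E}^{\times})$, and since $\mathrm{Br}(E^{\scriptscriptstyle H}/K)$ is the kernel of $\mathrm{res}^{\scriptscriptstyle G}_{\scriptscriptstyle H}\colon\mathrm{H}^2(G,E^{\times})\to\mathrm{H}^2(H,E^{\times})=\mathrm{Br}(E/E^{\scriptscriptstyle H})$, this group equals $\bigcap_{H\in\mF}\ker\mathrm{res}^{\scriptscriptstyle G}_{\scriptscriptstyle H}$ inside $\mathrm{H}^2(G,E^{\times})=\mathrm{Br}(E/K)$. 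So it suffices to exhibit $k$ distinct nonzero classes there. Write $C=\langle\sigma\rangle$, $n=|C|$, and $A=\{f\in\mathrm{Hom}(C,\mathbb{Q}/\mathbb{Z})\mid f(H\cap C)=0\text{ for all }H\in\mF\}$; by hypothesis $A\neq 0$, which forces $\sigma\neq 1$, so $k$-admissibility supplies discrete valuations $\pi_1,\dots,\pi_k$ of $E$ with $G_{\pi_i}=C$ and $\pi_i|_{\scriptscriptstyle K}$ unramified. Fix a nonzero $f\in A$.

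\textbf{Localization and the easy inclusion.} For each $i$, restricting to $C$ and then applying the coefficient map induced by $E^{\times}\hookrightarrow E_{\pi_i}^{\times}$ (on which $C$ acts as the decomposition group) gives a homomorphism $\mathrm{loc}_{\pi_i}\colon\mathrm{H}^2(G,E^{\times})\to\mathrm{H}^2(C,E_{\pi_i}^{\times})$. As $E_{\pi_i}/(E^{\scriptscriptstyle C})_{\pi_i}$ is unramified cyclic, the local unit group is cohomologically trivial over $C$ and the valuation induces $\mathrm{H}^2(C,E_{\pi_i}^{\times})\cong\mathrm{H}^2(C,\mathbb{Z})\cong\mathrm{H}^1(C,\mathbb{Q}/\mathbb{Z})=\mathrm{Hom}(C,\mathbb{Q}/\mathbb{Z})$, so I view $\mathrm{loc}_{\pi_i}$ as valued in $\mathrm{Hom}(C,\mathbb{Q}/\mathbb{Z})$. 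The easy half of the theorem is that $\mathrm{loc}_{\pi_i}$ maps $\mathrm{H}^2_{\mF}(G,\underline{E}^{\times})$ into $A$: for $H\in\mF$, from $\mathrm{res}^{\scriptscriptstyle C}_{\scriptscriptstyle H\cap C}\circ\mathrm{res}^{\scriptscriptstyle G}_{\scriptscriptstyle C}=\mathrm{res}^{\scriptscriptstyle G}_{\scriptscriptstyle H\cap C}=\mathrm{res}^{\scriptscriptstyle H}_{\scriptscriptstyle H\cap C}\circ\mathrm{res}^{\scriptscriptstyle G}_{\scriptscriptstyle H}$ one sees that $\mathrm{res}^{\scriptscriptstyle G}_{\scriptscriptstyle H}\alpha=0$ implies $\mathrm{res}^{\scriptscriptstyle G}_{\scriptscriptstyle C}\alpha$ dies in $\mathrm{H}^2(H\cap C,E^{\times})$, whence by naturality (and unramifiedness of $\pi_i$ in $E/E^{\scriptscriptstyle H\cap C}$) $\mathrm{loc}_{\pi_i}(\alpha)$ vanishes on $H\cap C$.

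\textbf{Producing the classes.} For each $i$ I will construct $\alpha_i\in\mathrm{H}^2_{\mF}(G,\underline{E}^{\times})$ with $\mathrm{loc}_{\pi_i}(\alpha_i)=f$ and $\mathrm{loc}_{\pi_j}(\alpha_i)=0$ for $j\neq i$, as $\alpha_i=\mathrm{cor}^{\scriptscriptstyle G}_{\scriptscriptstyle C}(\beta_i)$ for a suitable $\beta_i\in\mathrm{H}^2(C,E^{\times})=\mathrm{Br}(E/E^{\scriptscriptstyle C})$. First, $\mathrm{cor}^{\scriptscriptstyle G}_{\scriptscriptstyle C}$ carries $\bigcap_{H\in\mF}\ker\mathrm{res}^{\scriptscriptstyle C}_{\scriptscriptstyle H\cap C}$ into $\mathrm{H}^2_{\mF}(G,\underline{E}^{\times})$: the Mackey double coset formula writes $\mathrm{res}^{\scriptscriptstyle G}_{\scriptscriptstyle H}\circ\mathrm{cor}^{\scriptscriptstyle G}_{\scriptscriptstyle C}$ as a sum of terms each factoring through some $\mathrm{res}^{\scriptscriptstyle C}_{{}^{g^{-1}}\!H\cap C}$, and since $\mF$ is closed under conjugation each ${}^{g^{-1}}\!H\cap C$ equals $H'\cap C$ with $H'\in\mF$, so every term kills $\beta_i$. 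Next, by Tate periodicity for the cyclic group $C$, $\bigcap_{H}\ker\mathrm{res}^{\scriptscriptstyle C}_{\scriptscriptstyle H\cap C}$ consists of the classes of cyclic algebras $(E/E^{\scriptscriptstyle C},\sigma,u)$ with $u\in(E^{\scriptscriptstyle C})^{\times}$ a norm from $E$ in every subextension $E/E^{\scriptscriptstyle H\cap C}$, $H\in\mF$; and, $\pi_i$ being unramified, the local invariant of such a class at $\pi_i$ is $v_{\pi_i}(u)\bmod n$. I then choose $u$ by the weak approximation theorem, prescribing $v_{\pi_i}(u)$ to be an integer representing $f$ under $\mathrm{Hom}(C,\mathbb{Q}/\mathbb{Z})\cong\mathbb{Z}/n$ --- which is divisible by every $|H\cap C|$ precisely because $f$ vanishes on each $H\cap C$ --- while $v_w(u)=0$ at every other place $w$ of $E$ lying in $\bigcup_j G\cdot\pi_j$, and $u$ a local norm at the finitely many ramified places; the Hasse norm theorem for the cyclic extensions $E/E^{\scriptscriptstyle H\cap C}$ then makes $u$ a global norm in each, so $\beta_i:=[(E/E^{\scriptscriptstyle C},\sigma,u)]\in\bigcap_H\ker\mathrm{res}^{\scriptscriptstyle C}_{\scriptscriptstyle H\cap C}$. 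Finally, expanding $\mathrm{loc}_{\pi_\ell}(\mathrm{cor}^{\scriptscriptstyle G}_{\scriptscriptstyle C}\beta_i)$ by the Mackey formula for $\mathrm{res}^{\scriptscriptstyle G}_{\scriptscriptstyle C}\circ\mathrm{cor}^{\scriptscriptstyle G}_{\scriptscriptstyle C}$ together with the fact that corestriction commutes with the coefficient changes defining $\mathrm{loc}$, every summand other than the one from the trivial double coset amounts to localizing $\beta_i$ at some place of $E$ in $\bigcup_j G\cdot\pi_j$ distinct from $\pi_i$, where $u$ is a unit and hence the contribution vanishes; so $\mathrm{loc}_{\pi_i}(\alpha_i)=f$ and $\mathrm{loc}_{\pi_j}(\alpha_i)=0$ for $j\neq i$.

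\textbf{Conclusion and main obstacle.} The classes $\alpha_1,\dots,\alpha_k$ are then pairwise distinct ---for $i\neq j$, $\mathrm{loc}_{\pi_i}(\alpha_i)=f\neq 0=\mathrm{loc}_{\pi_i}(\alpha_j)$--- and each is nonzero, so $\mathrm{H}^2_{\mF}(G,\underline{E}^{\times})\cong\bigcap_{H\in\mF}\mathrm{Br}(E^{\scriptscriptstyle H}/K)$ contains at least $k$ nonzero elements (infinitely many when $k=\infty$). The step I expect to be most delicate is the construction of $u$: arranging by weak approximation that it has the prescribed valuations on the finite orbit $\bigcup_j G\cdot\pi_j$ and is simultaneously a \emph{global} norm --- not merely a local norm at $\pi_i$ --- in each cyclic extension $E/E^{\scriptscriptstyle H\cap C}$ (which also requires controlling the ramified places and the places outside the prescribed set), together with the bookkeeping in the Mackey expansion showing that prescribing $v_w(u)$ on the whole orbit, rather than only at $\pi_1,\dots,\pi_k$, is what kills the extraneous terms after localization. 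This is exactly where the unramifiedness of the $\pi_i$, the conjugation-closure of $\mF$, and the shape of $A$ all enter.
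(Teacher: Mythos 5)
Your overall strategy mirrors the paper's: both reduce to $\mathrm{H}^2_{\mF}(G,\underline{E}^{\times})\cong\bigcap_{H\in\mF}\ker\bigl(\mathrm{res}^G_H\bigr)$, both produce classes by transferring (corestricting) from the cyclic group $C=\langle\sigma\rangle$, and both detect and separate them via a Mackey/double-coset expansion of $\mathrm{res}\circ\mathrm{tr}$ together with the composite of restriction with the valuation map $E^{\times}\to\mathbb{Z}$ (what you call $\mathrm{loc}_{\pi_i}$ and the paper calls the character map $\chi_\delta$). The difference is in how the class in $\mathrm{H}^2(C,E^{\times})$ that restricts to zero on every $H\cap C$ is produced, and this is precisely where your argument has a real gap, which you correctly suspected.

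You try to choose $u\in(E^{C})^{\times}$ with prescribed valuations at the places in $\bigcup_j G\cdot\pi_j$ and then invoke the Hasse norm theorem for the cyclic extensions $E/E^{H\cap C}$ to conclude that $u$ is a global norm. This fails on two counts. First, the theorem is stated for arbitrary $k$-admissible extensions, not for global fields, so the Hasse norm theorem (and likewise local class field theory, which you use to identify $\mathrm{H}^2(C,E_{\pi_i}^{\times})$ with $\mathrm{Hom}(C,\mathbb{Q}/\mathbb{Z})$ via cohomological triviality of local units) is simply not available. Second, even when $E$ and $K$ are global, weak approximation only prescribes $u$ at finitely many places, whereas Hasse's theorem requires $u$ to be a local norm \emph{everywhere}; at the infinitely many unspecified places you have no control over $v_w(u)$, hence no way to verify the local norm condition there. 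The paper sidesteps both problems by never taking a ``free'' $u$ with valuation $a$: instead it takes $m_l\in(E^{C})^{\times}$ with valuations in $\{0,1\}$ at the prescribed places (approximation theorem of Zariski--Samuel, no completions needed) and forms $\alpha_l=\underline{\Delta_l}^{\ast}(\tilde f)$, where $\Delta_l:\mathbb{Z}\to E^{\times},\ 1\mapsto m_l$, and $\tilde f\in\mathrm{H}^2_{\mF\cap C}(C,\underline{\mathbb{Z}})$ is the Bredon class corresponding to $f$. Because $\tilde f$ already lives in Bredon cohomology, i.e.\ $\mathrm{res}^C_{H\cap C}\tilde f=0$ for all $H\in\mF$, naturality of the coefficient map gives $\mathrm{res}^C_{H\cap C}\alpha_l=\underline{\Delta_l}^{\ast}\bigl(\mathrm{res}^C_{H\cap C}\tilde f\bigr)=0$ for free. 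Concretely this amounts to taking $u=m_l^{a}$ with $a$ representing $f$: then $|H\cap C|$ divides $a$ forces $u=(m_l^{a/|H\cap C|})^{|H\cap C|}=N_{E/E^{H\cap C}}\bigl(m_l^{a/|H\cap C|}\bigr)$, a tautological norm, so no Hasse theorem is required. The character map then reads off the invariant $f$ at $\pi_l$ and $0$ at $\pi_s$, $s\neq l$, exactly as in your Mackey computation. If you replace your construction of $u$ by this cup-product/coefficient-change step (and replace the completion-based $\mathrm{loc}$ by the paper's valuation-induced map $\underline{\pi}^{\ast}$), the rest of your argument goes through and recovers the theorem in full generality.
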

As a corollary, we reprove a result first obtained in \cite{FeinKantorSchacher} which states that for a finite separable extension of global fields, the relative Brauer group is infinite.
\section{Notations and preliminaries}
Let $G$ be a group and let $\mathfrak{F}$ be an arbitrary collection of subgroups of $G$. We refer to $\mathfrak{F}$ as a \emph{family of subgroups} of $G$. The groups denoted by $H,K$ or $L$ are always contained in $\mathfrak{F}$.
The \emph{orbit category} $\orb$ is a category defined as follows: the objects are the left coset spaces $G/H$ for all $H \in \mathfrak{F}$ and the morphisms are all $G$-equivariant maps between the objects. Note that every morphism $\varphi: G/H \rightarrow G/K$ is completely determined by $\varphi(H)$, since $\varphi(xH)=x\varphi(H)$ for all $x \in G$. Moreover, there exists a morphism: $$G/H \rightarrow G/K : H \mapsto xK \mbox{ if and only if } x^{\scriptscriptstyle -1}Hx \subseteq K.$$ We denote the morphism $\varphi: G/H \rightarrow G/K: H\mapsto xK$  by $G/H \xrightarrow{x} G/K$. Viewing $G/K$ as a $G$-set, we write $(G/K)^{\scriptscriptstyle H}$ for the invariants of $G/K$ under the action of $H$. There is a bijective correspondence between $\mathrm{Mor}(G/H,G/K)$ and $(G/K)^{\scriptscriptstyle H}$, which is obtained by mapping a morphism $G/H \xrightarrow{x} G/K$ from $\mathrm{Mor}(G/H,G/K)$ to $xK \in (G/K)^{\scriptscriptstyle H}$.

An \emph{$\orb$-module} is a contravariant functor $M: \orb \rightarrow \mathfrak{Ab}$. The \emph{category of $\orb$-modules} is denoted by $\orbmod$ and is defined as follows: the objects are all the $\orb$-modules, the morphisms are all the natural transformations between the objects. Let $M$ be a left $G$-module. The {\it fixed point functor} associated to $M$ is  defined as the contravariant functor:
\[ \underline{M}: \orb \rightarrow \mathfrak{Ab}: G/H \mapsto M^{\scriptscriptstyle H} \]
where $M^{\scriptscriptstyle H}$ is the $H$-invariant subgroup of $M$, and  $\underline{M}$ maps a morphism $G/H \xrightarrow{x} G/K$ to $M^{\scriptscriptstyle K} \rightarrow M^{\scriptscriptstyle H} : m \mapsto x\cdot m$.
To do homological algebra in $\orbmod$ and in particular to define Bredon cohomology, we follow the approach of L\"{u}ck in \cite{Luck}, Chapter $9$. Let us briefly recall some of the elementary notions we will need.

A sequence
$0\rightarrow M' \rightarrow M \rightarrow M'' \rightarrow 0$
in $\orbmod$ is called {\it exact} if it is exact after evaluating in $G/H$, for all $H \in \mathfrak{F}$. Take $M \in \orbmod$ and consider the left exact functor:
\[ \nathom(M,-) : \orbmod \rightarrow \mathfrak{Ab}: N \mapsto \nathom(M,N), \]
where $\nathom(M,N)$ is the abelian group of all natural transformations from $M$ to $N$. Then
$M$ is a projective $\orb$-module if and only if this functor is exact. By considering the contravariant $\mathrm{Hom}_{\mathfrak{F}}$-functor, one can define injective $\orb$-modules in a similar way. It can be shown that $\orbmod$ contains enough projective and injective objects to construct projective and injective resolutions. Hence, one can construct bi-functors $\mathrm{Ext}^{n}_{\orb}(-,-)$ that have all the usual properties. The \emph{$n$-th Bredon cohomology of $G$} with coefficients $M \in \orbmod$ is by definition:
\[ \mathrm{H}^n_{\mathfrak{F}}(G,M)= \mathrm{Ext}^{n}_{\orb}(\underline{\mathbb{Z}},M), \]
where ${\Z}$ is a trivial $G$-module.

Finally, we describe the free modules in $\orbmod$. An \emph{$\mathfrak{F}$-set} is a collection of (possibly empty) sets $\{\Delta_{\scriptscriptstyle H}  \ | \ H \in \mathfrak{F}\}$. By setting $\Delta= \coprod_{H \in \mathfrak{F}}\Delta_{\scriptscriptstyle H}$, we obtain a set map $\beta: \Delta \rightarrow \mathfrak{F}: \delta \in \Delta_{\scriptscriptstyle H} \mapsto H$. We call $\beta$ a \emph{base function} for $\mathfrak{F}$. Note that $\beta^{\scriptscriptstyle -1}(H)=\Delta_{\scriptscriptstyle H}$ for each $H \in \mathfrak{F}$ and that each base function of $\mathfrak{F}$ uniquely determines an $\mathfrak{F}$-set and vice versa. In the future, we will denote an $\mathfrak{F}$-set by the pair $(\Delta,\beta)$, where $\beta$ is the defining base function from the set $\Delta$ to the family $\mathfrak{F}$. A map of $\mathfrak{F}$-sets from $(\Delta,\beta)$ to $(\Delta',\beta')$ is a set map $f$ from $\Delta$ to $\Delta'$ such that $\beta' \circ f = \beta$. Alternatively, a map of $\mathfrak{F}$-sets from $(\Delta,\beta)$ to $(\Delta',\beta')$ can be viewed as a collection of set maps $\{f_{\scriptscriptstyle H} : \Delta_{\scriptscriptstyle H} \rightarrow \Delta'_{\scriptscriptstyle H}  \ | \ H \in \mathfrak{F}\}$.

Note that any $\orb$-module $M$ has a structure of an $\mathfrak{F}$-set. By using the forgetful functor, define $\Delta_{\scriptscriptstyle K}= M(G/K)$  for each $K \in \mathfrak{F}$. We denote the $\mathfrak{F}$-set associated to $M$ by $\mathfrak{F}(M)$. Obviously, a natural transformation $\theta: M \rightarrow N$ induces a map of $\mathfrak{F}$-sets $\theta: \mathfrak{F}(M) \rightarrow \mathfrak{F}(N)$. A base function $\beta: \Delta \rightarrow \mathfrak{F}$ is said to \emph{belong} to an $\orb$-module $M$ if there exist an injective map of $\mathfrak{F}$-sets $i: (\Delta,\beta) \rightarrow \mathfrak{F}(M)$ (notation $\beta \subset_{i} M$).

An $\orb$-module $F$ is called \emph{free on the $\mathfrak{F}$-set $(\Delta,\beta)$} if $\beta \subset_i M$ and if for each $M \in \orbmod$, every map of $\mathfrak{F}$-sets $f: (\Delta,\beta) \rightarrow \mathfrak{F}(M)$ can be uniquely extended to a natural transformation $\bar{f}: F \rightarrow M$, i.e. there exists a unique natural transformation $\bar{f}: F \rightarrow M$ such that the diagram of maps of $\mathfrak{F}$-sets:
\[ \xymatrix{   \mathfrak{F}(F) \ar[r]^{\bar{f}} & \mathfrak{F}(M) \\
                (\Delta,\beta) \ar[u]^{i} \ar[ur]^{f} & } \]
commutes. In general, an $\orb$-module $F$ is called \emph{free} if it is free on some $\mathfrak{F}$-set $(\Delta,\beta)$.

It is not difficult to show that free $\orb$-modules are projective,  direct sums of free $\orb$-modules are free, and an $\orb$-module is projective if and only if it is a direct summand of a free $\orb$-module.

Now, take $K \in \mathfrak{F}$ and define the $\orb$-module:
\[ \mathbb{Z}[-,G/K]: \orb \rightarrow \mathfrak{Ab} : G/H \rightarrow \mathbb{Z}[G/H,G/K], \]
where $\mathbb{Z}[G/H,G/K]$ is the free $\mathbb{Z}$-module generated by $\mathrm{Mor}(G/H,G/K)$, and for each $\varphi \in \mathrm{Mor}(G/H_1,G/H_2)$, the morphism: $$\mathbb{Z}[\varphi,G/K]: \mathbb{Z}[G/H_2,G/K] \rightarrow \mathbb{Z}[G/H_1,G/K]$$ is defined by  $\mathbb{Z}[\varphi,G/K](\alpha)= \alpha \circ \varphi$ for all $\alpha \in \mathrm{Mor}(G/H_2,G/K)$. One can check that $\mathbb{Z}[-,G/K]$ is a free $\orb$-module and $\bigoplus_{\delta \in \Delta}\mathbb{Z}[-,G/\beta(\delta)]$ is a model for the free $\orb$-module on the $\mathfrak{F}$-set $(\Delta,\beta)$.

\section{Abelian Extensions of the Orbit Category}
In Section \ref{sec: ab ext gen}, we recall and rephrase some terminology and results from \cite{Hoff} and \cite{Hoff2}. In Section \ref{sec: ab ext fixed}, we turn to fixed point functors and derive the main results relating $\mathfrak{F}$-structures and abelian extensions of the orbit category.
\subsection{Definition and Basic Properties} \label{sec: ab ext gen}
Let $G$ be a group and let $\mathfrak{F}$ be a family of subgroups of $G$. Let $\{M(G/H)\}_{H \in \mathfrak{F}}$ be a collection of $\mathbb{Z}$-modules indexed over the family $\mathfrak{F}$. We define the category $\mathfrak{M}$ as follows. The objects of $\mathfrak{M}$ are precisely the $\mathbb{Z}$-modules $M(G/H)$ for all $H \in \mathfrak{F}$, the set of morphisms $\mathrm{Mor}(M(G/H),M(G/K))$ is empty if $H \neq K$ and
$$\mathrm{Mor}(M(G/H),M(G/H))=\{\bar{m}_{\scriptscriptstyle H} : M(G/H) \rightarrow M(G/H): m \mapsto m_{\scriptscriptstyle H} + m \;|\;m_{\scriptscriptstyle H} \in M(G/H)\}.$$
We call $\mathfrak{M}$ the category associated to the collection of $\mathbb{Z}$-modules $\{M(G/H)\}_{H \in \mathfrak{F}}$.
For all $n_{\scriptscriptstyle H},m_{\scriptscriptstyle H} \in M(G/H)$, we define $\bar{m}_{\scriptscriptstyle H}\pm\bar{n}_{\scriptscriptstyle H}$ to be the morphism $\overline{m_{\scriptscriptstyle H} \pm n_{\scriptscriptstyle H}}$.

\begin{definition} \rm An  \emph{abelian right extension of $\orb$ by $\mathfrak{M}$} consists of a category $\mathfrak{E}$ with objects $\{E(H)\}_{H \in \mathfrak{F}}$ and functors:
\[  \mathfrak{M} \xrightarrow{i} \mathfrak{E} \xrightarrow{\pi} \orb \]
satisfying the following properties.
\begin{enumerate}[(i)]
\item $i(M(G/H))= E(H)$ and $\pi(E(H))=G/H$ for all $H \in \mathfrak{F}$;
\medskip
\item $\forall H \in \mathfrak{F}, \forall \bar{m}_{\scriptscriptstyle H},\bar{n}_{\scriptscriptstyle H} \in \mathrm{Mor}({M}(G/H),{M}(G/H))$, if $i(\bar{m}_{\scriptscriptstyle H})=i(\bar{n}_{\scriptscriptstyle H})$, then $\bar{m}_{\scriptscriptstyle H}=\bar{n}_{\scriptscriptstyle H}$, i.e. $i$ is injective on morphisms;
    \medskip
\item $\forall H,K \in \mathfrak{F}, \forall \varphi \in \mathrm{Mor}(G/H,G/K)$, there exists a $\theta \in \mathrm{Mor}(E(H),E(K))$ such that $\pi(\theta)=\varphi$, i.e. $\pi$ is surjective on morphisms;
    \medskip
\item $\pi(i(\bar{m}_{\scriptscriptstyle H}))=id$ for all  $H \in \mathfrak{F}$ and for all $\bar{m}_{\scriptscriptstyle H} \in \mathrm{Mor}(M(G/H),M(G/H))$;
\medskip
\item $\forall H,K \in \mathfrak{F}, \forall \varphi_1,\varphi_2 \in \mathrm{Mor}({E}(H),{E}(K))$: if $\pi(\varphi_1)=\pi(\varphi_2)$, then there exists a unique $\bar{m}_{\scriptscriptstyle H} \in \mathrm{Mor}(M(G/H),M(G/H))$ such that $\varphi_1 \circ i(\bar{m}_{\scriptscriptstyle H})= \varphi_2$.
\end{enumerate}
\end{definition}
\smallskip
\begin{remark} \rm {$\;$}
\begin{enumerate}[(a)]
\item It follows from property (v) that  $\forall H,K \in \mathfrak{F}, \forall \varphi \in \mathrm{Mor}({E}(H),{E}(K))$, $\varphi \circ i(\bar{m}_{\scriptscriptstyle H}) = \varphi$ implies  $m_{\scriptscriptstyle H}=0\in M(G/H)$.
    \smallskip
\item Similarly, one can define an abelian left extension of $\orb$ by $\mathfrak{M}$ by replacing property (v) with the following: $\forall H,K \in \mathfrak{F}, \forall \varphi_1,\varphi_2 \in \mathrm{Mor}({E}(H),E(K))$, if $\pi(\varphi_1)=\pi(\varphi_2)$, then there exists a unique $\bar{m}_{\scriptscriptstyle K} \in \mathrm{Mor}(M(G/K),M(G/K))$ such that $i(\bar{m}_{\scriptscriptstyle K}) \circ \varphi_1 = \varphi_2$.
    \smallskip
\item In \cite{Xu}, Xu constructs a spectral sequence associated to an extension of categories, which generalizes the Lyndon-Hochschild-Serre spectral sequence.
\end{enumerate}
\end{remark}
\begin{definition} \rm The abelian right $\mathfrak{M}$-extensions, $\mathfrak{M} \xrightarrow{i_1} \mathfrak{E}_1 \xrightarrow{\pi_1} \orb$ and $\mathfrak{M} \xrightarrow{i_2} \mathfrak{E}_2 \xrightarrow{\pi_2} \orb$, are called \emph{equivalent} if there exists a functor $\theta : \mathfrak{E}_1 \rightarrow \mathfrak{E}_2$ such that the following diagram commutes:
\[ \xymatrix{  \mathfrak{M} \ar[r]^{i_1} \ar[d]^{id} & \mathfrak{E}_1 \ar[r]^{\pi_1} \ar[d]^{\theta} & \orb \ar[d]^{id}   \\
 \mathfrak{M} \ar[r]^{i_2}  & \mathfrak{E}_2 \ar[r]^{\pi_2}  & \orb . } \]
 \smallskip
It is not difficult to check that this implies $\theta({E}_1(H))={E}_2(H)$ for all $H \in \mathfrak{F}$ and that $\theta$ is bijective on morphisms.
\end{definition}
It is a standard fact that a group extension of a group $G$ by an abelian group $A$ induces a $G$-module structure on $A$. The following proposition shows that an analogous result holds in the case of abelian extensions of orbit categories.
\begin{proposition} An abelian right extension of $\orb$ by $\mathfrak{M}$ defines an $\orb$-module $M : \orb \rightarrow \mathfrak{Ab}: G/K \mapsto M(G/K)$  such that equivalent extensions give rise to the same $\orb$-module structure on $M$.
\end{proposition}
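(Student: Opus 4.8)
The plan is to construct the functor $M$ object-wise as prescribed and then, for each morphism $\varphi \colon G/H \to G/K$ in $\orb$, produce a well-defined group homomorphism $M(\varphi)\colon M(G/K) \to M(G/H)$ by transporting the additive action of $\mathrm{Mor}(M(G/K),M(G/K))$ across a chosen lift. Concretely, given $\varphi$, property (iii) gives some $\theta \in \mathrm{Mor}(E(H),E(K))$ with $\pi(\theta)=\varphi$. For $m_{\scriptscriptstyle K}\in M(G/K)$ the composite $\theta \circ i(\bar m_{\scriptscriptstyle K})$ still lies over $\varphi$ (by (iv) and functoriality), so by property (v) there is a \emph{unique} $\bar n_{\scriptscriptstyle H}\in\mathrm{Mor}(M(G/H),M(G/H))$ with $\theta \circ i(\bar n_{\scriptscriptstyle H}) = \theta \circ i(\bar m_{\scriptscriptstyle K})$; I set $M(\varphi)(m_{\scriptscriptstyle K}) = n_{\scriptscriptstyle H}$. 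This mirrors exactly the classical recipe by which an extension $0\to A\to\Gamma\to G\to 1$ equips $A$ with a $G$-action via conjugation by lifts.

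**Next I would** check that this assignment is independent of the chosen lift $\theta$. If $\theta'$ is another lift of $\varphi$, then $\pi(\theta)=\pi(\theta')$, so by (v) there is $\bar p_{\scriptscriptstyle H}$ with $\theta' = \theta\circ i(\bar p_{\scriptscriptstyle H})$; feeding this through the defining equation and using that $i$ is a functor injective on morphisms, together with the additivity $\overline{m}\pm\overline{n}=\overline{m\pm n}$ and the uniqueness clause of (v), forces the same $n_{\scriptscriptstyle H}$ to come out. Then I would verify that $M(\varphi)$ is additive in $m_{\scriptscriptstyle K}$ (again from $\overline{m}+\overline{n}=\overline{m+n}$ and the uniqueness in (v)), that $M(\mathrm{id})=\mathrm{id}$, and that $M(\varphi\circ\psi)=M(\psi)\circ M(\varphi)$ — the last by lifting $\varphi$ and $\psi$ to $\theta_\varphi,\theta_\psi$, noting $\theta_\varphi\circ\theta_\psi$ lifts $\varphi\circ\psi$, and chasing the relevant square. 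This makes $M$ a genuine contravariant functor $\orb \to \mathfrak{Ab}$.

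**Finally**, for the equivalence statement: given equivalent extensions $\mathfrak{E}_1,\mathfrak{E}_2$ with the connecting functor $\theta\colon\mathfrak{E}_1\to\mathfrak{E}_2$ satisfying $\theta\circ i_1 = i_2$ and $\pi_2\circ\theta=\pi_1$, I would take a lift $\tau\in\mathrm{Mor}(E_1(H),E_1(K))$ of $\varphi$ in $\mathfrak{E}_1$; then $\theta(\tau)$ is a lift of $\varphi$ in $\mathfrak{E}_2$, and applying $\theta$ to the defining identity $\tau\circ i_1(\bar n_{\scriptscriptstyle H}) = \tau\circ i_1(\bar m_{\scriptscriptstyle K})$ and using $\theta\circ i_1 = i_2$ shows the same $n_{\scriptscriptstyle H}$ solves the corresponding equation in $\mathfrak{E}_2$. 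Hence $M_1(\varphi)=M_2(\varphi)$ on the nose, and the two $\orb$-module structures coincide.

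\textbf{Expected main obstacle.} The routine parts are all genuinely routine, so the one place demanding care is the well-definedness of $M(\varphi)$ independently of the lift $\theta$: this is where property (v) together with the remark that $\varphi\circ i(\bar m_{\scriptscriptstyle H})=\varphi$ implies $m_{\scriptscriptstyle H}=0$ must be deployed precisely, and one must be careful that the ``correction'' morphism $\bar p_{\scriptscriptstyle H}$ relating two lifts lives on the \emph{source} side (right extension), so it interacts correctly with the action being transported. Everything downstream — additivity, functoriality, invariance under equivalence — then follows formally from the uniqueness assertions in (v) and the additive identification of $\mathrm{Mor}(M(G/H),M(G/H))$ with $M(G/H)$.
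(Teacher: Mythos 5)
Your proposal follows exactly the paper's own construction (lift $\varphi\in\mathrm{Mor}(G/H,G/K)$ to $\theta\in\mathrm{Mor}(E(H),E(K))$ and invoke the uniqueness clause of property (v) to transport $m_{\scriptscriptstyle K}$ to an element $n_{\scriptscriptstyle H}$), and your sketched checks of well-definedness, additivity, functoriality, and invariance under equivalence are precisely the steps the paper carries out or leaves as routine. One caveat: you twice write $\theta\circ i(\bar m_{\scriptscriptstyle K})$ where the sources and targets force $i(\bar m_{\scriptscriptstyle K})\circ\theta$ (as $\theta$ has source $E(H)$ and $i(\bar m_{\scriptscriptstyle K})$ has source $E(K)$), so the defining identity should read $\theta\circ i(\bar n_{\scriptscriptstyle H})=i(\bar m_{\scriptscriptstyle K})\circ\theta$; your surrounding discussion, in particular the remark that the correction lives on the source side for a right extension, makes clear this is a notational slip and not a conceptual error.
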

\begin{proof}
Let $\mathfrak{M} \xrightarrow{i_1} \mathfrak{E}_1 \xrightarrow{\pi_1} \orb$ be an abelian right extension of $\orb$ by $\mathfrak{M}$. To make $M : \orb \rightarrow \mathfrak{Ab}: G/K \mapsto M(G/K)$ into a contravariant functor, we first need to define a $\mathbb{Z}$-module homomorphism $M(\varphi): M(G/K) \rightarrow M(G/H)$ for every morphism $\varphi \in \mathrm{Mor}(G/H,G/K)$.

Let $\varphi \in \mathrm{Mor}(G/H,G/K)$. By surjectivity of $\pi_1$, we can find a $\psi \in \mathrm{Mor}(E_1(H),E_1(K))$ such that $\pi_1(\psi)=\varphi$. Now, for any given $m_{\scriptscriptstyle K} \in M(G/K)$, we have $\pi_1(i_1(\bar{m}_{\scriptscriptstyle K})\circ \psi)=\varphi$. Hence, there exists a unique $\bar{m}_{\scriptscriptstyle H} \in \mathrm{Mor}(M(G/H),M(G/H))$ such that $i_1(\bar{m}_{\scriptscriptstyle K})\circ \psi= \psi \circ i_1(\bar{m}_{\scriptscriptstyle H})$. We define:
\[ M(\varphi): M(G/K) \rightarrow M(G/H): m_{\scriptscriptstyle K} \mapsto m_{\scriptscriptstyle H}. \]
Repeated use of the uniqueness and injectivity properties shows that this is a well-defined (independent of the choice $\psi$) homomorphism of $\mathbb{Z}$-modules that turns $M$ into a contravariant functor.

It remains to show that equivalent extensions give rise to the same $\orb$-module structure on $M$. Let $\mathfrak{M} \xrightarrow{i_2} \mathfrak{E}_2 \xrightarrow{\pi_2} \orb$ be an extension that is equivalent to $\mathfrak{M} \xrightarrow{i_1} \mathfrak{E}_1 \xrightarrow{\pi_1} \orb$ via $\theta: \mathfrak{E}_1 \rightarrow \mathfrak{E}_2$. Let $\varphi \in \mathrm{Mor}(G/H,G/K)$ and $\psi \in \mathrm{Mor}(E_1(H),E_1(K))$ be such that $\pi_1(\psi)=\varphi$. Now, for an element $m_{\scriptscriptstyle K} \in M(G/K)$, let $\bar{m}_{\scriptscriptstyle H}$ be the unique element in $\mathrm{Mor}(M(G/H),M(G/H))$ such that $i_1(\bar{m}_{\scriptscriptstyle K})\circ \psi= \psi\circ i_1(\bar{m}_{\scriptscriptstyle H})$. Applying $\theta$ to this equation, we obtain $i_2(\bar{m}_{\scriptscriptstyle K})\circ \theta(\psi)=\theta(\psi)\circ i_2(\bar{m}_{\scriptscriptstyle H})$. Since $\pi_2(\theta(\psi))=\varphi$, we conclude that the $\orb$-module structure on $M$ described above is the same for equivalent extensions.
\end{proof}
\begin{remark} \rm Similarly one can show that abelian left extensions give rise to covariant functors $M: \orb \rightarrow \mathfrak{Ab}$.
\end{remark}
The proposition allows us to state the following definition.
\begin{definition} \rm Let $M$ be a $\orb$-module and let $\mathfrak{M}$ be the category associated to $\{M(G/H)\}_{H \in \mathfrak{F}}$. We denote by $\mathrm{Ext}_{\mathfrak{F}}(G,M)$ the set of equivalence classes of abelian right extensions of $\orb$ by $\mathfrak{M}$, such that induced the $\orb$-module structure on $M$ by the extensions is the given one. A representative of an element in $\mathrm{Ext}_{\mathfrak{F}}(G,M)$ is then called \emph{an abelian right extension of $\orb$ by $M$}.
\end{definition}
Next, we describe the notion of a split extension.
\begin{definition} \rm An abelian right extension $\mathfrak{M} \xrightarrow{i} \mathfrak{E} \xrightarrow{\pi} \orb$ is called \emph{split} if there exists a functor $s: \orb \rightarrow \mathfrak{E}$ such that $\pi \circ s = id$.
\end{definition}
We will now construct, for any $\orb$-module $M$, a split abelian right extension of $\orb$ by $M$ and show that any split abelian right extension of $\orb$ by $M$ is equivalent to it. This will allow us to consider  the class of split extensions inside $\mathrm{Ext}_{\mathfrak{F}}(G,M)$. \\
\indent Let $M$ be a $\orb$-module and let $\mathfrak{M}$ be the category associated to $\{M(G/H)\}_{H \in \mathfrak{F}}$. One defines $\mathfrak{M}\rtimes \orb$ to be the category with objects $M(G/H)\times G/H $ (set product) for each $H \in \mathfrak{F}$. The morphisms of $\mathfrak{M}\rtimes \orb$ are: for each $\varphi \in \mathrm{Mor}(G/H,G/K)$ and each $\bar{m}_{\scriptscriptstyle H} \in \mathrm{Mor}(M(G/H),M(G/H))$ we have a morphism: $$(\bar{m}_{\scriptscriptstyle H},\varphi) \in \mathrm{Mor}(M(G/H)\times G/H,M(G/K)\times G/K).$$
Note that $(\bar{m}_{\scriptscriptstyle H},\varphi)$ is not a set map from $M(G/H)\times G/H$ to $M(G/K)\times G/K$, it is just a morphism in the formal categorical sense. The composition $(\bar{m}_{\scriptscriptstyle K},\psi)\circ (\bar{m}_{\scriptscriptstyle H},\phi)$ of two morphisms: $$(\bar{m}_{\scriptscriptstyle H},\varphi) \in \mathrm{Mor}(M(G/H)\times G/H,M(G/K)\times G/K)$$ and $$(\bar{m}_{\scriptscriptstyle K},\psi) \in \mathrm{Mor}(M(G/K)\times G/K,M(G/L)\times G/L)$$\medskip is defined to be: $$ (\bar{m}_{\scriptscriptstyle H}+\overline{M(\varphi)(m_{\scriptscriptstyle K})},\psi \circ \varphi) \in  \mathrm{Mor}(M(G/H)\times G/H,M(G/L)\times G/L).$$ It is not difficult to see that $\mathfrak{M}\rtimes \orb$ is a category. Now, we define the functor $i: \mathfrak{M} \rightarrow \mathfrak{M}\rtimes \orb$ by setting: $$i(M(G/H))=M(G/H)\times G/H \mbox{\; and \;} i(\bar{m}_{\scriptscriptstyle H})=(\bar{m}_{\scriptscriptstyle H},id)$$
\medskip
for all $\bar{m}_{\scriptscriptstyle H} \in \mathrm{Mor}(M(G/H),M(G/H))$ and for all $H \in \mathfrak{F}$. Clearly, the functor $i$ is injective on morphisms. Define the functor $\pi: \mathfrak{M}\rtimes \orb \rightarrow \orb$ by  $\pi(M(G/H)\times G/H)= G/H$ and $\pi(\bar{m}_{\scriptscriptstyle H},\varphi)=\varphi$ for all $$(\bar{m}_{\scriptscriptstyle H},\varphi) \in \mathrm{Mor}(M(G/H)\times G/H,M(G/K)\times G/K)$$
\medskip
and for all $H,K \in \mathfrak{F}$. Clearly, the functor $\pi$ is surjective on morphisms.

\begin{proposition}\label{lemma: unique split ext} Let $M$ be an $\orb$-module, then
\[ \mathfrak{M} \xrightarrow{i} \mathfrak{M} \rtimes \orb \xrightarrow{\pi} \orb \]
is a split abelian right extension of $\orb$ by $M$ and it is equivalent to any split abelian right extension of $\orb$ by $M$.
\end{proposition}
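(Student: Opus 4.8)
The plan is to first check, straight from the construction, that $\mathfrak{M} \xrightarrow{i} \mathfrak{M} \rtimes \orb \xrightarrow{\pi} \orb$ satisfies axioms (i)--(v) of an abelian right extension and is split, and then, given an arbitrary split abelian right extension of $\orb$ by $M$, to write down an equivalence functor to $\mathfrak{M} \rtimes \orb$ built from its section.

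For the first part, axioms (i), (ii) and (iv) are immediate from the defining formulas $i(\bar m_H)=(\bar m_H,id)$, $\pi(M(G/H)\times G/H)=G/H$ and $\pi(\bar m_H,\varphi)=\varphi$; axiom (iii) follows since $\pi(\overline{0}_H,\varphi)=\varphi$. For (v), given $\varphi_1=(\bar m_H,\varphi)$ and $\varphi_2=(\bar n_H,\varphi)$ in $\mathrm{Mor}(E(H),E(K))$ with $\pi(\varphi_1)=\pi(\varphi_2)$ (so they share the second entry $\varphi$), I would use the composition law of $\mathfrak{M}\rtimes\orb$ to get $(\bar m_H,\varphi)\circ(\overline{n_H-m_H},id)=(\bar n_H,\varphi)$, and uniqueness of the correcting endomorphism $\overline{n_H-m_H}$ is built into the definition of $\mathfrak{M}$. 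The extension splits via $s(G/H)=M(G/H)\times G/H$ and $s(\varphi)=(\overline{0}_H,\varphi)$: functoriality of $s$ is the identity $(\overline{0}_K,\psi)\circ(\overline{0}_H,\varphi)=(\overline{0}_H,\psi\circ\varphi)$, and $\pi\circ s=id$ is clear. It then remains to verify that the $\orb$-module structure induced by this extension, via the preceding proposition, is the original $M$: taking the lift $\psi=(\overline{0}_H,\varphi)$ of $\varphi\in\mathrm{Mor}(G/H,G/K)$, the defining equation $i(\bar m_K)\circ\psi=\psi\circ i(\bar m_H)$ becomes $(\overline{M(\varphi)(m_K)},\varphi)=(\bar m_H,\varphi)$, so the induced action of $\varphi$ on $m_K$ is $M(\varphi)(m_K)$, i.e. the given one.

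For the second part, let $\mathfrak{M}\xrightarrow{i'}\mathfrak{E}\xrightarrow{\pi'}\orb$ be any split abelian right extension of $\orb$ by $M$ with section $s':\orb\to\mathfrak{E}$, $\pi'\circ s'=id$. Since $E(H)$ is the unique object of $\mathfrak{E}$ lying over $G/H$, we have $s'(G/H)=E(H)$, and I would define $\theta:\mathfrak{M}\rtimes\orb\to\mathfrak{E}$ by $\theta(M(G/H)\times G/H)=E(H)$ on objects and
\[ \theta(\bar m_H,\varphi)=s'(\varphi)\circ i'(\bar m_H) \]
on morphisms. Then $\theta\circ i=i'$ (using $s'(id)=id$) and $\pi'\circ\theta=\pi$ (using $\pi'(s'(\varphi))=\varphi$ and $\pi'(i'(\bar m_H))=id$), so the equivalence diagram commutes as soon as $\theta$ is known to be a functor; that $\theta$ preserves identities is visible. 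The key point is functoriality with respect to composition, and here I would invoke the fact that $\mathfrak{E}$ induces the given $\orb$-module structure: with the lift $s'(\varphi)$ of $\varphi$, this says exactly that $i'(\bar m_K)\circ s'(\varphi)=s'(\varphi)\circ i'(\overline{M(\varphi)(m_K)})$. Combining this relation with functoriality of $s'$ and $i'$ gives
\[ \theta(\bar m_K,\psi)\circ\theta(\bar m_H,\varphi)=s'(\psi)\circ i'(\bar m_K)\circ s'(\varphi)\circ i'(\bar m_H)=s'(\psi\circ\varphi)\circ i'\bigl(\overline{m_H+M(\varphi)(m_K)}\bigr)=\theta\bigl((\bar m_K,\psi)\circ(\bar m_H,\varphi)\bigr). \]
The remark following the definition of equivalence then yields bijectivity of $\theta$ on morphisms automatically, so the two extensions are equivalent.

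The main obstacle is precisely this last verification that $\theta$ respects composition; it is essentially the only place where anything beyond unwinding definitions occurs. It works because the $\orb$-module structure induced by an extension was defined so as to record how $i'(\bar m_K)$ commutes past a chosen lift of $\varphi$, which is exactly what is needed to slide the $M(G/K)$-translation in the composite $s'(\psi)\circ i'(\bar m_K)\circ s'(\varphi)\circ i'(\bar m_H)$ down to the source object. Everything else reduces to bookkeeping with the composition law of $\mathfrak{M}\rtimes\orb$ and axioms (i)--(v).
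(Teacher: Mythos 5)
Your proof is correct and follows essentially the same route as the paper: you verify axioms (i)--(v) directly (with a mirror-image form of the correcting morphism in (v), which is immaterial), check the induced module structure and the obvious section $s(\varphi)=(\bar 0,\varphi)$, and then construct the equivalence $\theta(\bar m_H,\varphi)=s'(\varphi)\circ i'(\bar m_H)$ whose functoriality rests on exactly the same commutation identity $i'(\bar m_K)\circ s'(\varphi)=s'(\varphi)\circ i'(\overline{M(\varphi)(m_K)})$ used in the paper. Nothing is missing.
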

\begin{proof}
Clearly, $\pi \circ i(\bar{m}_{\scriptscriptstyle H})=id$ for all morphisms $\bar{m}_{\scriptscriptstyle H}$. Now, let us check property (v) of the definition of right abelian extensions. If two morphisms in $$\mathrm{Mor}(M(G/H)\times G/H,M(G/K)\times G/K)$$ are mapped to the same morphism under $\pi$, this implies that these morphism are of the form $(\bar{m}_{\scriptscriptstyle H},\varphi)$ and $(\bar{n}_{\scriptscriptstyle H},\varphi)$. A quick computation then shows that $(\bar{m}_{\scriptscriptstyle H},\varphi) = (\bar{n}_{\scriptscriptstyle H},\varphi) \circ i(\bar{m}_{\scriptscriptstyle H}-\bar{n}_{\scriptscriptstyle H})$. This resolves the existence part of (v).

Next, suppose that  $\bar{m} \in \mathrm{Mor}(M(G/H),M(G/H))$ such that
$(\bar{m}_{\scriptscriptstyle H},\varphi)= (\bar{n}_{\scriptscriptstyle H},\varphi) \circ i(\bar{m})$. It follows that $(\bar{n}_{\scriptscriptstyle H}+\bar{m},\varphi)=(\bar{n}_{\scriptscriptstyle H}+ (\bar{m}_{\scriptscriptstyle H}-\bar{n}_{\scriptscriptstyle H}),\varphi)$, hence $\bar{m}=\bar{m}_{\scriptscriptstyle H}-\bar{n}_{\scriptscriptstyle H}$ which shows the uniqueness part of (v).

Since $i(\bar{m}_{\scriptscriptstyle K}) \circ (\bar{0},\varphi)= (\bar{0},\varphi) \circ i(\overline{M(\varphi)({m}_{\scriptscriptstyle K})})$ for each $\varphi \in \mathrm{Mor}(G/H,G/K)$ and each $m_{\scriptscriptstyle K} \in M(G/K)$, it follows that
$\mathfrak{M} \xrightarrow{i} \mathfrak{M} \rtimes \orb \xrightarrow{\pi} \orb$
is a abelian right extension of $\orb$ by $M$. This extension is split because the functor $s: \orb \rightarrow \mathfrak{M}\rtimes \orb$ defined by, $s(G/H)=M(G/H)\times G/H$ and s$(\varphi)=(\bar{0},\varphi)$ satisfies $\pi \circ s=id$.

Now, let $\mathfrak{M} \xrightarrow{i'} \mathfrak{E} \xrightarrow{\pi'} \orb$ be a split abelian right extension of $\orb$ by $M$, with splitting functor $s': \orb \rightarrow \mathfrak{E}$. To show that this extension is equivalent to $\mathfrak{M} \xrightarrow{i} \mathfrak{M} \rtimes \orb \xrightarrow{\pi} \orb$,
we need to construct a functor $\theta: \mathfrak{M} \rtimes \orb \rightarrow \mathfrak{E}$ such that the diagram:
\[ \xymatrix{  \mathfrak{M} \ar[r]^{i} \ar[d]^{id} &  \mathfrak{M} \rtimes \orb  \ar[r]^{\pi} \ar[d]^{\theta} & \orb \ar[d]^{id}   \\
 \mathfrak{M} \ar[r]^{i'}  & \mathfrak{E} \ar[r]^{\pi'}  & \orb , } \]
 commutes.
We define the functor $\theta$ as follows: $\theta(M(G/H)\times G/H)=E(H)$ and $\theta((\bar{m}_{\scriptscriptstyle H},\varphi))$ \\$=s'(\varphi)\circ i'(\bar{m}_{\scriptscriptstyle H})$ for each $$(\bar{m}_{\scriptscriptstyle H},\varphi) \in \mathrm{Mor}(M(G/H)\times G/H,M(G/K)\times G/K)$$ and each $H,K \in \mathfrak{F}$. We need to verify that $\theta$ is indeed a functor. Clearly, we have $\theta(id)=id$. Consider the morphisms: $$(\bar{m}_{\scriptscriptstyle H},\varphi) \in \mathrm{Mor}(M(G/H)\times G/H,M(G/K)\times G/K)$$ and $$(\bar{m}_{\scriptscriptstyle K},\phi) \in \mathrm{Mor}(M(G/K)\times G/K,M(G/L)\times G/L).$$ Their composition is given by: $$(\bar{m}_{\scriptscriptstyle H}+\overline{M(\varphi)(m_{\scriptscriptstyle K})},\phi \circ \varphi) \in  \mathrm{Mor}(M(G/H)\times G/H,M(G/L)\times G/L).$$ Using the fact that $M(\varphi)(m_{\scriptscriptstyle K})$ is the unique element such that $i'(\bar{m}_{\scriptscriptstyle K})\circ s'(\varphi)=s'(\varphi) \circ i'(\overline{M(\varphi)(m_{\scriptscriptstyle K})})$, we compute:
\begin{eqnarray*}
s'(\phi)\circ i'(\bar{m}_{\scriptscriptstyle K})\circ s'(\varphi)\circ i'(\bar{m}_{\scriptscriptstyle H}) &=&  s'(\phi) \circ s'(\varphi) \circ i'(\overline{M(\varphi)({m}_{\scriptscriptstyle K})})\circ i'(\bar{m}_{\scriptscriptstyle H}) \\
&=&  s'(\phi \circ \varphi) \circ i'(\overline{M(\varphi)({m}_{\scriptscriptstyle K})}+ \bar{m}_{\scriptscriptstyle H}).
\end{eqnarray*}
Hence, $\theta$ is a functor. The fact that $\theta$  makes the diagram commute follows immediately.
\end{proof}
The split abelian right extension constructed in this proposition will be referred to as the \emph{standard split abelian right extension} of $\orb$ by $M$.
\subsection{Extension of the orbit category by a fixed point functor} \label{sec: ab ext fixed}
Let $G$ be a group, let $\mathfrak{F}$ be a family of subgroups that contains the trivial subgroup and take $M \in G\mbox{-mod} $.
Suppose we have an abelian group extension:
\begin{equation} \label{eq: group extension} 0 \rightarrow M \rightarrow \Gamma \xrightarrow{\pi} G \rightarrow 1. \end{equation}
Note that this implies that the $G$-module structure on $M$ induced by the extension coincides with the original one, i.e. $\pi(x) \cdot m = xmx^{\scriptscriptstyle -1}$ for all $x \in \Gamma$ and all $m \in M$.
\begin{definition}\label{def: fstructure} \rm An \emph{$\mathfrak{F}$-structure} on (\ref{eq: group extension}) is a collection of subgroups $\Gamma_{\scriptscriptstyle H}$ of $\Gamma$ for each $H \in \mathfrak{F}$, such that:
\begin{enumerate}[(i)]
 \item for each $H \in \mathfrak{F}$, $\pi: \Gamma_{\scriptscriptstyle H} \xrightarrow{\cong} H$;
 \medskip
 \item for each $H,K \in \mathfrak{F}$ and $x \in G$, if $x^{\scriptscriptstyle -1}H x\subseteq K$, then there exists  $y \in \pi^{\scriptscriptstyle -1}(x)$ such that $y^{\scriptscriptstyle -1}\Gamma_{\scriptscriptstyle H} y\subseteq \Gamma_{\scriptscriptstyle K}$.
\end{enumerate}
This collection of subgroups of $\Gamma$ is also denoted by $\mathfrak{F}$. It readily follows that $\Gamma_{\{e\}}=\{e\}$, $\Gamma_{\scriptscriptstyle H} \cap M=\{e\}$ and
that $\Gamma_{\scriptscriptstyle H}$ commutes with $M^{\scriptscriptstyle H}$ for each $H \in \mathfrak{F}$. \\ \\
Now, let us suppose we have another group extension:
\begin{equation} \label{eq: group extension2} 0 \rightarrow M \rightarrow \Gamma' \xrightarrow{\pi'} G \rightarrow 1. \end{equation}
The $\mathfrak{F}$-structures $\{\Gamma_{\scriptscriptstyle H}\}_{H \in \mathfrak{F}}$ on $(\ref{eq: group extension})$ and $\{\Gamma_{\scriptscriptstyle H}'\}_{H \in \mathfrak{F}}$ on $(\ref{eq: group extension2})$ are called \emph{equivalent} if and only if there exists a group homomorphism $\theta: \Gamma \rightarrow \Gamma'$ such that
\[ \xymatrix{  0 \ar[r] & M \ar[r] \ar[d]^{id} & \Gamma \ar[r]^{\pi} \ar[d]^{\theta} & G \ar[d]^{id}  \ar[r]& 1   \\
 0 \ar[r] & M \ar[r]  & \Gamma' \ar[r]^{\pi'}  & G \ar[r] & 1 } \]
commutes and such that for each $H \in \mathfrak{F}$, $\theta(\Gamma_{\scriptscriptstyle H})=m_{\scriptscriptstyle H}^{\scriptscriptstyle -1}\Gamma_{\scriptscriptstyle H}'m_{\scriptscriptstyle H}$ for some $m_{\scriptscriptstyle H} \in M$ (note that $m_{\scriptscriptstyle H}$ depends on $H$). It is clear that, in this case, $\theta$ is an isomorphism and $(\ref{eq: group extension})$ is equivalent with $(\ref{eq: group extension2})$ in the usual sense of group extension.

An $\mathfrak{F}$-structure $\{\Gamma_{\scriptscriptstyle H}\}_{H \in \mathfrak{F}}$ on $(\ref{eq: group extension})$ is called \emph{split} if there exists a group homomorphism $s: G \rightarrow \Gamma$ such that $\pi \circ s = id$ and such that for each $H \in \mathfrak{F}$, $s(H)=m_{\scriptscriptstyle H}^{\scriptscriptstyle -1}\Gamma_{\scriptscriptstyle H}m_{\scriptscriptstyle H}$ for some $m_{\scriptscriptstyle H} \in M$ ($m_{\scriptscriptstyle H}$ depends on $H$). This implies that $(\ref{eq: group extension})$ is split extension in the usual sense. Note that, in general, a split extension of $G$ by $M$ can have a non-split $\mathfrak{F}$-structure.

An \emph{$\mathfrak{F}$-structure on $(G,M)$} is just an $\mathfrak{F}$-structure on some abelian extension of $G$ by $M$.
By $\mathrm{Str}_{\mathfrak{F}}(G,M)$, we denote the set of all equivalence classes of $\mathfrak{F}$-structures on $(G,M)$.
\end{definition}
Now, let us consider the standard split extension:
\[ 0 \rightarrow M \rightarrow \Gamma = M \rtimes G \rightarrow G \rightarrow 1 \]
and define $\Gamma_{\scriptscriptstyle H}=(0,H)$ for each $H \in \mathfrak{F}$. One easily verifies that $\{\Gamma_{\scriptscriptstyle H}\}_{H \in \mathfrak{F}}$ defines a split $\mathfrak{F}$-structure on the standard split extension of $G$ by $M$. We call this the {\it standard split $\mathfrak{F}$-structure} on $(G,M)$. It is not difficult to check that every split $\mathfrak{F}$-structure on $(G,M)$ is equivalent to the standard split $\mathfrak{F}$-structure on $(G,M)$. It follows that we can consider the equivalence class of all split $\mathfrak{F}$-structures in $\mathrm{Str}_{\mathfrak{F}}(G,M)$.

It turns out that, in certain cases, we can view $\mathrm{Str}_{\mathfrak{F}}(G,M)$ as a subspace of $\mathrm{Ext}(G,M)$, the set of equivalence classes of extensions of $G$ by $M$.
\begin{proposition} \label{prop: ker ext} Let $\overline{\mathfrak{F}}$ be the smallest family of subgroups of $G$ containing $\mF$ that is  closed under conjugation.  Suppose $\mathrm{H}^1(H,M)=0$ for every $H \in \overline{\mathfrak{F}}$. Then property (ii) of the definition of $\mathfrak{F}$-structure follows from property (i). Furthermore, $\mathrm{Str}_{\mathfrak{F}}(G,M)$ injects onto a subset of $\mathrm{Ext}(G,M)$ such that under the bijective correspondence between $\mathrm{Ext}(G,M)$ and $\mathrm{H}^2(G,M)$, the set $\mathrm{Str}_{\mathfrak{F}}(G,M)$ corresponds to $$\cap_{H \in \mathfrak{F}}\mathrm{Ker} \Big( i^2_{\scriptscriptstyle H}: \mathrm{H}^2(G,M) \rightarrow \mathrm{H}^2(H,M)\Big)$$ where $i^2_{\scriptscriptstyle H}$ is induced by the inclusion map $i_{\scriptscriptstyle H} : H \hookrightarrow G$.
\end{proposition}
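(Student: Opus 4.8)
The plan is to prove the two claims of Proposition~\ref{prop: ker ext} in sequence, both resting on the hypothesis $\mathrm{H}^1(H,M)=0$ for all $H\in\overline{\mathfrak{F}}$.

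First I would show that property (ii) follows from property (i). Suppose we have an abelian extension $0\to M\to\Gamma\xrightarrow{\pi}G\to 1$ together with subgroups $\{\Gamma_{\scriptscriptstyle H}\}_{H\in\mathfrak F}$ satisfying only (i). Fix $H,K\in\mathfrak F$ and $x\in G$ with $x^{\scriptscriptstyle -1}Hx\subseteq K$. Pick any $z\in\pi^{\scriptscriptstyle -1}(x)$; then $z^{\scriptscriptstyle -1}\Gamma_{\scriptscriptstyle H}z$ is a subgroup of $\Gamma$ mapped isomorphically by $\pi$ onto $x^{\scriptscriptstyle -1}Hx\subseteq K$. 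So both $z^{\scriptscriptstyle -1}\Gamma_{\scriptscriptstyle H}z$ and (the restriction to $x^{\scriptscriptstyle -1}Hx$ of) $\Gamma_{\scriptscriptstyle K}$ are sections of $\pi$ over the subgroup $x^{\scriptscriptstyle -1}Hx$ — that is, two splittings of the restricted extension $0\to M\to\pi^{\scriptscriptstyle -1}(x^{\scriptscriptstyle -1}Hx)\to x^{\scriptscriptstyle -1}Hx\to 1$. Since $x^{\scriptscriptstyle -1}Hx\in\overline{\mathfrak F}$, we have $\mathrm{H}^1(x^{\scriptscriptstyle -1}Hx,M)=0$, so any two such splittings are conjugate by an element of $M$: there is $m\in M$ with $m^{\scriptscriptstyle -1}(z^{\scriptscriptstyle -1}\Gamma_{\scriptscriptstyle H}z)m\subseteq\Gamma_{\scriptscriptstyle K}$. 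Taking $y=zm$ gives $y\in\pi^{\scriptscriptstyle -1}(x)$ (as $m\in M=\ker\pi$) with $y^{\scriptscriptstyle -1}\Gamma_{\scriptscriptstyle H}y\subseteq\Gamma_{\scriptscriptstyle K}$, which is (ii). The one point requiring care here is the standard identification: the $M$-conjugacy classes of splittings of an extension of a group $Q$ by $M$ realizing a fixed class in $\mathrm{H}^2(Q,M)$ form a torsor under $\mathrm{H}^1(Q,M)$, so vanishing of $\mathrm{H}^1$ forces uniqueness up to $M$-conjugacy — I would cite this as the classical fact it is.

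Next I would construct the injection $\mathrm{Str}_{\mathfrak F}(G,M)\hookrightarrow\mathrm{Ext}(G,M)$ and identify its image. Send an $\mathfrak F$-structure on $0\to M\to\Gamma\to G\to 1$ to the class of the underlying extension $0\to M\to\Gamma\to G\to 1$ in $\mathrm{Ext}(G,M)$; this is well-defined on equivalence classes since an equivalence of $\mathfrak F$-structures is in particular an equivalence of extensions. For injectivity, suppose two $\mathfrak F$-structures $\{\Gamma_{\scriptscriptstyle H}\}$ on $\Gamma$ and $\{\Gamma'_{\scriptscriptstyle H}\}$ on $\Gamma'$ have equivalent underlying extensions, via $\theta\colon\Gamma\xrightarrow{\cong}\Gamma'$ commuting with the maps to $G$ and from $M$. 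For each $H\in\mathfrak F$, both $\theta(\Gamma_{\scriptscriptstyle H})$ and $\Gamma'_{\scriptscriptstyle H}$ are sections of $\pi'$ over $H$, hence two splittings of the restricted extension over $H\in\overline{\mathfrak F}$; again $\mathrm{H}^1(H,M)=0$ gives $m_{\scriptscriptstyle H}\in M$ with $\theta(\Gamma_{\scriptscriptstyle H})=m_{\scriptscriptstyle H}^{\scriptscriptstyle -1}\Gamma'_{\scriptscriptstyle H}m_{\scriptscriptstyle H}$, so $\theta$ is an equivalence of $\mathfrak F$-structures and the two classes in $\mathrm{Str}_{\mathfrak F}(G,M)$ agree. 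Thus the map is injective.

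Finally I would pin down the image as $\bigcap_{H\in\mathfrak F}\mathrm{Ker}(i^2_{\scriptscriptstyle H})$. By the first part, once $\mathrm{H}^1$ vanishes an $\mathfrak F$-structure on an extension is nothing more than a choice of subgroup $\Gamma_{\scriptscriptstyle H}\cong H$ for each $H\in\mathfrak F$ — i.e.\ a splitting of the restricted extension over each $H$ — and the collection $\{\Gamma_{\scriptscriptstyle H}\}$ automatically satisfies (ii), so the extension class lies in the image of $\mathrm{Str}_{\mathfrak F}(G,M)$ iff for every $H\in\mathfrak F$ the restriction $0\to M\to\pi^{\scriptscriptstyle -1}(H)\to H\to 1$ splits, i.e.\ iff its class in $\mathrm{H}^2(H,M)$ is zero. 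Under the identification $\mathrm{Ext}(G,M)\cong\mathrm{H}^2(G,M)$, restricting an extension along $i_{\scriptscriptstyle H}\colon H\hookrightarrow G$ corresponds to the map $i^2_{\scriptscriptstyle H}\colon\mathrm{H}^2(G,M)\to\mathrm{H}^2(H,M)$, so this condition reads precisely $\alpha\in\mathrm{Ker}(i^2_{\scriptscriptstyle H})$ for all $H\in\mathfrak F$. (Conversely any extension with class in this intersection admits, for each $H$, a splitting over $H$, giving an $\mathfrak F$-structure; different choices of the $\Gamma_{\scriptscriptstyle H}$ are $M$-conjugate by the first part, so yield equivalent $\mathfrak F$-structures, and the map is onto the intersection.) I expect the main obstacle to be purely bookkeeping: carefully matching the ``splitting up to $M$-conjugacy'' language of $\mathfrak F$-structures with the cocycle-level description of $i^2_{\scriptscriptstyle H}$, and checking that the equivalence relation on $\mathfrak F$-structures is exactly what is needed for the map to $\mathrm{H}^2(G,M)$ to be both well-defined and injective — no deep input beyond the vanishing of $\mathrm{H}^1$ is required.
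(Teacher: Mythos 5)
Your proof is correct and follows essentially the same route as the paper's: both use the vanishing of $\mathrm{H}^1$ on $\overline{\mathfrak{F}}$ to force $M$-conjugacy of sections over each $H$ (giving both property (ii) and injectivity of the map to $\mathrm{Ext}(G,M)$), and then identify the image as the intersection of kernels of restriction. You are a bit more explicit than the paper in fleshing out the last step, which the paper leaves as a "straightforward application," but the argument is the same.
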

\begin{proof}
Consider an abelian extension $ 0 \rightarrow M \rightarrow \Gamma \xrightarrow{\pi} G \rightarrow 1$. Suppose we have a collection of subgroups $\{\Gamma_{\scriptscriptstyle H}\}_{H \in \mathfrak{F}}$ of $\Gamma$, such that $\pi: \Gamma_{\scriptscriptstyle H} \xrightarrow{\cong} H$ for each $H \in \mathfrak{F}$. We will show that this collection of subgroups forms an $\mathfrak{F}$-structure. Let $x \in G$ and $H,K \in \mathfrak{F}$ and assume that $L:=x^{\scriptscriptstyle -1}H x\subseteq K$. It is easy to see that $\Gamma_{\scriptscriptstyle K}$ has a subgroup $P$ such that $\pi: P \xrightarrow{\cong} L$.

Now, for any $z \in \Gamma$ such that $\pi(z)=x$, we have $\pi(z^{\scriptscriptstyle -1}\Gamma_{\scriptscriptstyle H} z) = \pi(P)$ and $\pi: z^{\scriptscriptstyle -1}\Gamma_{\scriptscriptstyle H} z \xrightarrow{\cong} L$. Because  $L \in \overline{\mathfrak{F}}$, we have $H^1(L,M)=0$. This implies that all splittings of the standard split extension $M\rtimes L$ are $M$-conjugate. Since any subgroup $Q$ of $\Gamma$ such that $\pi: Q \xrightarrow{\cong} L$, defines a splitting of the standard split extension, it follows that all such subgroups $Q$ are $M$-conjugate. Therefore, we may conclude that there exists an $m \in M$ such that $m^{\scriptscriptstyle -1}z^{\scriptscriptstyle -1}\Gamma_{\scriptscriptstyle H} zm=P$. Next, we let $y=zm$ and note that $\pi(y)=x$ and $y^{\scriptscriptstyle -1}\Gamma_{\scriptscriptstyle H}y = P \subseteq \Gamma_{\scriptscriptstyle K}$. This proves that $\{\Gamma_{\scriptscriptstyle H}\}_{H \in \mathfrak{F}}$ forms an $\mathfrak{F}$-structure.

To prove that $\mathrm{Str}_{\mathfrak{F}}(G,M)$ can be viewed is a subspace of $\mathrm{Ext}(G,M)$, we need to show that the map $\mathrm{Str}_{\mathfrak{F}}(G,M) \rightarrow \mathrm{Ext}(G,M)$, that maps an equivalence class of $\mathfrak{F}$-structures on $(G,M)$ to the underlying equivalence class of extensions of $G$ by $M$, is injective. This comes down to showing that $\mathfrak{F}$-structures on equivalent extensions of $G$ by $M$ are always equivalent. But this follows easily from the fact that all subgroups $Q$ of $\Gamma$, where $0 \rightarrow M \rightarrow \Gamma \xrightarrow{\pi} G$ is an abelian extension of $G$ by $M$ and $\pi: Q \xrightarrow{\cong} H \in \mathfrak{F}$, are $M$-conjugate.
The final statement is a straightforward application of the first statement of the proposition.
\end{proof}
Next, we show that an $\mathfrak{F}$-structure on an abelian extension of $G$ by $M$ gives rise to an abelian right extension of $\orb$ by $\underline{M}$.
\begin{proposition} \label{prop: fstruc to ext} Given an $\mathfrak{F}$-structure on an abelian extension of $G$ by $M$
\[ 0 \rightarrow M \rightarrow \Gamma \xrightarrow{p} G \rightarrow 1, \]
the induced functors $\pi: \mathcal{O}_{\mathfrak{F}}(\Gamma) \rightarrow \orb$ and $i: \mathfrak{M} \rightarrow \mathcal{O}_{\mathfrak{F}}(\Gamma)$, where $\mathfrak{M}$ is the category associated to $\{M^{\scriptscriptstyle H}\}_{H \in \mathfrak{F}}$, form an abelian right extension of $\orb$ by $\underline{M}$
\[  \mathfrak{M} \xrightarrow{i} \mathcal{O}_{\mathfrak{F}}(\Gamma) \xrightarrow{\pi} \orb,   \]
that is split if the given $\mathfrak{F}$-structure is split. Furthermore, equivalent $\mathfrak{F}$-structures on $(G,M)$ give rise to equivalent abelian right extensions of $\orb$ by $\underline{M}$.
\end{proposition}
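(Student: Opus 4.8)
The plan is to spell out the functors $\pi$ and $i$ explicitly, verify the axioms (i)--(v) of an abelian right extension one at a time, and then treat the ``split'' and ``equivalence'' assertions separately. On objects one sets $i(M^{\scriptscriptstyle H})=\Gamma/\Gamma_{\scriptscriptstyle H}$ and $\pi(\Gamma/\Gamma_{\scriptscriptstyle H})=G/H$, which is axiom (i). One sends $\bar n_{\scriptscriptstyle H}\in\mathrm{Mor}(M^{\scriptscriptstyle H},M^{\scriptscriptstyle H})$ to the endomorphism $\Gamma/\Gamma_{\scriptscriptstyle H}\xrightarrow{n_{\scriptscriptstyle H}}\Gamma/\Gamma_{\scriptscriptstyle H}$; this is a legitimate morphism of $\orbg$ precisely because $M^{\scriptscriptstyle H}$ commutes with $\Gamma_{\scriptscriptstyle H}$, a consequence of Definition \ref{def: fstructure}, and $i$ is injective on morphisms because $\Gamma_{\scriptscriptstyle H}\cap M=\{e\}$, which is axiom (ii); axiom (iv) is immediate since $n_{\scriptscriptstyle H}\in M=\ker p$. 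One sends a morphism $\Gamma/\Gamma_{\scriptscriptstyle H}\xrightarrow{y}\Gamma/\Gamma_{\scriptscriptstyle K}$ of $\orbg$ to $G/H\xrightarrow{p(y)}G/K$; this is well defined (two representatives of the same morphism differ by an element of $\Gamma_{\scriptscriptstyle K}$, hence their $p$-images by an element of $K$) and functorial. The defining property of an $\mF$-structure enters exactly in axiom (iii): a morphism $G/H\xrightarrow{x}G/K$ satisfies $x^{\scriptscriptstyle -1}Hx\subseteq K$, so condition (ii) of Definition \ref{def: fstructure} supplies $y\in p^{\scriptscriptstyle -1}(x)$ with $y^{\scriptscriptstyle -1}\Gamma_{\scriptscriptstyle H}y\subseteq\Gamma_{\scriptscriptstyle K}$, and $\Gamma/\Gamma_{\scriptscriptstyle H}\xrightarrow{y}\Gamma/\Gamma_{\scriptscriptstyle K}$ lifts it.

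The delicate axiom is (v). Given $\varphi_1,\varphi_2\colon\Gamma/\Gamma_{\scriptscriptstyle H}\to\Gamma/\Gamma_{\scriptscriptstyle K}$ with $\pi(\varphi_1)=\pi(\varphi_2)$, fix a representative $y_1$ of $\varphi_1$; since $p(y_1)K=p(y_2)K$ one can choose a representative of $\varphi_2$ of the form $y_1 m$ with $m\in M$. A short computation with the composition rule shows that $\varphi_1\circ i(\bar n_{\scriptscriptstyle H})=\varphi_2$ forces $n_{\scriptscriptstyle H}=p(y_1)\cdot m$, using $y_1\Gamma_{\scriptscriptstyle K}y_1^{\scriptscriptstyle -1}\cap M=\{e\}$; this gives uniqueness. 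For existence one must check that $m_{\scriptscriptstyle H}:=p(y_1)\cdot m$ actually lies in $M^{\scriptscriptstyle H}$, not merely in $M$, and this is the step I expect to require the main (though still short) work. Since $\varphi_2$ is a morphism of $\orbg$ we have $(y_1 m)^{\scriptscriptstyle -1}\Gamma_{\scriptscriptstyle H}(y_1 m)\subseteq\Gamma_{\scriptscriptstyle K}$; rewriting this as $m^{\scriptscriptstyle -1}(y_1^{\scriptscriptstyle -1}\Gamma_{\scriptscriptstyle H}y_1)m\subseteq\Gamma_{\scriptscriptstyle K}$ and using that $M$ is abelian and normal in $\Gamma$, together with $\Gamma_{\scriptscriptstyle K}\cap M=\{e\}$ and $y_1^{\scriptscriptstyle -1}\Gamma_{\scriptscriptstyle H}y_1\subseteq\Gamma_{\scriptscriptstyle K}$, one finds that $m$ is fixed by $p(y_1)^{\scriptscriptstyle -1}Hp(y_1)$, whence $m_{\scriptscriptstyle H}=p(y_1)\cdot m\in M^{\scriptscriptstyle H}$ and $\varphi_1\circ i(\bar m_{\scriptscriptstyle H})=\varphi_2$. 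This finishes the proof that $\mathfrak{M}\xrightarrow{i}\orbg\xrightarrow{\pi}\orb$ is an abelian right extension of $\orb$ by $\underline{M}$.

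For the split case, suppose the $\mF$-structure is split, so there is $s\colon G\to\Gamma$ with $p\circ s=\mathrm{id}$ and $s(H)=c_{\scriptscriptstyle H}^{\scriptscriptstyle -1}\Gamma_{\scriptscriptstyle H}c_{\scriptscriptstyle H}$ for some $c_{\scriptscriptstyle H}\in M$. I would define $\sigma\colon\orb\to\orbg$ by $\sigma(G/H)=\Gamma/\Gamma_{\scriptscriptstyle H}$ and $\sigma\bigl(G/H\xrightarrow{x}G/K\bigr)=\bigl(\Gamma/\Gamma_{\scriptscriptstyle H}\xrightarrow{c_{\scriptscriptstyle H}s(x)c_{\scriptscriptstyle K}^{\scriptscriptstyle -1}}\Gamma/\Gamma_{\scriptscriptstyle K}\bigr)$. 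One checks that $(c_{\scriptscriptstyle H}s(x)c_{\scriptscriptstyle K}^{\scriptscriptstyle -1})^{\scriptscriptstyle -1}\Gamma_{\scriptscriptstyle H}(c_{\scriptscriptstyle H}s(x)c_{\scriptscriptstyle K}^{\scriptscriptstyle -1})=c_{\scriptscriptstyle K}s(x)^{\scriptscriptstyle -1}s(H)s(x)c_{\scriptscriptstyle K}^{\scriptscriptstyle -1}\subseteq c_{\scriptscriptstyle K}s(K)c_{\scriptscriptstyle K}^{\scriptscriptstyle -1}=\Gamma_{\scriptscriptstyle K}$ (so the formula makes sense), that it is independent of the chosen representative $x$ (because conjugation by $c_{\scriptscriptstyle K}$ carries $s(K)$ into $\Gamma_{\scriptscriptstyle K}$), that it respects composition (because $s$ is a homomorphism and the inner $c_{\scriptscriptstyle K}$'s cancel), and that $p(c_{\scriptscriptstyle H}s(x)c_{\scriptscriptstyle K}^{\scriptscriptstyle -1})=x$, so $\pi\circ\sigma=\mathrm{id}$ and the extension splits.

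For the equivalence statement, let $\theta\colon\Gamma\to\Gamma'$ realize an equivalence of $\mF$-structures $\{\Gamma_{\scriptscriptstyle H}\}$ and $\{\Gamma_{\scriptscriptstyle H}'\}$, so that $p'\circ\theta=p$, $\theta|_M=\mathrm{id}$, and $\theta(\Gamma_{\scriptscriptstyle H})=c_{\scriptscriptstyle H}^{\scriptscriptstyle -1}\Gamma_{\scriptscriptstyle H}'c_{\scriptscriptstyle H}$ for some $c_{\scriptscriptstyle H}\in M$. I would define $\Theta\colon\orbg\to\mathcal{O}_{\mF}(\Gamma')$ by $\Theta(\Gamma/\Gamma_{\scriptscriptstyle H})=\Gamma'/\Gamma_{\scriptscriptstyle H}'$ and $\Theta\bigl(\Gamma/\Gamma_{\scriptscriptstyle H}\xrightarrow{y}\Gamma/\Gamma_{\scriptscriptstyle K}\bigr)=\bigl(\Gamma'/\Gamma_{\scriptscriptstyle H}'\xrightarrow{c_{\scriptscriptstyle H}\theta(y)c_{\scriptscriptstyle K}^{\scriptscriptstyle -1}}\Gamma'/\Gamma_{\scriptscriptstyle K}'\bigr)$. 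The same style of computation as above shows $\Theta$ is a well-defined functor, that $\pi'\circ\Theta=\pi$ (from $p'\theta=p$ and $c_{\scriptscriptstyle H}\in\ker p'$), and that $\Theta\circ i=i'$ (from $\theta|_M=\mathrm{id}$ and $M$ abelian, so conjugation by $c_{\scriptscriptstyle H}\in M$ is trivial on $M$); hence $\Theta$ fits into the commuting square defining equivalence of abelian right extensions, and equivalent $\mF$-structures give equivalent abelian right extensions of $\orb$ by $\underline{M}$.
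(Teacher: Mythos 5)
Your proof follows essentially the same route as the paper: the same definitions of $i$ and $\pi$, the same axiom-by-axiom verification, the same formula $c_{\scriptscriptstyle H}s(x)c_{\scriptscriptstyle K}^{\scriptscriptstyle -1}$ (the paper writes $m_{\scriptscriptstyle H}s(x)m_{\scriptscriptstyle K}^{\scriptscriptstyle -1}$) for the splitting functor, and the same formula for the functor $\Theta$ realizing equivalence; your treatment of axiom (v) is merely a reparametrization of the paper's computation (you write the second representative as $y_1 m$ rather than using $mx = yk$, and show $p(y_1)\cdot m \in M^{\scriptscriptstyle H}$ where the paper shows the analogous $m \in M^{\scriptscriptstyle H}$). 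One required step is missing, though: by Definition~3.7 an ``abelian right extension of $\orb$ by $\underline{M}$'' is one whose \emph{induced} $\orb$-module structure on $\{M^{\scriptscriptstyle H}\}$ coincides with that of the fixed-point functor $\underline{M}$, and you never verify this. The paper does: given $\varphi\colon G/H \xrightarrow{y} G/K$ lifted to $\theta\colon \Gamma/\Gamma_{\scriptscriptstyle H} \xrightarrow{x} \Gamma/\Gamma_{\scriptscriptstyle K}$ and $m\in M^{\scriptscriptstyle K}$, one checks $i(\bar m)\circ\theta = \theta\circ i(\overline{y\cdot m})$, so $\underline{M}(\varphi)(m) = y\cdot m$ as required. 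This is a short computation but it is part of the statement, and without it you have only shown that the data form an abelian right extension of $\orb$ by the module category $\mathfrak{M}$, not specifically by $\underline{M}$.
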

\begin{proof}
Let us first consider the functor $i: \mathfrak{M} \rightarrow \mathcal{O}_{\mathfrak{F}}(\Gamma)$. Take $m \in M^{\scriptscriptstyle H}$, for some $H \in \mathfrak{F}$. Then $$\bar{m} \in \mathrm{Mor}(M^{\scriptscriptstyle H},M^{\scriptscriptstyle H}) \mbox{\; and \;}i(\bar{m}): \Gamma/\Gamma_{\scriptscriptstyle H} \rightarrow \Gamma/\Gamma_{\scriptscriptstyle H} : \Gamma_{\scriptscriptstyle H} \mapsto m\Gamma_{\scriptscriptstyle H}.$$ This is well-defined because $hmh^{\scriptscriptstyle -1}=\pi(h)\cdot m=m$ for each $h \in \Gamma_{\scriptscriptstyle H}$. To see that $i$ is injective on morphisms, suppose that $m\Gamma_{\scriptscriptstyle H} = n\Gamma_{\scriptscriptstyle H}$ for some $m,n \in M^{\scriptscriptstyle H}$ and some $H \in \mathfrak{F}$. This implies that $m^{\scriptscriptstyle -1}n \in \Gamma_{\scriptscriptstyle H} \cap M=\{e\}$, so $m=n$. This proves that $i$ is injective on morphisms.

Next, we consider the functor  $\pi: \mathcal{O}_{\mathfrak{F}}(\Gamma) \rightarrow \orb$. The functor $\pi$ maps a morphism: $$\Gamma/\Gamma_{\scriptscriptstyle H} \rightarrow \Gamma/\Gamma_{\scriptscriptstyle K} : \Gamma_{\scriptscriptstyle H} \mapsto x\Gamma_{\scriptscriptstyle K}$$ to the morphism: $$G/H \rightarrow G/K : H \mapsto \pi(x)K.$$ This is clearly well-defined. To see that $\pi$ is surjective on morphisms, consider a morphism $$G/H \rightarrow G/K : H \mapsto yK$$ in $\orb$. By the definition of $\mathfrak{F}$-structure, there is $x \in \Gamma$ such that $x^{\scriptscriptstyle -1}\Gamma_{\scriptscriptstyle H} x \subseteq \Gamma_{\scriptscriptstyle K}$ and $\pi(x)=y$. It follows that  $$\Gamma/\Gamma_{\scriptscriptstyle H} \rightarrow \Gamma/\Gamma_{\scriptscriptstyle K} : \Gamma_{\scriptscriptstyle H} \mapsto x\Gamma_{\scriptscriptstyle K}$$ is a morphism in $\mathcal{O}_{\mathfrak{F}}(\Gamma)$ that is mapped to  $$G/H \rightarrow G/K : H \mapsto yK$$ by $\pi$. Hence $\pi$ is surjective on morphisms. Since $p(M)=\{e\}$, it is  clear that $\pi \circ i = id$.

Now, let us check the final property of abelian right extensions.
Suppose we have two morphisms: $$\varphi: \Gamma/\Gamma_{\scriptscriptstyle H} \rightarrow \Gamma/\Gamma_{\scriptscriptstyle K} : \Gamma_{\scriptscriptstyle H} \mapsto x\Gamma_{\scriptscriptstyle K}$$ and $$\psi: \Gamma/\Gamma_{\scriptscriptstyle H} \rightarrow \Gamma/\Gamma_{\scriptscriptstyle K} : \Gamma_{\scriptscriptstyle H} \mapsto y\Gamma_{\scriptscriptstyle K}$$ in $\mathcal{O}_{\mathfrak{F}}(\Gamma)$ such that $\pi(\varphi)=\pi(\psi)$. Since $\pi(\Gamma_{\scriptscriptstyle K})=K$ and the group extension is exact,  this implies that there exists  $m \in M$ and  $k \in \Gamma_{\scriptscriptstyle K}$ such that $mx=yk$. For an arbitrary $h \in \Gamma_{\scriptscriptstyle H}$, we compute $(hmh^{\scriptscriptstyle -1})x = hmxx^{\scriptscriptstyle -1}h^{\scriptscriptstyle -1}x = hmxk_1$, for some $k_1 \in \Gamma_{\scriptscriptstyle K}$ since $x^{\scriptscriptstyle -1}\Gamma_{\scriptscriptstyle H}x \subseteq \Gamma_{\scriptscriptstyle K}$.  Using the fact that $mx=yk$, we obtain $(hmh^{\scriptscriptstyle -1})x=hyk_2$ for some $k_2 \in \Gamma_{\scriptscriptstyle K}$. Multiplying $hyk_2$ on the left with $yy^{\scriptscriptstyle -1}$ and using the fact that  $y^{\scriptscriptstyle -1}\Gamma_{\scriptscriptstyle H}y \subseteq \Gamma_{\scriptscriptstyle K}$, we find  $k_3 \in \Gamma_{\scriptscriptstyle K}$ such that $(hmh^{\scriptscriptstyle -1})x=yk_3$. It follows that $(hmh^{\scriptscriptstyle -1})x=mxk^{\scriptscriptstyle -1}k_3$. Multiplying both sides with $x^{\scriptscriptstyle -1}$ on the left and using normality of $M$ in $\Gamma$, it follows that $k^{\scriptscriptstyle -1}k_3 \in \Gamma_{\scriptscriptstyle K} \cap M=\{e\}$. This implies that $(hmh^{\scriptscriptstyle -1})x=mx$. Therefore, $hmh^{\scriptscriptstyle -1}=m$ for any $h \in \Gamma_{\scriptscriptstyle H}$. It follows that $m \in M^{\scriptscriptstyle H}$, since $\pi(\Gamma_{\scriptscriptstyle H})=H$ and $m=hmh^{\scriptscriptstyle -1}=\pi(h)\cdot m$. It now follows easily from $mx=yk$, that $\varphi \circ i(\bar{m})=\psi$.

It still remains to show that $m$ is the unique element in $M^{\scriptscriptstyle H}$ with this property. Suppose that $\varphi \circ i(\bar{n})=\psi$ for some $n \in M^{\scriptscriptstyle H}$. Then $\varphi \circ i(\bar{m})=\varphi \circ i(\bar{n})$. Hence, there exists a $k \in \Gamma_{\scriptscriptstyle K}$ such that $mx=nxk$.  Multiplying on the left with $x^{\scriptscriptstyle -1}$ and using normality of $M$ in $\Gamma$, we see that $k \in \Gamma_{\scriptscriptstyle K} \cap M=\{e\}$. Hence, $mx=nx$  and so $m=n$.

Next, let us prove that the $\orb$-module structure on $\underline{M}$ induced by the extension coincides with the given one. Consider an arbitrary  morphism: $$\varphi: G/H \rightarrow G/K : H \mapsto yK$$ and  $m \in M^{\scriptscriptstyle K}$. Let $$\theta: \Gamma/\Gamma_{\scriptscriptstyle H} \rightarrow \Gamma/\Gamma_{\scriptscriptstyle K} : \Gamma_{\scriptscriptstyle H} \mapsto x\Gamma_{\scriptscriptstyle K}$$ be a morphism in $\mathcal{O}_{\mathfrak{F}}(\Gamma)$ such that $\pi(\theta)=\varphi$. Since $\pi(x)K=yK$ and $m \in M^{\scriptscriptstyle K}$, it follows that  $y\cdot m = xmx^{\scriptscriptstyle -1}$. Therefore, $i(\bar{m}) \circ \theta = \theta \circ i(\overline{y\cdot m})$, hence $\underline{M}(\varphi)(m)=y\cdot m$. This shows that $\mathfrak{M} \xrightarrow{i} \mathcal{O}_{\mathfrak{F}}(\Gamma) \xrightarrow{\pi} \orb$ is an abelian right extension of $\orb$ by $\underline{M}$.

Assume that the given $\mathfrak{F}$-structure splits, via the splitting $s: G \rightarrow \Gamma$. Let $(m_{\scriptscriptstyle H})_{H \in \mathfrak{F}}$ be the elements of $M$ such that $s(H)=m_{\scriptscriptstyle H}^{\scriptscriptstyle -1}\Gamma_{\scriptscriptstyle H}m_{\scriptscriptstyle H}$ for each $H \in \mathfrak{F}$. One can verify that the functor $S: \orb \rightarrow  \mathcal{O}_{\mathfrak{F}}(\Gamma)$, that takes $G/H$ to $\Gamma/\Gamma_{\scriptscriptstyle H}$ and maps a morphism: $$\varphi: G/H \rightarrow G/K : H \mapsto xK$$ to the morphism: $$\Gamma/\Gamma_{\scriptscriptstyle H} \rightarrow \Gamma/\Gamma_{\scriptscriptstyle K} : \Gamma_{\scriptscriptstyle H} \mapsto m_{\scriptscriptstyle H}s(x)m_{\scriptscriptstyle K}^{\scriptscriptstyle -1}\Gamma_{\scriptscriptstyle K},$$ is a well-defined splitting functor for $\mathfrak{M} \xrightarrow{i} \mathcal{O}_{\mathfrak{F}}(\Gamma) \xrightarrow{\pi} \orb$. \\
\indent Finally, suppose we have two $\mathfrak{F}$-structures $\{\Gamma_{1_{\scriptscriptstyle H}}\}_{H \in \mathfrak{F}}$ and $\{\Gamma_{2_{\scriptscriptstyle H}}\}_{H \in \mathfrak{F}}$  on the extensions: $$M \xrightarrow{i_1} \Gamma_1 \xrightarrow{\pi_1} G \mbox{\; and \;} M \xrightarrow{i_2} \Gamma_2 \xrightarrow{\pi_2} G,$$ respectively. Suppose also that they are equivalent via a group isomorphism $\theta : \Gamma_1 \rightarrow \Gamma_2$. Let $(m_{\scriptscriptstyle H})_{H \in \mathfrak{F}}$ be the elements of $M$ such that $\theta(\Gamma_{1_{\scriptscriptstyle H}})=m_{\scriptscriptstyle H}^{\scriptscriptstyle -1}\Gamma_{2_{\scriptscriptstyle H}}m_{\scriptscriptstyle H}$ for all $H \in \mathfrak{F}$. Now, we define $\Theta : \mathcal{O}_{\mathfrak{F}}(\Gamma_1) \rightarrow \mathcal{O}_{\mathfrak{F}}(\Gamma_2)$ by $\Theta(\Gamma_1/\Gamma_{1_{\scriptscriptstyle H}})=\Gamma_2/\Gamma_{2_{\scriptscriptstyle H}}$ and $$\Theta(\Gamma_1/\Gamma_{1_{\scriptscriptstyle H}} \rightarrow \Gamma_1/\Gamma_{1_{\scriptscriptstyle K}}: \Gamma_{1_{\scriptscriptstyle H}} \mapsto x\Gamma_{1_{\scriptscriptstyle K}})=\Gamma_2/\Gamma_{2_{\scriptscriptstyle H}} \rightarrow \Gamma_2/\Gamma_{2_{\scriptscriptstyle K}}: \Gamma_{2_{\scriptscriptstyle H}} \mapsto m_{\scriptscriptstyle H}\theta(x)m_{\scriptscriptstyle K}^{\scriptscriptstyle -1}\Gamma_{2_{\scriptscriptstyle K}}.$$ It is not difficult to checks $\Theta$ is a functor that entails an equivalence of extensions between $\mathfrak{M} \xrightarrow{i_1} \mathcal{O}_{\mathfrak{F}}(\Gamma_1) \xrightarrow{\pi_1} \orb$ and $\mathfrak{M} \xrightarrow{i_2} \mathcal{O}_{\mathfrak{F}}(\Gamma_2) \xrightarrow{\pi_2} \orb$. This proves the proposition.
\end{proof}
By the proposition, we have a well-defined map:
\[ \Psi: \mathrm{Str}_{\mathfrak{F}}(G,M) \rightarrow \mathrm{Ext}_{\mathfrak{F}}(G,\underline{M}). \]
In the remainder of this section, our goal is to prove that $\Psi$ is a bijection. First, we need a few lemmas.

Consider an abelian right extension of $\orb$ by $\underline{M}$:
\[ \mathfrak{M} \xrightarrow{i} \mathfrak{E} \xrightarrow{\pi} \orb. \]
\begin{lemma}\label{lemma: key lemma1} The group $\Gamma=\mathrm{Mor}(E(\{e\}),E(\{e\}))$ fits into an abelian extension:
\[0 \rightarrow M \xrightarrow{i} \Gamma \xrightarrow{\pi} G \rightarrow 1.\]
\end{lemma}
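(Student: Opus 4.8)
Since $\{e\}\in\mF$, the object $E(\{e\})$ of $\mathfrak{E}$ exists and $\pi(E(\{e\}))=G/\{e\}$, $i(M^{\{e\}})=E(\{e\})$ by property (i). Put $\Gamma=\mathrm{Mor}_{\mathfrak{E}}(E(\{e\}),E(\{e\}))$, a monoid under composition. Because $\pi$ and $i$ are functors, they restrict to monoid homomorphisms $\pi\colon\Gamma\to\mathrm{Mor}_{\orb}(G/\{e\},G/\{e\})$ and $i\colon\mathrm{Mor}_{\mathfrak{M}}(M^{\{e\}},M^{\{e\}})\to\Gamma$. The first step is to identify the two outer monoids: every $G$-map $G/\{e\}\to G/\{e\}$ is one of the invertible morphisms $G/\{e\}\xrightarrow{x}G/\{e\}$, $x\in G$, so $\mathrm{Mor}_{\orb}(G/\{e\},G/\{e\})$ is a group and $x\mapsto(G/\{e\}\xrightarrow{x}G/\{e\})$ identifies it with $G$ (the only subtlety is a choice of convention / inversion anti-automorphism, which affects nothing below); and, since $M^{\{e\}}=M$ and by definition $\mathrm{Mor}_{\mathfrak{M}}(M^{\{e\}},M^{\{e\}})=\{\bar m\mid m\in M\}$ with $\bar m\circ\bar n=\overline{m+n}$, this monoid is the abelian group $(M,+)$. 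Under these identifications we obtain $\pi\colon\Gamma\to G$ and $i\colon M\to\Gamma$, and it remains to show that $0\to M\xrightarrow{i}\Gamma\xrightarrow{\pi}G\to 1$ is an abelian group extension.

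\textbf{Exactness.} Surjectivity of $\pi$ is property (iii) of an abelian right extension; injectivity of $i$ is property (ii); the inclusion $i(M)\subseteq\ker\pi$ is property (iv). For $\ker\pi\subseteq i(M)$, take $\gamma\in\Gamma$ with $\pi(\gamma)=\mathrm{id}_{G/\{e\}}=\pi(\mathrm{id}_{E(\{e\})})$; applying property (v) with $H=K=\{e\}$, $\varphi_1=\mathrm{id}_{E(\{e\})}$, $\varphi_2=\gamma$ yields a (unique) $\bar m$ with $\mathrm{id}_{E(\{e\})}\circ i(\bar m)=\gamma$, i.e. $\gamma=i(\bar m)\in i(M)$. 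Since $M$ is abelian and $i$ is injective, $i(M)$ is an abelian subgroup; once $\Gamma$ is known to be a group, $i(M)=\ker\pi$ is automatically normal, so we get an abelian extension of $G$ by $M$.

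\textbf{The crux: $\Gamma$ is a group.} This is the one point that is not a mere reading-off of the axioms. Given $\gamma\in\Gamma$, choose by surjectivity of $\pi$ an element $\delta\in\Gamma$ with $\pi(\delta)=\pi(\gamma)^{-1}$ in $G$; then $\pi(\gamma\delta)=\mathrm{id}=\pi(\mathrm{id}_{E(\{e\})})$, so property (v) gives $\bar m$ with $(\gamma\delta)\circ i(\bar m)=\mathrm{id}_{E(\{e\})}$, i.e. $\gamma$ has a right inverse $\delta\circ i(\bar m)$ in the monoid $\Gamma$. A monoid in which every element has a right inverse is a group: if $ab=1$ and $bc=1$ then $a=a(bc)=(ab)c=c$, whence $ba=bc=1$ as well. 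Hence $\Gamma$ is a group and $\pi,i$ are group homomorphisms, completing the proof.

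I expect this last step to be the only real obstacle; everything else is a direct translation of properties (ii)--(v). (If desired, one further checks that conjugating $i(\bar m)$ by a $\pi$-lift of $x\in G$ recovers the action of $x$ on $M^{\{e\}}=M$, via property (v), so the induced $G$-module structure on $M$ is the original one; but this is not needed for the statement as given.)
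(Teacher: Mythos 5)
Your proof is correct and takes essentially the same approach as the paper: identify the two endomorphism monoids with $G$ and $(M,+)$, read exactness off properties (ii)--(v), and verify that $\Gamma$ is a group --- a step the paper waves off as ``not difficult to show,'' and which your right-inverse argument fills in cleanly. One small caveat: in the paper's usage, an ``abelian extension'' of $G$ by the $G$-module $M$ tacitly requires that the conjugation action on $M\cong\ker\pi$ agree with the given $G$-module structure, so the compatibility check you relegate to a closing parenthesis is actually part of what the lemma asserts, not an optional extra (though, as you note, it follows readily from property (v) applied to lifts of $x\in G$).
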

\begin{proof}
Obviously, $\Gamma$ surjects onto $\mathrm{Mor}(G/\{e\},G/\{e\})$ via $\pi$ and $\mathrm{Mor}(\underline{M}(G/\{e\}),\underline{M}(G/\{e\}))$ injects into $\Gamma$ via $i$, by the definition of extensions. It is clear that $\mathrm{Mor}(G/\{e\},G/\{e\})$, with group law defined by $\varphi\cdot\psi = \psi \circ \varphi$ is isomorphic to $G$. Similarly,  $\mathrm{Mor}(\underline{M}(G/\{e\}),\underline{M}(G/\{e\}))$ is an abelian group isomorphic to $M$. From now on we will identify the group $\mathrm{Mor}(G/\{e\},G/\{e\})$ with $G$ and $\mathrm{Mor}(\underline{M}(G/\{e\}),\underline{M}(G/\{e\}))$ with $M$. Using the properties of abelian extensions of the orbit category, it is not difficult to show that $\Gamma$ is a group, with group law defined by $\bar{\varphi}\cdot\bar{\psi}= \bar{\psi} \circ \bar{\varphi}$ such that $0 \rightarrow M \xrightarrow{i} \Gamma \rightarrow G \rightarrow 1$ is a short exact sequence of groups that induces a $G$-module structure on $M$ which coincides with the original one.
\end{proof}
\begin{notation}\rm ${\;}$
\begin{enumerate}[(a)]
\item From now on, we identify $\Gamma$ with $\mathrm{Mor}(E(\{e\}),E(\{e\}))$, $\mathrm{Mor}(G/\{e\},G/\{e\})$ with $G$ and $\mathrm{Mor}(\underline{M}(G/\{e\}),\underline{M}(G/\{e\}))$ with $M$. Therefore, an element $x\in G$ will sometimes be viewed as a morphism $x: G/\{e\} \xrightarrow{x} G/\{e\}$ or vice versa. We use similar notation for elements of $M$ and $\Gamma$.
\medskip
\item To keep the notation as light as possible, we use $\pi$ to denote morphisms on the orbit category extension and on the group extension. The distinction will be clear from the context. When we are working on the group extension, we will drop the map $i$ from the notation.
\end{enumerate}
\end{notation}
For each $H \in \mathfrak{F}$, fix a morphism $\varphi_{\scriptscriptstyle H}\in\mathrm{Mor}(E(\{e\}),E(H))$ such that $$\pi(\varphi_{\scriptscriptstyle H})=(id_{\scriptscriptstyle H}:G/\{e\} \rightarrow G/H: \{e\} \mapsto H).$$
Since the map $\underline{M}(id_{\scriptscriptstyle H})$ is just the inclusion $M^{\scriptscriptstyle H} \rightarrow M$ and $\mathfrak{M} \xrightarrow{i} \mathfrak{E} \xrightarrow{\pi} \orb$ is an extension that induces the original $\orb$-module structure on $\underline{M}$, we conclude that $\varphi_{\scriptscriptstyle H} \circ i(\bar{m})=i(\bar{m}) \circ \varphi_{\scriptscriptstyle H}$ for all $m \in M^{\scriptscriptstyle H}$ and all $H \in \mathfrak{F}$.
\begin{definition} \label{def: action} \rm For each $H \in \mathfrak{F}$, define the left group action:
\[ \alpha_{\scriptscriptstyle H} : \Gamma \times \mathrm{Mor}(E(\{e\}),E(H)) \rightarrow \mathrm{Mor}(E(\{e\}),E(H)) : (\theta,\varphi) \mapsto \varphi \circ \theta. \]
Let $\Gamma_{\scriptscriptstyle H}$ be the stabilizer of $\varphi_{\scriptscriptstyle H}$ under this action. Let $\mathfrak{F}$ be the family of subgroups of $\Gamma$ that contains all the stabilizers $\Gamma_{\scriptscriptstyle H}$.
\end{definition}
We observe that $\Gamma_{\{e\}}=\{id\}$, all the elements of $M^{\scriptscriptstyle H}$ commute with the elements of $\Gamma_{\scriptscriptstyle H}$ and  $\Gamma_{\scriptscriptstyle H} \cap M=\{e\}$.
\begin{lemma} \label{lemma: key lemma2} The collection of subgroups $\{\Gamma_{\scriptscriptstyle H}\}_{H \in \mathfrak{F}}$ forms an $\mathfrak{F}$-structure on:
\[0 \rightarrow M \rightarrow \Gamma \xrightarrow{\pi} G \rightarrow 1.\]
\end{lemma}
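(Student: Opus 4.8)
The plan is to verify directly the two axioms of Definition~\ref{def: fstructure} for the collection $\{\Gamma_{\scriptscriptstyle H}\}_{H\in\mathfrak{F}}$ of Definition~\ref{def: action}, working throughout with the left action $\alpha_{\scriptscriptstyle H}$ in an orbit--stabilizer fashion so that every group-theoretic manipulation in $\Gamma$ is converted into a statement about composition of morphisms and I never have to fight the order-reversal coming from the group law $\bar\varphi\cdot\bar\psi=\bar\psi\circ\bar\varphi$ on $\Gamma$. The two structural inputs I expect to use repeatedly are property (v) in the definition of an abelian right extension (which says that the subgroup $M=\mathrm{Mor}(\underline{M}(G/\{e\}),\underline{M}(G/\{e\}))\le\Gamma$ acts simply transitively on each $\pi$-fibre inside $\mathrm{Mor}(E(\{e\}),E(H))$) and the freeness of that $M$-action, equivalently the already recorded fact $\Gamma_{\scriptscriptstyle H}\cap M=\{e\}$ coming from Remark~(a) after that definition. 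I will also repeatedly use that the composite $id_{\scriptscriptstyle H}\circ g$ (for $g\in G=\mathrm{Mor}(G/\{e\},G/\{e\})$) is the morphism $\{e\}\mapsto gH$, so that it equals $id_{\scriptscriptstyle H}$ precisely when $g\in H$.

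For axiom (i): injectivity of $\pi|_{\Gamma_{\scriptscriptstyle H}}$ is immediate from $\Gamma_{\scriptscriptstyle H}\cap M=\{e\}$. To see $\pi(\Gamma_{\scriptscriptstyle H})\subseteq H$, apply $\pi$ to the relation $\varphi_{\scriptscriptstyle H}\circ\theta=\varphi_{\scriptscriptstyle H}$ defining $\theta\in\Gamma_{\scriptscriptstyle H}$: this yields $id_{\scriptscriptstyle H}\circ\pi(\theta)=id_{\scriptscriptstyle H}$, forcing $\pi(\theta)\in H$. Conversely, given $h\in H$, use Lemma~\ref{lemma: key lemma1} to pick a lift $\tilde h\in\Gamma$ of $h$; then $\pi(\varphi_{\scriptscriptstyle H}\circ\tilde h)=id_{\scriptscriptstyle H}\circ h=id_{\scriptscriptstyle H}=\pi(\varphi_{\scriptscriptstyle H})$, so by property (v) there is $\bar m\in M$ with $\varphi_{\scriptscriptstyle H}\circ i(\bar m)=\varphi_{\scriptscriptstyle H}\circ\tilde h$, and then $\theta:=i(\bar m)^{\scriptscriptstyle -1}\tilde h$ satisfies $\alpha_{\scriptscriptstyle H}(\theta,\varphi_{\scriptscriptstyle H})=\varphi_{\scriptscriptstyle H}$ and $\pi(\theta)=h$. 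Hence $\pi\colon\Gamma_{\scriptscriptstyle H}\xrightarrow{\cong}H$.

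For axiom (ii): suppose $x^{\scriptscriptstyle -1}Hx\subseteq K$, so the orbit-category morphism $x_*=(G/H\xrightarrow{x}G/K)$ exists, and a direct check gives $x_*\circ id_{\scriptscriptstyle H}=id_{\scriptscriptstyle K}\circ x$ in $\mathrm{Mor}(G/\{e\},G/K)$. Choose lifts $\tilde x_*\in\mathrm{Mor}(E(H),E(K))$ of $x_*$ and $\tilde x\in\Gamma$ of $x$; then $\pi(\tilde x_*\circ\varphi_{\scriptscriptstyle H})=\pi(\varphi_{\scriptscriptstyle K}\circ\tilde x)$, so property (v) (applied to morphisms out of $E(\{e\})$) furnishes $\bar m\in M$ with $\varphi_{\scriptscriptstyle K}\circ(\tilde x\circ i(\bar m))=\tilde x_*\circ\varphi_{\scriptscriptstyle H}$. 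Set $y:=\tilde x\circ i(\bar m)$; since $i(\bar m)\in M=\ker\pi$ we get $\pi(y)=x$, and by construction $\alpha_{\scriptscriptstyle K}(y,\varphi_{\scriptscriptstyle K})=\tilde x_*\circ\varphi_{\scriptscriptstyle H}$. Now for $\theta\in\Gamma_{\scriptscriptstyle H}$ one has $\alpha_{\scriptscriptstyle K}(\theta,\tilde x_*\circ\varphi_{\scriptscriptstyle H})=\tilde x_*\circ(\varphi_{\scriptscriptstyle H}\circ\theta)=\tilde x_*\circ\varphi_{\scriptscriptstyle H}$, so $\theta$ also fixes $\alpha_{\scriptscriptstyle K}(y,\varphi_{\scriptscriptstyle K})$; applying $\alpha_{\scriptscriptstyle K}(y^{\scriptscriptstyle -1},-)$ to the identity $\alpha_{\scriptscriptstyle K}(\theta y,\varphi_{\scriptscriptstyle K})=\alpha_{\scriptscriptstyle K}(y,\varphi_{\scriptscriptstyle K})$ gives $\alpha_{\scriptscriptstyle K}(y^{\scriptscriptstyle -1}\theta y,\varphi_{\scriptscriptstyle K})=\varphi_{\scriptscriptstyle K}$, i.e. $y^{\scriptscriptstyle -1}\theta y\in\Gamma_{\scriptscriptstyle K}$. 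Thus $y^{\scriptscriptstyle -1}\Gamma_{\scriptscriptstyle H}y\subseteq\Gamma_{\scriptscriptstyle K}$, establishing (ii).

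All of this is essentially routine once everything is expressed through the actions $\alpha_{\scriptscriptstyle H}$; the only genuine care required is the bookkeeping between categorical composition in $\mathfrak{E}$ and multiplication/conjugation in the group $\Gamma$ under the reversal $\bar\varphi\cdot\bar\psi=\bar\psi\circ\bar\varphi$, and the small point that property (v) applied to morphisms out of $E(\{e\})$ produces an element of $M=M(G/\{e\})$, which is exactly the correction needed in the definition of $y$ (there is no need to pass to $M^{\scriptscriptstyle H}$, nor to invoke the commutation relation $\varphi_{\scriptscriptstyle H}\circ i(\bar m)=i(\bar m)\circ\varphi_{\scriptscriptstyle H}$ for $m\in M^{\scriptscriptstyle H}$).
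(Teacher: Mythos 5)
Your proof is correct and follows essentially the same route as the paper: for (i) both produce a lift $\theta$ of $h\in H$ stabilizing $\varphi_{\scriptscriptstyle H}$ by correcting an arbitrary lift with the unique $\bar m\in M$ supplied by property (v), and for (ii) both choose a lift $\psi$ of the morphism $G/H\xrightarrow{x}G/K$, produce $y\in\pi^{\scriptscriptstyle -1}(x)$ with $\varphi_{\scriptscriptstyle K}\circ y=\psi\circ\varphi_{\scriptscriptstyle H}$, and conclude $y^{\scriptscriptstyle -1}\Gamma_{\scriptscriptstyle H}y\subseteq\Gamma_{\scriptscriptstyle K}$ (you phrase this as an orbit–stabilizer transport $\mathrm{Stab}(y\cdot\varphi_{\scriptscriptstyle K})=y\,\Gamma_{\scriptscriptstyle K}\,y^{\scriptscriptstyle -1}$, the paper does the equivalent direct computation). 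Your inclusion $\pi(\Gamma_{\scriptscriptstyle H})\subseteq H$ is made more explicit than in the paper, which is a small but welcome addition.
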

\begin{proof}
Let us first check property (i) of the definition of $\mathfrak{F}$-structure. Fix  $H \in \mathfrak{F}$ and  $h \in H$. For the morphism: $${h}: G/\{e\} \rightarrow G/\{e\} : \{e\} \mapsto h\{e\},$$ we can find  $\omega \in \Gamma$ such that $\pi(\omega)={h}$. Since $\pi(\varphi_{\scriptscriptstyle H} \circ \omega)=id_{\scriptscriptstyle H}=\pi(\varphi_{\scriptscriptstyle H})$, we can find  $m \in M$ such that $\varphi_{\scriptscriptstyle H} \circ \omega \circ i(\bar{m})=\varphi_{\scriptscriptstyle H}$. Hence, for $\theta=\omega\circ i(\bar{m})$, we have  $\theta \in \Gamma_{\scriptscriptstyle H}$ such that $\pi(\theta)={h}$. We conclude that $\pi(\Gamma_{\scriptscriptstyle H})=H$. Since the kernel of $\pi:\Gamma \rightarrow G$ is $M$ and $\Gamma_{\scriptscriptstyle H} \cap M = \{e\}$, it follows that $\pi:\Gamma_{\scriptscriptstyle H} \xrightarrow{\cong} H$. Hence, property (i) is satisfied.

To check property (ii), let $x \in G$ and $H,K \in \mathfrak{F}$ and assume that $x^{\scriptscriptstyle -1}H x\subseteq K$. This means that we have a morphism: $$\varphi: G/H \rightarrow G/K : H \mapsto xK \in \orb.$$ By the surjectivity of $\pi: \mathfrak{E} \rightarrow \orb$ on morphisms, we find a $\psi \in \mathrm{Mor}(E(H),E(K))$ such that $\pi(\psi)=\varphi$. It follows that there exists $\omega \in \Gamma$ such that $\pi(\omega)=x$ and  $\pi(\varphi_{\scriptscriptstyle K} \circ \omega)=\pi(\psi \circ \varphi_{\scriptscriptstyle H})$. Hence, there is  $m \in M$ such that $\psi \circ \varphi_{\scriptscriptstyle H}= \varphi_{\scriptscriptstyle K} \circ \omega \circ i(\bar{m})$. Setting $\theta=\omega \circ i(\bar{m})$, we find $\theta \in \Gamma$ such that $\pi(\theta)=x$ and $\psi \circ \varphi_{\scriptscriptstyle H}= \varphi_{\scriptscriptstyle K} \circ \theta$. We claim that $\theta^{\scriptscriptstyle -1}\Gamma_{\scriptscriptstyle H}\theta \subseteq \Gamma_{\scriptscriptstyle K}$. Indeed, for an arbitrary element $h \in \Gamma_{\scriptscriptstyle H}$, we compute:
\begin{eqnarray*}
\varphi_{\scriptscriptstyle K} \circ (\theta^{\scriptscriptstyle -1}h\theta)&=& \varphi_{\scriptscriptstyle K} \circ \theta \circ h \circ \theta^{\scriptscriptstyle -1} \\
&=& \psi \circ \varphi_{\scriptscriptstyle H} \circ h \circ \theta^{\scriptscriptstyle -1} \\
&=& \psi \circ \varphi_{\scriptscriptstyle H} \circ \theta^{\scriptscriptstyle -1} \\
&=& \varphi_{\scriptscriptstyle K} \circ \theta \circ \theta^{\scriptscriptstyle -1} \\
&=& \varphi_{\scriptscriptstyle K},
\end{eqnarray*}
which proves the claim. This proves that $\{\Gamma_{\scriptscriptstyle H}\}_{H \in \mathfrak{F}}$ forms an $\mathfrak{F}$-structure.
\end{proof}
The following lemma reveals an important property of $\mathfrak{E}$.
\begin{lemma}\label{lemma: key lemma} Let $\psi_1,\psi_2 \in \mathrm{Mor}(E(H),E(K))$ for some $H,K \in \mathfrak{F}$. If $\psi_1 \circ \varphi_{\scriptscriptstyle H} = \psi_2 \circ \varphi_{\scriptscriptstyle H}$, then $\psi_1=\psi_2$.
\end{lemma}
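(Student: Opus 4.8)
The plan is to derive the claim from property (v) in the definition of an abelian right extension, combined with the commutation relation between $\varphi_H$ and the morphisms $i(\bar m)$ for $m \in M^H$ that was recorded just above Definition \ref{def: action}.

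First I would apply the functor $\pi$ to the hypothesis, obtaining $\pi(\psi_1)\circ \mathrm{id}_H = \pi(\psi_2)\circ \mathrm{id}_H$ in $\orb$, where $\mathrm{id}_H = \pi(\varphi_H)\colon G/\{e\}\to G/H$ is the canonical projection. Since $\mathrm{id}_H$ is surjective as a map of $G$-sets, it is an epimorphism in $\orb$; concretely, writing $\pi(\psi_j)$ as $G/H\to G/K\colon H\mapsto x_j K$, one has $\pi(\psi_j)\circ\mathrm{id}_H\colon \{e\}\mapsto x_jK$, so the equality forces $x_1K = x_2K$ and hence $\pi(\psi_1)=\pi(\psi_2)$.

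Having $\pi(\psi_1)=\pi(\psi_2)$, property (v) of abelian right extensions yields a unique $\bar m_H\in\mathrm{Mor}(\underline M(G/H),\underline M(G/H))=\mathrm{Mor}(M^H,M^H)$ with $\psi_1\circ i(\bar m_H)=\psi_2$, and it remains to show $m_H=0$. Precomposing this identity with $\varphi_H$ and invoking $i(\bar m_H)\circ\varphi_H = \varphi_H\circ i(\bar m_H)$ (where the left-hand $i(\bar m_H)$ is a morphism of $E(H)$ and the right-hand one a morphism of $E(\{e\})$, with $m_H$ viewed inside $M$ via $\underline M(\mathrm{id}_H)$), I obtain
\[ \psi_1\circ\varphi_H\circ i(\bar m_H) = \psi_2\circ\varphi_H = \psi_1\circ\varphi_H, \]
that is, $\chi\circ i(\bar m_H) = \chi$ with $\chi := \psi_1\circ\varphi_H\in\mathrm{Mor}(E(\{e\}),E(K))$. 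Applying the first remark following the definition of abelian right extension (with $\{e\}$ and $K$ playing the roles of $H$ and $K$), we conclude $m_H = 0$ in $M=\underline M(G/\{e\})$, hence $m_H=0$ in $M^H$, hence $\bar m_H=\mathrm{id}$ and $\psi_2 = \psi_1\circ i(\mathrm{id})=\psi_1$.

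The only delicate point is to keep straight over which object each copy of $i(\bar m_H)$ lives — over $E(H)$ versus over $E(\{e\})$ — and to check that the commutation relation links these two exactly as needed; beyond this bookkeeping the argument is a short diagram chase, so I do not anticipate a genuine obstacle.
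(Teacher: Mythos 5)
Your proof is correct and takes essentially the same route as the paper: apply $\pi$ to deduce $\pi(\psi_1)=\pi(\psi_2)$, invoke property (v) to produce the unique $\bar m_H$ with $\psi_1\circ i(\bar m_H)=\psi_2$, then precompose with $\varphi_H$ and use the commutation $\varphi_H\circ i(\bar m_H)=i(\bar m_H)\circ\varphi_H$ together with uniqueness in (v) (equivalently Remark (a)) to conclude $\bar m_H=\mathrm{id}$. The extra care you take tracking over which object each $i(\bar m_H)$ lives is exactly the bookkeeping the paper glosses over.
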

\begin{proof}
Suppose $\psi_1 \circ \varphi_{\scriptscriptstyle H} = \psi_2 \circ \varphi_{\scriptscriptstyle H}$. Applying $\pi$, we obtain $\pi(\psi_1) \circ id_{\scriptscriptstyle H} = \pi(\psi_2) \circ id_{\scriptscriptstyle H}$. In an orbit category, this implies that $\pi(\psi_1)=\pi(\psi_2)$. Hence, we find an $m \in M^{\scriptscriptstyle H}$ such that $\psi_2 = \psi_1 \circ i(\bar{m})$. We now compute
$\psi_1 \circ \varphi_{\scriptscriptstyle H}  =  \psi_2 \circ \varphi_{\scriptscriptstyle H}
 =  \psi_1 \circ i(\bar{m}) \circ \varphi_{\scriptscriptstyle H}
 =  \psi_1 \circ \varphi_{\scriptscriptstyle H} \circ i(\bar{m})$.
So, uniqueness yields $i(\bar{m})=id$. We conclude that $\psi_1=\psi_2$.
\end{proof}
We can now prove the main theorem of this section.
\begin{theorem} \label{th: special ext}
Let $G$ be a group, let $\mathfrak{F}$ be a family of subgroups of $G$ that contains the trivial subgroup and suppose $M \in  G\mbox{-mod}$. There is a one-to-one correspondence between $\mathrm{Ext}_{\mathfrak{F}}(G,\underline{M})$ and $\mathrm{Str}_{\mathfrak{F}}(G,M)$, such that the class of split extension of $\orb$ by $\underline{M}$ corresponds to the class of split $\mathfrak{F}$-structures on $(G,M)$.
\end{theorem}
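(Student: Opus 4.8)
The plan is to exhibit an explicit inverse to the map $\Psi\colon\mathrm{Str}_{\mathfrak{F}}(G,M)\to\mathrm{Ext}_{\mathfrak{F}}(G,\underline{M})$ supplied by Proposition~\ref{prop: fstruc to ext}. Given an abelian right extension $\mathfrak{M}\xrightarrow{i}\mathfrak{E}\xrightarrow{\pi}\orb$, Lemma~\ref{lemma: key lemma1} and Lemma~\ref{lemma: key lemma2} already attach to it an abelian group extension $0\to M\to\Gamma\xrightarrow{\pi}G\to 1$ with $\Gamma=\mathrm{Mor}(E(\{e\}),E(\{e\}))$, together with an $\mathfrak{F}$-structure $\{\Gamma_{\scriptscriptstyle H}\}_{H\in\mathfrak{F}}$, where $\Gamma_{\scriptscriptstyle H}=\mathrm{Stab}_{\Gamma}(\varphi_{\scriptscriptstyle H})$ for a fixed morphism $\varphi_{\scriptscriptstyle H}$ over $id_{\scriptscriptstyle H}$ under the action $\alpha_{\scriptscriptstyle H}$ of Definition~\ref{def: action}. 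Setting $\Phi(\mathfrak{E})=\{\Gamma_{\scriptscriptstyle H}\}$, the theorem reduces to four points: (1) $\Phi$ descends to equivalence classes; (2) $\Phi\circ\Psi=\mathrm{id}$; (3) $\Psi\circ\Phi=\mathrm{id}$; and (4) the split classes correspond.

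Step (2) is a direct computation. Feeding an $\mathfrak{F}$-structure $\{\Gamma_{\scriptscriptstyle H}\}$ on $0\to M\to\Gamma\to G\to 1$ into $\Psi$ gives $\mathcal{O}_{\mathfrak{F}}(\Gamma)$ with objects $\Gamma/\Gamma_{\scriptscriptstyle H}$; since $\Gamma_{\{e\}}=\{e\}$, the group $\mathrm{Mor}(\Gamma/\Gamma_{\{e\}},\Gamma/\Gamma_{\{e\}})$ is identified with $\Gamma$ via $z\mapsto(\Gamma_{\{e\}}\mapsto z\Gamma_{\{e\}})$, compatibly with $M$ and $G$, and with the natural choice $\varphi_{\scriptscriptstyle H}=(\Gamma_{\{e\}}\mapsto\Gamma_{\scriptscriptstyle H})$ one checks that $\mathrm{Stab}_{\Gamma}(\varphi_{\scriptscriptstyle H})=\{z\in\Gamma\mid z\Gamma_{\scriptscriptstyle H}=\Gamma_{\scriptscriptstyle H}\}=\Gamma_{\scriptscriptstyle H}$, so $\Phi\Psi$ returns the original $\mathfrak{F}$-structure. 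For step (1), a second choice $\varphi_{\scriptscriptstyle H}'=\varphi_{\scriptscriptstyle H}\circ i(\bar m_{\scriptscriptstyle H})$ with $m_{\scriptscriptstyle H}\in M$ (the element supplied by property (v)) gives a $\mathrm{Stab}_{\Gamma}(\varphi_{\scriptscriptstyle H}')$ that is an $M$-conjugate of $\Gamma_{\scriptscriptstyle H}$, and an equivalence of extensions transports the $\varphi_{\scriptscriptstyle H}$'s and hence their stabilizers, so it induces an equivalence of the resulting $\mathfrak{F}$-structures; thus $\Phi$ is well defined on classes.

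Step (3) is the heart of the argument. Starting from $\mathfrak{E}$ with chosen $\{\varphi_{\scriptscriptstyle H}\}$, form $\Gamma,\Gamma_{\scriptscriptstyle H}$ as above and let $\mathfrak{M}\xrightarrow{i'}\mathcal{O}_{\mathfrak{F}}(\Gamma)\xrightarrow{\pi'}\orb$ be $\Psi(\Phi(\mathfrak{E}))$; I will build an equivalence functor $\Theta\colon\mathcal{O}_{\mathfrak{F}}(\Gamma)\to\mathfrak{E}$ by $\Theta(\Gamma/\Gamma_{\scriptscriptstyle H})=E(H)$ and, for a morphism $f\colon\Gamma_{\scriptscriptstyle H}\mapsto\omega\Gamma_{\scriptscriptstyle K}$, by letting $\Theta(f)$ be the morphism $E(H)\to E(K)$ characterized by $\Theta(f)\circ\varphi_{\scriptscriptstyle H}=\varphi_{\scriptscriptstyle K}\circ\omega$. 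Uniqueness of $\Theta(f)$ is exactly Lemma~\ref{lemma: key lemma}; independence of the coset representative $\omega$ follows from $\varphi_{\scriptscriptstyle K}\circ k=\varphi_{\scriptscriptstyle K}$ for $k\in\Gamma_{\scriptscriptstyle K}$; functoriality $\Theta(g\circ f)=\Theta(g)\circ\Theta(f)$, $\Theta(\mathrm{id})=\mathrm{id}$, as well as $\Theta\circ i'=i$ and $\pi\circ\Theta=\pi'$, are then obtained by composing with $\varphi_{\scriptscriptstyle H}$ and reapplying Lemma~\ref{lemma: key lemma}. The one delicate point, and the main obstacle, is the existence of $\Theta(f)$: one must show that $\varphi_{\scriptscriptstyle K}\circ\omega$ lies in the image of $-\circ\varphi_{\scriptscriptstyle H}\colon\mathrm{Mor}(E(H),E(K))\to\mathrm{Mor}(E(\{e\}),E(K))$ whenever $\omega^{-1}\Gamma_{\scriptscriptstyle H}\omega\subseteq\Gamma_{\scriptscriptstyle K}$. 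This condition translates, via $\Gamma_{\scriptscriptstyle K}=\mathrm{Stab}(\varphi_{\scriptscriptstyle K})$, into $(\varphi_{\scriptscriptstyle K}\circ\omega)\circ h=\varphi_{\scriptscriptstyle K}\circ\omega$ for all $h\in\Gamma_{\scriptscriptstyle H}$; conversely, using surjectivity of $\pi$ on morphisms and property (v) one writes $\varphi_{\scriptscriptstyle K}\circ\omega=\psi_0\circ\varphi_{\scriptscriptstyle H}\circ i(\bar m)$ for some $\psi_0$ and some $m\in M$, and then combining this invariance with the conjugation identity $i(\bar m)\circ h=h\circ i(\overline{\pi(h)\cdot m})$ in $\Gamma$, the relation $\varphi_{\scriptscriptstyle H}\circ h=\varphi_{\scriptscriptstyle H}$, and the cancellation in property (v), forces $\pi(h)\cdot m=m$ for all $h\in\Gamma_{\scriptscriptstyle H}$, hence $m\in M^{\scriptscriptstyle H}$, so that $\varphi_{\scriptscriptstyle H}\circ i(\bar m)=i(\bar m)\circ\varphi_{\scriptscriptstyle H}$ gives the factorization $\varphi_{\scriptscriptstyle K}\circ\omega=(\psi_0\circ i(\bar m))\circ\varphi_{\scriptscriptstyle H}$. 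Conceptually this shows that $\mathrm{Mor}_{\mathfrak{E}}(E(H),E(K))$ and $\mathrm{Mor}_{\mathcal{O}_{\mathfrak{F}}(\Gamma)}(\Gamma/\Gamma_{\scriptscriptstyle H},\Gamma/\Gamma_{\scriptscriptstyle K})$ are the same $M^{\scriptscriptstyle H}$-bundle over $\mathrm{Mor}_{\orb}(G/H,G/K)$, and $\Theta$ is the induced identification.

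Once $\Theta$ is seen to be an equivalence of extensions we obtain $\Psi\circ\Phi=\mathrm{id}$, and together with $\Phi\circ\Psi=\mathrm{id}$ this proves $\Psi$ is a bijection. Finally, for (4): Proposition~\ref{prop: fstruc to ext} shows $\Psi$ carries a split $\mathfrak{F}$-structure to a split abelian right extension, and since all split $\mathfrak{F}$-structures on $(G,M)$ are mutually equivalent and, by Proposition~\ref{lemma: unique split ext}, all split abelian right extensions of $\orb$ by $\underline{M}$ are mutually equivalent, the single split class on each side must correspond under the bijection $\Psi$, which is the remaining assertion.
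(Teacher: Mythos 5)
Your proof is correct and follows essentially the same route as the paper's: you invoke Lemmas~\ref{lemma: key lemma1}, \ref{lemma: key lemma2} and \ref{lemma: key lemma} to build the functor $\Theta$ (the paper's $\Lambda$), and the decisive computation that the element $m$ produced by property (v) lies in $M^{\scriptscriptstyle H}$ is the same calculation the paper carries out. The only difference is organizational: you package the argument as an explicit two-sided inverse $\Phi$ (with $\Phi\Psi=\mathrm{id}$, $\Psi\Phi=\mathrm{id}$, plus a well-definedness check), whereas the paper proves injectivity of $\Psi$ directly (an equivalence of the extensions $\mathcal{O}_{\mathfrak{F}}(\Gamma_1)\to\mathcal{O}_{\mathfrak{F}}(\Gamma_2)$, restricted to the morphisms at $\{e\}$, gives the required equivalence of $\mathfrak{F}$-structures) and then surjectivity via the functor $\Lambda$; these are the same technical ingredients reassembled.
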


\begin{proof}To prove the theorem it suffices to show that the map:
\[ \Psi: \mathrm{Str}_{\mathfrak{F}}(G,M) \rightarrow \mathrm{Ext}_{\mathfrak{F}}(G,\underline{M}), \]
constructed earlier, is a bijection. Let us first prove that $\Psi$ is injective. \\
Suppose we have  $\mathfrak{F}$-structures $\{\Gamma_{1_{\scriptscriptstyle H}}\}_{H \in \mathfrak{F}}$ and  $\{\Gamma_{2_{\scriptscriptstyle H}}\}_{H \in \mathfrak{F}}$ on extensions: $$M \rightarrow \Gamma_1 \xrightarrow{\pi_1} G \mbox{\; and \;} M \rightarrow \Gamma_2 \xrightarrow{\pi_2} G,$$ respectively, such that the induced abelian right extensions: $$\mathfrak{M} \xrightarrow{i_1} \mathcal{O}_{\mathfrak{F}}(\Gamma_1) \xrightarrow{\pi_1} \orb \mbox{\; and \;} \mathfrak{M} \xrightarrow{i_2} \mathcal{O}_{\mathfrak{F}}(\Gamma_2) \xrightarrow{\pi_2} \orb$$ are equivalent via a functor $\Theta$.
Viewing $\Theta$ as a map from $\Gamma_1=\mathrm{Mor}(\Gamma_1/\{e\},\Gamma_1/\{e\})$ to $\Gamma_2=\mathrm{Mor}(\Gamma_2/\{e\},\Gamma_2/\{e\})$, it becomes a group homomorphism $\Theta:\Gamma_1 \rightarrow \Gamma_2$ such that the diagram:
\[ \xymatrix{  0 \ar[r] & M \ar[r] \ar[d]^{id} & \Gamma_1 \ar[r]^{\pi_1} \ar[d]^{\Theta} & G \ar[d]^{id}  \ar[r]& 1   \\
 0 \ar[r] & M \ar[r]  & \Gamma_2 \ar[r]^{\pi_2}  & G \ar[r] & 1 } \]
commutes. 
Denote $\Gamma_i/\{e\} \rightarrow \Gamma_i/\Gamma_{i_{\scriptscriptstyle H}} : \{e\} \mapsto \Gamma_{i_{\scriptscriptstyle H}}$ by $\varphi_{i_{\scriptscriptstyle H}}$, for $i=1,2$. 
We observe that for each $H \in \mathfrak{F}$, the functor $\Theta: \mathcal{O}_{\mathfrak{F}}(\Gamma_1) \rightarrow  \mathcal{O}_{\mathfrak{F}}(\Gamma_2)$ maps $\varphi_{{1_{\scriptscriptstyle H}}}$ to $\varphi_{2_{\scriptscriptstyle H}} \circ i_2( \bar{m}_{\scriptscriptstyle H})$, for some $m_{\scriptscriptstyle H} \in M$. 
Now consider the action from Definition \ref{def: action}, for $i=1,2$, and note that the stabilizer group of $\varphi_{i_{\scriptscriptstyle H}}$ is $\Gamma_{i_{\scriptscriptstyle H}}$. One easily checks that $\Theta$ maps the  stabilizer  of $\varphi_{1_{\scriptscriptstyle H}}$  onto the stabilizer of $\varphi_{2_{\scriptscriptstyle H}}\circ i_2( \bar{m}_{\scriptscriptstyle H})$ which is $m_{\scriptscriptstyle H}^{\scriptscriptstyle -1}\Gamma_{2_{\scriptscriptstyle H}}m_{\scriptscriptstyle H}$. We conclude that $\Theta(\Gamma_{1_{\scriptscriptstyle H}})=m_{\scriptscriptstyle H}^{\scriptscriptstyle -1}\Gamma_{2_{\scriptscriptstyle H}}m_{\scriptscriptstyle H}$ for all $H \in \mathfrak{F}$. Hence, the given two $\mathfrak{F}$-structures are equivalent. This proves that $\Psi$ is injective. \\
To prove that $\Psi$ is surjective, choose an abelian right extension of $\orb$ by $\underline{M}$ denoted:
\begin{equation} \label{eq: ext} \mathfrak{M} \xrightarrow{i} \mathfrak{E} \xrightarrow{\pi} \orb. \end{equation}
In Lemmas \ref{lemma: key lemma1} and \ref{lemma: key lemma2}, we saw that this entails an $\mathfrak{F}$-structure $\{\Gamma_{\scriptscriptstyle H}\}_{H \in \mathfrak{F}}$ on
\[ 0 \rightarrow M \rightarrow \Gamma \xrightarrow{\pi} G \rightarrow 1. \]
By definition, $\Psi$ maps the equivalence class of this $\mathfrak{F}$-structure to the equivalence class of the extension:
\[  \mathfrak{M} \xrightarrow{i} \mathcal{O}_{\mathfrak{F}}(\Gamma) \xrightarrow{\pi} \orb.   \]
We prove that this extension is equivalent to (\ref{eq: ext}).

Consider a morphism: $$\Gamma/\Gamma_{\scriptscriptstyle H} \rightarrow \Gamma/\Gamma_{\scriptscriptstyle K} : \Gamma_{\scriptscriptstyle H} \mapsto \theta \Gamma_{\scriptscriptstyle K}.$$
So, we have $\theta^{\scriptscriptstyle -1}\Gamma_{\scriptscriptstyle H} \theta \subseteq \Gamma_{\scriptscriptstyle K}$. Applying $\pi$ to this equation, we obtain $\pi(\theta)^{\scriptscriptstyle -1}H \pi(\theta)\subseteq K$. Hence, we have a well-defined morphism: $$\varphi: G/H \rightarrow G/K : H \mapsto \pi(\theta)K.$$
By surjectivity, we find  $\phi \in \mathrm{Mor}(E(H),E(K))$ such that $\pi(\phi)=\varphi$. This implies that $\pi(\phi \circ \varphi_{\scriptscriptstyle H})=\pi(\varphi_{\scriptscriptstyle K} \circ \theta)$. Hence, we find a unique $m \in M$ such that $\phi \circ \varphi_{\scriptscriptstyle H} \circ i(\bar{m})=\varphi_{\scriptscriptstyle K} \circ \theta$. We now claim that $m \in M^{\scriptscriptstyle H}$.
To prove this, let $h \in H$. Because $\Gamma_{\scriptscriptstyle H}$ surjects onto $H$, there exists $\alpha \in \Gamma_{\scriptscriptstyle H}$ such that $\pi(\alpha)=h$. We need to show that $h\cdot m=m$. Recall that $i(\bar{h \cdot m})=\alpha^{\scriptscriptstyle -1} \circ i(\bar{m}) \circ \alpha$. Hence, by uniqueness, it suffices to show that $ \phi \circ \varphi_{\scriptscriptstyle H} \circ \alpha^{\scriptscriptstyle -1} \circ i(\bar{m}) \circ \alpha=\varphi_{\scriptscriptstyle K} \circ \theta$. Using the fact  $\alpha^{\scriptscriptstyle -1} \in \Gamma_{\scriptscriptstyle H}$,  we compute $ \phi \circ \varphi_{\scriptscriptstyle H} \circ \alpha^{\scriptscriptstyle -1} \circ i(\bar{m}) \circ \alpha = \phi \circ \varphi_{\scriptscriptstyle H} \circ i(\bar{m}) \circ \alpha = \varphi_{\scriptscriptstyle K} \circ \theta \circ \alpha$. Because $\theta^{\scriptscriptstyle -1}\Gamma_{\scriptscriptstyle H} \theta \subseteq \Gamma_{\scriptscriptstyle K}$, there exists $\beta \in \Gamma_{\scriptscriptstyle K}$ such that $\beta \circ \theta = \theta \circ \alpha$. It follows that $ \phi \circ \varphi_{\scriptscriptstyle H} \circ \alpha^{\scriptscriptstyle -1} \circ i(\bar{m}) \circ \alpha = \varphi_{\scriptscriptstyle K} \circ \beta \circ \theta= \varphi_{\scriptscriptstyle K} \circ \theta$.  This proves that $m \in M^{\scriptscriptstyle H}$. Therefore, we have that $ \phi \circ i(\bar{m}) \circ \varphi_{\scriptscriptstyle H}  = \phi \circ \varphi_{\scriptscriptstyle H} \circ i(\bar{m})  = \varphi_{\scriptscriptstyle K} \circ \theta$.
Setting $\psi=\phi \circ i(\bar{m})$, we obtain $\psi \in \mathrm{Mor}(E(H),E(K))$ such that $\psi \circ \varphi_{\scriptscriptstyle H}=\varphi_{\scriptscriptstyle K} \circ \theta$. By Lemma \ref{lemma: key lemma}, $\psi$ is the unique morphism with this property. We claim that the functor:
\[ \Lambda : \orbg \rightarrow \mathfrak{E}, \]
with $\Lambda(\Gamma/\Gamma_{\scriptscriptstyle H})=E(H)$ and $\Lambda(\Gamma/\Gamma_{\scriptscriptstyle H} \rightarrow \Gamma/\Gamma_{\scriptscriptstyle K} : \Gamma_{\scriptscriptstyle H} \mapsto \theta \Gamma_{\scriptscriptstyle K})=\psi$, where $\psi$ is the unique element in $\mathrm{Mor}(E(H),E(K))$ such that $\psi \circ \varphi_{\scriptscriptstyle H}=\varphi_{\scriptscriptstyle K} \circ \theta$, is a functor that will provide the desired equivalence.

To prove this, first note that the map $\Lambda$ is well defined. Indeed, if we take $\omega=\theta k=k \circ \theta$ instead of $\theta$ as a representative of $\theta\Gamma_{\scriptscriptstyle K}$ (here $k \in \Gamma_{\scriptscriptstyle K}$), then $\psi$ is still the unique morphism such that $\psi \circ \varphi_{\scriptscriptstyle H}= \varphi_{\scriptscriptstyle K} \circ \omega$ (because $\varphi_{\scriptscriptstyle K} \circ k  = \varphi_{\scriptscriptstyle K}$).

Now, consider the identity morphism $id \in \mathrm{Mor}(\Gamma/\Gamma_{\scriptscriptstyle H},\Gamma/\Gamma_{\scriptscriptstyle H})$ for some $\Gamma_{\scriptscriptstyle H} \in \mathfrak{F}$. Then $id \in \mathrm{Mor}(E(H),E(H))$ is clearly the unique morphism such that $id \circ \varphi_{\scriptscriptstyle H} = \varphi_{\scriptscriptstyle H} \circ id$. Hence, $\Lambda(id)=id$.

Finally, let $$f_1 : \Gamma/\Gamma_{\scriptscriptstyle H} \rightarrow \Gamma/\Gamma_{\scriptscriptstyle K} : \Gamma_{\scriptscriptstyle H} \mapsto \theta_1 \Gamma_{\scriptscriptstyle K} \mbox{\; and \;} f_2: \Gamma/\Gamma_{\scriptscriptstyle K} \rightarrow \Gamma/\Gamma_L : \Gamma_{\scriptscriptstyle K} \mapsto \theta_2 \Gamma_L$$ and suppose that $\psi_1 \circ \varphi_{\scriptscriptstyle H}=\varphi_{\scriptscriptstyle K} \circ \theta_1$ and $\psi_2 \circ \varphi_{\scriptscriptstyle K}=\varphi_L \circ \theta_2$. Then, clearly, it follows: $$f_2 \circ f_1 : \Gamma/\Gamma_{\scriptscriptstyle H} \rightarrow \Gamma/\Gamma_L : \Gamma_{\scriptscriptstyle H} \mapsto \theta_1\theta_2\Gamma_L$$ and we compute:
\begin{eqnarray*}
(\psi_2 \circ \psi_1) \circ \varphi_{\scriptscriptstyle H} & = & \psi_2 \circ \varphi_{\scriptscriptstyle K} \circ \theta_1 \\
& = & \varphi_L \circ \theta_2 \circ \theta_1 \\
& = & \varphi_L \circ \theta_1\theta_2.
\end{eqnarray*}
We conclude that $\Lambda(f_2 \circ f_1)=\psi_2 \circ \psi_1=\Lambda(f_2)\circ \Lambda(f_1)$. Hence, $\Lambda$ is a functor. It is now an easy exercise to finish the proof of the claim and the theorem, by showing that
\[ \xymatrix{  \mathfrak{M} \ar[r]^{i} \ar[d]^{id} & \orbg \ar[r]^{\pi} \ar[d]^{\Lambda} & \orb \ar[d]^{id}     \\
  \mathfrak{M} \ar[r]^{i}  & \mathfrak{E} \ar[r]^{\pi}  & \orb   } \]
commutes.
\end{proof}

We have shown that, up to equivalence, abelian right extensions of the orbit category $\orb$ by a fixed point functor $\underline{M}$ are always of the form $\mathfrak{M} \rightarrow \orbg \xrightarrow{\pi} \orb$ and induced by abelian group extensions $0 \rightarrow M \rightarrow \Gamma \xrightarrow{\pi} G$.
It turns out that, in some sense, the converse is also true.
\begin{proposition}  Let $G$ be a group and let $\mathfrak{F}$ be a family of subgroups that contains the trivial subgroup. Suppose $M \in \orbmod$ and consider a right abelian extension of $\orb$ by $M$, $\mathfrak{M} \xrightarrow{i} \mathfrak{E} \xrightarrow{\pi} \orb, $
where $\mathfrak{M}$ is the category associated to $\{M(G/H)_{H \in \mathfrak{F}}\}$.
If $\mathfrak{E}$ is an orbit category $\mathcal{O}_{\mathfrak{H}}(\Gamma)$  with $\{e\} \in \mathfrak{H}$ such that $ \mathcal{O}_{\mathfrak{H}}(\Gamma) \xrightarrow{\pi} \orb$ is induced by a group homomorphism $\pi: \Gamma \rightarrow G$ and $\mathfrak{F}=\{ \pi(\Gamma_{\scriptscriptstyle H}) \ | \ \Gamma_{\scriptscriptstyle H} \in \mathfrak{H} \}$,
then $M$ is isomorphic to the fixed point functor $\underline{N}$, with $N=M(G/\{e\})$.
\end{proposition}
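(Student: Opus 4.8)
The plan is to construct an explicit natural isomorphism $\lambda\colon M \Rightarrow \underline{N}$. The argument of Lemma \ref{lemma: key lemma1} applies verbatim to our (not necessarily fixed-point) $\orb$-module $M$: putting $\Gamma=\mathrm{Mor}(E(\{e\}),E(\{e\}))$ and $N=M(G/\{e\})=\mathrm{Mor}(M(G/\{e\}),M(G/\{e\}))$ produces a short exact sequence $0 \to N \to \Gamma \xrightarrow{\pi} G \to 1$ that induces a $G$-module structure on $N$. By hypothesis $\mathfrak{E}=\mathcal{O}_{\mathfrak{H}}(\Gamma)$ with $\pi$ the functor induced by a group homomorphism $\Gamma \to G$, so this $\pi$ is exactly the given one and $N=\ker\pi$. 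For each $H \in \mathfrak{F}$ write $E(H)=\Gamma/\Gamma_H$ with $\Gamma_H \in \mathfrak{H}$ and $\pi(\Gamma_H)=H$ (the subgroup $\Gamma_H$ is uniquely determined, since $\pi$ is a bijection on objects and $\mathfrak{F}=\{\pi(\Gamma_H)\mid \Gamma_H \in \mathfrak{H}\}$), and identify $\mathrm{Mor}_{\mathfrak{E}}(\Gamma/\Gamma_H,\Gamma/\Gamma_K)$ with $(\Gamma/\Gamma_K)^{\Gamma_H}$ in the usual way, so a morphism $E(H)\to E(K)$ is a coset $\gamma\Gamma_K$ with $\gamma^{-1}\Gamma_H\gamma \subseteq \Gamma_K$, and $\pi$ sends it to $\pi(\gamma)K$.

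The first real step is to read off two facts from property (v). Applied to morphisms out of $E(\{e\})=\Gamma/\{e\}$, property (v) says that $N$ acts freely and transitively on every fibre of $\pi\colon \Gamma/\Gamma_H \to G/H$, where $n\in N$ acts by $\varphi\mapsto\varphi\circ i(\bar n)$; a direct check shows that this action is left translation by $n$ on $\Gamma/\Gamma_H$. Freeness at the coset $\Gamma_H$ forces $N\cap\Gamma_H=\{e\}$, whence $\pi\colon \Gamma_H \xrightarrow{\cong} H$ and the $G$-action on $N$ is $g\cdot n=\tilde g n\tilde g^{-1}$ for any lift $\tilde g$ of $g$. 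A short coset computation, using $N\cap\Gamma_H=\{e\}$ and normality of $N$, then shows that the morphisms $E(H)\to E(H)$ lying over $\mathrm{id}_{G/H}$ are exactly the cosets of the form $\Gamma_H \mapsto n\Gamma_H$ with $n\in N^H$, distinct $n$ giving distinct cosets. Since $\pi(i(\bar m))=\mathrm{id}$, for $m\in M(G/H)$ the morphism $i(\bar m)$ is of this form, and I define $\lambda_H\colon M(G/H)\to N^H$ by letting $\lambda_H(m)$ be that $n$. Then $\lambda_H$ is a homomorphism because $i(\overline{m+m'})=i(\bar m)\circ i(\bar m')$ and composition of $(\Gamma_H \mapsto n\Gamma_H)$ with $(\Gamma_H \mapsto n'\Gamma_H)$ in $\mathcal{O}_{\mathfrak{H}}(\Gamma)$ is $(\Gamma_H \mapsto (n+n')\Gamma_H)$ (here $N$ is abelian); it is injective because $i$ is injective on morphisms; and it is surjective by the existence clause of property (v) applied to $\mathrm{id}_{E(H)}$ and the morphism $(\Gamma_H \mapsto n\Gamma_H)$. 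Hence each $\lambda_H$ is an isomorphism.

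It remains to check naturality, i.e.\ that $\underline{N}(\varphi)\circ\lambda_K=\lambda_H\circ M(\varphi)$ for every morphism $\varphi\colon G/H\to G/K$, $H\mapsto xK$. I would choose a lift $\psi\in\mathrm{Mor}_{\mathfrak{E}}(E(H),E(K))$ of $\varphi$ and, since $\pi\colon\Gamma_K\to K$ is onto, normalise its coset representative so that $\psi$ is the morphism $\Gamma_H\mapsto \tilde x\Gamma_K$ with $\pi(\tilde x)=x$. Writing $n_K=\lambda_K(m_K)$ and $n_H=\lambda_H(M(\varphi)(m_K))$, and recalling that $M(\varphi)(m_K)$ is the unique element with $i(\bar m_K)\circ\psi=\psi\circ i(\overline{M(\varphi)(m_K)})$, composing the relevant coset maps turns this identity into $\tilde x n_K\Gamma_K=n_H\tilde x\Gamma_K$; since $\tilde x^{-1}n_H^{-1}\tilde x n_K$ lies in $N\cap\Gamma_K=\{e\}$, this gives $n_H=\tilde x n_K\tilde x^{-1}=x\cdot n_K$, which is exactly $\underline{N}(\varphi)(\lambda_K(m_K))$. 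Therefore $\lambda$ is a natural isomorphism $M\cong\underline{N}$, and $N$ carries the $G$-module structure coming from the extension $0\to N\to\Gamma\to G\to 1$.

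I expect the only real obstacle to be bookkeeping: keeping three levels of identification consistent — abstract morphisms of $\mathfrak{E}$, morphisms of the concrete orbit category $\mathcal{O}_{\mathfrak{H}}(\Gamma)$ regarded as fixed cosets, and elements of $\Gamma$ and of $N$ — and in particular verifying that the torsor action of $M(G/\{e\})$ supplied by property (v) really is left translation on cosets. Once $N\cap\Gamma_H=\{e\}$ has been extracted cleanly from the uniqueness clause of property (v), the remaining steps are routine coset manipulations.
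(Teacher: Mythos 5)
Your proof is correct and takes essentially the same approach as the paper's: both construct the isomorphism $M(G/H)\cong N^{\scriptscriptstyle H}$ by identifying, for $m\in M(G/H)$, the element $n\in N$ for which $i(\bar m)$ is the translation $\Gamma_{\scriptscriptstyle H}\mapsto n\Gamma_{\scriptscriptstyle H}$, and then verify naturality by the same coset manipulation using $N\cap\Gamma_{\scriptscriptstyle K}=\{e\}$. The paper packages the map as $M(id_{\scriptscriptstyle H})$, but after unwinding the definition of the induced module structure it coincides with your $\lambda_{\scriptscriptstyle H}$.
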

\begin{proof}
The group $M(G/\{e\})$ becomes a left $G$-module under the identification of $G$ with the group $\mathrm{Mor}(G/\{e\},G/\{e\})$.
Let $id_{\scriptscriptstyle H}$ be the morphism $G/e \rightarrow G/H : e \mapsto H$. We claim that for each $H \in \mathfrak{F}$, $$M(id_{\scriptscriptstyle H}): M(G/H) \rightarrow M(G/\{e\})$$ is an injective map that surjects onto $M(G/\{e\})^{\scriptscriptstyle H}$. \\ \indent  Indeed, take $m \in M(G/H)$ and suppose that $M(id_{\scriptscriptstyle H})(m)=0$. It follows that $i(\bar{m}) \circ id_{\Gamma_{\scriptscriptstyle H}}= id_{\Gamma_{\scriptscriptstyle H}}$ in $\mathcal{O}_{\mathfrak{H}}(\Gamma)$. Since this implies that $i(\bar{m})$ is the identity map on $\Gamma/\Gamma_{\scriptscriptstyle H}$, we conclude by injectivity that $m=0$. This shows that $M(id_{\scriptscriptstyle H})$ is injective.

 Next, we let $\theta \in H$ and view it as an element of $\mathrm{Mor}(G/\{e\},G/\{e\})$. Because $id_{\scriptscriptstyle H} \circ \theta = id_{\scriptscriptstyle H}$, it follows that $M(\theta)\circ M(id_{\scriptscriptstyle H})=M(id_{\scriptscriptstyle H})$. This implies  $M(id_{\scriptscriptstyle H})(M(G/H)) \subseteq M(G/\{e\})^{\scriptscriptstyle H}$.

 Now, let $m \in  M(G/\{e\})^{\scriptscriptstyle H}$. Using the fact that $\pi(\Gamma_{\scriptscriptstyle H})=H$ one can check that for any $\psi \in \Gamma_{\scriptscriptstyle H}$ considered as a morphism in $\mathrm{Mor}(\Gamma/\{e\},\Gamma/\{e\})$, we have $i(\bar{m}) \circ \psi = \psi \circ i(\bar{m})$. Let us consider the morphism: $$i(\bar{m}): \Gamma/e \rightarrow \Gamma/\{e\} : \{e\} \mapsto x\{e\}.$$ Because $i(\bar{m}) \circ \psi = \psi \circ i(\bar{m})$ for any $\psi \in \Gamma_{\scriptscriptstyle H} \subseteq \mathrm{Mor}(\Gamma/\{e\},\Gamma/\{e\})$,  we have a well-defined morphism: $$\Gamma/\Gamma_{\scriptscriptstyle H} \rightarrow \Gamma/\Gamma_{\scriptscriptstyle H} : \Gamma_{\scriptscriptstyle H} \mapsto x\Gamma_{\scriptscriptstyle H}.$$ Since $\pi(x)=e$, this morphism equals $i(\bar{n})$ for some $n \in M(G/H)$. It can now be readily verified that $i(\bar{n}) \circ id_{\Gamma_{\scriptscriptstyle H}} = id_{\Gamma_{\scriptscriptstyle H}} \circ i(\bar{m})$; thus, $M(id_{\scriptscriptstyle H})(n)=m$. This shows that $M(id_{\scriptscriptstyle H})(M(G/H)) = M(G/\{e\})^{\scriptscriptstyle H}$, and the claim follows.

Finally, let $N$ be the $G$-module $M(G/\{e\})$.
It follows that the isomorphisms: $$\eta(G/H): M(G/H) \rightarrow N^{\scriptscriptstyle H}: m \mapsto M(id_{\scriptscriptstyle H})(m)$$ assemble to form a natural equivalence of  functors $\eta : M \rightarrow \underline{N}$.
\end{proof}
\section{A standard cochain complex} \label{sec: complex}
\indent In this section, we define a cochain complex that will be used to compute Bredon cohomology. Let us first recall a standard free resolution of $\underline{\mathbb{Z}}$  (see \cite{Waner}). Throughout, let $G$ be a group, let $\mathfrak{F}$ a family of subgroups of $G$ and let $M$ be a $\orb$-module. For each $n \in \mathbb{N}$, we define the $\mathbb{Z}$-module:
\medskip
\[ \mathbb{Z}[G/H_0,\ldots,G/H_n]= \mathbb{Z}[G/H_0,G/H_1]\otimes \ldots \otimes \mathbb{Z}[G/H_{n-1},G/H_n]. \]

\medskip

Viewing $\mathbb{Z}[G/H_0,\ldots,G/H_n]$ as a trivial $G$-module, we define the $\orb$-module:
\[ B_n(\mathfrak{F},G)= \bigoplus_{\scriptstyle (H_0,\ldots,H_n)\in \mathfrak{F}^{n+1}} \mathbb{Z}[-,G/H_0]\otimes \underline{\mathbb{Z}[G/H_0,\ldots,G/H_n]}\]
for each $n \in \mathbb{N}$. For each $n \in \mathbb{N}_0$, we also define natural transformations $d_n: B_n(\mathfrak{F},G) \rightarrow B_{n-1}(\mathfrak{F},G)$ as follows: $\forall H \in \mathfrak{F}$: \[ d_n(G/H): B_n(\mathfrak{F},G)(G/H) \rightarrow B_{n-1}(\mathfrak{F},G)(G/H): x \mapsto \sum_{i=0}^n(-1)^i\delta_i(x),\] where $\delta_i$ are defined on the generators of  $B_n(\mathfrak{F},G)(G/H)$ by: $\delta_i(f\otimes f_1 \otimes \ldots \otimes f_n) =$
\medskip
{$$=\left\{\begin{array}{lll}
 (f_1 \circ f) \otimes f_2 \otimes \ldots f_n  &\in   \mathbb{Z}[G/H,G/H_1]\otimes \mathbb{Z}[G/H_1,\ldots,G/H_n]\\
  &\mbox{\;\;\; if \;}  i=0;\\
                                                          f \otimes f_1 \otimes \ldots \otimes (f_{i+1} \circ f_{i}) \otimes \ldots \otimes f_n &\in   \mathbb{Z}[G/H,G/H_0]\otimes \mathbb{Z}[G/H_0,\ldots,\widehat{G/H_i},\ldots,G/H_n]\\ & \mbox{\;\;\; if \;} 0 < i < n; \\
                                                         f\otimes f_1 \otimes \ldots \otimes f_{n-1} &\in   \mathbb{Z}[G/H,G/H_0]\otimes \mathbb{Z}[G/H_0,\ldots,G/H_{n-1}]\\  &\mbox{\;\;\; if \;}  i=n.
                                                       \end{array}\right.$$}
                                                       \medskip
Finally, we define a natural transformation $\varepsilon : B_0(\mathfrak{F},G) \rightarrow \underline{\mathbb{Z}}$ by:  $$\varepsilon(G/H) : B_0(\mathfrak{F},G)(G/H) \rightarrow \mathbb{Z}: f \in \mathrm{Mor}(G/H,G/H_0)\mapsto 1.$$ It can be shown that:
\medskip
\[ \ldots \rightarrow B_n(\mathfrak{F},G) \xrightarrow{d_n} B_{n-1}(\mathfrak{F},G) \xrightarrow{d_{n-1}} \ldots \rightarrow B_1(\mathfrak{F},G) \xrightarrow{d_1} B_{0}(\mathfrak{F},G) \xrightarrow{\varepsilon} \underline{\mathbb{Z}} \rightarrow 0 \]
\medskip
is a free $\orb$-resolution of $\underline{\mathbb{Z}}$  (see \cite{Waner}). It follows that for each $n \in \mathbb{N}$ and every $M \in \orbmod$, we have:
\medskip
$$\mathrm{H}^n_{\mathfrak{F}}(G,M)= \mathrm{H}^n(\nathom(B_{\ast}(\mathfrak{F},G),M)).$$
\medskip

Next, we simplify the expression  for the abelian group $\nathom(B_{\ast}(\mathfrak{F},G),M)$ above, using a basis for the $B_{n}(\mathfrak{F},G)$. To this end, denote the product of sets:
\medskip
\[ \mathrm{Mor}(G/H_0,G/H_1)\times \mathrm{Mor}(G/H_1,G/H_2) \times \ldots \times \mathrm{Mor}(G/H_{n-1},G/H_n) \]
\medskip
by $\mathrm{Mor}(G/H_0,G/H_1,\ldots,G/H_n)$, for each $n \in \mathbb{N}$ and each $(H_0,H_1,\ldots, H_n) \in \mathfrak{F}^{n+1}$. Denote $id:G/H_0 \rightarrow G/H_0$ by $id^{\scriptscriptstyle H_0}$ for each $H_0 \in \mathfrak{F}$. Now, for each $n \in \mathbb{N}$ and each $H_0 \in \mathfrak{F}$, we define the set:
\medskip
\[ \Delta^n_{H_0} =\{ (id^{\scriptscriptstyle H_0},f_1,\ldots,f_n) \ | \ \forall (f_1,\ldots,f_n) \in  \mathrm{Mor}(G/H_0,G/H_1,\ldots,G/H_n), \forall (H_1,\ldots,H_n) \in \mathfrak{F}^n\}. \]
\medskip
\begin{lemma}\label{lemma: cochain complex} For each $n \in \mathbb{N}$, $(\Delta^n,\beta^n)$ is a basis for $B_n(\mathfrak{F},G)$, where $\Delta^n= \coprod_{H \in \mathfrak{F}}\Delta^n_{\scriptscriptstyle H}$ and $\beta^n: \Delta^n \rightarrow \mathfrak{F}: \delta \in \Delta^n_{\scriptscriptstyle H} \mapsto H$. Moreover, there is an isomorphism of abelian groups:
\medskip
\[\nathom(B_{n}(\mathfrak{F},G),M) \cong \mathrm{Hom}_{\mathfrak{F}\mbox{-set}}(\Delta^n,\mathfrak{F}(M)).\]
where $\mathrm{Hom}_{\mathfrak{F}\mbox{-set}}(\Delta^n,\mathfrak{F}(M))$ the group of maps of $\mathfrak{F}$-sets from $(\Delta^n,\beta^n)$ to $\mathfrak{F}(M)$.
\end{lemma}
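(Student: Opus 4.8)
The statement has two parts: first, that $(\Delta^n, \beta^n)$ is a basis for the $\orb$-module $B_n(\mathfrak{F},G)$, and second, the $\mathrm{Hom}$-isomorphism. The two parts are really linked: once we know that $B_n(\mathfrak{F},G)$ is free on $(\Delta^n,\beta^n)$, the second statement is immediate from the universal property of free modules, since $\nathom(F,M) \cong \mathrm{Hom}_{\mathfrak{F}\mbox{-set}}(\Delta^n,\mathfrak{F}(M))$ for any module $F$ free on $(\Delta^n,\beta^n)$ — this is literally the defining property recalled in Section 2. So the real content is the first claim, and the second paragraph of the proof will be a one-line invocation.

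For the first part, the plan is to exhibit an isomorphism of $\orb$-modules between $B_n(\mathfrak{F},G)$ and the standard free model $\bigoplus_{\delta \in \Delta^n} \mathbb{Z}[-, G/\beta^n(\delta)]$. Recall from Section 2 that $\mathbb{Z}[-,G/K]$ is free on the one-point $\mathfrak{F}$-set concentrated at $K$, and that direct sums of free modules are free on the disjoint union of the indexing $\mathfrak{F}$-sets. So it suffices to show
\[
B_n(\mathfrak{F},G) \;\cong\; \bigoplus_{(H_0,\ldots,H_n) \in \mathfrak{F}^{n+1}} \; \bigoplus_{(f_1,\ldots,f_n)} \mathbb{Z}[-,G/H_0],
\]
where the inner sum ranges over $\mathrm{Mor}(G/H_0,\ldots,G/H_n)$, because the right-hand side is exactly the free module on $(\Delta^n,\beta^n)$. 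Unwinding the definition, $B_n(\mathfrak{F},G)(G/H) = \bigoplus_{(H_0,\ldots,H_n)} \mathbb{Z}[G/H,G/H_0] \otimes \mathbb{Z}[G/H_0,\ldots,G/H_n]$, and since $\mathbb{Z}[G/H_0,\ldots,G/H_n]$ is the free $\mathbb{Z}$-module on the \emph{set} $\mathrm{Mor}(G/H_0,\ldots,G/H_n)$ (which carries trivial $G$-action, hence contributes no functoriality in the $G/H$ variable), the tensor product splits as $\bigoplus_{(f_1,\ldots,f_n)} \mathbb{Z}[G/H,G/H_0]$, naturally in $G/H$. Assembling over all tuples gives the desired decomposition.

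The only slightly delicate point — which I expect to be the main obstacle, though a mild one — is to make precise that this decomposition is \emph{natural} in $G/H$ and that it sends the designated elements $(id^{H_0},f_1,\ldots,f_n) \in \Delta^n_{H_0}$, viewed inside $\mathfrak{F}(B_n(\mathfrak{F},G))$ via the injection $i$, to the canonical basis elements $1 \cdot (id \otimes f_1 \otimes \cdots \otimes f_n)$ of the corresponding summand. Here one uses that the generator of $\mathbb{Z}[-,G/H_0]$ sitting in degree $G/H_0$ is the identity morphism $id^{H_0}$, so the element $id^{H_0} \otimes f_1 \otimes \cdots \otimes f_n \in B_n(\mathfrak{F},G)(G/H_0)$ is precisely the image of $\delta = (id^{H_0},f_1,\ldots,f_n)$ under the structure map $i$. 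The universal property then follows: any $\mathfrak{F}$-set map $f : (\Delta^n,\beta^n) \to \mathfrak{F}(M)$ prescribes, for each $\delta = (id^{H_0},f_1,\ldots,f_n)$, an element $f(\delta) \in M(G/H_0)$, and by Yoneda (the isomorphism $\nathom(\mathbb{Z}[-,G/H_0], M) \cong M(G/H_0)$) this extends uniquely to a natural transformation on the summand $\mathbb{Z}[-,G/H_0]$; summing over all $\delta$ gives the unique extension $\bar f : B_n(\mathfrak{F},G) \to M$. This establishes that $(\Delta^n,\beta^n)$ is a basis.

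Finally, for the $\mathrm{Hom}$-isomorphism: since $B_n(\mathfrak{F},G)$ is free on $(\Delta^n,\beta^n)$, the universal property says precisely that restriction along $i : (\Delta^n,\beta^n) \to \mathfrak{F}(B_n(\mathfrak{F},G))$ gives a bijection $\nathom(B_n(\mathfrak{F},G),M) \xrightarrow{\;\cong\;} \mathrm{Hom}_{\mathfrak{F}\mbox{-set}}(\Delta^n,\mathfrak{F}(M))$, and one checks this bijection is additive (it respects the abelian group structures on both sides, which are defined pointwise), hence an isomorphism of abelian groups. This completes the proof.
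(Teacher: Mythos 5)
Your proof is correct and follows essentially the same route as the paper: the paper also reduces to showing each summand $\mathbb{Z}[-,G/H_0]\otimes\underline{T}$ is free on the corresponding $\mathfrak{F}$-set of tuples $(id^{\scriptscriptstyle H_0},f_1,\ldots,f_n)$, and then invokes closure of freeness under direct sums. The only difference is cosmetic: you decompose one step further into $\bigoplus_{(f_1,\ldots,f_n)}\mathbb{Z}[-,G/H_0]$ and cite the freeness of $\mathbb{Z}[-,G/H_0]$ plus Yoneda, whereas the paper verifies the universal property of $\mathbb{Z}[-,G/H_0]\otimes\underline{T}$ directly by a diagram chase that is, in substance, the same Yoneda argument.
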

\begin{proof} Fix a set $(H_0,\ldots,H_n)\in \mathfrak{F}^{n+1}$, denote $T=\mathbb{Z}[G/H_0,\ldots,G/H_n]$ and let $E=\{e_{\alpha}\}_{\alpha \in I}$ be an indexing for $\mathrm{Mor}(G/H_0,G/H_1,\ldots,G/H_n)$.  Clearly, $E$ is a basis for $T$. Define the set $\Pi=\coprod_{H \in \mathfrak{F}} \Pi_{\scriptscriptstyle H}$, with  $\Pi_{H_0}=\{(id^{\scriptscriptstyle H_0},e_{\alpha}) \ | \ \alpha \in I \}$ and $\Pi_{\scriptscriptstyle H}=\emptyset$ for $H \neq H_0$. From this we obtain a base function $\pi: \Pi \rightarrow \mathfrak{F}: \delta \in \Pi_{\scriptscriptstyle H} \mapsto H$. We claim that $\mathbb{Z}[-,G/H_0]\otimes \underline{T}$ is free on $(\Pi,\pi)$. Indeed, let $N$ be any $\orb$-module and let $f: (\Pi,\pi) \rightarrow \mathfrak{F}(N)$ be a map of $\mathfrak{F}$-sets. Suppose there exists a natural transformation $\bar{f}: \mathbb{Z}[-,G/H_0]\otimes \underline{T} \rightarrow N$ that extends $f$. Then for any $H \in \mathfrak{F}$ and any $\varphi \in \mathrm{Mor}(G/H,G/H_0)$, the following diagram commutes:
\[ \xymatrix{
\mathbb{Z}[G/H,G/H_0]\otimes T \ \ \ \ \ \ar[r]^{  \bar{f}(G/H)  } & \ \ \ \ \ N(G/H) \\
\mathbb{Z}[G/H_0,G/H_0]\otimes T \ar[u]^{\mathbb{Z}[\varphi,G/H_0]\otimes id} \ \ \ \ \ \ar[r]^{ \bar{f}(G/H_0)} &\ \ \ \ \  N(G/H_0). \ar[u]_{N(\varphi)} \\
\Pi_{H_0} \ar[u] \ar[ur]^{f_{H_0}} & \\ } \]
Following $(id^{\scriptscriptstyle H_0},e_{\alpha})\in \Pi_{H_0}$ through the diagram, we see that $\bar{f}(G/H)(\varphi,e_{\alpha})$ is completely determined by $f_{H_0}((id^{\scriptscriptstyle H_0},e_{\alpha}))$. This shows that $\bar{f}$ is unique if it exists. The diagram above also suggests how to define $\bar{f}$. For each $H \in \mathfrak{F}$, $\varphi \in \mathrm{Mor}(G/H,G/H_0)$, and  $e_{\alpha} \in E$, we define:
\medskip
$$\bar{f}(G/H)(\varphi \otimes e_{\alpha})= N(\varphi)(f_{H_0}((id^{\scriptscriptstyle H_0},e_{\alpha}))).$$
\medskip
Since $\mathbb{Z}[G/H,G/H_0]\otimes T$ is the free $\mathbb{Z}$-module generated by all $\varphi \otimes e_{\alpha}$, we can extend $\bar{f}$ to $\mathbb{Z}[G/H,G/H_0]\otimes T$. One easily verifies that $\bar{f}$, so defined, yields the desired natural transformation. We conclude that $\mathbb{Z}[-,G/H_0]\otimes \underline{T}$ is free on $(\Pi,\pi)$. It now immediately follows that
$B_n(\mathfrak{F},G)$
 is free on $(\Delta^n,\beta^n)$. \\
\indent  Note that $\mathrm{Hom}_{\mathfrak{F}\mbox{-set}}(\Delta^n,\mathfrak{F}(M))$ inherits a $\mathbb{Z}$-module structure from $M$ by pointwise addition. Since $B_n(\mathfrak{F},G)$ is a free $\orb$-module on $(\Delta^n,\beta^n)$, we have an isomorphism:
\[ \mathrm{Hom}_{\mathfrak{F}\mbox{-set}}(\Delta^n,\mathfrak{F}(M)) \rightarrow \nathom(B_{n}(\mathfrak{F},G),M) : f \mapsto \bar{f} \]
where a map of $\mathfrak{F}$-sets is mapped to the unique natural transformation that extends it.
\end{proof}
\begin{notation} \rm When $f$ is a map of  $\mathfrak{F}$-sets from $(\Delta^n,\beta^n)$ to $\mathfrak{F}(M)$, we will denote $f_{\scriptscriptstyle H}(id^{\scriptscriptstyle H},\varphi_1,\ldots,\varphi_n)$ by $f_{\scriptscriptstyle H}(\varphi_1,\ldots,\varphi_n)$ for all
$(\varphi_1,\ldots,\varphi_{n}) \in \mathrm{Mor}(G/H,G/H_1,\ldots,G/H_{n})$.
\end{notation}
\begin{definition} \rm Define the cochain complex:
\[ 0 \rightarrow C^0_{\mathfrak{F}}(G,M) \xrightarrow{d^0} C^1_{\mathfrak{F}}(G,M) \rightarrow \ldots \rightarrow C^{n}_{\mathfrak{F}}(G,M) \xrightarrow{d^{n}} C^{n+1}_{\mathfrak{F}}(G,M) \rightarrow \ldots  \]
by $C^{n}_{\mathfrak{F}}(G,M)=\mathrm{Hom}_{\mathfrak{F}\mbox{-set}}(\Delta^n,\mathfrak{F}(M)) $ for each $n$ and \[ d^n: C^n_{\mathfrak{F}}(G,M) \rightarrow C^{n+1}_{\mathfrak{F}}(G,M) : f \mapsto d^n(f),\] with
 \begin{eqnarray*} d^n(f)_{\scriptscriptstyle H}( \varphi_1,\ldots,\varphi_{n+1})  &=&   M(\varphi_1)\Big( f_{\scriptscriptstyle {H_1}}(\varphi_2,\ldots,\varphi_{n+1})\Big)  \\
                                                       & & +  \sum_{i=1}^n(-1)^if_{\scriptscriptstyle H}(\varphi_1,\ldots,\varphi_{i+1} \circ \varphi_i,\ldots,\varphi_{n+1}) \\
                                                        & & + (-1)^{n+1}f_{\scriptscriptstyle H}(\varphi_1,\ldots,\varphi_n) \end{eqnarray*}
 where $(\varphi_1,\ldots,\varphi_{n+1}) \in \mathrm{Mor}(G/H,G/H_1,\ldots,G/H_{n+1})$.
\end{definition}
\begin{theorem} \label{th: cochain complex} Let $G$ be a group, $\mathfrak{F}$ a family of subgroups of $G$ and $M$ an $\orb$-module. Then for each $n \in \mathbb{N}$ we have:
\[ \mathrm{H}^n_{\mathfrak{F}}(G,M) = \mathrm{H}^n(C^{\ast}_{\mathfrak{F}}(G,M)). \]
\end{theorem}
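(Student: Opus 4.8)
The plan is to deduce the statement from Lemma \ref{lemma: cochain complex}. By the discussion preceding that lemma, $\mathrm{H}^n_{\mathfrak{F}}(G,M)$ is computed as $\mathrm{H}^n\big(\nathom(B_{\ast}(\mathfrak{F},G),M)\big)$, where the differentials are $\nathom(d_{n},M)$. On the other hand, Lemma \ref{lemma: cochain complex} gives, for each $n$, a $\mathbb{Z}$-module isomorphism $\Phi_n : \nathom(B_n(\mathfrak{F},G),M) \xrightarrow{\cong} C^n_{\mathfrak{F}}(G,M)$ obtained by restricting a natural transformation to the basis $(\Delta^n,\beta^n)$. So it suffices to check that $\{\Phi_n\}_n$ is a morphism of cochain complexes, i.e. that $\Phi_{n+1}\circ \nathom(d_{n+1},M) = d^n \circ \Phi_n$ for every $n$; taking cohomology then yields the claimed equality.

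To verify this, I would evaluate both sides on a basis element. Fix $(H_0,\dots,H_{n+1})\in\mathfrak{F}^{n+1}$ and $(\varphi_1,\dots,\varphi_{n+1})\in\mathrm{Mor}(G/H_0,G/H_1,\dots,G/H_{n+1})$, so that $(id^{H_0},\varphi_1,\dots,\varphi_{n+1})\in\Delta^{n+1}_{H_0}$ corresponds to the generator $id^{H_0}\otimes\varphi_1\otimes\cdots\otimes\varphi_{n+1}$ of $B_{n+1}(\mathfrak{F},G)(G/H_0)$. Applying $d_{n+1}(G/H_0)$ term by term with the explicit face operators $\delta_i$: the faces $\delta_i$ with $1\le i\le n$ each produce a genuine basis element of $B_n(\mathfrak{F},G)$ lying in $\Delta^n_{H_0}$, namely the one with $\varphi_i$ and $\varphi_{i+1}$ composed; the face $\delta_{n+1}$ produces the basis element with the last factor $\varphi_{n+1}$ dropped; and the face $\delta_0$ produces $\varphi_1\otimes\varphi_2\otimes\cdots\otimes\varphi_{n+1}$, which is \emph{not} itself a basis element but equals $B_n(\mathfrak{F},G)(\varphi_1)$ applied to the basis element $id^{H_1}\otimes\varphi_2\otimes\cdots\otimes\varphi_{n+1}\in\Delta^n_{H_1}$, since $\mathbb{Z}[\varphi_1,G/H_1](id^{H_1})=id^{H_1}\circ\varphi_1=\varphi_1$.

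Next I would feed these faces into a natural transformation $\bar f=\Phi_n^{-1}(f)$. For the faces coming from $\delta_i$ with $i\ge 1$ the value is, directly from the notation $f_{H_0}(\cdots)$, the corresponding term $f_{H_0}(\varphi_1,\dots,\varphi_{i+1}\circ\varphi_i,\dots,\varphi_{n+1})$ (and $f_{H_0}(\varphi_1,\dots,\varphi_n)$ for $\delta_{n+1}$). For $\delta_0$ the naturality of $\bar f$ enters: it converts $B_n(\mathfrak{F},G)(\varphi_1)$ into $M(\varphi_1)$, producing $M(\varphi_1)\big(f_{H_1}(\varphi_2,\dots,\varphi_{n+1})\big)$. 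Combining these with the signs $(-1)^i$ reproduces exactly the defining formula for $d^n(f)_{H_0}(\varphi_1,\dots,\varphi_{n+1})$, which is the required commutativity (the argument also covers $n=0$, where the middle sum is empty).

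I expect the only genuine obstacle to be careful index bookkeeping: matching the three cases in the definition of $\delta_i$ with the three groups of terms in $d^n$, keeping track of the indices $H_j$ through the direct-sum decomposition of $B_n(\mathfrak{F},G)$, and in particular noticing that the $\delta_0$-face lands in a different summand and must be rewritten via the functoriality of $B_n(\mathfrak{F},G)$ before a natural transformation can be applied to it. This is routine but is where all the content of the computation lies.
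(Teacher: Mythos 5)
Your proposal is correct and takes exactly the same route as the paper: the paper's proof of Theorem \ref{th: cochain complex} simply asserts that the differentials of $C^{\ast}_{\mathfrak{F}}(G,M)$ correspond to those of $\nathom(B_{\ast}(\mathfrak{F},G),M)$ under the isomorphism of Lemma \ref{lemma: cochain complex}, which is precisely the chain-map compatibility you verify. Your computation on basis elements (in particular, rewriting the $\delta_0$-face via $B_n(\mathfrak{F},G)(\varphi_1)$ and then using naturality of $\bar f$ to produce the $M(\varphi_1)$ term) is the detailed justification the paper leaves implicit.
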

\begin{proof} This follows from the fact that the differentials of the complex $C^{\ast}_{\mathfrak{F}}(G,M)$ correspond to the differentials of the complex $\nathom(B_{\ast}(\mathfrak{F},G),M)$ under the isomorphism of Lemma \ref{lemma: cochain complex}.
\end{proof}
\begin{remark} \rm \label{remark: notation} In the next section, it will often be convenient to view a morphism of the orbit category $G/H \xrightarrow{x} G/K$ as the element $xK \in (G/K)^{\scriptscriptstyle H}$. With this notation, the differentials of the complex $C^{\ast}_{\mathfrak{F}}(G,M)$ become:
\begin{eqnarray*} d^n(f)_{\scriptscriptstyle H}(x_1H_1,\ldots,x_{n+1}H_{n+1})  &=&   M(x_1H_1)\Big( f_{\scriptscriptstyle H_1}(x_2H_2,\ldots,x_{n+1}H_{n+1})\Big)  \\
                                                       & & +  \sum_{i=1}^n(-1)^if_{\scriptscriptstyle H}(x_1H_1,\ldots,x_{i}x_{i+1} H_{i+1},\ldots,x_{n+1}H_{n+1}) \\
                                                        & & + (-1)^{n+1}f_{\scriptscriptstyle H}(x_1H_1,\ldots,x_{n}H_{n}) \end{eqnarray*}
where $M(x_1H_1)=M(G/H \xrightarrow{x_1} G/H_1)$.
\end{remark}

\section{Interpretations of Low dimensional Bredon cohomology} \label{sec: H1}
\subsection{Interpretation of $\mathrm{H}^0_{\mathfrak{F}}(G,M)$ }
Although it is well-known, we include an interpretation of $\mathrm{H}^0_{\mathfrak{F}}(G,M)$, for completeness.

Let $G$ be a group, $\mathfrak{F}$ a family of subgroups of $G$ and $M$ an $\orb$-module. It follows immediately from the derived functor construction of Bredon cohomology that $$\mathrm{H}^0_{\mathfrak{F}}(G,M)=\nathom(\underline{\mathbb{Z}},M).$$
Using this fact or Theorem \ref{th: cochain complex}, we obtain:
\[ \mathrm{H}^0_{\mathfrak{F}}(G,M) = \lim_{G/H \in \orb}M(G/K). \]
In fact, the functor: $$\nathom(\underline{\Z},-): \orbmod \rightarrow \mathfrak{Ab}$$ is naturally equivalent to the limit functor: $$\lim_{G/H \in \orb}: \orbmod \rightarrow \mathfrak{Ab}.$$ So, Bredon cohomology can also be seen as the right derived functors of this functor.

\subsection{$\mathfrak{F}$-Derivations and Principal $\mathfrak{F}$-Derivations}
Let $G$ be a group, $\mathfrak{F}$ a family of subgroups of $G$ and $M$ a $\orb$-module.
\begin{definition} \rm
An $\mathfrak{F}$-derivation of $G$ into $M$ is a map of $\mathfrak{F}$-sets $\mathcal  D: \Delta^1 \rightarrow \mathfrak{F}(M)$ such that: $\forall H,H_1,H_2 \in \mathfrak{F}, \forall x_1H_1 \in (G/H_1)^{\scriptscriptstyle H}, \forall x_2H_2 \in (G/H_2)^{H_1}$:
\[ \mathcal D_{\scriptscriptstyle H}(x_1x_2H_2)= M(x_1H_1)\Big(\mathcal D_{H_1}(x_2H_2)\Big) + \mathcal D_{\scriptscriptstyle H}(x_1H_1). \]
The set of all $\mathfrak{F}$-derivations of $G$ into $M$ forms an abelian group which we denote by $\mathrm{Der}_{\mathfrak{F}}(G,M)$. \\
Given an element $m=(m_{\scriptscriptstyle H})_{H \in \mathfrak{F}} \in \prod_{H \in \mathfrak{F}}M(G/H)$, the principal $\mathfrak{F}$-derivation $\mathcal D_m$ of $G$ into $M$ is a map of $\mathfrak{F}$-sets $\mathcal D: \Delta^1 \rightarrow \mathfrak{F}(M)$ defined as follows: $\forall H,K \in \mathfrak{F}, \forall xK \in (G/K)^{\scriptscriptstyle H}$:
\[(\mathcal  D_m)_{\scriptscriptstyle H}(xK) = M(xK)(m_{\scriptscriptstyle K})-m_{\scriptscriptstyle H}. \]
The set of all principal $\mathfrak{F}$-derivation of $G$ into $M$ forms an abelian subgroup of $\mathrm{Der}_{\mathfrak{F}}(G,M)$ which we denote by $\mathrm{PDer}_{\mathfrak{F}}(G,M)$.
\end{definition}
We can now prove the following.
\begin{theorem} \label{th: H^1 int}If $G$ is a group, $\mathfrak{F}$ a family of subgroups of $G$ and $M$ a $\orb$-module, then we have
\[ \mathrm{H}^1_{\mathfrak{F}}(G,M) \cong \mathrm{Der}_{\mathfrak{F}}(G,M)\Big/ \mathrm{PDer}_{\mathfrak{F}}(G,M). \]
\end{theorem}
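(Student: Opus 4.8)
The plan is to invoke Theorem \ref{th: cochain complex}, which identifies $\mathrm{H}^1_{\mathfrak{F}}(G,M)$ with $\mathrm{H}^1(C^{\ast}_{\mathfrak{F}}(G,M)) = \ker d^1 / \operatorname{im} d^0$, and then to match the two sides of this quotient with $\mathrm{Der}_{\mathfrak{F}}(G,M)$ and $\mathrm{PDer}_{\mathfrak{F}}(G,M)$, respectively, purely by unwinding the low-degree differentials from Remark \ref{remark: notation}. First I would spell out $C^0$ and $C^1$ concretely. Since $\Delta^0 = \coprod_{H\in\mathfrak{F}}\{id^{\scriptscriptstyle H}\}$, a map of $\mathfrak{F}$-sets $\Delta^0\to\mathfrak{F}(M)$ is nothing but an element $m=(m_{\scriptscriptstyle H})_{H\in\mathfrak{F}}\in\prod_{H\in\mathfrak{F}}M(G/H)$, so $C^0_{\mathfrak{F}}(G,M)\cong\prod_{H\in\mathfrak{F}}M(G/H)$; whereas $C^1_{\mathfrak{F}}(G,M)$ is by definition the group of maps of $\mathfrak{F}$-sets $\Delta^1\to\mathfrak{F}(M)$, that is, of collections $(f_{\scriptscriptstyle H})_H$ assigning to each $xK\in(G/K)^{\scriptscriptstyle H}$ an element $f_{\scriptscriptstyle H}(xK)\in M(G/H)$, with pointwise addition.

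Next I would compute $d^0$ using the formula of Remark \ref{remark: notation} with $n=0$: the empty sum vanishes and the two remaining terms give $d^0(m)_{\scriptscriptstyle H}(xK) = M(xK)(m_{\scriptscriptstyle K}) - m_{\scriptscriptstyle H}$, which is exactly the principal $\mathfrak{F}$-derivation $\mathcal D_m$. Hence $\operatorname{im} d^0 = \mathrm{PDer}_{\mathfrak{F}}(G,M)$ as subgroups of $C^1_{\mathfrak{F}}(G,M)$. Then I would compute $d^1$ with $n=1$, obtaining
\[ d^1(f)_{\scriptscriptstyle H}(x_1H_1,x_2H_2) = M(x_1H_1)\big(f_{\scriptscriptstyle H_1}(x_2H_2)\big) - f_{\scriptscriptstyle H}(x_1x_2H_2) + f_{\scriptscriptstyle H}(x_1H_1), \]
so that $f\in\ker d^1$ precisely when $f_{\scriptscriptstyle H}(x_1x_2H_2) = M(x_1H_1)\big(f_{\scriptscriptstyle H_1}(x_2H_2)\big) + f_{\scriptscriptstyle H}(x_1H_1)$ for all $H,H_1,H_2\in\mathfrak{F}$, $x_1H_1\in(G/H_1)^{\scriptscriptstyle H}$, $x_2H_2\in(G/H_2)^{\scriptscriptstyle H_1}$. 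This is verbatim the defining identity of an $\mathfrak{F}$-derivation, so $\ker d^1 = \mathrm{Der}_{\mathfrak{F}}(G,M)$. Combining, $\mathrm{H}^1_{\mathfrak{F}}(G,M)\cong\mathrm{Der}_{\mathfrak{F}}(G,M)/\mathrm{PDer}_{\mathfrak{F}}(G,M)$.

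The argument is essentially bookkeeping, so there is no serious obstacle; the points that merit a line of care are the identification $C^0_{\mathfrak{F}}(G,M)\cong\prod_{H\in\mathfrak{F}}M(G/H)$ (i.e.\ that $\Delta^0$ has exactly one element over each $H$, namely $id^{\scriptscriptstyle H}$) and the verification that every term occurring in $d^1(f)_{\scriptscriptstyle H}$ genuinely lands in $M(G/H)$ — in particular that $x_1x_2H_2\in(G/H_2)^{\scriptscriptstyle H}$ whenever $x_1H_1\in(G/H_1)^{\scriptscriptstyle H}$ and $x_2H_2\in(G/H_2)^{\scriptscriptstyle H_1}$, which is just the statement that the composite of two orbit-category morphisms is again a morphism. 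Finally one notes that all these identifications respect the abelian group structures (pointwise addition throughout), so they are genuine isomorphisms of abelian groups, completing the proof.
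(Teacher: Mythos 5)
Your proposal is correct and is exactly the argument the paper intends: the paper's proof simply says the result is immediate from Theorem \ref{th: cochain complex}, Remark \ref{remark: notation}, and the definitions, and what you have written is the straightforward unwinding of $d^0$ and $d^1$ in low degrees that makes that immediacy explicit. The computations of $\operatorname{im} d^0=\mathrm{PDer}_{\mathfrak{F}}(G,M)$ and $\ker d^1=\mathrm{Der}_{\mathfrak{F}}(G,M)$ are both correct as stated.
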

\begin{proof}
This is immediate from the definition of (principal) $\mathfrak{F}$-derivations, Theorem \ref{th: cochain complex} and Remark \ref{remark: notation}.
\end{proof}
We shall now restrict to fixed point functors.
\begin{definition} \rm
Suppose $M \in G\mbox{-mod}$.
Given an element $m \in M$, a principal derivation $D_m$ of $G$ into $M$ is a set map $D: G \rightarrow M$ defined as follows: for all $x \in G$:
\[ D_m(x) = x\cdot m-m. \] An $\mathfrak{F}$-derivation of $G$ into $M$ is a set map $D : G \rightarrow M$ such that: for all $x,y \in G$:
\[ D(xy)= x\cdot D(y)+D(x) \]
and $D|{_{\scriptscriptstyle H}}: H \rightarrow M$ is a principal derivation $D_{m_{\scriptscriptstyle H}}:H\to M$, for each $H \in \mathfrak{F}$. Note that $m_{\scriptscriptstyle H}$ depends on $H$ and that
the set of all $\mathfrak{F}$-derivations of $G$ into $M$ forms an abelian group which we denote by $\mathrm{Der}_{\mathfrak{F}}(G,M)$. The set of all principal derivations of $G$ into $M$ forms an abelian subgroup of $\mathrm{Der}_{\mathfrak{F}}(G,M)$ which we denote by $\mathrm{PDer}_{\mathfrak{F}}(G,M)$.
\end{definition}
We now obtain the following
\begin{proposition} If $G$ is a group, $\mathfrak{F}$ a family of subgroups of $G$ that contains the trivial subgroup and $M \in G\mbox{-mod}$, then there exists an isomorphism:
\[ \mathrm{Der}_{\mathfrak{F}}(G,M)/ \mathrm{PDer}_{\mathfrak{F}}(G,M) \cong \mathrm{Der}_{\mathfrak{F}}(G,\underline{M})/\mathrm{PDer}_{\mathfrak{F}}(G,\underline{M}).\]
\end{proposition}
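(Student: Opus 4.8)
The plan is to build an explicit \emph{restriction} homomorphism $\Phi\colon\mathrm{Der}_{\mathfrak{F}}(G,\underline{M})\to\mathrm{Der}_{\mathfrak{F}}(G,M)$, check that it carries principal derivations to principal derivations so that it descends to a homomorphism $\bar\Phi$ on the quotients, and then show $\bar\Phi$ is a bijection. Using the identifications of $\mathrm{Mor}(G/\{e\},G/\{e\})$ with $G$ and of $\underline{M}(G/\{e\})$ with $M$ from the previous sections, and the morphism-as-coset notation of Remark \ref{remark: notation}, put $\Phi(\mathcal{D})(x):=\mathcal{D}_{\{e\}}(x\{e\})$ for $x\in G$. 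Specializing the defining identity of an $\mathfrak{F}$-derivation to $H=H_1=H_2=\{e\}$ shows that $\Phi(\mathcal{D})$ is a group $1$-cocycle $G\to M$; specializing instead to $H=H_1=\{e\}$ and $H_2=H$, and writing $c_H:=\mathcal{D}_{\{e\}}(eH)\in M$, gives $\mathcal{D}_{\{e\}}(xH)=x\cdot c_H+\Phi(\mathcal{D})(x)$, and evaluating at $x=h\in H$ yields $\Phi(\mathcal{D})(h)=c_H-h\cdot c_H$, i.e. $\Phi(\mathcal{D})|_H$ is the principal derivation $D_{-c_H}$. Hence $\Phi(\mathcal{D})\in\mathrm{Der}_{\mathfrak{F}}(G,M)$. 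A one-line computation on $\mathcal{D}_m$ for $m=(m_H)_H\in\prod_H M^H$ shows $\Phi(\mathcal{D}_m)=D_{m_{\{e\}}}$, so $\Phi$ descends to a map $\bar\Phi$ of the quotients.

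The conceptual heart is a structural lemma: for every $\mathcal{D}\in\mathrm{Der}_{\mathfrak{F}}(G,\underline{M})$ and every morphism $xK\in(G/K)^H$ of $\orb$ one has $\mathcal{D}_H(xK)=x\cdot c_K+\Phi(\mathcal{D})(x)-c_H$, where $c_H=\mathcal{D}_{\{e\}}(eH)$. I would prove this by applying the $\mathfrak{F}$-derivation identity to the factorization $G/\{e\}\xrightarrow{e}G/H\xrightarrow{x}G/K$ of the morphism $G/\{e\}\xrightarrow{x}G/K$, using that $\underline{M}$ of $G/\{e\}\xrightarrow{e}G/H$ is just the inclusion $M^H\hookrightarrow M$, and then substituting the coset formula for $\mathcal{D}_{\{e\}}$ from the previous paragraph. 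Here it is essential that $\{e\}\in\mathfrak{F}$, so that every object of $\orb$ admits a morphism from $G/\{e\}$. In particular $\mathcal{D}$ is completely determined by the pair $\bigl(\Phi(\mathcal{D}),(c_H)_{H\in\mathfrak{F}}\bigr)$, and $c_{\{e\}}=0$ automatically since $\Phi(\mathcal{D})$ is a cocycle.

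For surjectivity of $\bar\Phi$: given $D\in\mathrm{Der}_{\mathfrak{F}}(G,M)$, for each $H\in\mathfrak{F}$ choose $c_H\in M$ with $D(h)=c_H-h\cdot c_H$ for all $h\in H$ (possible since $D|_H$ is principal; take $c_{\{e\}}=0$), and define $\mathcal{D}_H(xK):=x\cdot c_K+D(x)-c_H$. One verifies that this is independent of the chosen representative $x$ of $xK$, that $\mathcal{D}_H(xK)\in M^H$, and that $\mathcal{D}$ satisfies the $\mathfrak{F}$-derivation identity; the inputs are the cocycle identity for $D$, the relations $D(k)=c_K-k\cdot c_K$, and the fact that $hxK=xK$ whenever $h\in H$ and $x^{-1}Hx\subseteq K$. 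By construction $\Phi(\mathcal{D})=D$. For injectivity of $\bar\Phi$: if $\Phi(\mathcal{D})=D_a$ for some $a\in M$, then combining $D_a(h)=h\cdot a-a$ with $\Phi(\mathcal{D})(h)=c_H-h\cdot c_H$ gives $h\cdot(a+c_H)=a+c_H$ for all $h\in H$, so $m_H:=a+c_H\in M^H$; with $m=(m_H)_H$ (note $m_{\{e\}}=a$) the structural lemma gives $\mathcal{D}_H(xK)=x\cdot m_K-m_H=(\mathcal{D}_m)_H(xK)$, hence $\mathcal{D}$ is principal. This yields the desired isomorphism, and together with Theorem \ref{th: H^1 int} it also identifies $\mathrm{H}^1_{\mathfrak{F}}(G,\underline{M})$ with the quotient of the group-theoretic $\mathfrak{F}$-derivations.

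The step I expect to be the main obstacle is the verification, inside the surjectivity argument, that the hand-built collection $\{\mathcal{D}_H\}$ really is an $\mathfrak{F}$-derivation — in particular that each $\mathcal{D}_H(xK)$ lands in the invariant subgroup $M^H$ and that the derivation identity holds for arbitrary composable morphisms of $\orb$. These are purely computational, but they require the cocycle condition for $D$ and the local equations $D(h)=c_H-h\cdot c_H$ to be combined in exactly the right order.
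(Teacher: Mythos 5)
Your proof is correct and essentially the paper's argument: both rest on the structural identity $\mathcal D_H(xK)=\mathcal D_{\{e\}}(x\{e\})+x\cdot\mathcal D_{\{e\}}(K)-\mathcal D_{\{e\}}(H)$ obtained by factoring morphisms of $\orb$ through $G/\{e\}$, and the ensuing verifications are the same computations. The only organizational difference is that you build the canonical restriction map $\Phi(\mathcal D)(x)=\mathcal D_{\{e\}}(x\{e\})$ from categorical to group $\mathfrak{F}$-derivations (choice-free, so it descends to the quotient immediately), whereas the paper goes the other direction via $\Psi(f)_H(xK)=m_H-x\cdot m_K+f(x)$ and must first check independence of the chosen elements $m_H$ modulo principal derivations.
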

\begin{proof} Consider the map:
\[ \overline{\Psi} : \mathrm{Der}_{\mathfrak{F}}(G,M) \rightarrow \mathrm{Der}_{\mathfrak{F}}(G,\underline{M})/\mathrm{PDer}_{\mathfrak{F}}(G,\underline{M}): f \mapsto \Psi(f)+\mathrm{PDer}_{\mathfrak{F}}(G,\underline{M}), \]
with $\Psi(f)_{\scriptscriptstyle H}(xK)=m_{\scriptscriptstyle H}-x\cdot m_{\scriptscriptstyle K} +f(x)$ for all $H,K \in \mathfrak{F},$ and all $xK \in (G/K)^{\scriptscriptstyle H}$, where $(m_{\scriptscriptstyle H})_{H \in \mathfrak{F}}$ is collection of elements in $M$ such that $f|{_{\scriptscriptstyle H}}: H \rightarrow M$ is the principal derivation $D_{m_{\scriptscriptstyle H}}$ for each $H \in \mathfrak{F}$. Via elementary calculations one can check that $\Psi(f) \in  \mathrm{Der}_{\mathfrak{F}}(G,\underline{M})$.

Note that the chosen elements $(m_{\scriptscriptstyle H})_{H \in \mathfrak{F}}$ are not unique. However, if we choose a different set of elements $(n_{\scriptscriptstyle H})_{H \in \mathfrak{F}}$ such that $f|{_{\scriptscriptstyle H}}: H \rightarrow M$ is the principal derivation $D_{n_{\scriptscriptstyle H}}$ for each $H \in \mathfrak{F}$, then it follows: $$(m_{\scriptscriptstyle H}-n_{\scriptscriptstyle H})_{H \in \mathfrak{F}} \in \prod_{H \in \mathfrak{F}}\underline{M}(G/H)$$ for each $H \in \mathfrak{F}$. Therefore, if we define $\Psi(f)_{\scriptscriptstyle H}(xK)$ using $(n_{\scriptscriptstyle H})_{H \in \mathfrak{F}}$ instead, then this will give the same $\mathfrak{F}$-derivation up to an element in $\mathrm{PDer}_{\mathfrak{F}}(G,\underline{M})$. This implies that $\overline{\Psi}$ is a well-defined $\mathbb{Z}$-module homomorphism.

We claim that $\Psi$ is surjective.
Let $\mathcal  D \in \mathrm{Der}_{\mathfrak{F}}(G,\underline{M})$ and consider arbitrary $H,K \in \mathfrak{F}$ and $xK \in (G/K)^{\scriptscriptstyle H}$. Using the composition: $$G/\{e\} \xrightarrow{e} G/H \xrightarrow{x} G/K = G/\{e\} \xrightarrow{x} G/K,$$ we compute: $$\mathcal  D_{\{e\}}(xK) = \underline{M}(H)(\mathcal  D_{\scriptscriptstyle H}(xK))+\mathcal D_{\{e\}}(H) = \mathcal D_{\scriptscriptstyle H}(xK)+ \mathcal D_{\{e\}}(H)$$ and by considering the composition: $$G/\{e\} \xrightarrow{x} G/\{e\} \xrightarrow{e} G/K=  G/\{e\} \xrightarrow{x} G/K,$$ we compute: $$\mathcal D_{\{e\}}(xK) = \underline{M}(x\{e\})\mathcal D_{\{e\}}(K) + \mathcal D_{\{e\}}(x\{e\})
= x\cdot \mathcal D_{\{e\}}(K)+\mathcal D_{\{e\}}(x\{e\}).$$ Combining these equations, entails:
\begin{eqnarray}
\mathcal D_{\scriptscriptstyle H}(xK) & = & \mathcal D_{\{e\}}(x\{e\}) + x\cdot \mathcal D_{\{e\}}(K) - \mathcal D_{\{e\}}(H) \label{eq: derivation}
\end{eqnarray}
for all $H,K \in \mathfrak{F}, \forall xK \in (G/K)^{\scriptscriptstyle H}$. Now, define: $$f: G \rightarrow M: x \mapsto \mathcal D_{\{e\}}(x\{e\}).$$ Then  $f(xy)=x\cdot f(y)+f(x)$ for all $x,y \in G$. Let $h \in H$ for some arbitrary $H \in \mathfrak{F}$ and consider the composition: $$G/\{e\} \xrightarrow{h} G/\{e\} \xrightarrow{e} G/H = G/\{e\} \xrightarrow{e} G/H.$$ We compute: $$\mathcal D_{\{e\}}(H) = \underline{M}(h)(\mathcal D_{\{e\}}(H))+\mathcal D_{\{e\}}(h\{e\})
= h\cdot \mathcal D_{\{e\}}(H)+\mathcal D_{\{e\}}(h\{e\}).$$ Define the elements $(m_{\scriptscriptstyle H})_{H \in \mathfrak{F}}$ in $M$ as $m_{\scriptscriptstyle H}=-\mathcal D_{\{e\}}(H)$ for all $H \in \mathfrak{F}$.
This implies that $f(h)=h\cdot m_{\scriptscriptstyle H}-m_{\scriptscriptstyle H}$ for all  $H \in \mathfrak{F}$ and hence, $f \in \mathrm{Der}_{\mathfrak{F}}(G,M)$.
Using equation (\ref{eq: derivation}), it follows: $$\mathcal  D_{\scriptscriptstyle H}(xK)  =  m_{\scriptscriptstyle H}-x\cdot m_{\scriptscriptstyle K}+f(x) \mbox{\; for all \;} H,K \in \mathfrak{F}, \forall xK \in (G/K)^{\scriptscriptstyle H}.$$ Thus, $\mathcal D+ \mathrm{PDer}_{\mathfrak{F}}(G,\underline{M})=\overline{\Psi}(f)$. This shows that $\Psi$ is surjective.

Suppose now $f$ is a principal derivation for $m \in M$, i.e. $f(x)=x\cdot m - m$ for all $x \in G$.
To determine $\Psi(f)$, we can set all $m_{\scriptscriptstyle H}$ equal to $m$. Hence, it follows that $\overline{\Psi}(f)=0$. This implies that $\mathrm{PDer}_{\mathfrak{F}}(G,M)$ is contained in the kernel of $\overline{\Psi}$.

Finally, suppose $\overline{\Psi}(f)=0$ for some $f \in \mathrm{Der}_{\mathfrak{F}}(G,M)$. This implies that $\Psi(f)$ (where $(m_{\scriptscriptstyle H})_{H \in \mathfrak{F}}$ are the chosen elements in $M$ such that $f_{|H}: H \rightarrow M$ is the principal derivation $D_{m_{\scriptscriptstyle H}}$) is a principal $\mathfrak{F}$-derivation $\mathcal D_n$ with $n=(n_{\scriptscriptstyle H})_{H \in \mathfrak{F}} \in \prod_{H \in \mathfrak{F}}M^{\scriptscriptstyle H}$. It follows that: $$f(x)=x\cdot (n_{\scriptscriptstyle K}+m_{\scriptscriptstyle K})-(n_{\scriptscriptstyle H}+m_{\scriptscriptstyle H})$$ for all $H,K \in \mathfrak{F}, \forall xK \in (G/K)^{\scriptscriptstyle H}$. Setting $x=e$ and  $H=\{e\}$ and considering and arbitrary subgroup $K \in \mathfrak{F}$, this implies: $$0=f(e)=(n_{\scriptscriptstyle K}+m_{\scriptscriptstyle K})-(n_e+m_e).$$ Hence, $n_e+m_e=n_{\scriptscriptstyle K}+m_{\scriptscriptstyle K}$ for all $K \in \mathfrak{F}$. We conclude that: $$f(x)=x\cdot (n_e+m_e)-(n_e+m_e)$$ for all $x \in G$. This shows that the kernel of $\overline{\Psi}$ is contained in $\mathrm{PDer}_{\mathfrak{F}}(G,M)$.

Considering everything together, we have an isomorphism of $\mathbb{Z}$-modules:
\[  \mathrm{Der}_{\mathfrak{F}}(G,M)/ \mathrm{PDer}_{\mathfrak{F}}(G,M) \cong \mathrm{Der}_{\mathfrak{F}}(G,\underline{M})/ \mathrm{PDer}_{\mathfrak{F}}(G,\underline{M}):  \bar{f}  \mapsto \overline{\Psi}(f) . \]
\end{proof}

\begin{corollary}\label{cor: special int H^1} If $G$ is a group and $\mathfrak{F}$ is a family of subgroups of $G$ that contains the trivial subgroup and $M \in G\mbox{-mod}$, then we have:
\[ \mathrm{H}^1_{\mathfrak{F}}(G,\underline{M}) \cong \mathrm{Der}_{\mathfrak{F}}(G,M)/ \mathrm{PDer}_{\mathfrak{F}}(G,M). \]
\end{corollary}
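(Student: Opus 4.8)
The plan is to obtain the statement as a formal consequence of Theorem \ref{th: H^1 int} and the Proposition immediately preceding it, since all of the substantive work has already been carried out there. The only thing to be careful about is that the symbols $\mathrm{Der}_{\mathfrak{F}}(G,-)$ and $\mathrm{PDer}_{\mathfrak{F}}(G,-)$ are overloaded: for an $\orb$-module argument they refer to the $\mathfrak{F}$-set notion of ($\orb$-module) $\mathfrak{F}$-derivation, while for a $G$-module argument they refer to the ordinary group-cohomological notion. Keeping track of which version is meant at each step is essentially the entire content of the argument.

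First I would apply Theorem \ref{th: H^1 int} with the $\orb$-module $\underline{M}$ in the role of the coefficient module. Since $\underline{M}$ is a genuine $\orb$-module, this is legitimate and yields
\[ \mathrm{H}^1_{\mathfrak{F}}(G,\underline{M}) \cong \mathrm{Der}_{\mathfrak{F}}(G,\underline{M})\big/ \mathrm{PDer}_{\mathfrak{F}}(G,\underline{M}), \]
where the right-hand side is formed using the $\mathfrak{F}$-set definition of $\mathfrak{F}$-derivations of $G$ into the $\orb$-module $\underline{M}$. Under the hood, this isomorphism comes from the standard cochain complex $C^{\ast}_{\mathfrak{F}}(G,\underline{M})$ of Section \ref{sec: complex} together with Remark \ref{remark: notation}, which identifies the $1$-cocycles with $\mathfrak{F}$-derivations and the $1$-coboundaries with principal $\mathfrak{F}$-derivations; but all of this is already packaged in Theorem \ref{th: H^1 int}, so nothing new needs to be proved.

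Next I would invoke the Proposition proved just above the corollary, which supplies an isomorphism
\[ \mathrm{Der}_{\mathfrak{F}}(G,M)\big/ \mathrm{PDer}_{\mathfrak{F}}(G,M) \cong \mathrm{Der}_{\mathfrak{F}}(G,\underline{M})\big/\mathrm{PDer}_{\mathfrak{F}}(G,\underline{M}), \]
this time with the left-hand side formed using the classical group-cohomological notion of $\mathfrak{F}$-derivation of the $G$-module $M$; here the hypothesis that $\{e\}\in\mathfrak{F}$ is used, exactly as in that Proposition. The concrete translation map $\Psi$ between the two pictures, its well-definedness, surjectivity, and the computation of its kernel have all been verified there. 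Composing the two displayed isomorphisms gives
\[ \mathrm{H}^1_{\mathfrak{F}}(G,\underline{M}) \cong \mathrm{Der}_{\mathfrak{F}}(G,M)\big/ \mathrm{PDer}_{\mathfrak{F}}(G,M), \]
which is the assertion of the corollary.

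In short, there is no genuine obstacle: the corollary is merely the composite of two isomorphisms already established in the text. If one wished to prove it from scratch, the nontrivial steps would be precisely the two ingredients just cited — relating Bredon $1$-cohomology to $\mathfrak{F}$-set $\mathfrak{F}$-derivations via the explicit cochain complex, and reconciling the $\orb$-module description with the $G$-module description through the map $\Psi$ — and the mild care needed with the clash of notation between the $\orb$-module and $G$-module versions of $\mathrm{Der}_{\mathfrak{F}}$ and $\mathrm{PDer}_{\mathfrak{F}}$.
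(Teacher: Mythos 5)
Your proposal is correct and is exactly the paper's argument: the corollary is obtained by composing the isomorphism of Theorem \ref{th: H^1 int} (with $\orb$-module coefficients $\underline{M}$) with the isomorphism of the immediately preceding proposition, which reconciles the $\orb$-module and $G$-module notions of $\mathfrak{F}$-derivation. Your added discussion of the overloaded notation $\mathrm{Der}_{\mathfrak{F}}/\mathrm{PDer}_{\mathfrak{F}}$ is a helpful clarification but not a departure from the paper's route.
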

\begin{proof} This is immediate from Theorem \ref{th: H^1 int} and the preceding proposition.
\end{proof}
\begin{corollary} \label{cor: H^1 intersect} If $G$ is a group and $\mathfrak{F}$ is a family of subgroups of $G$ that contains the trivial subgroup and $M \in G\mbox{-mod}$, then we have:
\[ \mathrm{H}^1_{\mathfrak{F}}(G,\underline{M}) \cong   \cap_{H \in \mathfrak{F}}\mathrm{Ker}\Big( i^1_{\scriptscriptstyle H}: \mathrm{H}^1(G,M) \rightarrow \mathrm{H}^1(H,M)\Big),\] where $i^1_{\scriptscriptstyle H}$ is induced by the inclusion map $i_{\scriptscriptstyle H} : H \hookrightarrow G$.
\end{corollary}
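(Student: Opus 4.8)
The plan is to reduce everything to the classical description of $\mathrm{H}^1(G,M)$ by crossed homomorphisms and then compare the two quotient groups directly. By Corollary \ref{cor: special int H^1} we already have
\[ \mathrm{H}^1_{\mathfrak{F}}(G,\underline{M}) \cong \mathrm{Der}_{\mathfrak{F}}(G,M)/\mathrm{PDer}_{\mathfrak{F}}(G,M), \]
where an $\mathfrak{F}$-derivation is a crossed homomorphism $D\colon G \to M$ whose restriction $D|_H$ is a principal derivation of $H$ for every $H \in \mathfrak{F}$, and $\mathrm{PDer}_{\mathfrak{F}}(G,M)$ is simply the group of principal derivations $D_m$, $m \in M$. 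On the other side, the standard bar-complex computation gives $\mathrm{H}^1(G,M) = \mathrm{Der}(G,M)/\mathrm{PDer}(G,M)$, with the restriction homomorphism $i^1_{\scriptscriptstyle H}$ induced on cohomology by the literal restriction of cocycles $\mathrm{Der}(G,M) \to \mathrm{Der}(H,M)$, $D \mapsto D|_H$.

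First I would record the two inclusions that make the comparison work: $\mathrm{Der}_{\mathfrak{F}}(G,M) \subseteq \mathrm{Der}(G,M)$ by definition, and $\mathrm{PDer}_{\mathfrak{F}}(G,M) = \mathrm{PDer}(G,M)$, since a principal derivation $D_m$ of $G$ restricts on each subgroup $H$ to the principal derivation $D_m|_H$, so that every principal derivation of $G$ is in particular an $\mathfrak{F}$-derivation. Hence $\mathrm{PDer}_{\mathfrak{F}}(G,M) = \mathrm{Der}_{\mathfrak{F}}(G,M) \cap \mathrm{PDer}(G,M)$, and the inclusion of derivation groups descends to an \emph{injective} homomorphism
\[ \mathrm{Der}_{\mathfrak{F}}(G,M)/\mathrm{PDer}_{\mathfrak{F}}(G,M) \hookrightarrow \mathrm{Der}(G,M)/\mathrm{PDer}(G,M) = \mathrm{H}^1(G,M). \]
Next I would identify its image. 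A class $[D] \in \mathrm{H}^1(G,M)$ lies in $\mathrm{Ker}(i^1_{\scriptscriptstyle H})$ exactly when $D|_H$ is a $1$-coboundary for $H$, i.e. a principal derivation of $H$; therefore $[D] \in \bigcap_{H\in\mathfrak{F}}\mathrm{Ker}(i^1_{\scriptscriptstyle H})$ if and only if $D|_H$ is principal for every $H \in \mathfrak{F}$, which is precisely the statement that $D$ itself is an $\mathfrak{F}$-derivation. So the image of the injection above is $\bigcap_{H\in\mathfrak{F}}\mathrm{Ker}(i^1_{\scriptscriptstyle H})$, and composing with the isomorphism of Corollary \ref{cor: special int H^1} yields the claimed isomorphism.

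There is no serious obstacle here; the only points needing a little care are checking that $\mathrm{PDer}_{\mathfrak{F}}(G,M)$ genuinely equals $\mathrm{PDer}(G,M)$ (so that the induced map on quotients is injective and not merely well defined), and that the restriction map on $\mathrm{H}^1$ appearing in the statement is indeed the one induced by restricting crossed homomorphisms — both of which are immediate from the definitions and the standard description of $\mathrm{H}^1$.
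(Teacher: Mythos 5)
Your proposal is correct and follows exactly the same route as the paper: the paper's proof is a one-line pointer to Corollary \ref{cor: special int H^1}, the crossed-homomorphism description of $\mathrm{H}^1$, and the definitions of $\mathrm{Der}_{\mathfrak{F}}$ and $\mathrm{PDer}_{\mathfrak{F}}$, and your argument simply spells out those details (including the key observation that $\mathrm{PDer}_{\mathfrak{F}}(G,M)=\mathrm{PDer}(G,M)$, which makes the induced map on quotients injective, and the identification of its image with $\bigcap_{H}\mathrm{Ker}(i^1_{\scriptscriptstyle H})$).
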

\begin{proof} This follows straight from the well-known interpretation of $1$-dimensional cohomology of groups in terms of derivations and principal derivations (e.g. see \cite{Brown}), the preceding corollary and the definitions of $\mathrm{Der}_{\mathfrak{F}}(G,M)$ and $\mathrm{PDer}_{\mathfrak{F}}(G,M)$.
\end{proof}
\begin{corollary} \label{cor: finite group Q}If $G$ is a finite group and $\mathfrak{F}$ is a family of subgroups of $G$ that contains the trivial subgroup, then
\[ \mathrm{H}^2_{\mathfrak{F}}(G,\underline{\mathbb{Z}})\cong \{ f \in \mathrm{Hom}(G,\mathbb{Q}/\mathbb{Z}) \ | \ f(H)=0 \ \mbox{for all} \ H \in \mathfrak{F}\}. \]
\end{corollary}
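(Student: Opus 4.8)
The plan is to get $\mathrm{H}^2_{\mathfrak{F}}(G,\underline{\mathbb{Z}})$ by a one‑step dimension shift from the interpretation of $\mathrm{H}^1_{\mathfrak{F}}$ furnished by Corollary \ref{cor: H^1 intersect}. View $\mathbb{Z}$, $\mathbb{Q}$ and $\mathbb{Q}/\mathbb{Z}$ as trivial $G$-modules and apply $\underline{(-)}$ to $0\to\mathbb{Z}\to\mathbb{Q}\to\mathbb{Q}/\mathbb{Z}\to0$. Since the $G$-action is trivial, evaluating the resulting diagram of $\orb$-modules at any $G/H$ returns the same sequence $0\to\mathbb{Z}\to\mathbb{Q}\to\mathbb{Q}/\mathbb{Z}\to0$, so
\[ 0\to\underline{\mathbb{Z}}\to\underline{\mathbb{Q}}\to\underline{\mathbb{Q}/\mathbb{Z}}\to0 \]
is exact in $\orbmod$. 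Feeding this into the long exact cohomology sequence for $\mathrm{H}^{\ast}_{\mathfrak{F}}(G,-)=\mathrm{Ext}^{\ast}_{\orb}(\underline{\mathbb{Z}},-)$, it is enough to show that $\mathrm{H}^1_{\mathfrak{F}}(G,\underline{\mathbb{Q}})$ and $\mathrm{H}^2_{\mathfrak{F}}(G,\underline{\mathbb{Q}})$ both vanish; the connecting homomorphism then gives an isomorphism $\mathrm{H}^1_{\mathfrak{F}}(G,\underline{\mathbb{Q}/\mathbb{Z}})\cong\mathrm{H}^2_{\mathfrak{F}}(G,\underline{\mathbb{Z}})$.

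Granting those two vanishings, I would finish by reading off both ends with Corollary \ref{cor: H^1 intersect}. First, it gives $\mathrm{H}^1_{\mathfrak{F}}(G,\underline{\mathbb{Q}})\cong\bigcap_{H\in\mathfrak{F}}\mathrm{Ker}\bigl(\mathrm{H}^1(G,\mathbb{Q})\to\mathrm{H}^1(H,\mathbb{Q})\bigr)$, which is zero since $\mathrm{H}^1(G,\mathbb{Q})=\mathrm{Hom}(G,\mathbb{Q})=0$ for the finite group $G$. Secondly, it identifies $\mathrm{H}^1_{\mathfrak{F}}(G,\underline{\mathbb{Q}/\mathbb{Z}})$ with $\bigcap_{H\in\mathfrak{F}}\mathrm{Ker}\bigl(\mathrm{H}^1(G,\mathbb{Q}/\mathbb{Z})\to\mathrm{H}^1(H,\mathbb{Q}/\mathbb{Z})\bigr)$; under the standard identifications $\mathrm{H}^1(G,\mathbb{Q}/\mathbb{Z})=\mathrm{Hom}(G,\mathbb{Q}/\mathbb{Z})$ and $\mathrm{H}^1(H,\mathbb{Q}/\mathbb{Z})=\mathrm{Hom}(H,\mathbb{Q}/\mathbb{Z})$ with the maps being restriction $f\mapsto f|_{H}$, this intersection is exactly $\{f\in\mathrm{Hom}(G,\mathbb{Q}/\mathbb{Z})\mid f(H)=0\ \text{for all}\ H\in\mathfrak{F}\}$, the right hand side of the statement. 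Combined with the dimension shift this yields the asserted isomorphism.

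The one genuinely substantial point, and the one I expect to be the main obstacle, is the vanishing of $\mathrm{H}^2_{\mathfrak{F}}(G,\underline{\mathbb{Q}})$. Here I would invoke the correspondence of Theorem \ref{th: main intro2}: $\mathrm{H}^2_{\mathfrak{F}}(G,\underline{\mathbb{Q}})$ is in bijection with $\mathrm{Str}_{\mathfrak{F}}(G,\mathbb{Q})$, the set of equivalence classes of $\mathfrak{F}$-structures on $(G,\mathbb{Q})$, with the (always existing) split class corresponding to $0$. Since $\mathrm{H}^1(H,\mathbb{Q})=0$ for every necessarily finite subgroup $H$, in particular for every $H$ in the conjugation closure $\overline{\mathfrak{F}}$, Proposition \ref{prop: ker ext} applies and identifies $\mathrm{Str}_{\mathfrak{F}}(G,\mathbb{Q})$ with a subset of $\mathrm{Ext}(G,\mathbb{Q})\cong\mathrm{H}^2(G,\mathbb{Q})$, namely $\bigcap_{H\in\mathfrak{F}}\mathrm{Ker}(i^2_H)$. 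But $\mathrm{H}^2(G,\mathbb{Q})=0$ because $G$ is finite and $\mathbb{Q}$ is uniquely divisible, so $\mathrm{Str}_{\mathfrak{F}}(G,\mathbb{Q})$ reduces to the single split class; hence $\mathrm{H}^2_{\mathfrak{F}}(G,\underline{\mathbb{Q}})=0$. (One could also try to establish this vanishing intrinsically, working directly with the standard cochain complex $C^{\ast}_{\mathfrak{F}}(G,\underline{\mathbb{Q}})$, whose terms are $\mathbb{Q}$-vector spaces, together with a transfer argument for the finite group $G$; but the route through Theorem \ref{th: main intro2} and Proposition \ref{prop: ker ext} is the most economical.)
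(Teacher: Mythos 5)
Your proof is correct and follows essentially the same route as the paper: the same short exact sequence $0\to\underline{\mathbb{Z}}\to\underline{\mathbb{Q}}\to\underline{\mathbb{Q}/\mathbb{Z}}\to0$, the same vanishing of $\mathrm{H}^1_{\mathfrak{F}}(G,\underline{\mathbb{Q}})$ and $\mathrm{H}^2_{\mathfrak{F}}(G,\underline{\mathbb{Q}})$ via Corollary \ref{cor: H^1 intersect} and (in effect) Corollary \ref{cor: intersection}, and the same final identification of $\mathrm{H}^1_{\mathfrak{F}}(G,\underline{\mathbb{Q}/\mathbb{Z}})$ with the indicated group of characters. Your invocation of Theorem \ref{th: main intro2} together with Proposition \ref{prop: ker ext} is just an unpacking of Corollary \ref{cor: intersection}, which the paper cites directly, so the two arguments agree in substance.
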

\begin{proof} The short exact sequence of trivial fixed point functors $0\to \underline{\mathbb{Z}} \rightarrow \underline{\mathbb{Q}} \rightarrow \underline{\mathbb{Q}/\mathbb{Z}}\to 0$ gives rise to a long exact Bredon cohomology sequence:
\[ \ldots \rightarrow \mathrm{H}^1_{\mathfrak{F}}(G,\underline{\mathbb{Q}}) \rightarrow  \mathrm{H}^1_{\mathfrak{F}}(G,\underline{\mathbb{Q}/\mathbb{Z}}) \rightarrow \mathrm{H}^2_{\mathfrak{F}}(G,\underline{\mathbb{Z}}) \rightarrow \mathrm{H}^2_{\mathfrak{F}}(G,\underline{\mathbb{Q}}) \rightarrow \ldots \ . \]
Since the order of $G$ is invertible in $\mathbb{Q}$, it follows that $\mathrm{H}^1(G,\mathbb{Q})$ and $\mathrm{H}^2(G,\mathbb{Q})$ are zero (e.g. see \cite{Brown}). By the preceding corollary, this implies that $\mathrm{H}^1_{\mathfrak{F}}(G,\underline{\mathbb{Q}})=0$.
Since $\mathrm{H}^1(H,\mathbb{Q})=0$ for all finite groups $H$, it follows from  Proposition \ref{prop: ker ext} and Corollary \ref{cor: intersection} that $\mathrm{H}^2_{\mathfrak{F}}(G,\underline{\mathbb{Q}}) \subseteq \mathrm{H}^2(G,\mathbb{Q})=0$. Hence, by the long exact sequence $\mathrm{H}^1_{\mathfrak{F}}(G,\underline{\mathbb{Q}/\mathbb{Z}}) \cong \mathrm{H}^2_{\mathfrak{F}}(G,\underline{\mathbb{Z}})$. Using Corollary \ref{cor: special int H^1} and the fact that $\mathbb{Q}/\mathbb{Z}$ is a trivial $G$-module, it is not difficult to verify that:
\[ \mathrm{H}^1_{\mathfrak{F}}(G,\underline{\mathbb{Q}/\mathbb{Z}})\cong \{ f \in \mathrm{Hom}(G,\mathbb{Q}/\mathbb{Z}) \ | \ f(H)=0 \ \mbox{for all} \ H \in \mathfrak{F}\}.\]
This concludes the proof.
\end{proof}
\subsection{Split extensions}
Let $G$ be a group, $\mathfrak{F}$ be a family of subgroups of $G$ and $M$ be a $\orb$-module. We have seen that there exists, up to equivalence of extensions, a unique split abelian right extension of $\orb$ by $M$  such that the induced $\orb$-module structure on $M$ by the extension equals the original one. However, a splitting of this split extensions does not have to be unique. In view of Proposition \ref{lemma: unique split ext}, let us consider the standard split extension:
\[ \mathfrak{M} \xrightarrow{i} \mathfrak{M} \rtimes \orb \xrightarrow{\pi} \orb \]
where $\mathfrak{M}$ is the usual category with objects $M(G/H)$. An obvious splitting for this extension is the functor $s: \orb \rightarrow \mathfrak{M} \rtimes \orb$ with $$s(G/H)=M(G/H)\times G/H \mbox{\; and \;} s(\varphi)=({0},\varphi)$$ for $\varphi \in \mathrm{Mor}(G/H,G/K).$ But in fact, any functor $t: \orb \rightarrow \mathfrak{M} \rtimes \orb$ with $$t(G/H)=M(G/H)\times G/H \mbox{\; and \;} t(\varphi)=(\overline{\mathcal D(\varphi)},\varphi)$$ for $\varphi \in \mathrm{Mor}(G/H,G/K)$ is a splitting. (Recall that  $\overline{\mathcal D(\varphi)} \in \mathrm{Mor}(M(G/H), M(G/H))$, i.e. $\mathcal D(\varphi) \in M(G/H)$). Imposing that $t$ is a functor is equivalent to requiring that the relation:
\[ \mathcal D(\psi \circ \varphi)=\mathcal D(\varphi) + M(\varphi)(\mathcal D(\psi)) \]
holds, for all $\varphi \in \mathrm{Mor}(G/H,G/H_1)$ and for all $\psi \in \mathrm{Mor}(G/H_1,G/H_2)$. To see this, just translate $s(\psi \circ \varphi)=s(\psi)\circ s(\varphi)$ into a statement about $\mathcal D$ using the composition law in $\mathfrak{M} \rtimes \orb$.

Note that  $\mathcal D$ can be considered as a map of $\mathfrak{F}$-sets from $\Delta^1$ to $\mathfrak{F}(M)$. Then, the equation above defines $\mathcal D$ as  an $\mathfrak{F}$-derivation of $G$ into $M$. This shows that there is a bijection between splittings of the standard split extension of $\orb$ with $M$ and the derivations of $G$ into $M$.
\begin{definition} \rm  Two splittings $s$ and $t$ of the extension:
\[ \mathfrak{M} \xrightarrow{i} \mathfrak{M} \rtimes \orb \xrightarrow{\pi} \orb \]
are said to be $M$-conjugate if there exists an element $m=(m_{\scriptscriptstyle H})_{H \in \mathfrak{F}} \in \prod_{H \in \mathfrak{F}}M(G/H)$ such that
\[  s(\varphi)\circ i(\bar{m}_{\scriptscriptstyle H})=i(\bar{m}_{\scriptscriptstyle K})\circ t(\varphi)  \]
for all $H,K \in \mathfrak{F}$, for all $\varphi \in \mathrm{Mor}(G/H,G/K)$. The notion of $M$-conjugacy places an equivalence relation on the set of all splittings.
\end{definition}
If $\mathcal D$ and $\mathcal D'$ are the $\mathfrak{F}$-derivations corresponding to $s$ and $t$, then $M$-conjugacy is equivalent to the relation:
\[\mathcal D_{\scriptscriptstyle H}(xK)-\mathcal D'_{\scriptscriptstyle H}(x_{\scriptscriptstyle K})= M(xK)(m_{\scriptscriptstyle K})-m_{\scriptscriptstyle H} = (\mathcal D_m)_{\scriptscriptstyle H}(x_{\scriptscriptstyle K})\]
for all $H,K \in \mathfrak{F}$, for all $xK \in (G/K)^{\scriptscriptstyle H}$. This means that two splittings are $M$-conjugate if and only if their corresponding $\mathfrak{F}$-derivations differ by a principal derivation. Hence, there is a one-to-one correspondence between the $M$-conjugacy classes of splittings of the standard abelian right extension of $\orb$ by $M$ and $\mathrm{Der}_{\mathfrak{F}}(G,M)/ \mathrm{PDer}_{\mathfrak{F}}(G,M)$. Using Theorem \ref{th: H^1 int}, we arrive at the following interpretation for $\mathrm{H}^1_{\mathfrak{F}}(G,M)$.
\begin{theorem} \label{th: H^1 int split ext}If $G$ is a group, $\mathfrak{F}$ a family of subgroups of $G$ and $M$ a $\orb$-module then there is a one-to-one correspondence between the $M$-conjugacy classes of splittings of
\[ \mathfrak{M} \xrightarrow{i} \mathfrak{M} \rtimes \orb \xrightarrow{\pi} \orb \]
and the elements of $\mathrm{H}^1_{\mathfrak{F}}(G,M)$. In particular, $\mathrm{H}^1_{\mathfrak{F}}(G,M)=0$ if and only if all splittings of the standard abelian right extensions of $G$ by $M$ are $M$-conjugate.
\end{theorem}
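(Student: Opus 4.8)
The plan is to upgrade the informal correspondence between splittings and $\mathfrak{F}$-derivations sketched just above the theorem into a genuine bijection, and then to pass to the $M$-conjugacy quotient on the splitting side and the coboundary quotient on the cohomology side, finishing with Theorem \ref{th: H^1 int}.

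First I would pin down the shape of an arbitrary splitting of $\mathfrak{M} \xrightarrow{i} \mathfrak{M}\rtimes\orb \xrightarrow{\pi}\orb$ (the standard split extension of Proposition \ref{lemma: unique split ext}). Since $\pi$ collapses the $M(G/H)$-factor and is the identity on the rest, any functor $t:\orb\to\mathfrak{M}\rtimes\orb$ with $\pi\circ t=\mathrm{id}$ must satisfy $t(G/H)=M(G/H)\times G/H$ and must send a morphism $\varphi\in\mathrm{Mor}(G/H,G/K)$ to a morphism $(\overline{\mathcal D(\varphi)},\varphi)$ for a unique $\mathcal D(\varphi)\in M(G/H)$; conversely every choice of the family $(\mathcal D(\varphi))_\varphi$ defines a $\pi$-section on objects and on morphisms. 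So the bare data of a splitting is exactly a map of $\mathfrak{F}$-sets $\mathcal D:\Delta^1\to\mathfrak{F}(M)$. It then remains to see when such a $\mathcal D$ gives a functor: expanding $t(\psi\circ\varphi)=t(\psi)\circ t(\varphi)$ by means of the composition law $(\bar m_K,\psi)\circ(\bar m_H,\varphi)=(\bar m_H+\overline{M(\varphi)(m_K)},\psi\circ\varphi)$ of $\mathfrak{M}\rtimes\orb$ reduces this to the single identity
\[ \mathcal D(\psi\circ\varphi)=\mathcal D(\varphi)+M(\varphi)\bigl(\mathcal D(\psi)\bigr), \]
which, rewritten with morphisms of $\orb$ as invariant cosets as in Remark \ref{remark: notation}, is precisely the defining relation of an $\mathfrak{F}$-derivation of $G$ into $M$. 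Hence splittings of the standard split extension are in natural bijection with $\mathrm{Der}_{\mathfrak{F}}(G,M)$, with the canonical splitting $s(\varphi)=(\bar 0,\varphi)$ corresponding to $\mathcal D\equiv 0$.

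Next I would identify $M$-conjugacy on the splitting side. If $s,t$ are splittings with associated $\mathfrak{F}$-derivations $\mathcal D,\mathcal D'$, then the relation $s(\varphi)\circ i(\bar m_H)=i(\bar m_K)\circ t(\varphi)$ for all $\varphi\in\mathrm{Mor}(G/H,G/K)$ and a fixed $m=(m_H)_H\in\prod_H M(G/H)$ unwinds, again via the composition law, to $\mathcal D_H(xK)-\mathcal D'_H(xK)=M(xK)(m_K)-m_H=(\mathcal D_m)_H(xK)$, i.e.\ $\mathcal D-\mathcal D'$ is the principal $\mathfrak{F}$-derivation attached to $m$. So two splittings are $M$-conjugate if and only if their $\mathfrak{F}$-derivations differ by an element of $\mathrm{PDer}_{\mathfrak{F}}(G,M)$, and the bijection of the previous paragraph descends to a bijection between $M$-conjugacy classes of splittings and $\mathrm{Der}_{\mathfrak{F}}(G,M)/\mathrm{PDer}_{\mathfrak{F}}(G,M)$. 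Composing with the isomorphism of Theorem \ref{th: H^1 int} gives the claimed one-to-one correspondence with $\mathrm{H}^1_{\mathfrak{F}}(G,M)$, and since $s$ maps to $\mathcal D\equiv 0$, its class maps to $0$. The last assertion is then formal: $\mathrm{H}^1_{\mathfrak{F}}(G,M)=0$ exactly when there is a single $M$-conjugacy class, i.e.\ when every splitting is $M$-conjugate to $s$, equivalently (as $M$-conjugacy is an equivalence relation) when all splittings are pairwise $M$-conjugate.

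The only substantial difficulty, and hence the main obstacle, is bookkeeping: one must repeatedly match the composition law of $\mathfrak{M}\rtimes\orb$ against the functoriality condition and against the $M$-conjugacy condition, keeping careful track of which copy, $M(G/H)$ (source) or $M(G/K)$ (target), each term inhabits, since the contravariance makes it easy to interchange $M(\varphi)(\mathcal D(\psi))$ with $\mathcal D(\varphi)$ or to place $m_H$ where $m_K$ belongs. Once the coset notation of Remark \ref{remark: notation} is in force, every computation lands on the cochain formulas in the definition of $C^{\ast}_{\mathfrak{F}}(G,M)$ verbatim, and no input beyond Theorem \ref{th: H^1 int} is needed.
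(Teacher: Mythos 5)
Your proposal is correct and follows the same route the paper takes in the text immediately preceding the theorem: identify splittings of the standard split extension with $\mathfrak{F}$-derivations via the composition law of $\mathfrak{M}\rtimes\orb$, observe that $M$-conjugacy of splittings is precisely equality of the corresponding derivations modulo principal derivations, and invoke Theorem \ref{th: H^1 int}. The extra care you take in pinning down that a $\pi$-section is forced on objects and is parametrized on morphisms by a unique $\mathcal D(\varphi)\in M(G/H)$ is exactly the bookkeeping the paper leaves implicit.
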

Let us again treat the case where the $\orb$-module is a fixed point functor $\underline{M}$. Consider the standard split $\mathfrak{F}$-structure on $(G,M)$:
\[ 0 \rightarrow M \rightarrow \Gamma = M \rtimes G \xrightarrow{\pi} G \rightarrow 1 \]
with $\Gamma_{\scriptscriptstyle H}=({0},H)$ for each $H \in \mathfrak{F}$. A map $s: G \rightarrow M \rtimes G$ is a splitting for this $\mathfrak{F}$-structure if and only if $s$ is a group homomorphism such that $\pi \circ s= id$ and $s(H)=(-m_{\scriptscriptstyle H},e)({0},H)(m_{\scriptscriptstyle H},e)$ for elements $(m_{\scriptscriptstyle H})_{H \in \mathfrak{F}}$, for each $H \in \mathfrak{F}$. This is equivalent to saying that $s$ is of the form: $$s: G \rightarrow M \rtimes G: x \mapsto (D(x),x),$$ where $D$ is an $\mathfrak{F}$-derivation of $G$ into M. Hence, the set of splittings of the standard split $\mathfrak{F}$-structure on $(G,M)$ is in bijective correspondence with the $\mathfrak{F}$-derivations of $G$ into M.
\begin{definition} \rm  Two splittings $s$ and $t$ of the standard split $\mathfrak{F}$-structure on $(G,M)$
are said to be $M$-conjugate if there exists an element $m \in M$ such that
\[  s(x)=mt(x)m^{\scriptscriptstyle -1}  \]
for all $x \in G$. The notion of $M$-conjugacy places an equivalence relation on the set of all splittings.
\end{definition}
One can readily verify that two splittings are $M$-conjugate if and only if their corresponding $\mathfrak{F}$-derivations differ by a principal derivation of $G$ into $M$. Using Corollary \ref{cor: special int H^1}, we arrive at the following result.
\begin{corollary}  If $G$ is a group, $\mathfrak{F}$ a family of subgroups of $G$ that contains the trivial subgroup and $M \in G\mbox{-mod}$, then there is a one-to-one correspondence between the $M$-conjugacy classes of splittings of the standard split $\mathfrak{F}$-structure on $(G,M)$ and the elements of $\mathrm{H}^1_{\mathfrak{F}}(G,\underline{M})$.  In particular, $\mathrm{H}^1_{\mathfrak{F}}(G,\underline{M})=0$ if and only if all splittings of the standard split $\mathfrak{F}$-structure on $(G,M)$ are $M$-conjugate.
\end{corollary}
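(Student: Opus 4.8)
The plan is to reduce the statement to the two ingredients already assembled in the discussion preceding it: the explicit description of splittings of the standard split $\mathfrak{F}$-structure in terms of $\mathfrak{F}$-derivations, and the identification of $\mathrm{H}^1_{\mathfrak{F}}(G,\underline{M})$ with $\mathrm{Der}_{\mathfrak{F}}(G,M)/\mathrm{PDer}_{\mathfrak{F}}(G,M)$ from Corollary~\ref{cor: special int H^1}. First I would recall that any splitting $s\colon G\to M\rtimes G$ of the standard split $\mathfrak{F}$-structure on $(G,M)$ is forced, by $\pi\circ s=\mathrm{id}$, to have the form $s(x)=(D(x),x)$ for a unique map $D\colon G\to M$; the requirement that $s$ be a homomorphism is exactly the cocycle identity $D(xy)=x\cdot D(y)+D(x)$, and the requirement that $s(H)$ be $M$-conjugate to $\Gamma_{\scriptscriptstyle H}=(0,H)$ translates, after computing $(-m_{\scriptscriptstyle H},e)(0,H)(m_{\scriptscriptstyle H},e)$ in $M\rtimes G$, into the condition that $D|_{\scriptscriptstyle H}$ equal the principal derivation $D_{m_{\scriptscriptstyle H}}$ of $H$ into $M$. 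Hence $D\in\mathrm{Der}_{\mathfrak{F}}(G,M)$, and conversely every $\mathfrak{F}$-derivation arises in this way, so splittings are in bijection with $\mathrm{Der}_{\mathfrak{F}}(G,M)$.

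Next I would verify that two splittings $s,t$, with associated $\mathfrak{F}$-derivations $D,D'$, are $M$-conjugate if and only if $D-D'\in\mathrm{PDer}_{\mathfrak{F}}(G,M)$. This is a direct computation: if $s(x)=m\,t(x)\,m^{\scriptscriptstyle -1}$ for some $m\in M$, writing $m$ as $(m,e)\in M\rtimes G$ and using the multiplication law gives $(D(x),x)=(m+D'(x)-x\cdot m,x)$, so $D(x)-D'(x)=-(x\cdot m-m)=-D_m(x)$ for all $x\in G$; conversely, any principal difference $D-D'=-D_m$ exhibits $(m,e)$ as an $M$-conjugating element. Therefore the $M$-conjugacy classes of splittings are in natural bijection with $\mathrm{Der}_{\mathfrak{F}}(G,M)/\mathrm{PDer}_{\mathfrak{F}}(G,M)$.

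Finally, composing this bijection with the isomorphism $\mathrm{H}^1_{\mathfrak{F}}(G,\underline{M})\cong\mathrm{Der}_{\mathfrak{F}}(G,M)/\mathrm{PDer}_{\mathfrak{F}}(G,M)$ of Corollary~\ref{cor: special int H^1} yields the asserted one-to-one correspondence, under which the (class of the) standard splitting corresponds to $0$. The ``in particular'' statement then follows immediately: the standard splitting always exists, so the set of $M$-conjugacy classes is a single point precisely when $\mathrm{H}^1_{\mathfrak{F}}(G,\underline{M})=0$. I do not expect a genuine obstacle here; the only point demanding care is the bookkeeping in $M\rtimes G$, specifically confirming that ``$s(H)$ is conjugate to $\Gamma_{\scriptscriptstyle H}$ by an element of $M$'' is equivalent to ``$D|_{\scriptscriptstyle H}$ is principal'' (with the element of $M$ producing exactly the principal derivation attached to it), since the conjugating elements $m_{\scriptscriptstyle H}$ are allowed to depend on $H$ whereas $M$-conjugacy of two splittings uses a single $m$.
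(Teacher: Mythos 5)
Your proposal is correct and follows essentially the same route as the paper: identify splittings of the standard split $\mathfrak{F}$-structure with $\mathfrak{F}$-derivations, check that $M$-conjugacy of splittings corresponds to differing by a principal derivation, and then invoke Corollary~\ref{cor: special int H^1}. The paper's version simply states the $M$-conjugacy/principal-derivation equivalence as "one can readily verify," whereas you spell out the semidirect-product computation, but the argument is the same.
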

\subsection{The second Bredon cohomology} \label{sec: H2}
Here, we present several applications of Theorem \ref{th: special ext} and the following  special case of a result of Hoff (see \cite{Hoff},\cite{Hoff2}).
\begin{theorem}[Hoff] \label{th: H^2} Suppose $G$ is a group, $\mathfrak{F}$ a family of subgroups of $G$ and $M$ a $\orb$-module. There is a one-to-one correspondence:
\[ \mathrm{H}^2_{\mathfrak{F}}(G,M) \leftrightarrow \mathrm{Ext}_{\mathfrak{F}}(G,M) \]
such that the zero element in $\mathrm{H}^2_{\mathfrak{F}}(G,M)$ corresponds to the class of split extensions in $\mathrm{Ext}_{\mathfrak{F}}(G,M)$.
\end{theorem}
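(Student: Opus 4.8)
The plan is to prove the bijection directly in terms of the explicit cochain complex $C^{\ast}_{\mathfrak{F}}(G,M)$ of Theorem \ref{th: cochain complex}, in exact analogy with the factor-set description of $\mathrm{H}^2$ of a group. Recall that a $2$-cocycle is a map of $\mathfrak{F}$-sets $f\colon \Delta^2 \to \mathfrak{F}(M)$, i.e.\ a family $f_{\scriptscriptstyle H}(\varphi,\psi)\in M(G/H)$ for composable $\varphi\in\mathrm{Mor}(G/H,G/H_1)$, $\psi\in\mathrm{Mor}(G/H_1,G/H_2)$, with $d^2 f = 0$ as spelled out in Remark \ref{remark: notation}. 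Given a class in $\mathrm{H}^2_{\mathfrak{F}}(G,M)$, I would pick a normalized representative $f$ (every class has one) and build a category $\mathfrak{E}_f$ with objects $\{E(H)\}_{H\in\mathfrak{F}}$, hom-sets $\mathrm{Mor}(E(H),E(K)) = M(G/H)\times\mathrm{Mor}(G/H,G/K)$, and composition
\[ (m_{\scriptscriptstyle K},\psi)\circ(m_{\scriptscriptstyle H},\varphi) \;=\; \bigl(m_{\scriptscriptstyle H}+M(\varphi)(m_{\scriptscriptstyle K})+f_{\scriptscriptstyle H}(\varphi,\psi),\; \psi\circ\varphi\bigr). \]
For $f=0$ this is precisely the composition in $\mathfrak{M}\rtimes\orb$. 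One checks that associativity of this composition is \emph{exactly} the identity $d^2 f = 0$; that $i(\bar m_{\scriptscriptstyle H})=(m_{\scriptscriptstyle H},\mathrm{id})$ and $\pi(m_{\scriptscriptstyle H},\varphi)=\varphi$ are functors satisfying (i)--(v) of the definition of an abelian right extension (normalization of $f$ makes $(0,\mathrm{id})$ the identity, and (v) holds with its uniqueness clause because composing on the right with $i(\bar m_{\scriptscriptstyle H})$ merely adds $m_{\scriptscriptstyle H}$ in the first coordinate); and that the induced $\orb$-module structure on $M$ is the given one.

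Conversely, starting from an abelian right extension $\mathfrak{M}\xrightarrow{i}\mathfrak{E}\xrightarrow{\pi}\orb$, I would use surjectivity of $\pi$ on morphisms to choose a lift $s(\varphi)$ of every morphism $\varphi$ of $\orb$, normalized so $s(\mathrm{id})=\mathrm{id}$. For composable $\varphi,\psi$ the morphisms $s(\psi)\circ s(\varphi)$ and $s(\psi\circ\varphi)$ both project to $\psi\circ\varphi$, so by property (v) there is a unique $\bar m_{\scriptscriptstyle H}$ with $s(\psi)\circ s(\varphi)=s(\psi\circ\varphi)\circ i(\bar m_{\scriptscriptstyle H})$; put $f_{\scriptscriptstyle H}(\varphi,\psi)=m_{\scriptscriptstyle H}$. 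Associativity of composition in $\mathfrak{E}$, combined with the relation $i(\bar m)\circ s(\varphi)=s(\varphi)\circ i(\overline{M(\varphi)(m)})$ that encodes the $\orb$-module structure, forces $d^2 f = 0$, while $\pi\circ i = \mathrm{id}$ together with the normalization shows $f$ is a bona fide map of $\mathfrak{F}$-sets, hence a $2$-cocycle.

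Next I would check that these two assignments pass to mutually inverse bijections on cohomology classes and equivalence classes. Changing the section $s$ to $s'$ is recorded by a $1$-cochain $g$ via $s'(\varphi)=s(\varphi)\circ i(\overline{g_{\scriptscriptstyle H}(\varphi)})$, and a short computation using the module-structure relation shows the cocycle changes by $d^1 g$; conversely, an equivalence functor $\theta$ between two extensions, applied to matching sections, displays the difference of their cocycles as a coboundary. Applying the tautological section of $\mathfrak{E}_f$ recovers $f$, and for an extension the functor $\Lambda(m_{\scriptscriptstyle H},\varphi)=s(\varphi)\circ i(\bar m_{\scriptscriptstyle H})$ gives an equivalence $\mathfrak{E}_f\simeq\mathfrak{E}$; this yields the bijection $\mathrm{H}^2_{\mathfrak{F}}(G,M)\leftrightarrow\mathrm{Ext}_{\mathfrak{F}}(G,M)$. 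For the last assertion, $f=0$ produces $\mathfrak{E}_0=\mathfrak{M}\rtimes\orb$, the standard split extension, and any split extension has a splitting functor, which is a section with identically zero cocycle; since by Proposition \ref{lemma: unique split ext} all split extensions are equivalent, they form a single class, necessarily the one matched with $0$.

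The real work, and the only delicate point, is the bookkeeping in the first two steps: that the $f$-twisted composition on $\mathfrak{E}_f$ is associative precisely when $d^2 f = 0$, and that property (v) holds verbatim. The one thing to get right is variance and placement — the complex $C^{\ast}_{\mathfrak{F}}(G,M)$ is contravariant, so $f_{\scriptscriptstyle H}(\varphi,\psi)$ has to live in $M(G/H)$ (the source) and the twist $M(\varphi)$ must be applied to the component coming from the target; once that is pinned down, both verifications are routine diagram chases and the rest is formal. Alternatively, one may simply cite Hoff \cite{Hoff,Hoff2}, the statement above being the specialization of his result to the orbit category.
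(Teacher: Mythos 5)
The paper does not prove this theorem at all --- it merely states it with the attribution ``[Hoff]'' and cites \cite{Hoff}, \cite{Hoff2}, whose general result on abelian extensions of small categories is being specialized to $\orb$. So there is no proof in the paper to compare against; your sketch fills a gap the authors deliberately left as a citation. Your factor-set argument is correct and self-contained at the level of detail you give. I checked the key verifications: associativity of the twisted composition on $\mathfrak{E}_f$ unwinds exactly to the cocycle condition $d^2 f=0$ in the complex $C^{\ast}_{\mathfrak{F}}(G,M)$ of Theorem \ref{th: cochain complex}; once $f$ is normalized, $(0,\mathrm{id})$ is the identity, $i$ and $\pi$ are functors, and property (v) holds with its uniqueness clause because the fibers of $\pi$ over a fixed morphism are $M(G/H)$-torsors under right composition with $i(\bar m_{\scriptscriptstyle H})$; the induced $\orb$-module structure recovers the given $M$; and, in the converse direction, changing the normalized section $s$ to $s'(\varphi)=s(\varphi)\circ i(\overline{g_{\scriptscriptstyle H}(\varphi)})$ shifts the extracted cocycle by precisely $d^1 g$. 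You also correctly note that all the bookkeeping hinges on contravariance: $f_{\scriptscriptstyle H}(\varphi,\psi)$ must land in $M(G/H)$ (the source), and the twist $M(\varphi)$ acts on the target's contribution. This is presumably the substance of Hoff's proof applied to $\orb$; the one thing writing it out buys is that it ties the statement directly to the explicit cochain complex of Section \ref{sec: complex} in the notation the rest of the paper uses, so that the subsequent derivation of Theorem \ref{cor: H^2 int} from Theorem \ref{th: special ext} is fully transparent rather than resting on an external reference.
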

Using Theorem \ref{th: special ext}, we immediately obtain the following
\begin{theorem} \label{cor: H^2 int} Let $G$ be a group,  $\mathfrak{F}$ be a family of subgroups of $G$ that contains the trivial subgroup, and  $M \in  G\mbox{-mod}$. There is a one-to-one correspondence between $\mathrm{H}^2_{\mathfrak{F}}(G,\underline{M})$ and $\mathrm{Str}_{\mathfrak{F}}(G,M)$, such that the zero element in $\mathrm{H}^2_{\mathfrak{F}}(G,\underline{M})$ corresponds to the class of split $\mathfrak{F}$-structures on $(G,M)$. In particular, $\mathrm{H}^2_{\mathfrak{F}}(G,\underline{M})=0$ if and only if every $\mathfrak{F}$-structure on $(G,M)$ splits.
\end{theorem}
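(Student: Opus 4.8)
The plan is to deduce the statement by composing two correspondences already at our disposal. First I would apply Hoff's Theorem~\ref{th: H^2} to the fixed point functor $\underline{M}$, regarded as an $\orb$-module; this produces a one-to-one correspondence
\[ \mathrm{H}^2_{\mathfrak{F}}(G,\underline{M}) \longleftrightarrow \mathrm{Ext}_{\mathfrak{F}}(G,\underline{M}) \]
under which the zero cohomology class is carried to the class of split abelian right extensions of $\orb$ by $\underline{M}$. Here one uses Proposition~\ref{lemma: unique split ext}, which guarantees that all split abelian right extensions of $\orb$ by $\underline{M}$ are mutually equivalent, so that ``the class of split extensions'' really is a single well-defined element of $\mathrm{Ext}_{\mathfrak{F}}(G,\underline{M})$.

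Second I would invoke Theorem~\ref{th: special ext}: the map $\Psi$ constructed via Proposition~\ref{prop: fstruc to ext} is a bijection
\[ \mathrm{Str}_{\mathfrak{F}}(G,M) \longleftrightarrow \mathrm{Ext}_{\mathfrak{F}}(G,\underline{M}) \]
which carries the class of split $\mathfrak{F}$-structures on $(G,M)$ to the class of split extensions; again one uses that every split $\mathfrak{F}$-structure on $(G,M)$ is equivalent to the standard split $\mathfrak{F}$-structure, so the source class is also well-defined. Composing the first correspondence with $\Psi^{-1}$ then yields the asserted bijection between $\mathrm{H}^2_{\mathfrak{F}}(G,\underline{M})$ and $\mathrm{Str}_{\mathfrak{F}}(G,M)$, and tracking the zero element through both maps shows that it lands exactly on the class of split $\mathfrak{F}$-structures on $(G,M)$.

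The ``in particular'' clause is then purely formal: $\mathrm{H}^2_{\mathfrak{F}}(G,\underline{M})=0$ if and only if $\mathrm{Str}_{\mathfrak{F}}(G,M)$ is a singleton, and since the distinguished element of $\mathrm{Str}_{\mathfrak{F}}(G,M)$ is the class of split $\mathfrak{F}$-structures, this occurs precisely when every $\mathfrak{F}$-structure on $(G,M)$ splits. I do not expect any genuine obstacle here, since both ingredients have already been established; the only point requiring a little care is the compatibility of the two notions of splitness (split abelian right extension of $\orb$ versus split $\mathfrak{F}$-structure) under $\Psi$, and this compatibility is already part of the statement of Theorem~\ref{th: special ext}, proved there by exhibiting the splitting functor $S$ constructed in Proposition~\ref{prop: fstruc to ext}.
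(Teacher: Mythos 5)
Your proposal is correct and follows the paper's proof exactly: the paper also obtains the statement by composing Hoff's Theorem~\ref{th: H^2} (applied to the coefficient module $\underline{M}$) with the bijection $\Psi$ of Theorem~\ref{th: special ext}, matching up the split classes on both sides. Nothing is missing.
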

\smallskip
\begin{corollary}\label{cor: intersection} Let $G$ be a group,  $\mathfrak{F}$ be a family of subgroups of $G$ that contains the trivial subgroup, and $\overline{\mathfrak{F}}$ be the smallest family of subgroups of $G$ containing $\mF$ that is closed under conjugation. If $H^1(H,M)=0$ for all $H \in \overline{\mathfrak{F}}$, then $$\mathrm{H}^2_{\mathfrak{F}}(G,\underline{M} )\cong  \cap_{H \in \mathfrak{F}}\mathrm{Ker}\Big( i^2_{\scriptscriptstyle H}: \mathrm{H}^2(G,M) \rightarrow \mathrm{H}^2(H,M)\Big),$$ where $i^2_{\scriptscriptstyle H}$ is induced by the inclusion map $i_{\scriptscriptstyle H} : H \hookrightarrow G$.
\end{corollary}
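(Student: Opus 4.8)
The plan is to assemble the statement from two results already in hand: Theorem~\ref{cor: H^2 int}, which gives a bijection of pointed sets $\mathrm{H}^2_{\mathfrak{F}}(G,\underline{M}) \leftrightarrow \mathrm{Str}_{\mathfrak{F}}(G,M)$ taking $0$ to the class of split $\mathfrak{F}$-structures, and Proposition~\ref{prop: ker ext}, which (using exactly the hypothesis $\mathrm{H}^1(H,M)=0$ for all $H \in \overline{\mathfrak{F}}$) realizes $\mathrm{Str}_{\mathfrak{F}}(G,M)$ as a subset of $\mathrm{Ext}(G,M)$ corresponding, under $\mathrm{Ext}(G,M)\cong \mathrm{H}^2(G,M)$, to $\bigcap_{H \in \mathfrak{F}}\mathrm{Ker}\big(i^2_H\colon \mathrm{H}^2(G,M)\to \mathrm{H}^2(H,M)\big)$. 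Composing these, I obtain a bijection of pointed sets between $\mathrm{H}^2_{\mathfrak{F}}(G,\underline{M})$ and $\bigcap_{H\in\mathfrak{F}}\mathrm{Ker}(i^2_H)\subseteq \mathrm{H}^2(G,M)$.

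The remaining task is to upgrade this set bijection to an isomorphism of abelian groups. First I would note that since $\{e\}\in\mathfrak{F}$, the inclusion of families $\{\{e\}\}\subseteq\mathfrak{F}$ induces a full inclusion of orbit categories $\mathcal{O}_{\{e\}}G\hookrightarrow \mathcal{O}_{\mathfrak{F}}G$, hence a restriction homomorphism $\mathrm{res}\colon \mathrm{H}^2_{\mathfrak{F}}(G,\underline{M})\to \mathrm{H}^2_{\{e\}}(G,\underline{M})=\mathrm{H}^2(G,M)$, which is a genuine group homomorphism by functoriality of $\mathrm{Ext}$. The key claim is that the composite bijection above coincides with $\mathrm{res}$. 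To verify this I would trace an element $\xi\in \mathrm{H}^2_{\mathfrak{F}}(G,\underline{M})$ through Hoff's correspondence (Theorem~\ref{th: H^2}) together with Theorem~\ref{th: special ext}: $\xi$ is represented by an abelian extension $\mathfrak{M}\to \mathcal{O}_{\mathfrak{F}}(\Gamma)\to \mathcal{O}_{\mathfrak{F}}G$ arising from a group extension $0\to M\to\Gamma\to G\to 1$ with $\Gamma=\mathrm{Mor}(E(\{e\}),E(\{e\}))$ (Lemma~\ref{lemma: key lemma1}), and the image of $\xi$ under the composite is precisely the class of this underlying group extension in $\mathrm{H}^2(G,M)$. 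On the other hand, restricting $\mathcal{O}_{\mathfrak{F}}(\Gamma)\to \mathcal{O}_{\mathfrak{F}}G$ to the one-object subcategory on $G/\{e\}$ recovers exactly that group extension of $G$ by $M$ (an abelian extension of the one-object category $\mathcal{O}_{\{e\}}G$ by $M$ is the same datum as a group extension of $G$ by $M$, namely its morphism group), so $\mathrm{res}(\xi)$ is the same class. Hence the composite equals $\mathrm{res}$.

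Granting this, the conclusion is immediate: $\mathrm{res}$ is a group homomorphism which, by the two bijections, is injective with image exactly $\bigcap_{H\in\mathfrak{F}}\mathrm{Ker}(i^2_H)$, and an injective homomorphism onto a subgroup is an isomorphism onto that subgroup.

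The step I expect to be the main obstacle is precisely the compatibility invoked above: that Hoff's correspondence $\mathrm{Ext}_{\mathfrak{F}}(G,-)\leftrightarrow \mathrm{H}^2_{\mathfrak{F}}(G,-)$ is natural with respect to restriction along $\mathcal{O}_{\{e\}}G\hookrightarrow \mathcal{O}_{\mathfrak{F}}G$ — equivalently, that ``forget the $\mathfrak{F}$-structure and keep the underlying group extension'' on the $\mathrm{Str}$ side is literally the map induced on $\mathrm{Ext}^2$ (one can, alternatively, verify this directly on the standard cochain complex of Section~\ref{sec: complex}, where restriction is just evaluation of cochains on tuples of morphisms with source $G/\{e\}$, manifestly a chain map). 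Everything else is bookkeeping with the already-established bijections; one should only keep in mind that the vanishing $\mathrm{H}^1(H,M)=0$ for $H\in\overline{\mathfrak{F}}$ is used through Proposition~\ref{prop: ker ext} both to make condition (ii) of an $\mathfrak{F}$-structure automatic and to make the forgetful map $\mathrm{Str}_{\mathfrak{F}}(G,M)\to \mathrm{Ext}(G,M)$ injective, and that without it the description by an intersection of kernels may fail.
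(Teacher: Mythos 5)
Your proof follows the paper's own route: the official proof is literally the single line ``Applying Proposition~\ref{prop: ker ext} and the preceding theorem proves the result,'' i.e.\ compose the bijection of Theorem~\ref{cor: H^2 int} with the embedding from Proposition~\ref{prop: ker ext}. What you add beyond that --- identifying the composite of the two bijections with the restriction homomorphism along $\mathcal{O}_{\{e\}}G \hookrightarrow \orb$, so that the pointed-set correspondence is visibly an isomorphism of abelian groups --- is a genuine clarification that the paper's one-liner leaves implicit: neither $\mathrm{Str}_{\mathfrak{F}}(G,M)$ nor $\mathrm{Ext}_{\mathfrak{F}}(G,\underline{M})$ is given a group structure anywhere in the text, so additivity of the final identification does require an argument. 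Your proposed verification on the standard cochain complex of Section~\ref{sec: complex}, where restriction is simply evaluation on tuples of morphisms with all subgroups equal to $\{e\}$ (manifestly a cochain map), is the cleanest way to nail down the compatibility of Hoff's correspondence with restriction, and the alternative description via Lemma~\ref{lemma: key lemma1} --- that the underlying group extension $0\to M\to\Gamma\to G\to 1$ of an $\mathfrak{F}$-structure is recovered by restricting the orbit-category extension to the full subcategory on $G/\{e\}$ --- is exactly right.
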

\begin{proof} Applying Proposition \ref{prop: ker ext} and the preceding theorem proves the result.
\end{proof}
\smallskip
\begin{corollary} Let $G$ be a virtually free group and let $\mathfrak{F}$ be the family of finite subgroups of $G$.  Let $M \in G\mbox{-mod}$ and consider an abelian extension of $G$ by $M$:
\[ 0 \rightarrow M \rightarrow \Gamma \rightarrow G \rightarrow 1. \]
Then every $\mathfrak{F}$-structure on this extension is split. In particular, if $H^1(H,M)=0$ for all $H \in \mathfrak{F}$, then this extension splits if and only if it splits when restricted to every $H \in \mathfrak{F}$.
\end{corollary}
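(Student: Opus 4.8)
The plan is to deduce both statements from the vanishing of $\mathrm{H}^2_{\mathfrak{F}}(G,\underline{M})$, combined with Theorem \ref{cor: H^2 int} and Proposition \ref{prop: ker ext}. Indeed, by Theorem \ref{cor: H^2 int} there is a one-to-one correspondence between $\mathrm{H}^2_{\mathfrak{F}}(G,\underline{M})$ and $\mathrm{Str}_{\mathfrak{F}}(G,M)$ under which $0$ corresponds to the split class, so once we know $\mathrm{H}^2_{\mathfrak{F}}(G,\underline{M})=0$ it follows that every $\mathfrak{F}$-structure on $(G,M)$ is split, and in particular so is every $\mathfrak{F}$-structure on the given extension $0\to M\to\Gamma\to G\to 1$. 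Hence the crux is to prove $\mathrm{H}^2_{\mathfrak{F}}(G,\underline{M})=0$; in fact I would prove the stronger statement that $\mathrm{H}^n_{\mathfrak{F}}(G,N)=0$ for \emph{every} $\orb$-module $N$ and every $n\geq 2$.

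To establish this I would invoke the structure theory of virtually free groups: by Bass--Serre theory such a group $G$ acts on a tree $T$ with finite vertex and edge stabilizers. Since a finite group acting on a tree fixes a nonempty subtree, $T^{H}$ is contractible for every finite subgroup $H$, so $T$ is a model for the classifying space $E_{\mathfrak{F}}G$. Its augmented cellular chain complex is then a resolution of $\underline{\mathbb{Z}}$ of length at most $1$ by $\orb$-modules that are direct sums of modules of the form $\mathbb{Z}[-,G/G_{\sigma}]$ with $G_{\sigma}\in\mathfrak{F}$, hence free; consequently $\mathrm{H}^{n}_{\mathfrak{F}}(G,N)=\mathrm{Ext}^{n}_{\orb}(\underline{\mathbb{Z}},N)=0$ for all $n\geq 2$. (Alternatively, one may simply cite Dunwoody's characterization that $G$ is virtually free if and only if its Bredon cohomological dimension for the family of finite subgroups is at most $1$.) This is the only non-formal step; everything else is bookkeeping with the dictionary already developed. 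Combined with the reduction of the previous paragraph, it proves the first assertion.

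For the last assertion, one direction is immediate: a splitting of the extension restricts to a splitting over each $H\in\mathfrak{F}$. Conversely, assume $H^{1}(H,M)=0$ for all $H\in\mathfrak{F}$. Since the family of finite subgroups is closed under conjugation we have $\overline{\mathfrak{F}}=\mathfrak{F}$, so Proposition \ref{prop: ker ext} applies. If the extension admits a homomorphic section $s_{\scriptscriptstyle H}\colon H\to\Gamma$ over each $H\in\mathfrak{F}$, set $\Gamma_{\scriptscriptstyle H}=s_{\scriptscriptstyle H}(H)$; these subgroups satisfy condition (i) of Definition \ref{def: fstructure}, and by Proposition \ref{prop: ker ext} condition (ii) follows automatically from (i), so $\{\Gamma_{\scriptscriptstyle H}\}_{H\in\mathfrak{F}}$ is an $\mathfrak{F}$-structure on the extension. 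By the part already proved it is split, and a splitting of an $\mathfrak{F}$-structure is in particular a group-theoretic section of $\pi$; hence the extension splits. I expect no genuine obstacle here beyond correctly assembling the cited results; the one place requiring care is the geometric input that $T$ is a $1$-dimensional model for $E_{\mathfrak{F}}G$, i.e. that each $T^{H}$ is contractible.
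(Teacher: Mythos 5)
Your proof is correct and follows essentially the same route as the paper: the geometric input (a one-dimensional model for $E_{\mathfrak{F}}G$ via Dunwoody/Bass--Serre theory, giving $\mathrm{H}^2_{\mathfrak{F}}(G,-)=0$) combined with Theorem \ref{cor: H^2 int} for the first statement. For the final assertion, the paper invokes Corollary \ref{cor: intersection} rather than assembling an $\mathfrak{F}$-structure by hand from local sections via Proposition \ref{prop: ker ext}, but since that corollary is itself a direct consequence of Proposition \ref{prop: ker ext}, the two arguments are the same in substance.
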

\begin{proof} Since $G$ has a free subgroup of finite index, a result of Dunwoody (see \cite{Dunwoody}) implies that there exists a one-dimensional model for $E_{\mathfrak{F}}G$. This implies that $\mathrm{H}^2_{\mathfrak{F}}(G,N)=0$ for all $N \in \orbmod$ (e.g. see \cite{Nucinkis}). In particular, $\mathrm{H}^2_{\mathfrak{F}}(G,\underline{M})=0$ for all $M \in G\mbox{-mod}$. The statements now follow from the two previous results.
\end{proof}
\begin{corollary} Let $G$ be a countable group and let $\mathfrak{F}$ be the family of finitely generated subgroups of $G$.  Let $M \in G\mbox{-mod}$ and consider an abelian extension of $G$ by $M$:
\[ 0 \rightarrow M \rightarrow \Gamma \rightarrow G \rightarrow 1. \]
Then every $\mathfrak{F}$-structure on this extension is split. In particular, if $H^1(H,M)=0$ for all $H \in \mathfrak{F}$, then this extension splits if and only if it splits when restricted to every $H \in \mathfrak{F}$.
\end{corollary}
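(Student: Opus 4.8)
The plan is to reduce everything to the vanishing $\mathrm{H}^2_{\mathfrak{F}}(G,N)=0$ for every $N\in\orbmod$ and then feed this into Theorem \ref{cor: H^2 int} and Corollary \ref{cor: intersection}, exactly as in the virtually free case treated above. Since $G$ is countable, I would first write $G=\bigcup_{n\geq1}G_n$ as an ascending union of finitely generated subgroups $G_1\leq G_2\leq\cdots$; the role of countability is precisely that every $H\in\mathfrak{F}$, being finitely generated, is contained in some $G_n$. Next I would introduce the \emph{coset tree} $T$ of this chain: its $0$-cells are $\coprod_{n\geq1}G/G_n$, and for each $gG_n$ there is a single $1$-cell joining $gG_n$ to $gG_{n+1}$ (well defined because $G_n\leq G_{n+1}$). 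This is a $1$-dimensional $G$-CW-complex, and since the level of a vertex is a $G$-invariant $\mathbb{N}$-valued function, $G$ acts without inversions, with all cell stabilizers equal to conjugates of the $G_n$, hence lying in $\mathfrak{F}$.

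The crux of the argument is to check that $T$ is a tree. Connectedness is easy: given $gG_n$ and $g'G_{n'}$, one has $gG_N=g'G_N$ as soon as $g^{-1}g'\in G_N$, which holds for $N$ large, so the two upward rays $gG_n\to gG_{n+1}\to\cdots$ and $g'G_{n'}\to g'G_{n'+1}\to\cdots$ eventually coincide. For the absence of loops I would filter $T$ by the finite subgraphs $T_N$ spanned by the levels $\leq N$ and induct on $N$: $T_N$ is obtained from $T_{N-1}$ by attaching, to each new vertex $gG_N$, edges from the vertices $g'G_{N-1}$ with $g'G_N=gG_N$, and those vertices lie in pairwise distinct connected components of $T_{N-1}$ (by induction the components of $T_{N-1}$ are indexed by its top level $G/G_{N-1}$), so no cycle is created and $T_N$ is again a forest with components indexed by $G/G_N$. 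Passing to the colimit, $H_1(T)=\varinjlim H_1(T_N)=0$, so the connected graph $T$ is a tree.

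I would then verify the required fixed-point condition and assemble the resolution. For $H\in\mathfrak{F}$ the fixed subcomplex $T^H$ is the full subgraph on $\{gG_n\mid H\leq gG_ng^{-1}\}$; it is nonempty since $H\subseteq G_n$ for $n$ large, and it is connected by the same merging-rays argument (if $H\leq gG_ng^{-1}$ then $H\leq gG_mg^{-1}$ for all $m\geq n$), hence a subtree of $T$ and so contractible. It is worth emphasizing that $T$ need not be a model for $E_{\mathfrak{F}}G$ — fixed sets for non--finitely generated subgroups contained in some $gG_ng^{-1}$ may well be nonempty — but this is irrelevant, because exactness of a sequence in $\orbmod$ is tested only at the orbits $G/H$ with $H\in\mathfrak{F}$. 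Consequently the augmented cellular chain complex
\[ 0\longrightarrow C_1(T)\longrightarrow C_0(T)\longrightarrow\underline{\mathbb{Z}}\longrightarrow 0 \]
is a length-$\leq 1$ free resolution of $\underline{\mathbb{Z}}$ in $\orbmod$, whence $\mathrm{H}^n_{\mathfrak{F}}(G,N)=\mathrm{Ext}^n_{\orb}(\underline{\mathbb{Z}},N)=0$ for all $n\geq2$ and all $N\in\orbmod$; in particular $\mathrm{H}^2_{\mathfrak{F}}(G,\underline{M})=0$.

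Finally, Theorem \ref{cor: H^2 int} turns this vanishing into the statement that every $\mathfrak{F}$-structure on $(G,M)$, and a fortiori the one carried by the given extension, splits. For the last assertion, note that the family of finitely generated subgroups is closed under conjugation, so $\overline{\mathfrak{F}}=\mathfrak{F}$; assuming $\mathrm{H}^1(H,M)=0$ for all $H\in\mathfrak{F}$, Corollary \ref{cor: intersection} yields
\[ 0=\mathrm{H}^2_{\mathfrak{F}}(G,\underline{M})\cong\bigcap_{H\in\mathfrak{F}}\mathrm{Ker}\big(i^2_{\scriptscriptstyle H}:\mathrm{H}^2(G,M)\to\mathrm{H}^2(H,M)\big), \]
so the class of $0\to M\to\Gamma\to G\to1$ in $\mathrm{H}^2(G,M)$ vanishes precisely when all of its restrictions $i^2_{\scriptscriptstyle H}[\Gamma]$ do; that is, the extension splits if and only if it splits over every $H\in\mathfrak{F}$. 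The one genuinely non-formal point is the combinatorial verification that the coset tree is a tree; once $\mathrm{H}^2_{\mathfrak{F}}(G,N)=0$ is in hand, everything else is a direct application of results already proved.
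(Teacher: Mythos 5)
Your proof is correct and takes the same route as the paper's, which similarly writes $G$ as an ascending union of finitely generated subgroups $G_n$, obtains a $G$-tree from this filtration (you construct the coset tree explicitly and verify the tree and fixed-set properties by hand, while the paper simply invokes Bass-Serre theory), and then applies Theorem \ref{cor: H^2 int} and Corollary \ref{cor: intersection} exactly as in the virtually free case. Your remark that $T$ need not be a model for $E_{\mathfrak{F}}G$ in the strict sense — yet still yields a length-one free resolution of $\underline{\mathbb{Z}}$ in $\orbmod$ because exactness there is only tested at $G/H$ with $H\in\mathfrak{F}$ — is a genuine subtlety that the paper's phrasing glosses over, since the family of finitely generated subgroups is not closed under taking subgroups.
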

\begin{proof} We can write $G$ as a countable directed union $G=\bigcup_{n\in \mathbb N}G_n$ where $G_n\leq G_{n+1}$ and $G_n$ is finitely generated for each $n\in \mathbb N$. By Bass-Serre theory, $G$ acts on a tree $X$ with stabilizer subgroups the collection $\{G_n\}_{n\in \mathbb N}$. Then $X$ is a one-dimensional model for $E_{\mathfrak{F}}G$. Now, a similar argument as in the proof of the previous corollary finishes the proof.
\end{proof}
\section{Bredon-Galois cohomology}
Throughout this section, we assume some familiarity with the basic notions and results of Galois theory and discrete valuation theory (see \cite{Serre2},\cite{ZariskiSamuel}).

In Galois cohomology, one aims to understand the properties of a finite Galois extensions of fields $E / K$ by investigating the cohomology of the Galois group $\mathrm{Gal}(E/K)$ with coefficients in multiplicative group $E^{\times}$, where $G$ acts by field automorphisms. We propose a new invariant called \emph{Bredon-Galois cohomology} as a way to study the properties of a collection of intermediate fields of a finite Galois extensions $E / K$ by considering the $\mathfrak{F}$-Bredon cohomology of $\mathrm{Gal}(E/K)$ with coefficients in the fixed point functor $\underline{E}^{\times}$, where $\mathfrak{F}$ is the family of subgroups of $\mathrm{Gal}(E/K)$ corresponding under Galois correspondence to the given collection of intermediate fields. A first indication that this is an interesting invariant is that, under very mild conditions, there is analog of Hilbert's Theorem $90$ (e.g. see \cite{Serre2},\cite{Weibel}).
\begin{lemma} \label{lemma: bredon-hilbert}Let $E/K$ be a finite Galois extension of fields with Galois group $G$ and let $\mathfrak{F}$ be a family of subgroups of $G$ containing the trivial subgroup, then $\mathrm{H}^1_{\mathfrak{F}}(G,\underline{E}^{\times})=0$.
\end{lemma}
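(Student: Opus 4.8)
The plan is to reduce the statement to the classical Hilbert's Theorem $90$ via the interpretation of $\mathrm{H}^1_{\mathfrak{F}}$ obtained earlier. By Corollary \ref{cor: H^1 intersect} (whose only hypothesis is that $\mathfrak{F}$ contain the trivial subgroup, which holds here), there is an isomorphism
\[ \mathrm{H}^1_{\mathfrak{F}}(G,\underline{E}^{\times}) \cong \bigcap_{H \in \mathfrak{F}} \mathrm{Ker}\Big( i^1_{\scriptscriptstyle H} : \mathrm{H}^1(G,E^{\times}) \rightarrow \mathrm{H}^1(H,E^{\times})\Big), \]
where $E^{\times}$ carries its natural $\mathrm{Gal}(E/K)=G$-module structure. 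Hence it suffices to show that the whole group $\mathrm{H}^1(G,E^{\times})$ vanishes. But $E/K$ is a finite Galois extension with Galois group $G$ acting on $E^{\times}$ by field automorphisms, so $\mathrm{H}^1(G,E^{\times})=0$ is exactly the classical Hilbert's Theorem $90$ (see \cite{Serre2}, \cite{Weibel}). Therefore the intersection of kernels is zero and $\mathrm{H}^1_{\mathfrak{F}}(G,\underline{E}^{\times})=0$.

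Alternatively, and just as quickly, one can argue through Corollary \ref{cor: special int H^1}, which gives $\mathrm{H}^1_{\mathfrak{F}}(G,\underline{E}^{\times}) \cong \mathrm{Der}_{\mathfrak{F}}(G,E^{\times})/\mathrm{PDer}_{\mathfrak{F}}(G,E^{\times})$: any $\mathfrak{F}$-derivation $D\colon G \to E^{\times}$ is in particular an ordinary derivation (crossed homomorphism), so by Hilbert $90$ it equals $D_m$ for some $m \in E^{\times}$, and since the restriction of a principal derivation to any subgroup is again principal, $D_m$ automatically lies in $\mathrm{Der}_{\mathfrak{F}}(G,E^{\times})$; thus $\mathrm{Der}_{\mathfrak{F}}(G,E^{\times})=\mathrm{PDer}_{\mathfrak{F}}(G,E^{\times})$ and the quotient is trivial.

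There is no real obstacle here: the only points to verify are the (immediate) hypotheses of the cited corollaries, and the whole mathematical content is the classical Hilbert $90$. The remark worth making after the proof is that no finiteness of $G$ nor any vanishing assumption on the $\mathrm{H}^1(H,-)$ is needed — the Bredon-theoretic $\mathrm{H}^1$ inherits the vanishing of $\mathrm{H}^1(G,E^{\times})$ for free — which is why this "analog of Hilbert $90$" holds under such mild hypotheses.
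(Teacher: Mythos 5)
Your proof is correct and follows the paper's own argument exactly: both invoke Corollary \ref{cor: H^1 intersect} to embed $\mathrm{H}^1_{\mathfrak{F}}(G,\underline{E}^{\times})$ inside $\mathrm{H}^1(G,E^{\times})$ and then apply the classical Hilbert~90. The alternative route you sketch via Corollary \ref{cor: special int H^1} is just an unwinding of the same identification and adds nothing new, but it is also valid.
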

\begin{proof} By Corollary \ref{cor: H^1 intersect}, we known that $\mathrm{H}^1_{\mathfrak{F}}(G,\underline{E}^{\times})\subseteq \mathrm{H}^1(G,E^{\times})$. The statement now follows from Hilbert's Theorem $90$.
\end{proof}
From  Corollary \ref{cor: intersection} and Hilbert's Theorem $90$, we can immediately deduce the following.
\begin{lemma} \label{lemma: 2nd-bredon-galois}For any finite Galois extension of fields $E/K$ with Galois group $G$ and family of subgroups $\mathfrak{F}$ containing the trivial subgroup, there is an isomorphism: \[ \mathrm{H}^2_{\mathfrak{F}}(G,\underline{E}^{\times})\cong  \cap_{H \in \mathfrak{F}}\mathrm{Ker}\Big( \mathrm{H}^2(G,E^{\times}) \rightarrow \mathrm{H}^2(H,E^\times)\Big).\]
\end{lemma}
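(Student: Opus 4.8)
The plan is to deduce the statement from Corollary \ref{cor: intersection} applied to the $G$-module $M = E^{\times}$, where $G = \mathrm{Gal}(E/K)$ acts on $E^{\times}$ by field automorphisms. That corollary asserts precisely that $\mathrm{H}^2_{\mathfrak{F}}(G,\underline{M}) \cong \cap_{H \in \mathfrak{F}}\mathrm{Ker}\big(i^2_{\scriptscriptstyle H} : \mathrm{H}^2(G,M) \to \mathrm{H}^2(H,M)\big)$, under the single hypothesis that $\mathrm{H}^1(H, M) = 0$ for every $H$ in $\overline{\mathfrak{F}}$, the smallest family of subgroups containing $\mathfrak{F}$ that is closed under conjugation. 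So the only thing to check is this vanishing.

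First I would observe that $\mathrm{H}^1(H, E^{\times}) = 0$ in fact holds for \emph{every} subgroup $H \leq G$, which is more than enough. Indeed, by the fundamental theorem of Galois theory the extension $E / E^{\scriptscriptstyle H}$ is finite Galois with Galois group canonically identified with $H$, and the $H$-module structure on $E^{\times}$ obtained by restricting the $G$-action along $H \hookrightarrow G$ is exactly the natural action of $\mathrm{Gal}(E/E^{\scriptscriptstyle H})$ on the multiplicative group of its top field $E$. Hilbert's Theorem $90$ then gives $\mathrm{H}^1(H, E^{\times}) = \mathrm{H}^1(\mathrm{Gal}(E/E^{\scriptscriptstyle H}), E^{\times}) = 0$. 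In particular this holds for all $H \in \overline{\mathfrak{F}}$, so the hypothesis of Corollary \ref{cor: intersection} is met and the claimed isomorphism follows at once.

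I expect no genuine obstacle here — the lemma is essentially a packaging of Corollary \ref{cor: intersection} with Hilbert $90$. The one point deserving a sentence of care is the identification of the restricted module $E^{\times}|_{H}$ with the Galois module attached to $E/E^{\scriptscriptstyle H}$, which is immediate from the definitions but is what licenses applying Hilbert $90$ at the level of each subgroup rather than only to $G$ itself. (Alternatively, one could run the same argument through Proposition \ref{prop: ker ext} and Theorem \ref{cor: H^2 int} directly, realizing $\mathrm{H}^2_{\mathfrak{F}}(G,\underline{E}^{\times})$ as the set of classes of $\mathfrak{F}$-structures and using that vanishing $\mathrm{H}^1$ makes every collection of lifts $\{\Gamma_{\scriptscriptstyle H}\}$ an $\mathfrak{F}$-structure; but invoking Corollary \ref{cor: intersection} subsumes both steps.)
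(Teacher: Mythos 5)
Your proof is correct and is exactly the argument the paper has in mind: the lemma is stated immediately after the remark that it follows from Corollary \ref{cor: intersection} together with Hilbert's Theorem $90$, which is precisely what you verify (including the small point that Hilbert $90$ applies at the level of each subgroup $H$ via the Galois subextension $E/E^{\scriptscriptstyle H}$).
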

Hence, one can express the $2$-dimensional Bredon-Galois cohomology in terms of the $2$-dimensional Galois cohomology. One could therefore argue that, in low dimensions, it is unnecessary to introduce Bredon-cohomology as a new invariant. In what follows, we shall try to convince the reader to the contrary.

We use Bredon-Galois cohomology to study relative Brauer groups associated to field extensions.  In Section \ref{sec: short exact seq}, we construct two short exact sequences containing relative Brauer groups. It will be clear from the proof that the obstruction to the existence of these short exact sequences is a third-dimensional Bredon cohomology group. Hence, it is not clear how to obtain these exact sequence using Galois cohomology. This is a first motivation for the use of Bredon-Galois cohomology.

In Section \ref{sec: non-zero}, we expand some known methods for constructing non-zero elements in certain relative Brauer groups. The results in this section can also be proven using Galois cohomology. However, the use of Bredon-Galois cohomology hides a lot of the technical difficulties that would arise otherwise. Hence, our goal in this section is mainly to illustrate that Bredon-Galois cohomology is a more natural tool in this setting.

Let us recall the definition of the Brauer group associated to a field.
Let $k$ be a field and let $A$ and $A'$ be two simple central $k$-algebras. By Artin-Wedderburn Theorem, $A$ is isomorphic to a matrix algebra $M_n(D)$ and $A'$ is isomorphic to a matrix algebra $M_{n'}(D')$, where $D$ and $D'$ are both division $k$-algebras with center $k$. We say $A$ and $A'$ are \emph{equivalent} if $D$ and $D'$ are isomorphic. The equivalence class of $A$ is denoted by $[A]$. Let $\mathrm{Br}(k)$ the set of equivalence classes of central simple $k$-algebras. The operation: $$[A]+[B]:=[A\otimes_k B] \mbox{\; for \;} [A],[B] \in \mathrm{Br}(k),$$ turns $\mathrm{Br}(k)$ into an abelian group. The group $\mathrm{Br}(k)$ is called the \emph{Brauer group} of $k$. Given a homomorphism of fields $i: k \rightarrow K$, we can construct a group homomorphism: $$B(i): B(k) \rightarrow B(K): [A] \mapsto [A\otimes_k K],$$ where $K$ becomes a $k$-algebra via $i$. This construction turns $\mathrm{Br}(-)$ into a covariant functor from the category of fields to the category of abelian groups. If $k \subseteq K$ is an inclusion of fields then the \emph{relative Brauer group} $\mathrm{Br}(K/k)$ is by definition the kernel of the map $\mathrm{Br}(k) \rightarrow \mathrm{Br}(K)$, induced by the inclusion $k \hookrightarrow K$.

Let $E/K$ be a finite Galois extension and denote the Galois group of this extension by $G$. Let $E^{\times}$ be the multiplicative group of $E$. This group becomes a $G$-module under the action of $G$ by field automorphisms. Let $H$ be a subgroup of $G$ and let $L$ be the fixed field of $H$. By considering the following commutative diagram with exact rows, where the maps are induced by field inclusions:
\[ \xymatrix{0 \ar[r] & \mathrm{Br}(E/K) \ar[d]^{\phi} \ar[r] & \mathrm{Br}(K) \ar[d] \ar[r]& \mathrm{Br}(E) \ar[d]^{id} \\
0 \ar[r] & \mathrm{Br}(E/L) \ar[r]  & \mathrm{Br}(L)  \ar[r]& \mathrm{Br}(E), }\]
\smallskip
it follows that $\mathrm{ker}(\phi)\cong \mathrm{Br}(L/K)$. It is well-known that $\mathrm{H}^2(G,E^{\times})\cong \mathrm{Br}(E/K)$ and $\mathrm{H}^2(H,E^{\times})\cong \mathrm{Br}(E/L)$ such that the map $\phi$ corresponds to the restriction homomorphism $\mathrm{H}^2(G,E^{\times}) \rightarrow \mathrm{H}^2(H,E^{\times})$. We conclude that $\mathrm{Br}(L/K)$ is isomorphic to the kernel of the restriction map $\mathrm{H}^2(G,E^{\times}) \rightarrow \mathrm{H}^2(H,E^{\times})$.
\begin{proposition} \label{prop: bredon-galois} Let $E/K$ be a finite Galois extension with Galois group $G$. Let $\mathfrak{F}$ be any family of subgroups of $G$ containing the trivial subgroup, then
\[ \mathrm{H}^2_{\mathfrak{F}}(G,\underline{E}^{\times}) \cong \cap_{H \in \mathfrak{F}} \mathrm{Br}(E^{\scriptscriptstyle H}/K)\subseteq \mathrm{Br}(E/K).\]
\end{proposition}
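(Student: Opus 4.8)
The plan is to combine Lemma~\ref{lemma: 2nd-bredon-galois} with the classical identification of the relative Brauer group of a Galois extension with a second Galois cohomology group, which was recalled in the discussion immediately preceding the statement.

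First I would invoke Lemma~\ref{lemma: 2nd-bredon-galois}. Its hypothesis is automatic here: for every subgroup $H$ of $G$ the extension $E/E^{\scriptscriptstyle H}$ is Galois with group $H$, so Hilbert's Theorem~$90$ gives $\mathrm{H}^1(H,E^{\times})=0$; in particular this holds for every member of the conjugation closure $\overline{\mathfrak{F}}$, so Corollary~\ref{cor: intersection}, and hence Lemma~\ref{lemma: 2nd-bredon-galois}, applies. This yields
\[ \mathrm{H}^2_{\mathfrak{F}}(G,\underline{E}^{\times}) \cong \bigcap_{H \in \mathfrak{F}} \mathrm{Ker}\Big( \mathrm{res}_{\scriptscriptstyle H} : \mathrm{H}^2(G,E^{\times}) \rightarrow \mathrm{H}^2(H,E^{\times}) \Big), \]
the intersection being formed inside $\mathrm{H}^2(G,E^{\times})$.

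Next I would translate each kernel into Brauer-group language. Fix the crossed-product isomorphism $\mathrm{H}^2(G,E^{\times}) \cong \mathrm{Br}(E/K)$ and, for each $H \in \mathfrak{F}$ with fixed field $L=E^{\scriptscriptstyle H}$, the isomorphism $\mathrm{H}^2(H,E^{\times}) \cong \mathrm{Br}(E/L)$, under which $\mathrm{res}_{\scriptscriptstyle H}$ becomes the map $\phi_{\scriptscriptstyle H} : \mathrm{Br}(E/K) \rightarrow \mathrm{Br}(E/L)$ induced by the inclusion $K \hookrightarrow L$, exactly as recalled before the statement. Since $K \subseteq L \subseteq E$, the relative Brauer group $\mathrm{Br}(L/K)$ is a subgroup of $\mathrm{Br}(E/K)$, and the commutative diagram with exact rows displayed above identifies $\mathrm{Ker}(\phi_{\scriptscriptstyle H})$ precisely with $\mathrm{Br}(L/K)=\mathrm{Br}(E^{\scriptscriptstyle H}/K)$ as a subgroup of $\mathrm{Br}(E/K)$. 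Substituting into the previous display gives
\[ \mathrm{H}^2_{\mathfrak{F}}(G,\underline{E}^{\times}) \cong \bigcap_{H \in \mathfrak{F}} \mathrm{Br}(E^{\scriptscriptstyle H}/K), \]
and since $\{e\}\in\mathfrak{F}$ contributes the term $\mathrm{Br}(E^{\{e\}}/K)=\mathrm{Br}(E/K)$, this intersection is a subgroup of $\mathrm{Br}(E/K)$, as asserted.

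The only genuinely delicate point — and the step I would take care to spell out — is the bookkeeping: all the kernels $\mathrm{Ker}(\phi_{\scriptscriptstyle H})$ must be realized as honest subgroups of one and the same group $\mathrm{Br}(E/K)$, compatibly with a single fixed isomorphism $\mathrm{H}^2(G,E^{\times}) \cong \mathrm{Br}(E/K)$, so that forming the intersection even makes sense. This is guaranteed by the naturality of the crossed-product isomorphism with respect to restriction along $H \hookrightarrow G$ — equivalently, along $\mathrm{Gal}(E/L)\hookrightarrow \mathrm{Gal}(E/K)$ — which is exactly what makes the square relating $\mathrm{res}_{\scriptscriptstyle H}$ and $\phi_{\scriptscriptstyle H}$ commute. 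No serious obstacle beyond this is expected.
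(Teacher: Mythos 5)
Your proof is correct and follows essentially the same route as the paper: combine Lemma~\ref{lemma: 2nd-bredon-galois} with the identification (recalled before the statement) of $\mathrm{Br}(E^{\scriptscriptstyle H}/K)$ as the kernel of the restriction map $\mathrm{H}^2(G,E^{\times})\to\mathrm{H}^2(H,E^{\times})$. The paper simply applies these two ingredients in the opposite order, and treats the Hilbert~90 hypothesis as already absorbed into Lemma~\ref{lemma: 2nd-bredon-galois}, so your extra verification of it is correct but redundant.
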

\begin{proof} From the preceding discussion, we conclude: $$\mathrm{Br}(E^{\scriptscriptstyle H}/K)= \mathrm{Ker} \ \Big(i_{\scriptscriptstyle H}^2: \mathrm{H}^2(G,E^{\times}) \rightarrow \mathrm{H}^2(H,E^{\times})\Big),$$ where $i_{\scriptscriptstyle H}^2$ is induced by the inclusion $i_{\scriptscriptstyle H} : H \hookrightarrow G$. This implies: $$\cap_{H \in \mathfrak{F}} \mathrm{Br}(E^{\scriptscriptstyle H}/K)= \cap_{H \in \mathfrak{F}}\mathrm{Ker} \ \Big(i_{\scriptscriptstyle H}^2: \mathrm{H}^2(G,E^{\times}) \rightarrow \mathrm{H}^2(H,E^{\times})\Big).$$ Now, Lemma \ref{lemma: 2nd-bredon-galois} implies that
\[ \mathrm{H}^2_{\mathfrak{F}}(G,\underline{E}^{\times}) \cong \cap_{H \in \mathfrak{F}} \mathrm{Br}(E^{\scriptscriptstyle H}/K).  \]
\end{proof}
As a consequence, we are able to express the relative Brauer group of a finite seperable non-normal field extension as a Bredon-Galois cohomology group.
\begin{corollary}\label{cor: int brauer} Let $L/K$ be a finite separable extension and let $E$ be a field  such that $E/L$ is a finite extension and $E$ is Galois over $K$. Denote the Galois group of $E/K$ by $G$. Let $H$ be the subgroup of $G$ such that $E^{\scriptscriptstyle H}=L$. Then
\[
 \mathrm{H}^2_{\mathfrak{F}}(G,\underline{E}^{\times}) \cong \mathrm{Br}(L/K),
\]
 where $\mathfrak{F}$ is the smallest family of subgroups of $G$ that is closed under conjugation, closed under taking subgroups and contains $H$.
\end{corollary}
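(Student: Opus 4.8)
The plan is to deduce the corollary from Proposition \ref{prop: bredon-galois} together with an explicit description of the family $\mathfrak{F}$ and a standard conjugation argument for relative Brauer groups. Since $\{e\}\subseteq H$, the family $\mathfrak{F}$ contains the trivial subgroup, so Proposition \ref{prop: bredon-galois} applies and gives
\[ \mathrm{H}^2_{\mathfrak{F}}(G,\underline{E}^{\times}) \cong \bigcap_{H' \in \mathfrak{F}} \mathrm{Br}(E^{H'}/K) \subseteq \mathrm{Br}(E/K). \]
Thus the whole problem reduces to the purely Galois-theoretic identity $\bigcap_{H' \in \mathfrak{F}} \mathrm{Br}(E^{H'}/K) = \mathrm{Br}(E^{H}/K) = \mathrm{Br}(L/K)$.

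First I would describe $\mathfrak{F}$ concretely: being the smallest family that contains $H$ and is closed under conjugation and under passage to subgroups, one checks directly that $\mathfrak{F} = \{\, H' \le G \mid H' \subseteq gHg^{-1} \text{ for some } g \in G \,\}$. Consequently, for every $H' \in \mathfrak{F}$ there is a $g \in G$ with $E^{H'} \supseteq E^{gHg^{-1}} = g(E^{H}) = g(L)$, where $g(L)$ denotes the image of $L$ under the field automorphism $g$ of $E$.

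Next I would invoke two elementary facts. \emph{Monotonicity}: if $K \subseteq F_1 \subseteq F_2$, then the factorization $\mathrm{Br}(K) \to \mathrm{Br}(F_1) \to \mathrm{Br}(F_2)$ yields $\mathrm{Br}(F_1/K) \subseteq \mathrm{Br}(F_2/K)$; applied to $F_1 = g(L) \subseteq F_2 = E^{H'}$ this gives $\mathrm{Br}(g(L)/K) \subseteq \mathrm{Br}(E^{H'}/K)$. \emph{Conjugation invariance}: the automorphism $g \in G$ restricts to a $K$-isomorphism $L \xrightarrow{\ \cong\ } g(L)$, hence $A \otimes_K L \cong A \otimes_K g(L)$ as $K$-algebras for every central simple $K$-algebra $A$, so $A$ splits over $L$ if and only if it splits over $g(L)$; that is, $\mathrm{Br}(g(L)/K) = \mathrm{Br}(L/K)$ as subgroups of $\mathrm{Br}(K)$. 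Combining the two facts, $\mathrm{Br}(L/K) \subseteq \mathrm{Br}(E^{H'}/K)$ for every $H' \in \mathfrak{F}$, so $\mathrm{Br}(L/K) \subseteq \bigcap_{H' \in \mathfrak{F}} \mathrm{Br}(E^{H'}/K)$; the reverse inclusion is immediate since $H \in \mathfrak{F}$ and $E^{H} = L$. This proves the identity, and with it the corollary.

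I do not expect a genuine obstacle in this argument; the only point that requires a little care is the explicit identification of $\mathfrak{F}$ and the resulting observation that, in the intersection, the "maximal" members $gHg^{-1}$ of $\mathfrak{F}$ — via conjugation invariance — already determine everything, while the smaller subgroups in $\mathfrak{F}$ contribute only larger relative Brauer groups and hence are harmless.
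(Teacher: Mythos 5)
Your proof is correct and follows essentially the same route as the paper: both reduce to Proposition \ref{prop: bredon-galois} and then show that in the intersection $\bigcap_{H'\in\mathfrak{F}}\mathrm{Br}(E^{H'}/K)$ only $H$ matters, because subgroups of conjugates of $H$ contribute larger relative Brauer groups (monotonicity) and conjugation leaves the relative Brauer group unchanged. The paper phrases this through Corollary \ref{cor: intersection} at the level of cohomology kernels, while you spell it out directly in field-theoretic terms, but the content is the same.
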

\begin{proof} First, let $\mathfrak{F}$ be the family containing just $H$ and the trivial subgroup. Then, the proposition states that $\mathrm{H}^2_{\mathfrak{F}}(G,\underline{E}^{\times}) \cong \mathrm{Br}(L/K)$. Now, using Corollary \ref{cor: intersection}, it is straightforward to check that enlarging $\mathfrak{F}$ with subgroups of $H$ and their conjugates does not change $\mathrm{H}^2_{\mathfrak{F}}(G,\underline{E}^{\times})$ inside $\mathrm{H}^2(G,{E}^{\times})$.
\end{proof}
As we mention in the proof, this corollary is also valid when $\mathfrak{F}$ consists only of $H$ and the trivial subgroup. The reason for making $\mathfrak{F}$ larger ensures that $\mathrm{H}^2_{\mathfrak{F}}(G,\underline{E}^{\times})$ has nicer functorial properties. We list these properties in the next lemma whose proof can be found in \cite[4.14, 5.4]{Hambleton} (see also \cite{ThevenazWebb}).

Let us establish the notation:
$$\mF\cap S=\{H\cap S \; |\; H\in \mF\},$$ for any given subgroup of $S$ of $G$.
\begin{lemma}
Let $\Gamma$ be a finite group,  $\mathfrak{F}$ be a family of subgroups of $\Gamma$ that is closed under conjugation and taking subgroups and  $M \in \mbox{mod-}\mathcal{O}_{\mathfrak{F}}(\Gamma)$ be a (cohomological) Mackey-functor. Denote by $\mathcal{O}\Gamma$ the orbit category with respect to the family that contains all subgroups of $\Gamma$. Then the functor:
\[ \mathrm{H}^n_{\mathfrak{F}\cap -}(-,M): \mathcal{O}\Gamma \rightarrow \mathfrak{Ab} : \Gamma/S \mapsto \mathrm{H}^n_{\mathfrak{F}\cap S}(S,M) \]
is again a (cohomological) Mackey functor, for each $n$.  This implies that for all $n \in \mathbb{N}$, all $g \in G$ and  all subgroups $L,P$ of $G$ such that $L\subseteq P$, we have a restriction map:
\[ \mathrm{res}_L^P: \mathrm{H}^n_{\mathfrak{F}\cap P}(P,\underline{E}^{\times}) \rightarrow  \mathrm{H}^n_{\mathfrak{F}\cap L}(L,\underline{E}^{\times}), \]
a transfer map:
\[ \mathrm{tr}_L^P : \mathrm{H}^n_{\mathfrak{F}\cap L }(L,\underline{E}^{\times}) \rightarrow  \mathrm{H}^n_{\mathfrak{F}\cap P}(P,\underline{E}^{\times}) \]
and a conjugation map:
\[ c_g(P): \mathrm{H}^n_{\mathfrak{F}\cap P }(P,\underline{E}^{\times}) \rightarrow  \mathrm{H}^n_{\mathfrak{F}\cap {}^gP}({}^gP,\underline{E}^{\times}) \]
such that
 \[ \mathrm{res}_P^G \circ \mathrm{tr}_L^G = \sum_{x \in [P\setminus G/L]}  c_x(P^x) \circ \mathrm{tr}_{P^{x}\cap L}^{P^{x}} \ \circ \mathrm{res}_{P^x \cap L}^L. \]
In addition, if $M$ is a cohomological Mackey functor, we also have:
\[ \mathrm{tr}_L^P \circ \mathrm{res}_L^P= [P:L]id.\]
The restriction, transfer and conjugation maps are also natural, in the sense that they commute with connecting homomorphisms and maps induced by natural transformations between coefficients.
\end{lemma}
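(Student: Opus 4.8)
Besides quoting \cite[4.14, 5.4]{Hambleton} (see also \cite{ThevenazWebb}), the statement can be obtained by transporting to the orbit category the classical construction of the Mackey-functor structure on group cohomology (cf.\ \cite[Ch.~III]{Brown}, \cite[Ch.~9]{Luck}); the plan is to do exactly that. Since $\mF$ is closed under conjugation and under taking subgroups, for every subgroup $S\leq\Gamma$ the family $\mF\cap S$ consists of subgroups of $S$ and satisfies $\mF\cap S\subseteq\mF$; hence $S/Q\mapsto\Gamma/Q$ extends to an induction functor $I_S\colon\mathcal{O}_{\mF\cap S}(S)\to\mathcal{O}_{\mF}(\Gamma)$, with $I_S=I_P\circ I^P_L$ whenever $L\leq P$, where $I^P_L\colon\mathcal{O}_{\mF\cap L}(L)\to\mathcal{O}_{\mF\cap P}(P)$ is the analogous functor one level down. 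Precomposition with $I^P_L$ gives an exact restriction functor $(I^P_L)^{\ast}$ on the module categories, which carries the trivial fixed point functor $\underline{\Z}_P$ to $\underline{\Z}_L$ and the restriction $M|_P=(I_P)^{\ast}M$ to $M|_L$, and which sits in an adjoint triple $\mathrm{ind}^P_L\dashv(I^P_L)^{\ast}\dashv\mathrm{coind}^P_L$ with $\mathrm{coind}^P_L$ exact (the usual ``relatively free'' behaviour of these functors; see \cite{Luck}), so that $(I^P_L)^{\ast}$ preserves projectives. Applying $(I^P_L)^{\ast}$ to a projective resolution of $\underline{\Z}_P$ over $\mathcal{O}_{\mF\cap P}(P)$ then yields a Shapiro isomorphism
\[ \mathrm{H}^n_{\mF\cap L}\big(L,(I^P_L)^{\ast}N\big)\;\cong\;\mathrm{Ext}^n_{\mathcal{O}_{\mF\cap P}(P)}\big(\underline{\Z}_P,\,\mathrm{coind}^P_L(I^P_L)^{\ast}N\big) \]
for every $\mathcal{O}_{\mF\cap P}(P)$-module $N$.

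Next I would define the three families of structure maps. The restriction $\mathrm{res}^P_L\colon\mathrm{H}^n_{\mF\cap P}(P,M|_P)\to\mathrm{H}^n_{\mF\cap L}(L,M|_L)$ is simply the map of $\mathrm{Ext}$-groups obtained by applying the exact functor $(I^P_L)^{\ast}$, together with the identifications $(I^P_L)^{\ast}\underline{\Z}_P=\underline{\Z}_L$ and $(I^P_L)^{\ast}M|_P=M|_L$. For the transfer $\mathrm{tr}^P_L$, defined when $[P:L]<\infty$, I would construct a natural trace morphism $\tau\colon\mathrm{coind}^P_L(I^P_L)^{\ast}M|_P\to M|_P$ of $\mathcal{O}_{\mF\cap P}(P)$-modules --- assembled from the transfer maps of the Mackey functor $M$, summed over the finitely many relevant cosets, this being the single place where the Mackey hypothesis on $M$ is used essentially --- and set $\mathrm{tr}^P_L$ to be the composite of the Shapiro isomorphism above (with $N=M|_P$) with the map induced by $\tau$ on $\mathrm{Ext}^n_{\mathcal{O}_{\mF\cap P}(P)}(\underline{\Z}_P,-)$, landing in $\mathrm{Ext}^n_{\mathcal{O}_{\mF\cap P}(P)}(\underline{\Z}_P,M|_P)=\mathrm{H}^n_{\mF\cap P}(P,M|_P)$. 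The conjugation $c_g(P)$ is induced by the isomorphism of categories $c_g\colon\mathcal{O}_{\mF\cap P}(P)\xrightarrow{\sim}\mathcal{O}_{\mF\cap{}^gP}({}^gP)$ together with the canonical natural isomorphism $M|_P\cong(M|_{{}^gP})\circ c_g$ coming from the inner $\Gamma$-action already present in the domain $\mathcal{O}_{\mF}(\Gamma)$ of $M$.

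Finally one checks the Mackey functor axioms, and most of this is formal bookkeeping. Transitivity of restriction, transitivity of transfer, and the conjugation relations follow from the corresponding equalities and natural isomorphisms among the $I^P_L$ and $c_g$ and from the triangle identities of the adjunctions, together with naturality of the Shapiro isomorphism. When $M$ is cohomological, the relation $\mathrm{tr}^P_L\circ\mathrm{res}^P_L=[P:L]\,\mathrm{id}$ reduces, upon applying $\mathrm{Ext}^n_{\mathcal{O}_{\mF\cap P}(P)}(\underline{\Z}_P,-)$, to the assertion that the composite of the adjunction unit $M|_P\to\mathrm{coind}^P_L(I^P_L)^{\ast}M|_P$ with $\tau$ is multiplication by $[P:L]$, which is exactly the cohomological identity for $M$ itself. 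Naturality in the coefficients, and compatibility with the connecting homomorphisms of long exact Bredon cohomology sequences, is automatic since every map in sight is induced by a natural transformation or an adjunction. The genuinely substantial step --- the one I expect to be the main obstacle --- is the double coset formula
\[ \mathrm{res}^{\Gamma}_P\circ\mathrm{tr}^{\Gamma}_L=\sum_{x\in[P\setminus\Gamma/L]}c_x(P^x)\circ\mathrm{tr}^{P^x}_{P^x\cap L}\circ\mathrm{res}^L_{P^x\cap L}, \]
whose proof rests on an orbit-category Mackey decomposition of the composite functor $(I^{\Gamma}_P)^{\ast}\circ\mathrm{coind}^{\Gamma}_L$ into a direct sum of pieces indexed by the double cosets $P\setminus\Gamma/L$, carefully matched against the explicit description of $\tau$ in terms of the transfers of $M$; carrying this out is the orbit-category analogue of the classical double-coset computation for group cohomology, and is where the real work lies.
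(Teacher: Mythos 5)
The paper gives no proof of this lemma — it refers the reader to \cite[4.14, 5.4]{Hambleton} and \cite{ThevenazWebb} — and your proposal is a correct reconstruction of exactly the argument those references carry out: Kan extension along $\mathcal{O}_{\mathfrak{F}\cap S}(S)\to\mathcal{O}_{\mathfrak{F}}\Gamma$ (using that $\mathfrak{F}$ is subgroup- and conjugation-closed so restriction preserves frees and coinduction is exact), Shapiro's lemma, a trace morphism built from the Mackey transfers of $M$, and the double-coset decomposition of $(I^{\Gamma}_P)^{\ast}\circ\mathrm{coind}^{\Gamma}_L$. So you are taking essentially the same approach as the paper (i.e.\ the one it delegates to its sources), and the sketch is sound.
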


\indent In what follows, we shall use this Mackey functor structure on Bredon-Galois cohomology, together with Proposition \ref{prop: bredon-galois}, to study the relative Brauer group of a finite separable extension of fields.
\subsection{Two short exact sequences} \label{sec: short exact seq}
Throughout this section, we assume that $E/K$ is a finite Galois extension with Galois group $G$.
Note that fixed point functors are cohomological Mackey functors (see \cite{ThevenazWebb}). Thus, if $\mathfrak{F}$ is any family of subgroups of $G$ closed under conjugation and taking subgroup, we have $\mathrm{tr}_{\{e\}}^G \circ \mathrm{res}_{\{e\}}^G= |G|id$. Since $\mathrm{H}^{n}_{\mathfrak{F}\cap \{e\} }(\{e\},\underline{E}^{\times})=0$ for all $n \geq 1$, it follows that $|G|id$ is the zero map on $\mathrm{H}^{n}_{\mathfrak{F}}(G,\underline{E}^{\times})$ for all $n \geq 1$. We conclude that $\mathrm{H}^{n}_{\mathfrak{F}}(G,\underline{E}^{\times})$ is a torsion abelian group, and
\[ \mathrm{H}^{n}_{\mathfrak{F}}(G,\underline{E}^{\times})= \bigoplus_{\substack{p \ \mbox{\tiny prime} \\ p \ \mbox{\tiny divides} \ |G|}} \mathrm{H}^{n}_{\mathfrak{F}}(G,\underline{E}^{\times})_{(p)} \]
for all $n \geq 1$, where $\mathrm{H}^{n}_{\mathfrak{F}}(G,\underline{E}^{\times})_{(p)}$ is the $p$-primary component of $\mathrm{H}^{n}_{\mathfrak{F}}(G,\underline{E}^{\times})$.
\begin{proposition} \label{lemma: odd cohomology is zero} Let $\mathfrak{F}$ be any family of subgroups of $G$ that is closed under conjugation and taking subgroups and let $p$ be a prime. If the $p$-Sylow subgroups of $G$ are cyclic, then we have $\mathrm{H}^{2k+1}_{\mathfrak{F}}(G,\underline{E}^{\times})_{(p)}=0$ for each $k \in \mathbb{N}$.
\end{proposition}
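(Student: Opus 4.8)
The strategy is to localize at $p$, pass to a cyclic $p$-Sylow subgroup, and there identify the relevant Bredon cohomology with the ordinary cohomology of a cyclic quotient group, where the $2$-periodicity of cyclic group cohomology and Hilbert's Theorem $90$ kill all odd-degree classes. We may assume $\mathfrak{F}\neq\emptyset$, so that $\mathfrak{F}$ contains the trivial subgroup.

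Let $P$ be a $p$-Sylow subgroup of $G$. Since $\underline{E}^{\times}$ is a cohomological Mackey functor, the preceding lemma gives $\mathrm{tr}_P^G\circ\mathrm{res}_P^G=[G:P]\cdot\mathrm{id}$ on $\mathrm{H}^{n}_{\mathfrak{F}}(G,\underline{E}^{\times})$; as $p\nmid[G:P]$, this composite is invertible on the $p$-primary component, so
\[ \mathrm{res}_P^G:\mathrm{H}^{2k+1}_{\mathfrak{F}}(G,\underline{E}^{\times})_{(p)}\longrightarrow \mathrm{H}^{2k+1}_{\mathfrak{F}\cap P}(P,\underline{E}^{\times}) \]
is injective. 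Hence it suffices to show the target vanishes, and we are reduced to the case $G=P$ a cyclic $p$-group, with $\mathfrak{F}$ (written now for $\mathfrak{F}\cap P$) any nonempty family of subgroups of $P$ closed under conjugation and taking subgroups. Because $P$ is cyclic, its subgroups form a chain $\{e\}=P_{0}\lneq P_{1}\lneq\cdots\lneq P_{m}=P$, and since $\mathfrak{F}$ is closed under passage to subgroups it is an initial segment of this chain; that is, $\mathfrak{F}$ is precisely the set of all subgroups of $L:=P_{r}$ for some $r$, and $L\trianglelefteq P$.

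I claim that $X:=E(P/L)$, a contractible CW-complex on which $P/L$ acts freely, viewed as a $P$-CW-complex via the quotient map $P\twoheadrightarrow P/L$, is a model for $E_{\mathfrak{F}}P$: for a subgroup $Q\le P$ one has $X^{Q}=X\simeq\ast$ if $Q\le L$ and $X^{Q}=\emptyset$ otherwise, since $P/L$ acts freely on $X$. Every cell of $X$ is a free $(P/L)$-cell, hence a $P$-cell of orbit type $P/L$, so the augmented cellular chain complex of $X$, read through the Bredon construction, is a free resolution of $\underline{\mathbb{Z}}$ over $\mathcal{O}_{\mathfrak{F}}(P)$ consisting of finite direct sums of copies of $\mathbb{Z}[-,P/L]$. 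Applying $\mathrm{Hom}_{\mathcal{O}_{\mathfrak{F}}(P)}(-,\underline{M})$ and using both $\mathrm{Hom}_{\mathcal{O}_{\mathfrak{F}}(P)}(\mathbb{Z}[-,P/L],\underline{M})\cong M^{L}$ and the identification of the endomorphism ring of the object $P/L$ with the group ring $\mathbb{Z}[P/L]$ (here $(P/L)^{L}=P/L$ because $L\trianglelefteq P$), one turns this into the standard cochain complex computing $\mathrm{H}^{\ast}(P/L,M^{L})$. Thus $\mathrm{H}^{n}_{\mathfrak{F}}(P,\underline{M})\cong\mathrm{H}^{n}(P/L,M^{L})$ for every $P$-module $M$. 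Taking $M=E^{\times}$, and noting $P/L\cong\mathrm{Gal}(E^{L}/E^{P})$ is a cyclic $p$-group with $(E^{\times})^{L}=(E^{L})^{\times}$, we get $\mathrm{H}^{2k+1}_{\mathfrak{F}}(P,\underline{E}^{\times})\cong\mathrm{H}^{2k+1}(P/L,(E^{L})^{\times})$; by $2$-periodicity this equals $\mathrm{H}^{1}(P/L,(E^{L})^{\times})$, which vanishes by Hilbert's Theorem $90$ for $E^{L}/E^{P}$ (cf. Lemma \ref{lemma: bredon-hilbert}). Combined with the first step, $\mathrm{H}^{2k+1}_{\mathfrak{F}}(G,\underline{E}^{\times})_{(p)}=0$.

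The routine parts are the transfer argument of the first paragraph and the classical facts about cyclic groups; the one point requiring genuine care is the identification $\mathrm{H}^{n}_{\mathfrak{F}}(P,\underline{M})\cong\mathrm{H}^{n}(P/L,M^{L})$. This needs the linear ordering of the subgroups of a cyclic $p$-group in order to recognize $E(P/L)$ as $E_{\mathfrak{F}}P$, and then a careful bookkeeping of the Bredon chain complex of $E(P/L)$ under evaluation at the single orbit type $P/L$, in particular checking that the resulting differentials are exactly the $\mathbb{Z}[P/L]$-linear differentials of a free resolution of $\mathbb{Z}$ over $\mathbb{Z}[P/L]$.
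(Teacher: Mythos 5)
Your proof is correct, but it takes a genuinely different route from the paper's once both arguments have reduced to a cyclic $p$-group $P$ by the transfer\slash restriction trick. The paper proceeds by induction on $|P|=p^n$: it quotes Corollary~4.5 of \cite{Martinez} to remove the trivial subgroup from $\mathfrak{F}$ (using that the unique order-$p$ subgroup $H$ of $P$ lies in every nontrivial member of $\mathfrak{F}$), then invokes the isomorphism of orbit categories $\mathcal{O}_{\mathfrak{F}\setminus\{e\}}P\cong\mathcal{O}_{\overline{\mathfrak{F}}}(P/H)$ and repeats, eventually landing on ordinary cohomology of a quotient with the trivial family, where Hilbert's Theorem~90 and $2$-periodicity finish the job. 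You instead observe directly that $\mathfrak{F}\cap P$ is exactly the set of subgroups of some $L\trianglelefteq P$, exhibit $E(P/L)$ as a concrete model for $E_{\mathfrak{F}}P$ all of whose cells have orbit type $P/L$, and read off from the Bredon chain complex of this model (using $\mathrm{End}(\mathbb{Z}[-,P/L])\cong\mathbb{Z}[P/L]$ and $\mathrm{Hom}_{\mathfrak{F}}(\mathbb{Z}[-,P/L],\underline{M})\cong M^{L}$) the clean isomorphism $\mathrm{H}^{n}_{\mathfrak{F}}(P,\underline{M})\cong\mathrm{H}^{n}(P/L,M^{L})$ in one stroke. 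Both arrive at the same target, and in fact the paper's induction is implicitly chaining together your isomorphism one $p$-step at a time; the paper's version leans on existing machinery and avoids any mention of classifying spaces, whereas yours is more self-contained and makes the intermediate isomorphism explicit rather than leaving it buried in an induction. Both are valid, and your version arguably makes the structural reason for vanishing clearer.
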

\begin{proof}
Let $P$ be a $p$-Sylow subgroup of $G$ and consider the transfer and restriction maps $\mathrm{tr}_P^G$ and $\mathrm{res}_P^G$.  It follows that $$\mathrm{tr}_P^G \circ \mathrm{res}_P^G= [G:P]id,$$ which implies that $\mathrm{res}_P^G$ injects $\mathrm{H}^{\ast}_{\mathfrak{F}}(G,\underline{E}^{\times})_{(p)}$ into $\mathrm{H}^{\ast}_{\mathfrak{F}\cap P}(P,\underline{E}^{\times})$. Hence, it suffices to prove the lemma for cyclic $p$-groups.

So, we assume that $G=\mathbb{Z}_{p^n}$ for some $n$ and proceed by induction on $n$. If $n=0$, the statement of the lemma is trivial. Now, assume the statement is correct for $\mathbb{Z}_{p^{n-1}}$ and consider $G=\mathbb{Z}_{p^n}$. If $\mathfrak{F}$ just contains the trivial subgroup then $\mathrm{H}^{2k+1}_{\mathfrak{F}}(G,\underline{E}^{\times})=\mathrm{H}^{2k+1}(G,E^{\times})$.  Since cyclic groups have periodic cohomology with period $2$, it follows from Hilbert's Theorem $90$ that
$\mathrm{H}^{2k+1}(G,E^{\times})=0$ for each $k \in \mathbb{N}$.

Suppose that $\mathfrak{F}$ does not just contain the trivial subgroup. Since $G$ contains a unique subgroup $H$ of order $p$, it follows that every non-trivial subgroup in $\mathfrak{F}$ contains $H$. Also, $H \in \mathfrak{F}$ because $\mathfrak{F}$ is subgroup closed. Let $\mathfrak{H}$ be the family $\mathfrak{F}\setminus{\{e\}}$. By Corollary $4.5$ of \cite{Martinez}, we can conclude that $$\mathrm{H}^{2k+1}_{\mathfrak{F}}(G,\underline{E}^{\times})=\mathrm{H}^{2k+1}_{\mathfrak{H}}(G,\underline{E}^{\times}).$$
Note that the family $\overline{\mathfrak{H}}=\{S/H \ | \ S \in \mathfrak{H}\}$ of subgroups of $G/H$ is subgroup closed.
Since \[ \mathcal{O}_{\mathfrak{H}}G \rightarrow \mathcal{O}_{\overline{\mathfrak{H}}}(G/H) : G/S \mapsto (G/H)/(S/H) \]
is an isomorphism of categories, we are done by induction.
\end{proof}
\medskip
\begin{theorem} Let $F/E$ be a finite Galois extension with Galois group $N \subseteq \mathrm{Gal}(F/K)=\Gamma$ such that $F/K$ is also Galois, and let $\mathfrak{F}$ be a family of subgroups of $\Gamma$ that is closed under conjugation and taking subgroups. For a prime $p$, if the $p$-Sylow subgroups of $G$ are cyclic, then we have a short exact sequence:
\[ 0 \rightarrow  \cap_{H \in \mathfrak{F}} \mathrm{Br}(E^{\scriptscriptstyle H}/K)_{(p)} \rightarrow \cap_{H \in \mathfrak{F}} \mathrm{Br}(F^{\scriptscriptstyle H}/K)_{(p)} \rightarrow \lim_{H \in \mathfrak{F}} \Big(\cap_{S \in \mathfrak{F}} \mathrm{Br}(F^{\scriptscriptstyle S \cap HN}/E^{\scriptscriptstyle H})_{(p)} \Big)  \rightarrow 0. \]
If all Sylow subgroups of $G$ are cyclic, then
\[ 0 \rightarrow  \cap_{H \in \mathfrak{F}} \mathrm{Br}(E^{\scriptscriptstyle H}/K) \rightarrow \cap_{H \in \mathfrak{F}} \mathrm{Br}(F^{\scriptscriptstyle H}/K) \rightarrow \lim_{H \in \mathfrak{F}} \Big(\cap_{S \in \mathfrak{F}} \mathrm{Br}(F^{\scriptscriptstyle S \cap  HN}/E^{\scriptscriptstyle H}) \Big)  \rightarrow 0. \]

\end{theorem}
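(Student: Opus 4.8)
The plan is to rewrite all three terms as Bredon--Galois cohomology groups via Proposition~\ref{prop: bredon-galois} and to recognise the asserted sequence as the low--degree exact sequence of a Bredon analogue of the Lyndon--Hochschild--Serre spectral sequence for $1\to N\to\Gamma\to G\to 1$; its transgression lands in a third Bredon--Galois cohomology group, which is then annihilated by the cyclic--Sylow hypothesis by way of Proposition~\ref{lemma: odd cohomology is zero}. Throughout I read the hypotheses as $N\trianglelefteq\Gamma$, so that $E=F^{N}$ is Galois over $K$ with group $G=\Gamma/N$; write $q\colon\Gamma\to G$ for the quotient, $\mathfrak{F}N=\{HN\mid H\in\mathfrak{F}\}$ and $q(\mathfrak{F})=\{HN/N\mid H\in\mathfrak{F}\}$. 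Note $q(\mathfrak{F})$ is again closed under conjugation and taking subgroups: a subgroup $H'/N\le HN/N$ equals $(H\cap H')N/N$ with $H\cap H'\in\mathfrak{F}$.

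First I would translate the three terms. Since $N\trianglelefteq\Gamma$ we have $E^{H}=F^{HN}$ for $H\in\mathfrak{F}$, and using the isomorphism of categories $\mathcal{O}_{\mathfrak{F}N}\Gamma\cong\mathcal{O}_{q(\mathfrak{F})}G$ under which $\underline{F}^{\times}$ goes to $\underline{E}^{\times}$ (the device already used in the proof of Proposition~\ref{lemma: odd cohomology is zero}), Proposition~\ref{prop: bredon-galois} gives
\[
\bigcap_{H\in\mathfrak{F}}\mathrm{Br}(E^{H}/K)\cong\mathrm{H}^{2}_{q(\mathfrak{F})}(G,\underline{E}^{\times}),\qquad
\bigcap_{H\in\mathfrak{F}}\mathrm{Br}(F^{H}/K)\cong\mathrm{H}^{2}_{\mathfrak{F}}(\Gamma,\underline{F}^{\times}),
\]
while, applied to the Galois extension $F/E^{H}$ with group $HN$ and the family $\mathfrak{F}\cap HN$ on $HN$,
\[
\bigcap_{S\in\mathfrak{F}}\mathrm{Br}(F^{S\cap HN}/E^{H})\cong\mathrm{H}^{2}_{\mathfrak{F}\cap HN}(HN,\underline{F}^{\times}),
\]
naturally in $\Gamma/H\in\mathcal{O}_{\mathfrak{F}}\Gamma$ through the Mackey functor of the Lemma preceding the theorem; so the third term is $\lim_{H\in\mathfrak{F}}\mathrm{H}^{2}_{\mathfrak{F}\cap HN}(HN,\underline{F}^{\times})$.

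Next I would produce the spectral sequence attached to the functor $\rho\colon\mathcal{O}_{\mathfrak{F}}\Gamma\to\mathcal{O}_{\mathfrak{F}N}\Gamma\cong\mathcal{O}_{q(\mathfrak{F})}G$, $\Gamma/H\mapsto\Gamma/HN$ --- either by applying the construction of Xu (\cite{Xu}) to the induced extension of categories, or as the Grothendieck spectral sequence for $\mathrm{H}^{\ast}_{\mathfrak{F}}(\Gamma,-)=\lim^{\ast}_{\mathcal{O}_{\mathfrak{F}}\Gamma}$ written as $\lim^{\ast}_{\mathcal{O}_{q(\mathfrak{F})}G}\circ R\rho_{\ast}$ (compare also \cite{Martinez}) --- of the form
\[
E_{2}^{p,q}=\mathrm{H}^{p}_{q(\mathfrak{F})}\big(G,R^{q}\rho_{\ast}\underline{F}^{\times}\big)\ \Longrightarrow\ \mathrm{H}^{p+q}_{\mathfrak{F}}(\Gamma,\underline{F}^{\times}).
\]
The fibre of $\rho$ over $\Gamma/HN$ computes $R^{q}\rho_{\ast}\underline{F}^{\times}$ there as $\mathrm{H}^{q}_{\mathfrak{F}\cap HN}(HN,\underline{F}^{\times})$: for $q=0$ this is $(F^{\times})^{HN}=(E^{\times})^{HN/N}$, so $R^{0}\rho_{\ast}\underline{F}^{\times}\cong\underline{E}^{\times}$ and $E_{2}^{p,0}=\mathrm{H}^{p}_{q(\mathfrak{F})}(G,\underline{E}^{\times})$; for $q=1$ it vanishes by the Bredon Hilbert~$90$, Lemma~\ref{lemma: bredon-hilbert}, so the whole row $E_{2}^{\ast,1}=0$; for $q=2$ its limit is the third term above, so $E_{2}^{0,2}=\lim_{H\in\mathfrak{F}}\mathrm{H}^{2}_{\mathfrak{F}\cap HN}(HN,\underline{F}^{\times})$. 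Because $E_{2}^{\ast,1}=0$, the resulting exact sequence of low--degree terms (whose edge maps one checks are the inclusion of intersections and componentwise restriction $\mathrm{res}^{\Gamma}_{HN}$) reads
\[
0\to\bigcap_{H\in\mathfrak{F}}\mathrm{Br}(E^{H}/K)\to\bigcap_{H\in\mathfrak{F}}\mathrm{Br}(F^{H}/K)\to\lim_{H\in\mathfrak{F}}\bigcap_{S\in\mathfrak{F}}\mathrm{Br}(F^{S\cap HN}/E^{H})\xrightarrow{\ d_{3}\ }\mathrm{H}^{3}_{q(\mathfrak{F})}(G,\underline{E}^{\times}),
\]
using $E_{2}^{2,0}=\mathrm{H}^{2}_{q(\mathfrak{F})}(G,\underline{E}^{\times})$ and $E_{2}^{3,0}=\mathrm{H}^{3}_{q(\mathfrak{F})}(G,\underline{E}^{\times})$.

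It then remains to kill the transgression $d_{3}$. If the $p$-Sylow subgroups of $G$ are cyclic then, since $q(\mathfrak{F})$ is closed under conjugation and taking subgroups, Proposition~\ref{lemma: odd cohomology is zero} gives $\mathrm{H}^{3}_{q(\mathfrak{F})}(G,\underline{E}^{\times})_{(p)}=0$; taking $p$-primary components of the displayed exact sequence (all terms are torsion, as recorded before the theorem) makes $d_{3}$ vanish on $p$-parts and yields the first short exact sequence. If every Sylow subgroup of $G$ is cyclic, then $\mathrm{H}^{3}_{q(\mathfrak{F})}(G,\underline{E}^{\times})=\bigoplus_{p\mid|G|}\mathrm{H}^{3}_{q(\mathfrak{F})}(G,\underline{E}^{\times})_{(p)}=0$, so $d_{3}=0$ and the integral sequence follows at once. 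The main obstacle is this middle step: setting up the Bredon Lyndon--Hochschild--Serre spectral sequence and, above all, identifying its $E_{2}$--page --- in particular that $R^{q}\rho_{\ast}\underline{F}^{\times}$ is computed fibrewise by $\mathrm{H}^{q}_{\mathfrak{F}\cap HN}(HN,\underline{F}^{\times})$ and that $R^{0}\rho_{\ast}\underline{F}^{\times}\cong\underline{E}^{\times}$; granted this, everything else is formal, together with the already established Hilbert~$90$ and odd--vanishing results.
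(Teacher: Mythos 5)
Your proposal is correct and takes essentially the same route as the paper: translating everything to Bredon--Galois cohomology via Proposition~\ref{prop: bredon-galois}, invoking the Bredon Lyndon--Hochschild--Serre spectral sequence (the paper cites Mart\'{\i}nez--P\'{e}rez's Theorem 5.1 directly, where you derive it as a Grothendieck/Xu spectral sequence and identify the $E_2$-page yourself), killing the $q=1$ row with Lemma~\ref{lemma: bredon-hilbert}, and killing the transgression target $\mathrm{H}^3$ on $p$-parts via Proposition~\ref{lemma: odd cohomology is zero}. The only real difference is that the paper outsources the spectral sequence and the identification of its $E_2$-page to the cited reference, while you sketch the construction; the low-degree argument, Hilbert~90 input, and odd-vanishing input are identical.
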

\begin{proof}
By Galois theory, it follows that $G=\Gamma/N$. We define the family $\overline{\mathfrak{F}}=\{HN/N  \ | \ H \in \mathfrak{F} \}$ of subgroups of $G$ and note that this family is closed under conjugation and taking subgroups. From Theorem $5.1$ in \cite{Martinez}, we obtain a spectral sequence:
\[ \mathrm{E}_2^{p,q}=\mathrm{H}^p_{\overline{\mathfrak{F}}}(G,\mathrm{H}^q_{\mathfrak{F \cap -}}(-,\underline{F}^{\times})) \Rightarrow \mathrm{H}^p_{\mathfrak{F}}(\Gamma,\underline{F}^{\times}), \]
with $$\mathrm{H}^q_{\mathfrak{F} \cap -}(-,\underline{F}^{\times}): \mathcal{O}_{\overline{\mathfrak{F}}}G \rightarrow \mathfrak{Ab}: (\Gamma/N)/(HN/N) \mapsto \mathrm{H}^q_{\mathfrak{F} \cap HN }(HN,\underline{F}^{\times}).$$ Using Lemma \ref{lemma: bredon-hilbert}, we conclude that $\mathrm{E}^{p,1}_2=0$ for all $p\geq 0$. A standard spectral sequence argument entails a 5-term exact sequence from which we deduce the exact sequence:
\[ 0 \rightarrow \mathrm{H}^2_{\overline{\mathfrak{F}}}(G,\mathrm{H}^0_{\mathfrak{F \cap -}}(-,\underline{F}^{\times})) \rightarrow \mathrm{H}^2_{\mathfrak{F}}(\Gamma,\underline{F}^{\times}) \rightarrow \mathrm{H}^0_{\overline{\mathfrak{F}}}(G,\mathrm{H}^2_{\mathfrak{F \cap -}}(-,\underline{F}^{\times})) \rightarrow \mathrm{H}^3_{\overline{\mathfrak{F}}}(G,\mathrm{H}^0_{\mathfrak{F \cap -}}(-,\underline{F}^{\times})). \]
Using the limit interpretation of 0-dimensional Bredon cohomology, it is straightforward to check that: $$\mathrm{H}^0_{\mathfrak{F} \cap HN}(HN,\underline{F}^{\times})=(F^{\times})^{HN}.$$
The fact that $F^N=E$ implies $\mathrm{H}^0_{\mathfrak{F \cap -}}(-,\underline{F}^{\times})=\underline{E}^{\times}$. By restricting the exact sequence above to the $p$-primary components and using Proposition \ref{lemma: odd cohomology is zero}, we obtain a short exact sequence:
\[ 0 \rightarrow \mathrm{H}^2_{\overline{\mathfrak{F}}}(G,\underline{E}^{\times})_{(p)} \rightarrow \mathrm{H}^2_{\mathfrak{F}}(\Gamma,\underline{F}^{\times})_{(p)} \rightarrow \mathrm{H}^0_{\overline{\mathfrak{F}}}(G,\mathrm{H}^2_{\mathfrak{F \cap -}}(-,\underline{F}^{\times}))_{(p)} \rightarrow 0. \]
Using Proposition \ref{prop: bredon-galois} and the limit-interpretation of 0-dimensional Bredon cohomology, the sequence transforms to:
\[ 0 \rightarrow  \cap_{H \in \mathfrak{F}} \mathrm{Br}(E^{\scriptscriptstyle H}/K)_{(p)} \rightarrow \cap_{H \in \mathfrak{F}} \mathrm{Br}(F^{\scriptscriptstyle H}/K)_{(p)} \rightarrow \lim_{H \in \mathfrak{F}} \Big(\cap_{S \in \mathfrak{F}} \mathrm{Br}(F^{\scriptscriptstyle S \cap HN}/E^{\scriptscriptstyle H})_{(p)} \Big)  \rightarrow 0. \]
The second statement of the theorem follows immediately from the first.
\end{proof}
\medskip
\begin{theorem} Let $\mathfrak{F}$ be any family of subgroups of $G$ that is closed conjugation and taking subgroups and let $p$ be a prime. If the $p$-Sylow subgroups of $G$ are cyclic, then there exists a short exact sequence:
\[ 0 \rightarrow  \cap_{H \in \mathfrak{F}} \mathrm{Br}(E^{\scriptscriptstyle H}/K)_{(p)} \rightarrow \mathrm{Br}(E/K)_{(p)} \rightarrow \lim_{H \in \mathfrak{F}}\mathrm{Br}(E/E^{\scriptscriptstyle H})_{(p)} \rightarrow 0. \]
If all Sylow subgroups of $G$ are cyclic, then
\[ 0 \rightarrow  \cap_{H \in \mathfrak{F}} \mathrm{Br}(E^{\scriptscriptstyle H}/K) \rightarrow \mathrm{Br}(E/K) \rightarrow \lim_{H \in \mathfrak{F}}\mathrm{Br}(E/E^{\scriptscriptstyle H}) \rightarrow 0 \]
is exact.
\end{theorem}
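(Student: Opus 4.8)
The plan is to compare the $\mathfrak{F}$-Bredon cohomology $\mathrm{H}^\ast_{\mathfrak{F}}(G,-)$ with ordinary group cohomology of $G$ by means of a Lyndon--Hochschild--Serre-type spectral sequence, and then to collapse the relevant corner of it using Proposition \ref{lemma: odd cohomology is zero}. The starting observation is that the fixed point functor construction $N\mapsto\underline{N}$ is exactly the right adjoint of the (exact) restriction functor from $\orbmod$ to left $\mathbb{Z}[G]$-modules, $P\mapsto P(G/\{e\})$: indeed $\underline{N}(G/H)=N^{\scriptscriptstyle H}=\mathrm{Hom}_{\mathbb{Z}[G]}(\mathbb{Z}[G/H],N)$. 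Consequently $N\mapsto\underline{N}$ preserves injectives, and $\nathom(\underline{\mathbb{Z}},\underline{N})=\mathrm{Hom}_{\mathbb{Z}[G]}(\mathbb{Z},N)$, so the Grothendieck spectral sequence of this composite of functors reads
\[ \mathrm{E}_2^{p,q}=\mathrm{H}^p_{\mathfrak{F}}\!\big(G,\underline{\mathrm{H}^q(-,E^{\times})}\big)\ \Longrightarrow\ \mathrm{H}^{p+q}(G,E^{\times}), \]
where $\underline{\mathrm{H}^q(-,E^{\times})}$ is the $\orb$-module $G/H\mapsto \mathrm{H}^q(H,E^{\times})$ with the restriction and conjugation maps as transition morphisms; note $\underline{\mathrm{H}^0(-,E^{\times})}=\underline{E}^{\times}$.

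Next I would feed in Hilbert's Theorem $90$: applied to each of the Galois extensions $E/E^{\scriptscriptstyle H}$ (with group $H$) it gives $\mathrm{H}^1(H,E^{\times})=0$ for every $H\in\mathfrak{F}$, so the entire row $q=1$ of the $\mathrm{E}_2$-page is zero. The standard low-degree analysis of a first-quadrant spectral sequence with vanishing $q=1$ row then produces the exact sequence
\[ 0\to \mathrm{H}^2_{\mathfrak{F}}(G,\underline{E}^{\times})\to \mathrm{H}^2(G,E^{\times})\to \mathrm{H}^0_{\mathfrak{F}}\!\big(G,\underline{\mathrm{H}^2(-,E^{\times})}\big)\xrightarrow{\,d_3\,}\mathrm{H}^3_{\mathfrak{F}}(G,\underline{E}^{\times}), \]
in which the middle map is the edge homomorphism, i.e. the product of the restrictions to the subgroups in $\mathfrak{F}$. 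Now I would translate the terms: $\mathrm{H}^2(G,E^{\times})\cong\mathrm{Br}(E/K)$ and $\mathrm{H}^2(H,E^{\times})\cong\mathrm{Br}(E/E^{\scriptscriptstyle H})$ naturally in $H$; the limit interpretation of zero-dimensional Bredon cohomology identifies $\mathrm{H}^0_{\mathfrak{F}}(G,\underline{\mathrm{H}^2(-,E^{\times})})$ with $\lim_{H\in\mathfrak{F}}\mathrm{Br}(E/E^{\scriptscriptstyle H})$; and Proposition \ref{prop: bredon-galois} identifies $\mathrm{H}^2_{\mathfrak{F}}(G,\underline{E}^{\times})$ with $\cap_{H\in\mathfrak{F}}\mathrm{Br}(E^{\scriptscriptstyle H}/K)$, sitting inside $\mathrm{Br}(E/K)$ as exactly the kernel of that edge map. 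Thus the sequence becomes
\[ 0\to \cap_{H\in\mathfrak{F}}\mathrm{Br}(E^{\scriptscriptstyle H}/K)\to \mathrm{Br}(E/K)\to \lim_{H\in\mathfrak{F}}\mathrm{Br}(E/E^{\scriptscriptstyle H})\xrightarrow{\,d_3\,}\mathrm{H}^3_{\mathfrak{F}}(G,\underline{E}^{\times}). \]

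It then remains to kill the last term. If all Sylow subgroups of $G$ are cyclic, then $\mathrm{H}^3_{\mathfrak{F}}(G,\underline{E}^{\times})_{(p)}=0$ for every prime $p$ by Proposition \ref{lemma: odd cohomology is zero}, and since $\mathrm{H}^3_{\mathfrak{F}}(G,\underline{E}^{\times})$ is torsion and equal to the direct sum of its $p$-primary parts (as noted just before that proposition), it vanishes; hence $d_3=0$ and the displayed sequence is short exact, which is the second assertion. For the first assertion, fix the prime $p$ with cyclic $p$-Sylow and localize the four-term exact sequence at $p$: localization is exact, and because all of the groups in sight are annihilated by $|G|$, taking $p$-primary parts commutes with the inverse limit $\lim_{H\in\mathfrak{F}}\mathrm{Br}(E/E^{\scriptscriptstyle H})$; since $\mathrm{H}^3_{\mathfrak{F}}(G,\underline{E}^{\times})_{(p)}=0$ by Proposition \ref{lemma: odd cohomology is zero}, this yields the first claimed short exact sequence.

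The main obstacle is setting up the spectral sequence correctly --- in particular verifying that $\underline{N}$ is indeed the coinduction/right adjoint of restriction (so that its higher derived functors are the $\orb$-modules $G/H\mapsto\mathrm{H}^q(H,E^{\times})$) and that the $q=0$ edge map is honestly the restriction homomorphism, which is what makes the identification of its kernel with $\cap_{H}\mathrm{Br}(E^{\scriptscriptstyle H}/K)$ via Proposition \ref{prop: bredon-galois} legitimate. The only other point requiring minor care is the commutation of $p$-localization with the inverse limit, which holds because the inverse system consists of abelian groups of exponent dividing $|G|$.
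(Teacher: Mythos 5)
Your proposal is correct and follows essentially the same path as the paper: set up the spectral sequence $\mathrm{E}_2^{p,q}=\mathrm{H}^p_{\mathfrak{F}}(G,\underline{\mathrm{H}^q(-,E^{\times})})\Rightarrow\mathrm{H}^{p+q}(G,E^{\times})$, kill the $q=1$ row by Hilbert 90, read off the five-term exact sequence, translate via Proposition \ref{prop: bredon-galois} and the limit description of $\mathrm{H}^0_{\mathfrak{F}}$, and kill the $d_3$-target using Proposition \ref{lemma: odd cohomology is zero} after passing to $p$-primary parts. The only difference is that the paper invokes the spectral sequence as Theorem 6.1 of Mart\'{\i}nez-P\'{e}rez \cite{Martinez} and says ``argue as in the previous theorem,'' whereas you re-derive it from the Grothendieck formalism by observing that $\underline{(-)}$ is right adjoint to evaluation at $G/\{e\}$ and hence preserves injectives.
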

\begin{proof} Consider the spectral sequence in Theorem $6.1$ of \cite{Martinez}:
\[ \mathrm{E}_2^{p,q}=\mathrm{H}^p_{{\mathfrak{F}}}(G,\mathrm{H}^q(-,E^{\times})) \Rightarrow \mathrm{H}^p(G,E^{\times}), \]
with $$\mathrm{H}^q(-,E^{\times}): \mathcal{O}_{{\mathfrak{F}}}G \rightarrow \mathfrak{Ab}: G/H \mapsto \mathrm{H}^q (H,E^{\times}).$$
Following a similar argument as in the proof of the previous theorem finishes the claim.
\end{proof}
\subsection{Constructing non-zero elements in relative Brauer groups} \label{sec: non-zero}
Our approach in this section is an adaptation to the Bredon setting of methods used in \cite{FeinSaltmanSchacher} and \cite{FeinSchacher}.

Let us recall some terminology and result from discrete valuation theory.
\emph{A discrete valuation} of $v$ of a field $K$ is a surjective group homomorphism: $$v: K^{\times} \rightarrow \mathbb{Z}\mbox{\; such that \;} v(x+y)\geq \min\{v(x),v(y)\}$$ for all $x,y \in K^{\times}$. Let $L/K$ be a finite separable extension and let $\delta : K^{\times} \rightarrow \mathbb{Z}$ be a discrete valuation of $K$. An \emph{extension} of $\delta$ to $L$ is a discrete valuation: $$\theta: L^{\times} \rightarrow \mathbb{Z}\mbox{\; such that \;} \theta|{_{K^{\times}}}=\delta.$$ Let us clarify this definition. Denote the index of $\theta(K^{\times})$ in $\mathbb{Z}$ by $e_{\delta}^{\theta}$. This number is called the \emph{ramification index} of $\theta$ with respect to $\delta$. Hence, $\theta|{_{K^{\times}}} : K^{\times}\rightarrow e_{\delta}^{\theta}\mathbb{Z}$ is a surjective group homomorphism. By $\theta|{_{K^{\times}}}=\delta$, we mean that $\alpha \circ \theta|{_{K^{\times}}}= \delta$, where $\alpha$ is the isomorphism $e_{\delta}^{\theta}\mathbb{Z} \rightarrow \mathbb{Z}:  e_{\delta}^{\theta}n \mapsto n$. Note that if $x \in K^{\times}$ and $\delta(x)=n$, then $\theta(x)=e_{\delta}^{\theta}n$. \\
It can be shown that there exist only a finite number of extensions of $\delta$ to $L$. \\
\indent Let $E/K$ be a finite Galois extension, let $\delta : K^{\times} \rightarrow \mathbb{Z}$ be a {discrete valuation} of $K$ and let $\pi:  E^{\times} \rightarrow \mathbb{Z}$ be an extension of $\delta$ to $E$. Denote the Galois group of $E/K$ by $G$ and let $\Omega$ be the set of extensions of $\delta$ to $E$. The group $G$ acts on $\Omega$ as follows: for $\pi \in \Omega$ and $g \in G$,  $$(g\cdot \pi )(x)=\pi(g^{\scriptscriptstyle -1}(x))$$ for all $x \in E^{\times}.$ Is is easy to check that this defines a left group action of $G$ on $\Omega$. In fact, one can show that this action in transitive. Using the fact that $\Omega$ is a transitive $G$-set, one can also show that all the valuations in $\Omega$ have the same ramification index with respect to $\delta$. The isotropy group of $G$ at $\pi$ is called the \emph{decomposition group} of $\pi$ and is denoted by $G_{\pi}$.
The valuation $\delta$ is called \emph{unramified} in $E$ if $e_{\delta}^{\pi}=1$. Note that if $M$ is a field such that $E \supseteq M \supseteq K$ and $\delta$ is unramified in $E$ , then the valuation $\pi|{_M}$ of $M$ is also unramified in $E$.

Let $E/K$ be a finite Galois extension and set $G=\mathrm{Gal}(E/K)$. Let $\mathfrak{F}$ be a family of subgroups of $G$ that is closed under conjugation and closed under taking subgroups. Let $\delta : K^{\times} \rightarrow \mathbb{Z}$ be a discrete valuation of $K$ and let $\pi:  E^{\times} \rightarrow \mathbb{Z}$ be an extension of $\delta$ to $E$. Since $G_{\pi}$ is the isotropy of $\pi$, the map $\pi: E^{\times} \rightarrow \mathbb{Z}$ is a $G_{\pi}$-module homomorphism. Hence, $\pi$ induces a natural transformation $\underline{\pi}$ between the fixed point functors $\underline{E}^{\times}$ and $\underline{\mathbb{Z}}$ in $\mbox{Mod-}\mathcal{O}_{\mathfrak{F}\cap G_{\pi}}(G_{\pi})$. We define:  $$\underline{\pi}^{\ast}:\mathrm{H}^2_{\mathfrak{F}\cap G_{\pi}}(G_{\pi},\underline{E}^{\times}) \rightarrow \mathrm{H}^2_{\mathfrak{F}\cap G_{\pi}}(G_{\pi},\underline{\mathbb{Z}})$$ to be the map induced by $\underline{\pi}$. Denote   $$\mathfrak{F}(G_{\pi}):=\{f \in \mathrm{Hom}(G_{\pi},\mathbb{Q}/\mathbb{Z}) \ | \ f(H\cap G_{\pi})=0, \ \forall \ H \in \mathfrak{F} \}.$$
Note that, by Corollary \ref{cor: finite group Q}, we have an isomorphism: $$\mathrm{H}^2_{\mathfrak{F}\cap G_{\pi}}(G_{\pi},\underline{\mathbb{Z}}) \xrightarrow{\cong} \mathfrak{F}(G_{\pi}).$$
The \emph{character map} $\chi_{\delta} : \mathrm{H}^2_{\mathfrak{F}}(G,\underline{E}^{\times}) \rightarrow \mathfrak{F}(G_{\pi})$ is defined by the following composition of maps:
\[ \mathrm{H}^2_{\mathfrak{F}}(G,\underline{E}^{\times}) \xrightarrow{\mathrm{res}_{G_{\pi}}^{G}} \mathrm{H}^2_{\mathfrak{F}\cap G_{\pi}}(G_{\pi},\underline{E}^{\times}) \xrightarrow{\underline{\pi}^{\ast}} \mathrm{H}^2_{\mathfrak{F}\cap G_{\pi}}(G_{\pi},\underline{\mathbb{Z}}) \xrightarrow{\cong} \mathfrak{F}(G_{\pi}). \]

\indent The following lemma reveals a key property of the character map.
\medskip
\begin{lemma} \label{lemma: transfer lemma}Let $E \supseteq M \supseteq K$ be a tower of fields such that $E$ is a finite Galois extension of $K$ with Galois group G and suppose $P$ is the subgroup of $G$ such that $E^{P}=M$. Let $\delta$ be a discrete valuation of $K$ that is unramified in $E$, and suppose that $\theta_1,\ldots,\theta_n$ are all the possible extensions of $\delta$ to $M$.
Let $\{\pi=\pi_1,\ldots,\pi_n\}$ be a set of discrete valuations of $E$ such that $\pi_i$ extends $\theta_i$. Finally, let $\{e=x_1,\ldots,x_n\}$ be a set of elements in $G$ such that $x_i \cdot \pi_i = \pi$. Let $\mathfrak{F}$ be a family of subgroups of $G$ that is closed under conjugations and taking subgroups. Now consider the character maps:
\[ \chi_{\delta}: \mathrm{H}^2_{\mathfrak{F}}(G,\underline{E}^{\times}) \rightarrow \mathfrak{F}(G_{\pi})  \]
and
\[ \chi_{\theta_i}: \mathrm{H}^2_{\mathfrak{F}\cap P}(P,\underline{E}^{\times}) \rightarrow \chi_{\mathfrak{F}\cap P}(P_{\pi_i})  \]
for all $i \in \{1,\ldots,n\}$. For every $\alpha \in \mathrm{H}^2_{\mathfrak{F}\cap P}(P,\underline{E}^{\times})$, we have:
\[ \chi_{\delta} \circ \mathrm{tr}_{P}^G(\alpha) = \sum_{i=1}^n c_{x_i}(G_{\pi_i}) \circ \mathrm{tr}_{P_{\pi_i}}^{G_{\pi_i}} \circ \chi_{\theta_i}(\alpha). \]
\end{lemma}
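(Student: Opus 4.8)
The plan is to deduce the identity from the Mackey double coset formula recorded in the functoriality lemma above, applied to the composite $\mathrm{res}_{G_\pi}^G\circ\mathrm{tr}_P^G$, and then to propagate the resulting equation through the two remaining constituents of the character map: the coefficient map $\underline{\pi}^{\ast}$ and the isomorphism $\mathrm{H}^2_{\mathfrak{F}\cap -}(-,\underline{\mathbb{Z}})\cong\mathfrak{F}(-)$ of Corollary \ref{cor: finite group Q}. Recall that $\chi_\delta=(\text{iso})\circ\underline{\pi}^{\ast}\circ\mathrm{res}_{G_\pi}^G$ and, likewise, $\chi_{\theta_i}=(\text{iso})\circ\underline{\pi_i}^{\ast}\circ\mathrm{res}_{P_{\pi_i}}^P$, where by $P_{\pi_i}$ we mean the decomposition group $G_{\pi_i}\cap P$; the hypothesis that $\delta$ is unramified in $E$ guarantees that each $\theta_i$ is unramified in $E$ as well, so that $\pi_i$ restricts to $\theta_i$ with no ramification correction and $\chi_{\theta_i}$ is built via $\underline{\pi_i}\colon\underline{E}^\times\to\underline{\mathbb Z}$ exactly as $\chi_\delta$ is built via $\underline{\pi}$.

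First I would set up the double coset bookkeeping. Since $\Omega$ is a transitive $G$-set isomorphic to $G/G_\pi$, and since the extensions $\theta_1,\dots,\theta_n$ of $\delta$ to $M=E^P$ correspond precisely to the $P$-orbits on $\Omega$ (two extensions to $E$ restrict to the same valuation of $M$ iff they are $P$-conjugate), the elements $x_1,\dots,x_n$ with $x_i\cdot\pi_i=\pi$ form a complete set of representatives of the double cosets $G_\pi\backslash G/P$. A direct stabilizer computation gives $x_i^{-1}G_\pi x_i=G_{\pi_i}$, hence $x_i^{-1}G_\pi x_i\cap P=G_{\pi_i}\cap P=P_{\pi_i}$. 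Feeding these identifications into the double coset formula (with the two generic subgroups taken to be $G_\pi$ and $P$) yields
\[ \mathrm{res}_{G_\pi}^G\circ\mathrm{tr}_P^G \;=\; \sum_{i=1}^n c_{x_i}(G_{\pi_i})\circ\mathrm{tr}_{P_{\pi_i}}^{G_{\pi_i}}\circ\mathrm{res}_{P_{\pi_i}}^P . \]

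Next I would apply $\underline{\pi}^{\ast}$ and then the isomorphism of Corollary \ref{cor: finite group Q} to both sides evaluated at $\alpha$. As $\underline{\pi}^{\ast}$ is additive it distributes over the sum, so it suffices to match each summand, using three naturality facts. First, because $\pi=x_i\cdot\pi_i$, i.e. $\pi(y)=\pi_i(x_i^{-1}y)$, conjugation by $x_i$ carries the natural transformation $\underline{\pi_i}$ over $\mathcal{O}_{\mathfrak{F}\cap G_{\pi_i}}(G_{\pi_i})$ to $\underline{\pi}$ over $\mathcal{O}_{\mathfrak{F}\cap G_\pi}(G_\pi)$; combined with the naturality of the conjugation maps in the coefficients, this gives $\underline{\pi}^{\ast}\circ c_{x_i}=c_{x_i}\circ\underline{\pi_i}^{\ast}$. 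Second, by the stated naturality of transfer in the coefficients, $\underline{\pi_i}^{\ast}$ commutes with $\mathrm{tr}_{P_{\pi_i}}^{G_{\pi_i}}$. After these two moves each summand becomes $c_{x_i}(G_{\pi_i})\circ\mathrm{tr}_{P_{\pi_i}}^{G_{\pi_i}}$ applied to $\underline{\pi_i}^{\ast}(\mathrm{res}_{P_{\pi_i}}^P(\alpha))$, which is exactly $\chi_{\theta_i}(\alpha)$ before the final identification. Third, the isomorphism $\mathrm{H}^2_{\mathfrak{F}\cap S}(S,\underline{\mathbb{Z}})\cong\mathfrak{F}(S)$ of Corollary \ref{cor: finite group Q}, being assembled from the connecting homomorphism of $0\to\underline{\mathbb{Z}}\to\underline{\mathbb{Q}}\to\underline{\mathbb{Q}/\mathbb{Z}}\to 0$ and the identification of Corollary \ref{cor: special int H^1}, is natural with respect to conjugation and transfer, so it commutes past $c_{x_i}$ and $\mathrm{tr}_{P_{\pi_i}}^{G_{\pi_i}}$; this produces $\sum_i c_{x_i}(G_{\pi_i})\circ\mathrm{tr}_{P_{\pi_i}}^{G_{\pi_i}}\circ\chi_{\theta_i}(\alpha)$ and finishes the proof.

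The main obstacle I anticipate is bookkeeping rather than conceptual: fixing the conjugation conventions so that the double coset representatives $x_i$ land, on the nose, where the statement predicts (the subgroups $G_{\pi_i}$ and $P_{\pi_i}$, and the conjugation maps $c_{x_i}$ with the correct source and target), and verifying carefully that the isomorphism of Corollary \ref{cor: finite group Q} is compatible with \emph{transfer} — this requires tracing the transfer through the connecting homomorphism and through the $\mathrm{H}^1$-interpretation of Corollary \ref{cor: special int H^1}, since transfer is less transparently natural than restriction or conjugation. Everything else is a routine diagram chase with the functoriality already recorded.
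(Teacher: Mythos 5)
Your proposal follows essentially the same route as the paper: both invoke the Mackey double coset formula for $\mathrm{res}_{G_\pi}^G\circ\mathrm{tr}_P^G$, show that the $x_i$ furnish a complete set of representatives of $G_\pi\backslash G/P$ (via the decomposition of $\Omega$ into the $P$-orbits $\Omega_i$) with $G_\pi^{x_i}=G_{\pi_i}$ and $G_\pi^{x_i}\cap P=P_{\pi_i}$, and then appeal to the naturality of restriction, transfer, and conjugation to carry the identity through $\underline{\pi}^{\ast}$ and the connecting-homomorphism isomorphism onto $\mathfrak{F}(G_\pi)$. You simply spell out the final naturality step in more detail than the paper, which compresses it into one sentence.
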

\begin{proof}
Denote the set of extension of $\delta$ to $E$ by $\Omega$ and denote the set of extensions of $\theta_i$ to $E$ by $\Omega_i$ for all $i \in \{1,\ldots,n\}$. Then $\pi_i \in \Omega_i \subseteq \Omega$ for all $i$. We claim that first, $\Omega = \coprod_{i=1}^n \Omega_i $; secondly, $G_{\pi}^{x_i}\cap P=P_{\pi_i}$; and finally, $\{x_1,\ldots,x_n\}$ is a set of representatives of the double cosets $G_{\pi} \setminus G / P$.

The first two claims are easy to verify. Let us prove the third claim. Let $g \in G$ and consider $g^{\scriptscriptstyle -1}\cdot \pi \in \Omega$. There exists a unique $i \in \{1,\ldots,n\}$ such that $g^{\scriptscriptstyle -1}\cdot \pi \in \Omega_i$.  Fix this $i$. Since $\pi_i \in \Omega_i$ and $P$ acts transitively on $\Omega_i$, we can find $p \in P$ such that $p\cdot \pi_i = g^{\scriptscriptstyle -1} \cdot \pi$.  Since $x_i \cdot \pi_i = \pi$, we conclude that $(gpx_i^{\scriptscriptstyle -1})\cdot \pi = \pi$. This implies that we can find an element $y \in G_{\pi}$ such that $gpx_i^{\scriptscriptstyle -1}=y$. To summarize, for all $g \in G$, we can find  $x \in G_\pi$ and $p \in P$ such that $xgp=x_i$, where $i$ is the unique element of $\{1,\ldots,n\}$ such that $g^{\scriptscriptstyle -1}\cdot \pi \in \Omega_i$. Now, assume that $x_i=xx_jp$ for some $x \in G_{\pi}$, some $p \in P$ and some $i,j \in \{1,\ldots,n\}$. One easily verifies that this implies that $p\cdot \pi_i=\pi_j$. Since $p \in \mathrm{Gal}(E/M)$, this implies that for all $m \in M$, $$\theta_j(m)=\pi_j(m)=\pi_i(p^{\scriptscriptstyle -1}(m))=\pi_i(m)=\theta_i(m).$$ Therefore, $\theta_i=\theta_j$ which implies that $i=j$. This proves our claim. \\
A priori, we have the relation:
 \[ \mathrm{res}_{G_{\pi}}^{G} \circ \mathrm{tr}_P^G = \sum_{x \in [G_{\pi}\setminus G/L]}  c_x(G_{\pi}^x) \circ \mathrm{tr}_{G_{\pi}^{x}\cap P}^{G_{\pi}^{x}} \ \circ \mathrm{res}_{G_{\pi}^x \cap P}^P. \]
Using our claims, we can rewrite this formula as:
\[ \mathrm{res}^G_{G_{\pi}} \circ \mathrm{tr}_P^G = \sum_{i=1}^n c_{x_i}(G_{\pi_i})  \circ \mathrm{tr}_{P_{\pi_i}}^{G_{\pi_i}} \circ \mathrm{res}_{P_{\pi_i}}^P. \]
The lemma now follows from the definition of the character maps (note that the isomorphism of the last map in the definition is given by a connecting homomorphism) and the naturality of transfer, restriction and conjugation.
\end{proof}
\begin{definition} \rm Let $k \in \mathbb{N}\cup \{\infty\}$. We say that an extension of fields $E/K$ is $k$-\emph{admissible} if it is a finite Galois extension with Galois group $G$ such that for every $\sigma \in G\setminus \{e\}$, there exist at least $k$ discrete valuations $\pi$ of $E$ such that $G_{\pi}=\langle\sigma\rangle$ and such that $\pi|{_K}$ is unramified in $E$.
\end{definition}
We can now prove the main theorem of this section.
\begin{theorem} \label{th: relative brauer}
Let $E/K$ be a $k$-admissible extension with Galois group $G$ and let $\mathfrak{F}$ be any family of subgroups of $G$ that is closed under conjugation and taking subgroups. If there exists an element $\sigma \in G$ such that $\mathfrak{F}( \langle \sigma \rangle)$ is non-zero, then $\cap_{H \in \mathfrak{F}} \mathrm{Br}(E^{\scriptscriptstyle H}/K)$ contains at least $k$ non-zero elements.
\end{theorem}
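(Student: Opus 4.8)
The plan is to reduce the statement to Bredon--Galois cohomology and then construct the required classes by transfer from a cyclic subgroup. By Proposition~\ref{prop: bredon-galois} we have $\mathrm{H}^2_{\mathfrak{F}}(G,\underline{E}^{\times})\cong\cap_{H\in\mathfrak{F}}\mathrm{Br}(E^{\scriptscriptstyle H}/K)$, so it suffices to exhibit at least $k$ distinct non-zero elements of $\mathrm{H}^2_{\mathfrak{F}}(G,\underline{E}^{\times})$. Fix a non-zero $f\in\mathfrak{F}(\langle\sigma\rangle)$; then $\sigma\neq e$. Put $P=\langle\sigma\rangle$ and $M=E^{\scriptscriptstyle P}$. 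Using $k$-admissibility, choose discrete valuations $\pi_1,\dots,\pi_k$ of $E$ with $G_{\pi_j}=P$ and with $\delta_j:=\pi_j|_{\scriptscriptstyle K}$ unramified in $E$; set $\theta_j=\pi_j|_{\scriptscriptstyle M}$, which is a discrete valuation of $M$ unramified in $E$, has $\pi_j$ as its unique extension to $E$, and satisfies $P_{\pi_j}=P$. By the classical approximation theorem applied to the finitely many (pairwise distinct, hence independent) discrete valuations of $M$ lying over $\delta_1,\dots,\delta_k$, choose $t_j\in M^{\times}$ with $\theta_j(t_j)=1$ and $w(t_j)=0$ for every such valuation $w\neq\theta_j$.

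The key local step is to produce, for each $j$, a class $\beta_j\in\mathrm{H}^2_{\mathfrak{F}\cap P}(P,\underline{E}^{\times})$ with $\chi_{\theta_j}(\beta_j)=f$. Since $\delta_j$ is unramified in $E$, for each $Q\in\mathfrak{F}\cap P$ the restriction $\pi_j\colon(E^{\scriptscriptstyle Q})^{\times}\to\mathbb{Z}$ is surjective, so
\[ 0\longrightarrow\underline{U}_j\longrightarrow\underline{E}^{\times}\xrightarrow{\ \underline{\pi}_j\ }\underline{\mathbb{Z}}\longrightarrow 0,\qquad U_j:=\ker\pi_j, \]
is a short exact sequence of $\mathcal{O}_{\mathfrak{F}\cap P}(P)$-modules; moreover $n\mapsto t_j^{\,n}$ is a natural transformation $\underline{\mathbb{Z}}\to\underline{E}^{\times}$ splitting $\underline{\pi}_j$, because $t_j$ is $P$-fixed and $\pi_j(t_j)=1$. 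Hence $\underline{\pi}_j^{\ast}\colon\mathrm{H}^2_{\mathfrak{F}\cap P}(P,\underline{E}^{\times})\to\mathrm{H}^2_{\mathfrak{F}\cap P}(P,\underline{\mathbb{Z}})$ is split surjective, and composing the section with the isomorphism $\mathrm{H}^2_{\mathfrak{F}\cap P}(P,\underline{\mathbb{Z}})\cong\mathfrak{F}(P)$ of Corollary~\ref{cor: finite group Q} yields $\beta_j$ (the restriction step in the definition of $\chi_{\theta_j}$ is trivial since $P_{\pi_j}=P$). Observe that working inside Bredon cohomology makes the image of a class from $\mathrm{H}^2_{\mathfrak{F}\cap P}(P,\underline{\mathbb{Z}})$ automatically lie in the $\mathfrak{F}\cap P$-Bredon group with coefficients $\underline{E}^{\times}$, so there is no need to separately verify that $\beta_j$ is ``split by every $E^{\scriptscriptstyle Q}$''; this is one of the places where Bredon--Galois cohomology hides the technical difficulties.

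Now set $\alpha_j=\mathrm{tr}_{P}^{G}(\beta_j)\in\mathrm{H}^2_{\mathfrak{F}}(G,\underline{E}^{\times})$ and evaluate the character maps, using $\pi_i$ as the chosen extension of $\delta_i$ to $E$, by Lemma~\ref{lemma: transfer lemma}: $\chi_{\delta_i}(\alpha_j)$ is a sum indexed by the extensions $w$ of $\delta_i$ to $M$, and since $\beta_j$ was built from the section $n\mapsto t_j^{\,n}$ the $w$-summand equals $w(t_j)$ times a restriction of $f$, hence vanishes whenever $w(t_j)=0$. By the choice of $t_j$ the only possibly non-zero summand is the one with $w=\theta_j$, which occurs exactly when $\delta_i=\delta_j$; and with $\pi_j$ itself taken as the chosen extension of $\delta_j$ one obtains $\chi_{\delta_j}(\alpha_j)=f$, so every $\alpha_j$ is non-zero. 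When the $\delta_j$ are pairwise distinct this gives $\chi_{\delta_i}(\alpha_j)=f$ for $i=j$ and $0$ otherwise, so the $\alpha_j$ are $k$ pairwise distinct non-zero classes; this covers the cases of interest, e.g. finite separable extensions of global fields, where the Chebotarev density theorem supplies $\infty$-admissibility with infinitely many valuations of pairwise distinct restrictions to $K$, and one recovers the infiniteness of $\mathrm{Br}(L/K)$. In the general $k$-admissible case the $\delta_j$ may coincide and the surviving $\theta_j$-term is then a non-trivial $N_G(P)/P$-conjugate of $f$, so one must argue separately that the $\alpha_j$ stay distinct; I expect this, together with the bookkeeping that kills the cross-terms in Lemma~\ref{lemma: transfer lemma} (which is precisely what forces the approximation-theoretic choice of each $t_j$ as a unit at all the auxiliary places), to be the \emph{main obstacle}, while the remaining verifications (exactness and splitting of the displayed sequence, naturality of transfer, restriction and the connecting maps) are routine given the results already established.
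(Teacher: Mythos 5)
Your proposal is essentially the same as the paper's proof: reduce via Proposition~\ref{prop: bredon-galois} to producing $k$ distinct non-zero classes in $\mathrm{H}^2_{\mathfrak{F}}(G,\underline{E}^{\times})$; pick, by the approximation theorem, elements $t_j\in M^{\times}$ ($M=E^{P}$) with controlled valuations; combine them with a fixed non-zero $f\in\mathfrak{F}(P)$ to get classes $\beta_j\in\mathrm{H}^2_{\mathfrak{F}\cap P}(P,\underline{E}^{\times})$; transfer to $G$; and detect the transfers by the character maps via Lemma~\ref{lemma: transfer lemma}. Your split-exact-sequence construction of $\beta_j$ is exactly the paper's $\underline{\Delta_l}^{\ast}(\tilde f)$, because the section $n\mapsto t_j^{\,n}$ of $\underline{\pi}_j$ is precisely $\underline{\Delta_j}$. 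One small omission on your side: you should verify that $\theta_1,\dots,\theta_k$ are pairwise distinct (this follows because each $\pi_j$ is the \emph{unique} extension of $\theta_j$ to $E$, so $\theta_i=\theta_j$ would force $\pi_i=\pi_j$), which is what makes your choice of the $t_j$ well-posed.

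You are right to flag the case of coinciding $\delta_j$ as unresolved, and I want to point out that the paper's written proof does not handle it either. The paper's approximation condition ${}^{s}\theta_i(m_l)=\delta_{i1}\delta_{ls}$ is literally inconsistent whenever $\delta_s=\delta_t$ for some $s\neq t$: the sets $\{{}^{s}\theta_i\}_i$ and $\{{}^{t}\theta_j\}_j$ are then identical, and if ${}^{s}\theta_1={}^{t}\theta_{i_0}$ with $i_0>1$, the triples $(s,1,s)$ and $(t,i_0,s)$ prescribe the conflicting values $1$ and $0$ for the same quantity. As you observe, with your (consistent) choice of the $t_j$ the surviving $\theta_j$-summand in $\chi_{\delta_i}(\alpha_j)$ for $\delta_i=\delta_j$, $i\neq j$, is $c_{x}(\tilde f)$ for some $x\in N_G(P)\setminus P$, and nothing prevents $c_x(\tilde f)=\tilde f$ (for instance if $G$ is abelian), so the character maps need not separate $\mathrm{tr}_P^G(\beta_i)$ from $\mathrm{tr}_P^G(\beta_j)$. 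Thus what the argument actually establishes --- in both your write-up and the paper --- is the result under the additional assumption that the $k$ valuations $\pi_j$ can be chosen to lie above pairwise distinct valuations $\delta_j$ of $K$. This is harmless for the intended application in Corollary~\ref{cor: fein}: there the Chebotarev density theorem supplies infinitely many distinct unramified places of $K$ with the desired Frobenius class, so the $\delta_j$ can always be taken distinct. But for a general $k$-admissible extension, the repeated-$\delta$ case is left open both by your proposal and by the paper's proof.
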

\begin{proof}
Let $P$ be the subgroup of $G$ generated by $\sigma$ and denote $E^{P}$ by $M$. Because $E/K$ is a $k$-admissible extension, there exist discrete valuations ${}^{1}\pi,\ldots,{}^{k}\pi $ of $E$ such that for all $s \in \{1\ldots,k\}$, ${}^{s}\pi|{_K}$ is unramified in $E$ and $G_{{}^{s}\pi}=P$. We set ${}^{s}\delta={}^{s}\pi|{_K}$ and ${}^{s}\theta={}^{s}\pi|{_M}$  for each $s \in \{1\ldots,k\}$ and observe that $P=P_{{}^{s}\pi}$.
For each $s \in \{1,\ldots,k\}$,  let $\{{}^{s}\theta_1={}^{s}\theta,\ldots,{}^{s}\theta_{n_s}\}$ be all the possible extensions of ${}^{s}\delta$ to $M$ and denote $n=n_1$.
 By the approximation theorem (see \cite{ZariskiSamuel} Ch. VI, Th. $18$), there exist elements: $$m_1,\ldots,m_k\in M^{\times}\mbox{\; so that \;}  {}^{s}\theta_i(m_l)=\delta_{i1}\delta_{ls}$$ for all $s,l \in \{1\ldots,k\}$ and all $i \in \{1,\ldots,n_s\}$. Now,  fix a non-zero element ${f} \in \mathfrak{F}(P)$. Let $\tilde{f}\in \mathrm{H}^2_{\mathfrak{F}\cap P}(P,\underline{\mathbb{Z}})$ be the element that corresponds to ${f}$ under the isomorphism: $$\mathrm{H}^2_{\mathfrak{F}\cap P}(P,\underline{\mathbb{Z}})\cong \mathfrak{F}(P).$$ For each $l \in \{1\ldots,k\}$, the $P$-module homomorphism: $$\Delta_l: \mathbb{Z} \rightarrow E^{\times} : 1 \mapsto m_l$$ induces a natural transformation $\underline{\Delta_l}: \underline{\mathbb{Z}} \rightarrow \underline{E}^{\times}$. Moreover, for each $l \in \{1\ldots,k\}$,  the morphism $\underline{\Delta_l}$ induces a map: $$\underline{\Delta_l}^{\ast}:  \mathrm{H}^2_{\mathfrak{F}\cap P}(P,\underline{\mathbb{Z}}) \rightarrow \mathrm{H}^2_{\mathfrak{F}\cap P}(P,\underline{E}^{\times}).$$ Denote $\underline{\Delta_l}^{\ast}(\tilde{f})$ by $\alpha_l$ for each $\in \{1\ldots,k\}$. We claim that $\{ \mathrm{tr}_{P}^G(\alpha_1),\ldots, \mathrm{tr}_{P}^G(\alpha_k)\}$
is a set of $k$ distinct, non-zero elements of $\mathrm{H}^2_{\mathfrak{F}}(G,\underline{E}^{\times})$.

Let us show that $\mathrm{tr}_{P}^G(\alpha_1)$ is non-zero and does not equal $\mathrm{tr}_{P}^G(\alpha_2)$. The remaining cases are of course similar, so the claim will follow. Let $\{{}^{1}\pi={}^{1}\pi_1,\ldots,{}^1\pi_n\}$ be a set of discrete valuations of $E$ such that ${}^{1}\pi_i$ extends ${}^{1}\theta_i$ and let $\{e=x_1,\ldots,x_n\}$ be a set of elements in $G$ such that $x_i \cdot {}^{1}\pi_i = {}^{1}\pi$. It follows from Lemma \ref{lemma: transfer lemma} that
\begin{equation} \label{eq: transfer 1} \chi_{{}^{1}\delta}(\mathrm{tr}_{P}^G(\alpha_1)) = \sum_{i=1}^n c_{x_i}(G_{{}^{1}\pi_i}) \circ \mathrm{tr}_{P_{{}^{1}\pi_i}}^{G_{{}^{1}\pi_i}} \circ \chi_{{}^{1}\theta_i}(\alpha_1) \end{equation}
and
\begin{equation} \label{eq: transfer 2} \chi_{{}^{1}\delta}(\mathrm{tr}_{P}^G(\alpha_2)) = \sum_{i=1}^n c_{x_i}(G_{{}^{1}\pi_i}) \circ \mathrm{tr}_{P_{{}^{1}\pi_i}}^{G_{{}^{1}\pi_i}} \circ \chi_{{}^{1}\theta_i}(\alpha_2). \end{equation}
By naturality, we have: $$\underline{{}^{1}\pi_i}^{\ast} \circ \mathrm{res}_{P_{{}^{1}\pi_i}}^P(\alpha_l)=\underline{{}^{1}\pi_i \circ \Delta_l}^{\ast} \circ  \mathrm{res}_{P_{{}^{1}\pi_i}}^P(\tilde{f})$$ for $l=1,2$. Observe that the valuations in $\{{}^{1}\theta_1={}^{1}\theta,\ldots,{}^{1}\theta_n\}$ are unramified in $E$, which implies that ${}^{1}\pi_i(m_l)=\delta_{1l}\delta_{i1}$. Using this observation, one checks that $\underline{{}^{1}\pi_i \circ \Delta_l}^{\ast}=\delta_{1l}\delta_{i1}$. Then it follows from (\ref{eq: transfer 1}) and (\ref{eq: transfer 2}) that
\begin{equation*} \chi_{{}^{1}\delta}(\mathrm{tr}_{P}^G(\alpha_1)) = f\neq 0 \end{equation*}
and
\begin{equation*}  \chi_{{}^{1}\delta}(\mathrm{tr}_{P}^G(\alpha_2)) = 0. \end{equation*}
This proves our claim. The theorem now follows from Proposition \ref{prop: bredon-galois}.
\end{proof}
Using Theorem \ref{th: relative brauer}, we reprove a result first obtained in \cite{FeinKantorSchacher}.
\begin{corollary}[Fein-Kantor-Schacher] \label{cor: fein}Let $L \supsetneq K$ be a finite separable extension of global fields, then $\mathrm{Br}(L/K)$ is infinite.
\end{corollary}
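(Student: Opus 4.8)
The plan is to realize $\mathrm{Br}(L/K)$ as a Bredon--Galois cohomology group and then feed it into Theorem \ref{th: relative brauer}. Let $E$ be the Galois closure of $L/K$, put $G=\mathrm{Gal}(E/K)$, and let $H=\mathrm{Gal}(E/L)$, which is a proper subgroup of $G$ since $L\supsetneq K$ forces $[G:H]=[L:K]\geq 2$. Let $\mathfrak{F}$ be the smallest family of subgroups of $G$ that is closed under conjugation and taking subgroups and contains $H$; this family contains the trivial subgroup. By Corollary \ref{cor: int brauer} together with Proposition \ref{prop: bredon-galois},
$$\mathrm{Br}(L/K)\;\cong\;\mathrm{H}^2_{\mathfrak{F}}(G,\underline{E}^{\times})\;\cong\;\cap_{H'\in\mathfrak{F}}\mathrm{Br}(E^{H'}/K),$$
so it suffices to prove that this last group is infinite.

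Next I would check the hypotheses of Theorem \ref{th: relative brauer}. First, $E/K$ is $k$-admissible for every $k\in\mathbb{N}$: for a fixed nontrivial $\sigma\in G$, the Chebotarev density theorem produces infinitely many places $v$ of $K$ that are unramified in $E$ and such that $\mathrm{Frob}_{\pi}=\sigma$ for a suitable place $\pi$ of $E$ above $v$; for such $\pi$ the decomposition group is $G_{\pi}=\langle\sigma\rangle$ and $\pi|_{K}=v$ is unramified in $E$. Hence $E/K$ is $\infty$-admissible, in particular $k$-admissible for all $k$. Second, I need an element $\sigma\in G$ with $\mathfrak{F}(\langle\sigma\rangle)\neq 0$. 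Since $E$ is the Galois closure of $L/K$, the core $\bigcap_{g\in G}{}^{g}H$ is trivial, so $G$ acts faithfully and transitively on the coset space $G/H$, of degree $[G:H]\geq 2$. By the theorem of Fein, Kantor and Schacher (see \cite{FeinKantorSchacher}) that every transitive permutation group of degree greater than one contains a fixed-point-free element of prime-power order, I may choose $\sigma\in G$ of order $q^{a}$ (with $q$ prime and $a\geq 1$) having no fixed point on $G/H$; equivalently, $\sigma$ lies in no conjugate of $H$.

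Now for every $H'\in\mathfrak{F}$ one has $H'\subseteq{}^{g}H$ for some $g\in G$, and since $\sigma\notin{}^{g}H$ the intersection $\langle\sigma\rangle\cap H'$ is a proper subgroup of the cyclic $q$-group $\langle\sigma\rangle$, hence is contained in its unique maximal subgroup $\langle\sigma^{q}\rangle$. Therefore the homomorphism $\langle\sigma\rangle\to\mathbb{Q}/\mathbb{Z}$ factoring as $\langle\sigma\rangle\twoheadrightarrow\langle\sigma\rangle/\langle\sigma^{q}\rangle\cong\mathbb{Z}/q\hookrightarrow\mathbb{Q}/\mathbb{Z}$ is a nonzero element of $\mathfrak{F}(\langle\sigma\rangle)$, so $\mathfrak{F}(\langle\sigma\rangle)\neq 0$. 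Applying Theorem \ref{th: relative brauer} with the family $\mathfrak{F}$ and arbitrary $k$ then shows that $\cap_{H'\in\mathfrak{F}}\mathrm{Br}(E^{H'}/K)$ contains at least $k$ nonzero elements for every $k$, hence is infinite; by the first paragraph, so is $\mathrm{Br}(L/K)$.

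I expect the only genuinely hard input to be the group-theoretic fact used to produce $\sigma$, namely the existence of a fixed-point-free element of prime-power order in an arbitrary transitive permutation group of degree $>1$, which is the classification-dependent theorem of Fein--Kantor--Schacher; the prime-power hypothesis is essential here, since it is exactly what makes the subgroup lattice of $\langle\sigma\rangle$ a chain and thereby forces $\mathfrak{F}(\langle\sigma\rangle)$ to be nonzero (a fixed-point-free element of prime order need not exist in general). The remaining steps --- the reduction to the Galois closure, the Chebotarev argument for $\infty$-admissibility, and the bookkeeping with the family $\mathfrak{F}$ --- are routine.
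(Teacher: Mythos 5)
Your proof is correct and follows essentially the same route as the paper: realize $\mathrm{Br}(L/K)$ as $\mathrm{H}^2_{\mathfrak F}(G,\underline{E}^{\times})$ via Corollary \ref{cor: int brauer}, invoke Chebotarev for $\infty$-admissibility, use the Fein--Kantor--Schacher lemma to find a fixed-point-free $\sigma$ of prime-power order, verify $\mathfrak F(\langle\sigma\rangle)\neq 0$, and then apply Theorem \ref{th: relative brauer}. The one cosmetic improvement is your direct argument that $\mathfrak F(\langle\sigma\rangle)\neq 0$: instead of the paper's contradiction argument about $\mathfrak F\cap P$ generating $P$, you simply observe that each $\langle\sigma\rangle\cap H'$ is a proper subgroup of the cyclic $q$-group $\langle\sigma\rangle$ and therefore lies in its unique maximal subgroup $\langle\sigma^q\rangle$, so the obvious character of order $q$ works; this is the same underlying fact phrased more transparently.
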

\begin{proof}
Consider a field $E$ containing $L$ such that $E/K$ is finite Galois with Galois group $G$. Let $H$ be the subgroup of $G$ such that $E^{\scriptscriptstyle H}=L$ and let $\mathfrak{F}$ is the smallest family of subgroups of $G$ that is closed under conjugation, closed under taking subgroups and contains $H$. It follows from the \emph{Chebotarev density theorem} (see \cite{FriedJarden}) that the extension $E/K$ is $\infty$-admissible.

Now, let $G$ act on $G/H$ by multiplication on the left. Since this action is transitive and $H \neq G$, by the lemma below, there exists an element in $\sigma \in G\setminus \{e\}$ of prime power order that acts without fixed points on $G/H$. This implies that $\sigma \notin S$ for all $S \in \mathfrak{F}$.
Denote $P=\langle\sigma\rangle$ and suppose $|P|=p^n$, for some prime $p$ and some $n \in \mathbb{N}_0$.
We claim that $\mathfrak{F}(P)$ is non-zero.

To prove this claim, it suffices to show that the subgroup of $P$ generated by $\mathfrak{F}\cap P$ does not equal $P$. Suppose that $P$ is generated by $\mathfrak{F}\cap P$.
This implies that there exist elements $\sigma^{i_1},\ldots,\sigma^{i_s}$ such that for each $j \in \{1,\ldots,s\}$ we have $\sigma^{i_j} \in P\cap H^{g_j}$ for some $g_j \in G$ and such that $\prod_{j=1}^s\sigma^{i_j}=\sigma$. It follows that $\sigma^{\sum_{j=1}^s i_j}=\sigma$, and hence, $\sum_{j=1}^s i_j=1 \mod p^n$. So, there must exists a $j$ such that $i_j$ is not divisible by $p$. For this $j$, we have $\gcd\{i_j,p^n\}=1$. It follows that $\sigma^{i_j}$ is a generator of $P$. Thus, there exists  $r \in \mathbb{Z}$ so that $(\sigma^{i_j})^r=\sigma$. Since $\sigma^{i_j} \in H^{g_{j}}$, this implies  $\sigma \in H^{g_{j}}$, which is a contradiction. This proves the claim.

Following the argument of the proof of Theorem \ref{th: relative brauer} shows that $\mathrm{H}^2_{\mathfrak{F}}(G,\underline{E}^{\times})$ is infinite. Corollary \ref{cor: int brauer} then implies that $\mathrm{Br}(L/K)$ is infinite.
\end{proof}
\begin{lemma}[see \cite{FeinKantorSchacher}] \label{lemma: finite simple group lemma} If a finite group $G$ acts transitively on a set $X$ containing more than one element, then there exists an element of prime power order in $G$ that acts without fixed points.
\end{lemma}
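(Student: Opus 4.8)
This is the Fein--Kantor--Schacher theorem, and any proof of it ultimately rests on the classification of finite simple groups; the plan is to reduce the assertion to the case of a primitive permutation group and then to invoke that classification. Note at the outset that the existence of \emph{some} element acting without fixed points is elementary, by the standard averaging argument ($\sum_{g \in G}|\mathrm{Fix}(g)| = |G|$ since the action is transitive, while the identity already contributes $|X| > 1$, so some $g$ has no fixed point). The whole content of the lemma lies in arranging that such a $g$ can be taken of prime power order.

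The first reduction replaces the given action by the action of $G$ on $G/H$, where $H = G_x$ is the stabilizer of a point of $X$; transitivity forces $H \neq G$, and an element $g \in G$ acts without fixed points on $X$ precisely when $g$ lies in no conjugate of $H$. If $H$ is not maximal, pick a maximal subgroup $M$ with $H < M < G$: since $H^a \subseteq M^a$ for all $a \in G$, an element avoiding every conjugate of $M$ also avoids every conjugate of $H$, so we may assume $H$ is maximal. Next, let $N = \bigcap_{a \in G} H^a$ be the core of $H$ in $G$; then $\bar G = G/N$ acts faithfully and primitively on $G/H \cong \bar G/\bar H$ with $\bar H = H/N$. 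A nontrivial $p$-element $\bar g \in \bar G$ lying in no conjugate of $\bar H$ lifts to a $p$-element of $G$ with the same property: take a Sylow $p$-subgroup $Q$ of the preimage of $\langle \bar g\rangle$, note that the quotient map sends $Q$ onto $\langle \bar g\rangle$ (as $\langle \bar g\rangle$ is a $p$-group), choose $g \in Q$ with image $\bar g$, and observe that $g \in H^a$ would force $\bar g \in \bar H^{\bar a}$. Hence it suffices to treat a faithful primitive permutation group $G$ of degree $n > 1$ with point stabilizer a maximal subgroup $H$.

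For the primitive case I would run through the O'Nan--Scott types, using the classification of finite simple groups. If $\mathrm{Soc}(G)$ is the elementary abelian group $V \cong \mathbb{F}_p^{\,d}$ (affine type), translation by any nonzero vector of $V$ is a fixed-point-free element of order $p$. In the product-action, simple-diagonal, and twisted-wreath types the socle is a power of a nonabelian simple group, and one produces the required prime-power-order element by pulling it back from the almost simple group acting on a single simple factor. The remaining, genuinely hard, case is the almost simple one: $T \trianglelefteq G \leq \mathrm{Aut}(T)$ with $T$ a nonabelian finite simple group and $H$ a maximal subgroup not containing $T$. Here one must exhibit an element of $G$ of prime power order lying in no conjugate of $H$, which amounts to comparing the element orders occurring in $T$ (and in its outer cosets inside $\mathrm{Aut}(T)$) with the element orders occurring in the maximal subgroups of $T$; this verification, carried out uniformly across the families of finite simple groups, is precisely the argument of \cite{FeinKantorSchacher}.

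The main obstacle is thus concentrated entirely in the almost simple case: there is no elementary argument, and one genuinely needs detailed structural information about finite simple groups and their maximal subgroups coming from the classification. The reductions preceding it are routine, the only mild subtlety being the lifting of a $p$-element through the quotient by the core of $H$. Since these facts lie well outside the scope of the present paper, we simply invoke \cite{FeinKantorSchacher} for the statement, as done here.
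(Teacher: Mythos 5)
The paper does not prove this lemma at all; it simply cites \cite{FeinKantorSchacher}, and you correctly end up doing the same. Your supplementary sketch of the standard reductions (to a primitive action with maximal, core-free point stabilizer via the Sylow lifting argument, followed by an O'Nan--Scott analysis resting on the classification of finite simple groups in the almost-simple case) is accurate and explains well why no self-contained proof is attempted in the paper.
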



\begin{thebibliography}{12345}
\bibitem{BCH} Baum, P., Connes, A., and Higson, N.,
    {\em Classifying space for proper actions and K-theory of Group C.-algebras}, Contemporary Mathematics $167$, ($1994$), $241$-$291$
\bibitem{Bredon} Bredon, G.E., 
    {\em Equivariant cohomology theories} , Lecture Notes in Mathematics $34$, Springer ($1967$)
\bibitem{Brown} Brown, K.,
    {\em Cohomology of groups}, Berlin, Heidelberg, New York: Springer-Verlag, ($1982$)
\bibitem{DL} Davis, J. F. and L\"{u}ck, W.,
    {\em Spaces over a Category and Assembly Maps in Isomorphism Conjectures in $K$- and $L$- Theory}, $K$-theory $15$, ($1998$), $201$-$252$.
\bibitem{Dunwoody} Dunwoody, M. J.,
    {\em Accessibility and groups of cohomological dimension one}, Proc. Lond. Math. Soc, $(3)$, $38(2)$ $(1979)$ $193$-$215$
\bibitem{DwyerHenn} Dwyer, W.G. and Henn, H.W.,
    {\em Homotopy theoretic methods in group cohomology}, Advanced Courses in Mathematics, CRM Barcelona, Birkhauser Verlar, Basel, $(2001)$
\bibitem{FeinKantorSchacher} Fein, B., Kantor, W. and Schacher, M.,
    {\em Relative Brauer groups II}, J. reine angew. Math, Vol. $328$ ($1981$) $39$-$57$
\bibitem{FeinSaltmanSchacher} Fein, B., Saltman, D. and Schacher, M.,
    {\em Brauer-Hilbertian fields}, Trans. Am. Math. Soc., Vol. $334$ Nr. $2$  ($1992$) $915$-$928$
\bibitem{FeinSchacher} Fein, B. and Schacher, M.,
    {\em A conjecture about relative Brauer groups}, K-Theory and Algebraic Geometry: Connections with Quadratic Forms and Division Algebras (Jacob, B. and Rosenberg, A. eds.), Proc. of the Symposia in Pure Mathematics, Vol. $58$, $161$-$169$
\bibitem{FriedJarden} Fried, M. and Jarden, M.,
    {\em Field arithmetic}, Springer-Verlag ($2008$)
\bibitem{Hambleton} Hambleton, I., Pamuk, S. and Yalcin, E.,
    { \em Equivariant CW-complexes and the orbit category}, Commentarii Math. Helv. (to appear)
\bibitem{Hoff} Hoff, G.,
    { \em On the cohomology of categories}, Rend. Mat. $7$ ($1974$) $169$-$192$
\bibitem{Hoff2} Hoff, G.,
    { \em Cohomologies et extensions de categories}, Math. Scand. $74$ ($1994$) $191$-$207$
\bibitem{Kochloukova} Kochloukova, D.H., Mart\'{\i}nez-P\'{e}rez, C. and Nucinkis, B.,
    {\em Cohomological finiteness conditions in Bredon cohomology},  To appear in Bull. Lond. Math. Soc.
\bibitem{Luck} L\"{u}ck, W.,
    {\em Transformation groups and algebraic K-theory}, Lecture Notes in Mathematics, Vol. $1408$, Springer-Berlin, $(1989)$
\bibitem{Luck1}  L\"{u}ck, W.,
    {\em The type of the classifying space for a family of subgroups}, J. Pure Appl. Algebra $149$ ($2000$), $177$-$203$.
\bibitem{Luck2} L\"{u}ck, W.,
    {\em Survey on classifying spaces for families of subgroups}, Infinite groups: geometric, combinatorial and dynamical aspects, Springer $(2005)$, $269$-$322$
\bibitem{LuckWeiermann} L\"{u}ck, W.  and Weiermann, M.,
    {\em On the classifying space of the family of virtually cyclic subgroups}, Pure and Applied Mathematics Quarterly, Vol. $8$ Nr. $2$ $(2012)$
\bibitem{Martinez} Mart\'{\i}nez-P\'{e}rez, C.,
    {\em A spectral sequence in Bredon (co)homology}, J. Pure Appl. Algebra $176$ $(2002)$ $161$-$173$
 \bibitem{MV} Mislin, G. and Valette, A.,
    { \em Proper group actions and the Baum-Connes conjecture}, Birkhaeuser Verlag (2003).
\bibitem{Nucinkis} Nucinkis, B.,
    {\em On dimensions in Bredon homology}, Homology, Homotopy and Applications, $6$ $(2004)$ $33$-$47$
\bibitem{Serre2} Serre, J-P.,
    {\em Local fields}, Springer-Verlag, ($1979$)
\bibitem{slom0} Slomi\'{n}ska, J.,
    {\em Equivariant Bredon cohomology of classifying spaces of families of subgroups}, Bull. Acad. Polon. Sei. Ser. Sei. Math $28$, $(1980)$,  $503$-$508$
\bibitem{slom} Slominska, J.,
    {\em Hecke structure on Bredon cohomology}, Fundamenta Mathematicae $140$ ($1991$) $1$-$30$.
\bibitem{ThevenazWebb} Thevenaz J. and Webb, P.,
    {\em The structure of Mackey functors} Trans. Am. Math. Soc., Vol. $347$ Nr. $6$  ($1995$) $1865$-$1961$
\bibitem{tom} Dieck, T.,
    {\em Transformation groups}, Arch. Math. (Basel) $8$ ($1987$)
\bibitem{Waner} Waner, S.,
    {\em Mackey functors and G-cohomology}, Proc. Am. Math. Soc. $(4)$ $90$ $(1984)$ $641$-$648$
\bibitem{Webb} Webb, P.,
    {\em An introduction to the representations and cohomology of categories}, Proc. of the Bernoulli Centre program on group representations, Lausanne, $2007$, $149$-$173$.
\bibitem{Weibel} Weibel, C. A., 
    {\em An introduction to homological algebra}, Cambridge University Press ($1994$)
\bibitem{Xu} Xu, F.,
    {\em On the cohomology rings of small categories}, J. Pure Appl. Algebra, $212$ ($2008$) $2555$-$2569$
\bibitem{ZariskiSamuel} Zariski, O. and Samuel, P.,
    {\em Commutative algebra}, Springer-Verlag ($1975$)


\end{thebibliography}
\end{document}